\definecolor{seagreen}{RGB}{46,139,87}
\definecolor{maroon}{RGB}{128,0,0}
\definecolor{darkviolet}{RGB}{148,0,211}
\definecolor{twelve}{RGB}{100,100,170}
\definecolor{thirteen}{RGB}{100,150,50}
\definecolor{fourteen}{RGB}{200,0,0}
\definecolor{fifteen}{RGB}{0,200,0}
\definecolor{sixteen}{RGB}{0,0,200}
\definecolor{seventeen}{RGB}{200,0,200}
\definecolor{eighteen}{RGB}{0,200,200}
\newtheorem{thm}{Theorem}[section]
\newtheorem{lem}[thm]{Lemma}
\newtheorem{cor}[thm]{Corollary}
\newtheorem{prop}[thm]{Proposition}
\newtheorem{conj}[thm]{Conjecture}
\newtheorem{fig}[thm]{Figure}
\newtheorem{thmx}{Theorem}
\theoremstyle{definition}
\newtheorem{defin}[thm]{Definition}
\newtheorem{constr}[thm]{Construction}
\newtheorem{exm}[thm]{Example}
\theoremstyle{remark} \newtheorem{rem2}[thm]{Remark}
\newcommand{\ul}[1]{\underline{#1}}
\def\c{\mathbb{C}}
\def\f{\mathbb{F}}
\def\m{\mathbb{M}}
\def\r{\mathbb{R}}
\def\z{\mathbb{Z}}
\def\ca{\mathcal{A}}
\def\uz{\underline{\mathbb{Z}}}
\newcommand{\oxi}{\overline{\xi}}
\newcommand{\otau}{\overline{\tau}}
\def\Motc{\mathcal{M}ot_\c}
\def\hocolim{\underset{\longrightarrow}{\operatorname{hocolim}}}
\def\Spec{\operatorname{Spec}}
\def\Ext{\operatorname{Ext}}
\def\TExt{\operatorname{TExt}}
\def\colim{\operatorname{colim}}
\def\lim{\operatorname{lim}}
\def\coker{\operatorname{coker}}
\newcommand{\kq}{kq}
\newcommand{\SH}{\mathcal{SH}}
\newcommand{\Gal}{\operatorname{Gal}}
\newcommand{\kqE}{\mbox{}^{kq}E}
\newcommand{\algE}{\mbox{}^{alg}E}
\theoremstyle{theoremstyle}
\newtheorem*{theorem*}{Theorem}
\newtheorem*{proposition*}{Proposition}
\newtheorem*{corollary*}{Corollary}
\newtheorem*{remark*}{Remark}
\newtheorem*{conjecture*}{Conjecture}
\newtheorem{defn*}{Definition}
\theoremstyle{definition}
\theoremstyle{theoremstyle}
\author{Dominic Leon Culver}\address{University of Illinois, Urbana-Champaign}\email{dculver@illinois.edu}
\author{J.D. Quigley}\address{University of Notre Dame}\email{jquigle2@nd.edu}
\title[$kq$-resolutions I]{$kq$-resolutions I}
\begin{document}
\maketitle

\section*{Abstract}
Let $kq$ denote the very effective cover of Hermitian K-theory. We apply the $kq$-based motivic Adams spectral sequence, or $kq$-resolution, to computational motivic stable homotopy theory. Over base fields of characteristic not two, we prove that the $n$-th stable homotopy group of motivic spheres is detected in the first $n$ lines of the $kq$-resolution, thereby reinterpreting results of Morel and R{\"o}ndigs-Spitzweck-{\O}stv{\ae}r in terms of $kq$ and $kq$-cooperations. Over algebraically closed fields of characteristic 0, we compute the ring of $kq$-cooperations modulo $v_1$-torsion, establish a vanishing line of slope $1/5$ in the $E_2$-page, and completely determine the $0$- and $1$- lines of the $kq$-resolution. This gives a full computation of the $v_1$-periodic motivic stable stems and recovers Andrews and Miller's calculation of the $\eta$-periodic $\mathbb{C}$-motivic stable stems. We also construct a motivic connective $j$ spectrum and identify its homotopy groups with the $v_1$-periodic motivic stable stems. Finally, we propose motivic analogs of Ravenel's Telescope and Smashing Conjectures and present evidence for both. 

\tableofcontents

\section{Introduction} 

\subsection{Motivation and main theorems}

Motivic stable homotopy groups are the central object of investigation in computational motivic stable homotopy theory. Their study is motivated by deep connections to arithmetic invariants such as Milnor-Witt K-theory \cite{Mor12} and Hermitian K-theory \cite{RSO19}, as well as classical \cite{Isa19}\cite{IWX20} and equivariant \cite{BS19}\cite{BGI20}\cite{BI20}\cite{DI17} stable homotopy groups. The primary tools for accessing motivic stable homotopy groups are the (very) effective slice spectral sequence \cite{Lev08}\cite{SO12}\cite{Voe02} and the motivic Adams(-Novikov) spectral sequence \cite{DI05}\cite{HKO11}\cite{OO14}\cite{WO17}. 

This paper initiates the study of a new spectral sequence in motivic stable homotopy theory. Fix a field $F$ of characteristic not two and let $kq$ denote the very effective cover of Hermitian K-theory \cite{ARO17}.

\begin{thmx}[$kq$-resolution, Theorem \ref{Thm:kqASS}]\label{ThmA}
There is a strongly convergent spectral sequence, the \emph{$kq$-resolution}, of the form
\begin{equation}\label{Eqn:ThmA}
E_1^{n,t,u} = \pi_{t,u}^F(kq \wedge \overline{kq}^{\wedge n}) \Rightarrow \pi_{t-n,u}^F(S^{0,0}),
\end{equation}
where $\overline{kq}$ is the cofiber of the unit map $S^{0,0} \to kq$. The $d_r$-differentials have the form
$$d_r : E_r^{n,t,u} \to E_r^{n+r, t+r-1,u}.$$
There is an analogous strongly convergent spectral sequence of the form
\begin{equation}\label{Eqn:ThmA2}
E_1^{n,t,u} = \pi_{t,u}^F(kq_p^\wedge \wedge \overline{kq_p^\wedge}^{\wedge n}) \Rightarrow \pi_{t-n,u}^F((S^{0,0})_p^\wedge).
\end{equation}
\end{thmx}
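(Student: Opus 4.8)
The plan is to construct the standard $kq$-based Adams tower over the motivic sphere and extract \eqref{Eqn:ThmA} as the spectral sequence of the associated exact couple, the only substantive issue being strong convergence. First I would use that $kq$ is an $E_\infty$-ring spectrum in $\SH(F)$, with unit $\eta\colon S^{0,0}\to kq$; since $\overline{kq}=\operatorname{cofib}(\eta)$, the fiber of $\eta$ is $\Sigma^{-1,0}\overline{kq}$. Set $W_n := \Sigma^{-n,0}\overline{kq}^{\wedge n}$ (so $W_0=S^{0,0}$), with structure maps $W_{n+1}\to W_n$ obtained by smashing the fiber sequence $\Sigma^{-1,0}\overline{kq}\to S^{0,0}\to kq$ with $\Sigma^{-n,0}\overline{kq}^{\wedge n}$. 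This same smashing identifies
$$\operatorname{cofib}(W_{n+1}\to W_n)\;\simeq\;\Sigma^{-n,0}\bigl(kq\wedge\overline{kq}^{\wedge n}\bigr).$$
The tower $\{W_n\}$ of cofiber sequences has an exact couple whose spectral sequence, after reindexing so that filtration degree $n$ and total degree $t-n$ correspond to $\pi_{t,u}^F(kq\wedge\overline{kq}^{\wedge n})$, has the stated $E_1$-page; the $d_r$-tridegree $(n,t,u)\mapsto(n+r,t+r-1,u)$ is the usual bookkeeping for an Adams tower, and the weight $u$ is preserved because every map in sight has weight zero. Any two $kq$-Adams resolutions give the same spectral sequence from $E_2$ onward, so the choice of the canonical resolution is harmless.

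For convergence I would show $\holim_n W_n\simeq *$, which both identifies the abutment with $\pi_{*,*}^F(S^{0,0})$ and, together with a finite-filtration statement, yields strong convergence. The key structural input: $kq$ is very effective \cite{ARO17}, hence connective in the homotopy $t$-structure, so $\overline{kq}$ is connective; moreover by \cite{Mor12} and \cite{RSO19} the unit $S^{0,0}\to kq$ is an isomorphism on $\underline{\pi}_0$ (both are the sheaf $\underline{K}^{MW}_{*}$, i.e.\ $\underline{GW}$ in weight $0$) and an epimorphism on $\underline{\pi}_1$, so that $\overline{kq}$ is $1$-connected. Then sub-additivity of connectivity under $\wedge$ in $\SH(F)$ (reducing to the perfect closure of $F$ if necessary, which is harmless as $kq$ and $S^{0,0}$ are defined over the prime field) gives that $\overline{kq}^{\wedge n}$ is $(2n-1)$-connected, whence $W_n=\Sigma^{-n,0}\overline{kq}^{\wedge n}$ is $(n-1)$-connected. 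Consequently $\pi_{k,u}^F W_n=0$ for $n>k$, the $kq$-Adams filtration on each $\pi_{k,u}^F(S^{0,0})$ has finite length, $\holim_n W_n\simeq *$, and the spectral sequence converges strongly to $\pi_{*,*}^F(S^{0,0})$. This proves \eqref{Eqn:ThmA}; for \eqref{Eqn:ThmA2} I would rerun the entire construction internally to $p$-complete motivic spectra, using that $p$-completion preserves cofiber sequences and connectivity bounds for bounded-below spectra, so that $\operatorname{cofib}((S^{0,0})_p^\wedge\to(kq)_p^\wedge)\simeq(\overline{kq})_p^\wedge$ is again $1$-connected and the same estimates apply over $(S^{0,0})_p^\wedge$.

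The main obstacle is the convergence input above: establishing that $\overline{kq}$ is $1$-connected — equivalently that the unit is epic on $\underline{\pi}_1$ — which rests on the identification of $\underline{\pi}_0$ together with the low-degree homotopy of $kq$ extracted from its slices and the computations of \cite{RSO19}. (This is precisely the point of contact with the reinterpretation of Morel and R\"ondigs--Spitzweck--\O stv\ae r advertised in the introduction.) A secondary technical nuisance is that connectivity estimates for smash products in $\SH(F)$ are delicate outside the perfect-field setting, so one should either base change to the perfect closure or, more robustly, phrase the convergence argument using the very effective (slice-type) filtration rather than the homotopy $t$-structure; either way the finiteness of the $kq$-Adams filtration in each bidegree is what drives strong convergence.
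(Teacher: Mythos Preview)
Your proposal is correct and, for the integral spectral sequence \eqref{Eqn:ThmA}, follows the same route as the paper: both argue strong convergence via connectivity of $\overline{kq}$ in the homotopy $t$-structure, so that the $kq$-Adams filtration in each bidegree is finite and $\holim_n W_n\simeq *$. One minor difference: you invoke the surjection on $\underline{\pi}_1$ (from \cite{RSO19}) to get $\overline{kq}$ genuinely $1$-connected, hence $\overline{kq}^{\wedge n}$ is $(2n-1)$-connected; the paper's proof only uses the isomorphism on $\Pi_0$ (citing \cite{Bac17}) to get $\overline{kq}$ $1$-connective, hence $kq\wedge\overline{kq}^{\wedge n}$ is $n$-connective --- which already suffices. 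Your stronger input is precisely what the paper deploys later (Corollary~\ref{Cor:Conn}) for the Milnor--Witt stem application, but it is more than is needed for convergence itself.

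For the $p$-complete version \eqref{Eqn:ThmA2} your approach genuinely differs from the paper's. The paper does not rerun the connectivity argument in the $p$-complete category; instead it invokes Mantovani \cite[Thms.~1.0.1 and 7.3.4]{Man18} as a black box to identify the $kq_p^\wedge$-nilpotent completion of $S^{0,0}$ with $(S^{0,0})_p^\wedge$. Your direct approach --- checking that $(\overline{kq})_p^\wedge$ remains $1$-connected and that connectivity is sub-additive under the $p$-completed smash product --- ought to go through, but the assertion that $p$-completion preserves connectivity in the motivic homotopy $t$-structure (and that this $t$-structure interacts well with the relevant limits) is exactly the sort of foundational statement that \cite{Man18} packages; the paper's citation is the cleaner route, while yours is more self-contained at the cost of having to justify that step.
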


Since the $E_1$-page of \eqref{Eqn:ThmA} contains arithmetic input, the $kq$-resolution is naturally suited for relating low-dimensional motivic stable homotopy groups to arithmetic invariants. Let $\Pi^F_k(-):= \bigoplus_{u\in \z}\pi^F_{k+u,u}$ denote the $k$-th homotopy group in the homotopy t-structure on the $F$-motivic stable homotopy category \cite{Mor12}, also known as the $k$-th Milnor-Witt stem. Applying calculations of \cite{RSO19} to show that $kq \wedge \overline{kq}^{\wedge n}$ is $2n$-connective in the homotopy t-structure, we obtain the following from Theorem \ref{ThmA}. 

\begin{thmx}[Milnor-Witt stems, Theorem \ref{Thm:MWStems}]\label{ThmB}
There is an isomorphism of graded abelian groups
\begin{equation}\label{Eqn:ThmB1}
\Pi_0^F(S^{0,0}) \cong \Pi_0^F(kq),
\end{equation}
and a short exact sequence of graded abelian groups
\begin{equation}\label{Eqn:ThmB2}
0 \to \Pi_2((kq \wedge \overline{kq})/d_1) \to \Pi_1^F(S^{0,0}) \to \Pi_1^F(kq)  \to 0.
\end{equation}
More generally, $\Pi^F_k(S^{0,0})$ is detected in $E_\infty^{n,t,u}$ of \eqref{Eqn:ThmA} in the range of tridegrees $(n,t,u)$ satisfying $n \leq k$ and $t-n-u=k$.
\end{thmx}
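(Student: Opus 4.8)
The plan is to deduce everything from the strong convergence of the $kq$-resolution of Theorem \ref{ThmA} together with a connectivity estimate, in the homotopy $t$-structure, for the $E_1$-term $kq \wedge \overline{kq}^{\wedge n}$. First I would recall from \cite{RSO19} the computation of $\pi_{*,*}^F(kq)$, or rather the weaker statement that $kq$ is $(-1)$-connective and $\Pi_0^F(kq) \cong \underline{K}^{MW}_0$-type data suffices; more precisely, I would establish that $\overline{kq}$, the cofiber of $S^{0,0}\to kq$, is $1$-connective in the homotopy $t$-structure (it is $2$-connective after accounting for weights, but the key point is the Milnor-Witt degree). The central lemma is then that $kq \wedge \overline{kq}^{\wedge n}$ is $2n$-connective in the homotopy $t$-structure: this follows by smashing, using that the homotopy $t$-structure is compatible with the smash product in the appropriate sense (connectivity adds), and that $kq$ itself is $0$-connective while each $\overline{kq}$ factor contributes $2$. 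Concretely, $\pi_{t,u}^F(kq\wedge \overline{kq}^{\wedge n}) = 0$ whenever $t - u < 2n$, equivalently the contribution to $\Pi_k$ vanishes unless $t - n - u = k$ with $k \geq n$, i.e. $n \leq k$.

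Granting this, I would run the $kq$-resolution after re-indexing by Milnor-Witt degree: fix $k$ and sum \eqref{Eqn:ThmA} over all weights $u$ with $t - n - u = k$. The connectivity lemma says the $n$-th line $E_1^{n,*,*}$ contributes to $\Pi_j^F(S^{0,0})$ only for $j \geq n$; since the differentials $d_r$ raise $n$ by $r$ and lower $t-n$ by $1$ (hence preserve the Milnor-Witt degree $t-n-u$), the spectral sequence computing $\Pi_k^F(S^{0,0})$ has contributions only from lines $n \leq k$, and strong convergence (Theorem \ref{ThmA}) gives that $\Pi_k^F(S^{0,0})$ is detected in $E_\infty^{n,t,u}$ with $n \leq k$ and $t - n - u = k$. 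This is exactly the final assertion of the theorem.

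For the isomorphism \eqref{Eqn:ThmB1}: when $k = 0$ only the line $n = 0$ can contribute, so there are no differentials into or out of the relevant spot from other lines within Milnor-Witt degree $0$ — one must check $d_1\colon E_1^{0}\to E_1^{1}$ vanishes on the relevant classes, but the target lies in Milnor-Witt degree $0$ on the $n=1$ line, which is forced to vanish by the connectivity lemma ($n=1 > k = 0$). Hence $E_\infty^{0} = E_1^{0}$ in this degree and $\Pi_0^F(S^{0,0}) \cong \Pi_0^F(kq)$. For the short exact sequence \eqref{Eqn:ThmB2} with $k=1$: the lines $n = 0$ and $n = 1$ are the only ones contributing, giving a two-step filtration on $\Pi_1^F(S^{0,0})$ with associated graded $E_\infty^{0} = \Pi_1^F(kq)$ (the quotient) and $E_\infty^{1}$ (the sub). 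The only possible differential is $d_1\colon E_1^{0,*,*}\to E_1^{1,*,*}$ landing in Milnor-Witt degree $1$; identifying $E_\infty^{1} = \ker(d_1)/\mathrm{im}(d_1)$ on the $n=1$ line with $\Pi_2((kq\wedge\overline{kq})/d_1)$ — note the shift from $t-n$ to $t$ accounts for the "$\Pi_2$" — and checking that no $d_1$ maps \emph{into} the $n=1$ line from $n=0$ survives to obstruct exactness, one gets precisely \eqref{Eqn:ThmB2}. The main obstacle is the connectivity lemma: verifying that the homotopy $t$-structure connectivity of $kq\wedge\overline{kq}^{\wedge n}$ is exactly $2n$ requires the explicit slice/homotopy-sheaf input of \cite{RSO19} on $\pi_{*,*}^F(kq)$ and care that the $C_2$-equivariant-flavored structure of $kq$ does not degrade the bound; everything else is bookkeeping with the spectral sequence indexing.
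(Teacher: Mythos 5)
Your approach is the same as the paper's: feed the $2n$-connectivity of $kq \wedge \overline{kq}^{\wedge n}$ in the homotopy $t$-structure into the $kq$-resolution and read off low Milnor-Witt stems by bookkeeping. Two points in your write-up deserve correction, one merely imprecise, one a genuine sign error.

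On connectivity: your statement that $\overline{kq}$ is $1$-connective is not the input you actually use. The paper needs $\overline{kq}$ to be $2$-connective in the homotopy $t$-structure (that is, $\Pi_0^F(\overline{kq}) = \Pi_1^F(\overline{kq}) = 0$), which is precisely what makes ``each $\overline{kq}$ factor contributes $2$'' correct. The $\Pi_0$-vanishing is Bachmann's $\Pi_0$-isomorphism; the $\Pi_1$-vanishing is genuinely harder and comes from the surjectivity of $\Pi_1^F(S^{0,0}) \to \Pi_1^F(kq)$ in \cite{RSO19}. Your parenthetical ``(it is $2$-connective after accounting for weights, but the key point is the Milnor-Witt degree)'' muddles this; there is no weight-accounting to do — the claim is that $\overline{kq}$ is in $\Pi$-degrees $\geq 2$, full stop, and this is the nontrivial input.

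On the differentials: you assert that $d_r \colon E_r^{n,t,u} \to E_r^{n+r,t+r-1,u}$ ``preserve the Milnor-Witt degree $t-n-u$.'' They do not. The source has $t-n-u$, the target has $(t+r-1)-(n+r)-u = (t-n-u) - 1$: the differentials \emph{decrease} Milnor-Witt stem by one. This is exactly the mechanism the paper exploits: for \eqref{Eqn:ThmB1}, any $d_r$ out of Milnor-Witt degree $0$ on the $0$-line lands on Milnor-Witt degree $-1$ on the $r$-line, and these groups vanish for every $r \geq 1$ by the connectivity lemma; for \eqref{Eqn:ThmB2}, the relevant $d_1$ runs from Milnor-Witt degree $2$ on the $0$-line to Milnor-Witt degree $1$ on the $1$-line — that is, from $\Pi_2^F(kq)$ to the degree-$2$ part of $\Pi^F_*(kq \wedge \overline{kq})$ — and the cokernel is what you record as $\Pi_2((kq \wedge \overline{kq})/d_1)$. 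If one really believed your preservation claim, the natural prediction would be $\Pi_1((kq \wedge \overline{kq})/d_1)$, not $\Pi_2$. Your remark about ``the shift from $t-n$ to $t$'' shows you implicitly see the degree offset, but it contradicts the preservation claim on the same line; in a final write-up the assertion needs to read ``decreases Milnor-Witt degree by one.'' Your conclusions happen to be correct because, with or without the erroneous preservation claim, the connectivity bound forces the targets to vanish in the cases $k=0,1$, but the stated mechanism is wrong and would mislead a reader trying to generalize to higher $k$.
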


\begin{remark*}
In \cite{ARO17}, it is remarked that the geometric meaning of $kq$ is not well understood. Using known calculations of $\Pi_n^F(S^{0,0})$ from \cite{Mor12}\cite{RSO19}, Theorem \ref{ThmB} implies that
$$\Pi_0^F(kq) \cong K_*^{MW}(F);$$
compare with \cite{Bac17}. We also find 
$$\Pi_2^F((kq \wedge \overline{kq})/d_1) \cong K_*^M(F)/24.$$
\end{remark*}

We will explore further connections between $kq$-cooperations and arithmetic invariants, as well as other applications of \eqref{Eqn:ThmA}, in future work. 

The primary aim of this paper is to apply the $2$-complete $kq$-resolution \eqref{Eqn:ThmA2} to the study of large-scale periodic phenomena in the $2$-complete motivic stable stems. From now on, we fix an algebraically closed field $F = \overline{F}$ of characteristic 0 and work in the category of $2$-complete cellular motivic spectra. 

Substantial progress has been made in the past decade towards understanding periodicity in the motivic stable stems \cite{And18}\cite{Ghe17b}\cite{GIKR18}\cite{Hor18}\cite{Kra18}\cite{Qui19a}\cite{Qui19c}\cite{Qui19b}. However, even the most elementary form of periodicity classically, $v_1$-periodicity, has not been completely understood over any base field. Our first application of the $kq$-resolution is to completely understand $v_1$-periodicity over algebraically closed fields. 

\begin{thmx}[$v_1$-periodic stable stems, Theorem \ref{Thm:v1periodic}]\label{ThmD}
The $v_1$-torsion free component of the motivic stable stems is given by
\begin{align*}
\m_2[h_0,h_1,v_1^4]/(h_0h_1,h_0v_1^4,\tau h_1^3) \oplus  \bigoplus_{k \geq 0} \Sigma^{4k-1,2k} \z/2^{\rho(k)}[\tau] \oplus \m_2[h_1,v_1^4]/(h_1^3 \tau)
\end{align*}
where $|h_0| = (0,0)$, $|h_1| = (1,1)$, $|v_1^4| = (8,4)$, $|\tau| = (0,-1)$, and $\rho(k)$ is the $2$-adic valuation of $8k$. 
\end{thmx}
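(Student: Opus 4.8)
The plan is to compute the $v_1$-torsion-free part of the $2$-complete motivic stable stems by running the $2$-complete $kq$-resolution \eqref{Eqn:ThmA2} after inverting $v_1$, or rather after passing to the $v_1$-torsion-free quotient of each group. The key input will be the computation of the ring of $kq$-cooperations modulo $v_1$-torsion (promised in the abstract), the vanishing line of slope $1/5$ in $E_2$, and the complete determination of the $0$- and $1$-lines of the $kq$-resolution, all of which are established earlier in the paper. First I would record that, modulo $v_1$-torsion, the $E_1$-page $\pi_{*,*}(kq_2^\wedge \wedge \overline{kq_2^\wedge}{}^{\wedge n})$ collapses onto a very thin piece: by the cooperations computation the relevant Ext-type input is a free module over $\m_2[v_1^4]$ on a short list of generators built out of $h_0$, $h_1$, $\tau$, and the classes dual to the bottom cells of the $\overline{kq}$-smash powers. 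The slope $1/5$ vanishing line guarantees that for each fixed Milnor-Witt stem only finitely many filtrations $n$ contribute, so the spectral sequence degenerates to a finite computation in each stem once we work $v_1$-periodically.

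Next I would split the answer into its three visible summands and match each to a structural feature of the resolution. The summand $\m_2[h_0,h_1,v_1^4]/(h_0h_1, h_0 v_1^4, \tau h_1^3)$ should be exactly the contribution of the $0$-line $\pi_{*,*}(kq_2^\wedge)$ modulo $v_1$-torsion, i.e. the image of $\pi_{*,*}(S^{0,0})$ in $\pi_{*,*}(kq)$; this is where I would invoke the explicit description of $\pi_{*,*}(kq_2^\wedge)$ from Hermitian $K$-theory together with the $d_1$-differential pattern off the $0$-line. The middle summand $\bigoplus_{k\ge 0}\Sigma^{4k-1,2k}\z/2^{\rho(k)}[\tau]$, with $\rho(k)=\nu_2(8k)$, is the familiar "image-of-$J$" pattern and should arise from the $1$-line $\pi_{*,*}((kq\wedge\overline{kq})/d_1)$: the $\z/2^{\rho(k)}$ orders are forced by $h_0$-towers of height $\rho(k)$ truncated by the interaction of $d_1$ with multiplication by $v_1^4$, exactly as in the classical $ko$-resolution / $bo$-resolution computation of Mahowald. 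The third summand $\m_2[h_1,v_1^4]/(h_1^3\tau)$ is the $\eta$-periodic part and should be pinned down by comparison with Andrews–Miller's computation of the $\eta$-periodic $\c$-motivic stable stems, which the paper states it recovers; concretely it comes from the $h_1$-tower families surviving to $E_\infty$ on (and just above) the $1$-line.

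The main obstacle, as in all $bo$/$kq$-resolution arguments, will be resolving the higher differentials and hidden extensions: showing that after the known $d_1$ there are no further differentials in the $v_1$-periodic region, and that the multiplicative extensions by $h_0$ (giving the exact orders $2^{\rho(k)}$) and by $v_1^4$ are as claimed. I would handle the differential vanishing by a degree/sparseness argument: the slope $1/5$ vanishing line together with the sparsity of the cooperations module modulo $v_1$-torsion leaves no room for a $d_r$ with $r\ge 2$ between two nonzero $v_1$-periodic classes, except possibly in a bounded range which can be checked by hand against Morel's and Röndigs–Spitzweck–Østvær's known low-stem computations (Theorem \ref{ThmB}). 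The extension problems I would resolve by naturality: mapping the $kq$-resolution to the classical $ko$-resolution under complex/real realization, and to the $\eta$-inverted sphere, forces the $h_0$- and $v_1^4$-module structure, and comparison with the known motivic image of $J$ (or with the connective $j$ spectrum constructed later in the paper) fixes the remaining ambiguities. Assembling the three summands and checking they are disjoint in tridegree then yields the stated direct sum decomposition.
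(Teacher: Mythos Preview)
Your approach is essentially the paper's: reduce to the $0$- and $1$-lines of the $kq$-resolution using the slope $1/5$ vanishing line, then read off the answer from the explicit computation of those two lines (Theorem \ref{Thm:kqLines}). But you are working harder than necessary in two places, and one of your summand identifications is slightly off.

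First, the paper does not argue ``sparseness leaves no room for a $d_r$ with $r\ge 2$'' and does not need any low-stem hand checks. Instead, the argument is a direct slope comparison: by the Bounded Torsion Theorem (Theorem \ref{Prop:kqDifferentials}), every surviving class in $kq$-filtration $\ge 2$ has $H\f_2$-Adams filtration $0$ or $1$. Since $v_1$ raises Adams filtration, at most two members of a $v_1$-periodic family can be detected in any fixed $kq$-filtration. But $|v_1|=(2,1)$, so such a family would be detected along a line of slope at least $1/4$ in the $(t-n,n)$-plane. This collides with the slope $1/5$ vanishing line, so every $v_1$-periodic class is already detected in filtration $0$ or $1$. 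No further differential analysis or extension bookkeeping is needed, because Theorem \ref{Thm:kqLines} already records the $E_\infty$ description of those lines (with the image-of-$J$ torsion orders and the $h_0,h_1,v_1^4$ relations built in).

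Second, your placement of the third summand is not quite right: the $\eta$-periodic piece $\m_2[h_1,v_1^4]/(h_1^3\tau)\{y\}$ lives entirely on the $1$-line, generated by the class $y$ in tridegree $(1,9,5)$, not ``on and just above'' it. So the decomposition is simply: first summand $=$ the $0$-line, second and third summands together $=$ the $1$-line. There is nothing in filtration $\ge 2$ contributing.
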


In the classical setting, $v_1$-periodicity is closely linked with the J-homomorphism $J : \pi_*(bo) \to \pi_{*-1}(S^0)$. The calculation of the image of J \cite{Ada66}\cite{DM89}\cite{Fri80}\cite{Mah81}\cite{Qui71}\cite{Sul74} produced new connections between algebraic topology, representation theory, and geometry, and moreover, led to the development of chromatic homotopy theory. The image of J can be extracted from the spectrum $j$ which sits in a fiber sequence
\begin{equation}\label{Eqn:jc}
j \to bo \xrightarrow{\psi^3-1} \Sigma^4 bsp.
\end{equation}
Using the theory of $\Gamma_\star(S^0)$-modules \cite{GIKR18} and the $kq$-resolution, we construct a motivic analog of \eqref{Eqn:jc} and compute its homotopy over algebraically closed fields. 

\begin{thmx}[Motivic $j$ spectrum and its homotopy, Theorem \ref{Thm:Jo}]\label{ThmE}
There is a fiber sequence
\begin{equation}\label{Eqn:ThmE}
j_o \to kq \to \Sigma^{4,2}ksp
\end{equation}
where $ksp$ is the very effective cover of $\Sigma^{4,2}KQ$. The homotopy of the fiber is given by
\begin{align*}
\pi_{**}(j_o) \cong & \m_2[h_0,h_1,v_1^4]/(h_0h_1,h_0v_1^4,\tau h_1^3) \oplus  \\
	& \bigoplus_{k \geq 0} \Sigma^{4k-1,2k} \z/2^{\rho(k)}[\tau] \oplus \m_2[h_1,v_1^4]/(h_1^3 \tau)
\end{align*}
where $|h_0| = (0,0)$, $|h_1| = (1,1)$, $|v_1^4| = (8,4)$, $|\tau| = (0,-1)$, and $\rho(k)$ is the $2$-adic valuation of $8k$. 
\end{thmx}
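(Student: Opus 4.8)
The plan is to realize the fiber sequence \eqref{Eqn:ThmE} as the motivic counterpart of the classical image-of-$J$ sequence \eqref{Eqn:jc} and then to extract $\pi_{**}(j_o)$ from the associated long exact sequence in homotopy. For the construction I would first produce a stable Adams operation $\psi^3\colon kq\to kq$. The most economical route is to transport the Adams operation $\psi^3$ on $KGL$ along the structure relating algebraic and Hermitian $K$-theory: $\psi^3$ is compatible with the duality involution, hence descends to $KQ$, and applying the very effective cover functor to $kq\hookrightarrow KQ\xrightarrow{\psi^3}KQ$ gives $\psi^3\colon kq\to kq$. The crucial point, the motivic analog of the classical identities $\psi^3\cdot 1=1$, $\psi^3\eta=\eta$, is that $\psi^3$ acts as the identity on the finitely many lowest very effective slices of $kq$ (those detecting $1$, $\eta$ and $\eta^2$); consequently the composite of $\psi^3-1$ with the truncation of $kq$ onto those slices is nullhomotopic, and a choice of nullhomotopy lifts $\psi^3-1$ to a map $kq\to\Sigma^{4,2}ksp$. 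Here one uses that $\Sigma^{4,2}ksp$ is identified, via the $(8,4)$-Bott periodicity $\Sigma^{8,4}KQ\simeq KQ$, with the corresponding very effective cover of $kq$ — exactly as $\Sigma^4 bsp\simeq\tau_{\geq 4}bo$ classically. Then $j_o$ is defined to be the fiber of this lift. I would carry out the construction of the lift, and the verification of the vanishing on the bottom slices, inside the category of $\Gamma_\star(S^{0,0})$-modules of \cite{GIKR18}, where the relevant mapping spectra are small and the computation is essentially algebraic, transporting the resulting diagram back to $2$-complete cellular spectra only at the end.

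Granting the fiber sequence, I would compute $\pi_{**}(j_o)$ from the long exact sequence
\[
\cdots\to\pi_{**}(j_o)\to\pi_{**}(kq)\xrightarrow{(\psi^3-1)_*}\pi_{**}(\Sigma^{4,2}ksp)\to\cdots
\]
together with the homotopy of $kq$ and $ksp$ over $F=\overline F$, which is available from the low lines of the $kq$-resolution of Theorem \ref{Thm:kqASS} (equivalently, from known very effective Hermitian $K$-theory computations, cf.\ \cite{ARO17}). On the Bott-periodic family $\pi_{4k,2k}(kq)$, a copy of $\z_2$, the operation $\psi^3$ is multiplication by $3^{2k}=9^k$, so $(\psi^3-1)_*$ is multiplication by $9^k-1$, whose $2$-adic valuation is $v_2(8k)=\rho(k)$; the (shifted) cokernels then produce the cyclic $\tau$-towers $\Sigma^{4k-1,2k}\,\z/2^{\rho(k)}[\tau]$. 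On the $\eta$-divisible classes $(\psi^3-1)_*$ vanishes — it acts as an even integer on $2$-torsion elements — so these pass into $\pi_{**}(j_o)$, yielding the summands $\m_2[h_0,h_1,v_1^4]/(h_0h_1,h_0v_1^4,\tau h_1^3)$ and $\m_2[h_1,v_1^4]/(\tau h_1^3)$. A bookkeeping step — organized via the $\Gamma_\star(S^{0,0})$-module and $\tau$-Bockstein structure, which makes the relevant extension and $\tau$-divisibility problems over $\overline F$ essentially algebraic — assembles these into the stated formula and determines the $h_0,h_1,\tau,v_1^4$-module structure. The result agrees term-by-term with the $v_1$-torsion free component of the motivic stable stems computed in Theorem \ref{Thm:v1periodic}, as one expects from the motivic $J$-homomorphism; this comparison could also be run in the reverse direction.

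The principal obstacle lies in the first step: constructing the motivic $\psi^3$ on $kq$ and, above all, proving that $\psi^3-1$ genuinely factors through $\Sigma^{4,2}ksp$ — that is, that $\psi^3$ fixes not only the unit slice but also the $\eta$- and $\eta^2$-slices of $kq$ — and doing so coherently enough to pin down the fiber. This is the analog of a nontrivial theorem of Adams in the classical setting, and motivically it must be verified slice-by-slice; this is precisely where the $\Gamma_\star(S^{0,0})$-module machinery and the explicit low-degree portion of the $kq$-resolution are indispensable. A secondary, more routine difficulty is tracking $\tau$-divisibility and the extensions in the long exact sequence of the second step, since over $\overline F$ each homotopy group appears as a full $\tau$-tower rather than a single group.
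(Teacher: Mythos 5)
Your proposal is correct in outline but takes a genuinely different route from the paper, and it is worth comparing them because the difference is exactly at the spot you flag as the principal obstacle.

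\textbf{Construction of the fiber sequence.} You propose to build a motivic stable Adams operation $\psi^3$ on $kq$, verify that $\psi^3-1$ is trivial on the low very effective slices of $kq$, and thereby lift $\psi^3-1$ to $kq\to\Sigma^{4,2}ksp$, defining $j_o$ as the fiber. The paper deliberately sidesteps this. It defines $j_o := \Gamma_\star(j)$, where $j$ is the \emph{classical} connective $J$ spectrum and $\Gamma_\star$ is the Gheorghe--Isaksen--Krause--Ricka functor, and then uses the criterion \cite[Lem.~3.17]{GIKR18}: since $MU_*(bo)$, $MU_*(bsp)$, and hence $MU_*(j)$ are concentrated in even degrees (an easy consequence of $bo\wedge C\eta\simeq bu$), the functor $\Gamma_\star$ carries the classical fiber sequence $j\to bo\xrightarrow{\psi^3-1}\Sigma^4 bsp$ to a fiber sequence of motivic spectra, and one identifies $\Gamma_\star(bo)\simeq kq$, $\Gamma_\star(\Sigma^4 bsp)\simeq\Sigma^{4,2}ksp$ via Construction \ref{Constr:Map}. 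No motivic $\psi^3$ is ever constructed. Your approach, by contrast, is essentially the content of the forthcoming Bachmann--Hopkins work that the paper cites in Remark \ref{Rem:BH}; it has the advantage of working over general base fields (not just $\overline F$ via the $\Gamma_\star$ equivalence), but the factorization through $\Sigma^{4,2}ksp$ genuinely requires new arguments about $\psi^3$ on the $\eta$-slices, and you are right to regard it as the hard step — the paper avoids it precisely because $\Gamma_\star$ makes the fiber sequence come for free from the classical one.

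\textbf{Computation of homotopy.} You then propose to run the long exact sequence of the fiber sequence directly, using $\psi^3=9^k$ on $\pi_{4k,2k}(kq)$ and $v_2(9^k-1)=\rho(k)$ to obtain the $\z/2^{\rho(k)}[\tau]$ summands, and the triviality of $\psi^3-1$ on $\eta$-divisible classes to propagate the $h_1$- and $h_0$-summands. This is correct and recovers the answer, but it is again not what the paper does. The paper instead observes (following Davis--Mahowald/Mahowald classically) that pinching onto the $\Sigma^{4,2}ksp$-summand of the $kq$-cooperations identifies the $d_1$-differential $kq\to kq\wedge\overline{kq}$ with $\psi^3-1$, so that $\pi_{**}(j_o)$ is exactly the direct sum of the $0$- and $1$-lines of the $kq$-resolution, which were already computed in Theorem \ref{Thm:kqLines}. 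Thus the paper piggybacks on the hard work done in Sections 3--4, whereas your long-exact-sequence computation is an independent recomputation. Both give the same answer; the paper's route is shorter given the surrounding material, while yours is more self-contained. Your remark that the two should agree with Theorem \ref{Thm:v1periodic} is exactly the consistency check the paper builds in by identifying $\pi_{**}(j_o)$ with the first two lines of the resolution.

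One small imprecision: where you say $(\psi^3-1)_*$ vanishes on $\eta$-divisible classes because ``it acts as an even integer on $2$-torsion elements,'' the accurate statement is that the $\eta$-divisible classes here are all simple $2$-torsion ($\f_2[\tau]$-modules), so multiplication by the even integer $\psi^3-1$ is literally zero on them; for higher $2$-torsion one would need to check the $2$-adic valuation. This does not affect the conclusion in this case.
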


\begin{remark*}
Hu-Kriz-Ormsby define unitary and orthogonal motivic J homomorphisms in \cite{HKO11} and compute the image of the unitary homomorphism. As expected, the order of $2$-torsion in Theorem \ref{ThmE} is closely related to the order of $2$-torsion in their calculations. 
\end{remark*}

\begin{remark*}
The fiber sequence \eqref{Eqn:ThmE} will be constructed over general base fields in forthcoming work of Bachmann-Hopkins. 
\end{remark*}

For our next application, recall that $\eta \in \pi_{1,1}(S^{0,0})$ is not nilpotent. The $\eta$-periodic motivic stable stems have been analyzed frequently in the past decade \cite{AM17}\cite{GI16}\cite{GI15}\cite{RO19}\cite{Ron16}\cite{Wil18}. Using the $kq$-resolution, we prove the following theorem, recovering work of Andrews-Miller \cite{AM17}. 

\begin{thmx}[$\eta$-periodic stable stems, Theorem \ref{Thm:EtaInv}]\label{ThmC}
The $\eta$-periodic motivic stable stems are given by
$$\pi_{**}(\eta^{-1}S^{0,0}) \cong \f_2[h_1^{\pm 1}, v_1^4]\{x,y\}$$
where $|\eta| = (1,1)$, $|v_1^4| = (8,4)$, $|x| = (0,0)$, and $|y| = (8,5)$. 
\end{thmx}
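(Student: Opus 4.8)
The plan is to compute $\pi_{**}(\eta^{-1}S^{0,0})$ by applying $\eta^{-1}(-)$ to the $2$-complete $kq$-resolution \eqref{Eqn:ThmA2} and running the resulting localized spectral sequence. Since $\eta$-localization is exact and commutes with the filtered tower defining the $kq$-resolution, one obtains a conditionally convergent spectral sequence
\[
E_1^{n,t,u} = \pi_{t,u}\bigl(\eta^{-1}(kq \wedge \overline{kq}^{\wedge n})\bigr) \Rightarrow \pi_{t-n,u}(\eta^{-1}S^{0,0}).
\]
The first step is therefore to identify the $E_1$-page, i.e.\ to compute $\eta^{-1}kq_*$ and, more generally, the ring of $\eta$-localized $kq$-cooperations $\eta^{-1}\pi_{**}(kq \wedge \overline{kq}^{\wedge n})$. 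Here I would invoke the paper's earlier computation of the ring of $kq$-cooperations modulo $v_1$-torsion (the $v_1$-torsion-free part referenced in Theorems~\ref{ThmD} and~\ref{ThmE}); since $\eta$ acts invertibly precisely on a well-controlled piece, the $\eta$-inverted cooperations algebra should collapse to something small — concretely I expect $\eta^{-1}kq_{**} \cong \f_2[h_1^{\pm1}, v_1^4]$ and the higher cooperation terms to be free modules over this, with the bimodule structure making the $E_1$-page a cobar-type complex for a Hopf algebroid over $\f_2[h_1^{\pm1},v_1^4]$.

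The second step is to analyze this cobar complex. I would compute its cohomology — this should be a purely algebraic calculation once the Hopf algebroid structure is pinned down, and I expect the answer to be $\f_2[h_1^{\pm1}, v_1^4]\{x,y\}$ concentrated on the $0$- and $1$-lines, with $x$ in filtration $0$ (the unit) and $y$ arising from the $1$-line cooperations in degree $(8,5)$. The third step is to show the spectral sequence degenerates: I would use the vanishing line of slope $1/5$ in the $E_2$-page established earlier in the paper, together with the sparseness of $E_2$ after $\eta$-inversion, to conclude there is no room for differentials or hidden extensions. Alternatively, one can compare directly with Andrews--Miller's $\eta^{-1}\mathrm{Ext}$ computation over the motivic Steenrod algebra and match the two spectral sequences; since the theorem statement explicitly says we "recover" their result, a comparison map of resolutions is natural here.

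The main obstacle I anticipate is the first step: correctly identifying the $\eta$-inverted $kq$-cooperations as a Hopf algebroid, including the precise bimodule structure and the class of the generator $y$. The subtlety is that $\overline{kq}$ is the cofiber of the unit, so $kq \wedge \overline{kq}^{\wedge n}$ mixes $kq_{**}$ with its augmentation ideal in an iterated way; after $\eta$-inversion one must check that the relevant $\mathrm{Tor}$ or $\mathrm{Ext}$ groups behave as expected and that no unexpected torsion survives. A secondary difficulty is confirming that $\eta^{-1}ksp$ (via the fiber sequence of Theorem~\ref{ThmE}) does not contribute — equivalently that $\eta^{-1}j_o \to \eta^{-1}kq$ is close to an equivalence on the relevant range — but this should follow formally from the homotopy computation in Theorem~\ref{ThmE} once $\eta$ is inverted, since only the $v_1$-periodic $\m_2[h_1,v_1^4]$-summands survive $\eta$-localization.
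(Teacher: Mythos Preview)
Your approach is different from the paper's and considerably more involved.  The paper does \emph{not} $\eta$-localize the spectral sequence, compute $\eta^{-1}$-cooperations, or analyze a cobar complex.  Instead it argues directly in the unlocalized $kq$-resolution: by the Bounded Torsion Theorem (Theorem~\ref{Prop:kqDifferentials}), every class in $kq$-filtration $n\geq 2$ that survives to $E_2$ has $H\f_2$-Adams filtration $0$ or $1$, and such classes (coming from the bottom of Adams covers of $bo$ or $bsp$) can detect at most two consecutive $\eta$-multiples within a fixed $kq$-filtration.  Hence an infinite $\eta$-tower above the $1$-line would have to be detected along a line of slope at least $1/2$ in $(t-n,n)$-coordinates, contradicting the $1/3$ vanishing line on the $E_1$-page (Lemma~\ref{Lem:OneThird}).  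Note that the paper uses the naive $1/3$ line here, not the sharper $1/5$ line; the $1/5$ line is only needed for the $v_1$-periodic statement.  Once one knows $\eta$-periodic classes live on the $0$- and $1$-lines, the answer is read off from Theorem~\ref{Thm:kqLines}.

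Your route could be made to work, and the paper even observes later (Proposition~\ref{Prop:cKW}) that the $\eta$-localized $kq$-resolution is the $cKW$-based Adams spectral sequence.  But the main obstacle you flag is real: as the paper stresses at the opening of Section~\ref{Section:Analysis}, $kq$ fails Adams' flatness, so the $E_1$-page is not a cobar complex for a Hopf algebroid, and you would have to check separately that this repairs itself after $\eta$-inversion.  The paper's slope-versus-vanishing argument sidesteps this entirely and is a two-line proof once Theorems~\ref{Prop:kqDifferentials} and~\ref{Thm:kqLines} are in hand.
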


Theorems \ref{ThmD}-\ref{ThmC} provide a foundation for understanding deeper chromatic phenomena. Periodicity is well-understood in the classical \cite{DHS88}\cite{Rav16} and abelian group-equivariant \cite{BS17}\cite{BHNNNS17} stable homotopy theory, but it remains quite mysterious in the motivic setting. 

The Ravenel Conjectures \cite{Rav84} sheperded classical chromatic homotopy theory through its infancy. Guided by our calculations above, we develop motivic analogs of Ravenel's Telescope and Smashing Conjectures over algebraically closed fields. To that effect, we define chromatic Bousfield localization functors $L_{mn}$ using the motivic Morava K-theories constructed by Krause \cite{Kra18} and their corresponding finite Bousfield localizations $L_{mn}^f$. 

\begin{conjecture*}[Motivic Telescope Conjecture, Conjecture \ref{Conj:MotTCLF}]
The natural transformation $L^f_{mn} \to L_{mn}$ is an equivalence. 
\end{conjecture*}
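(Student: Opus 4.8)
The plan is to reduce the statement to a single computational input—that the motivic telescope $v_1^{-1}S/2$ (and more generally the height-$n$ telescope of a type-$n$ generalized Moore spectrum) has homotopy agreeing with the corresponding Morava $K$-theory localization—and then bootstrap from there by a thick-subcategory argument exactly parallel to Ravenel's analysis in the classical case. Concretely, the Bousfield classes satisfy $\langle L^f_{mn}\rangle \le \langle L_{mn}\rangle$ formally (a finite localization always maps to the corresponding Morava-$K$-theory localization), so the assertion is that the reverse inequality holds, i.e. that $L_{mn}$-acyclicity implies $L^f_{mn}$-acyclicity; equivalently, that the fiber of $L^f_{mn}X \to L_{mn}X$ vanishes for all $X$. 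Since both functors are smashing-compatible in the appropriate sense and commute with the relevant localizations, it suffices to prove this for $X = S^{0,0}$, and further, by the standard chromatic fracture square assembling $L_{mn}$ from $L^f_{K(i)}$ for $n$ in the band, it suffices to treat the monochromatic layers one height at a time, i.e. to show $M^f_{mn} S^{0,0} \simeq M_{mn} S^{0,0}$, the monochromatic pieces.

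The first and main step is height one: one must show that $v_1^{-1}(S^{0,0}/2)$ has homotopy concentrated as predicted by $K(1)$-localization. This is where Theorem~\ref{ThmD} (equivalently Theorem~\ref{ThmE}) enters decisively: we have already computed the $v_1$-periodic motivic stable stems over $F = \overline{F}$, $\mathrm{char}(F) = 0$, and in particular we can read off $\pi_{**}(v_1^{-1}S^{0,0}/2)$ from the $v_1$-torsion-free summand $\m_2[h_0,h_1,v_1^4]/(h_0h_1,h_0v_1^4,\tau h_1^3) \oplus \bigoplus_k \Sigma^{4k-1,2k}\z/2^{\rho(k)}[\tau] \oplus \m_2[h_1,v_1^4]/(h_1^3\tau)$ by inverting $v_1$ after smashing with the mod-$2$ Moore spectrum. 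On the other side, $L_{K(1)}S^{0,0}/2$ is computed directly from the motivic $K(1)$-local sphere—accessible via the motivic image-of-$J$ spectrum $j_o$ of Theorem~\ref{ThmE} and the fiber sequence \eqref{Eqn:ThmE}, together with Krause's identification of the motivic Morava $K$-theories \cite{Kra18}. The point is to exhibit an explicit map $v_1^{-1}S^{0,0}/2 \to L_{K(1)}S^{0,0}/2$ (the canonical localization map) and check it is an isomorphism on homotopy groups by comparing the two computations tridegree by tridegree; the self-map realizing $v_1$-periodicity on $S^{0,0}/2$ comes from the $kq$-resolution machinery as in the proof of Theorem~\ref{ThmD}.

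With height one in hand, the second step is the inductive ascent: assuming $L^f_{K(i)} \to L_{K(i)}$ is an equivalence for $i < n$, one treats height $n$ by choosing a type-$n$ generalized Moore spectrum $F(n)$ (which exists motivically, at least $2$-completely and cellularly, by the motivic periodicity/nilpotence inputs available from Krause and from \cite{GIKR18}), and showing that $v_n^{-1}F(n)$ realizes $L_{K(n)}F(n)$. The telescope conjecture for $S^{0,0}$ at the band $[m,n]$ then follows by the usual fracture/assembly, because $L^f_{mn} S^{0,0}$ is built from the $L^f_{K(i)}S^{0,0}$ (which we may in turn build from the telescopes of the $F(i)$) in the same diagram that builds $L_{mn}S^{0,0}$ from the $L_{K(i)}S^{0,0}$, and an equivalence of diagrams yields an equivalence of limits. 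I expect the genuine obstacle to be precisely this higher-height input: unlike height one, where Theorem~\ref{ThmD} supplies a complete answer, for $n \ge 2$ we do not have an independent computation of $v_n^{-1}F(n)$, and the argument must instead proceed by leveraging structural features—the motivic analog of the Miller square, descent from the point to $\mathbb{C}$-motivic (or $\tau$-inverted) computations à la \cite{GIKR18}, and the expectation (itself essentially equivalent to the conjecture) that the motivic $v_n$-self-maps are detected by motivic Morava $K$-theory. For this reason the write-up should present the height-one case as a theorem and the general statement as supported conjecture, flagging the $n \ge 2$ telescope input as the crux; this matches the paper's stated aim of proposing the conjecture and presenting evidence rather than a full proof.
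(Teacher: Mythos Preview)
The statement you are attempting to prove is a \emph{conjecture} in the paper, not a theorem. There is no proof in the paper to compare against: the authors explicitly present Conjecture~\ref{Conj:MotTCLF} as open, and their contribution is to provide \emph{evidence} in the two lowest cases $(m,n)=(1,0)$ and $(m,n)=(1,-1)$, each of which they reduce only to a further unresolved conjecture about Bousfield classes (Conjectures~\ref{Conj:cKWBous} and~\ref{Conj:Bous}). Your final paragraph essentially acknowledges this, but the earlier parts of the proposal are written as if a proof is within reach, which misrepresents the situation.

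Beyond this framing issue, your outline has genuine gaps even in the cases the paper does treat. First, you freely invoke smashing, chromatic fracture, and a thick-subcategory argument ``exactly parallel to Ravenel's analysis.'' The paper is explicit that none of these are known motivically: the Smashing Conjecture is stated separately as Conjecture~\ref{Conj:MotSC}, and the authors remark that they ``do not expect a thick subcategory argument to hold in this context.'' Second, even your height-one step is incomplete. You propose to check that $v_1^{-1}S^{0,0}/2 \to L_{K(1)}S^{0,0}/2$ is an equivalence, but $L_{1,-1}$ is localization at $K(0)\vee K(w_0)\vee K(1)$, not at $K(1)$ alone; the paper's actual route is to identify $L^f_{1,-1}S^{0,0}\simeq v_1^{-1}S^{0,0}\simeq L_{KQ}S^{0,0}$ and then observe that concluding $L_{KQ}\simeq L_{1,-1}$ would require the unproved Bousfield-class identification $\langle KGL\rangle=\langle K(0)\vee K(1)\rangle$ and $\langle KW\rangle=\langle K(w_0)\vee K(1)\rangle$. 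Your proposal does not engage with this obstruction. A faithful write-up should present the conjecture as a conjecture, record the reductions the paper actually establishes, and flag the Bousfield-class and smashing questions as the remaining obstacles.
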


\begin{conjecture*}[Motivic Smashing Conjecture, Conjecture \ref{Conj:MotSC}]
For each $(m,n)$, the localization functor $L_{mn}$ is smashing, i.e. $L_{mn} X \simeq X \wedge L_{mn} S^{0,0}$. 
\end{conjecture*}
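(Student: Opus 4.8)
The plan is to transplant the Hopkins--Ravenel proof that $L_n$ is smashing into the $\tau$-deformed, $2$-complete cellular $\c$-motivic category, attacking the statement one ``fiber along $\tau$'' at a time. Recall that on a compactly generated stable $\infty$-category a Bousfield localization $L$ is smashing if and only if it preserves arbitrary coproducts, equivalently if and only if the full subcategory of $L$-acyclics is a localizing $\otimes$-ideal generated by a set of compact objects. So the first step is to exhibit, for each $(m,n)$, a set of \emph{finite} cellular motivic spectra whose localizing tensor ideal is precisely the subcategory of $L_{mn}$-acyclics; the natural candidates are the motivic generalized Moore spectra of ``type $> (m,n)$'' built from Krause's motivic Morava $K$-theories, and identifying their localizing ideal with the $L_{mn}$-acyclics is exactly the kind of statement a motivic Thick Subcategory and Nilpotence Theorem is designed to deliver.

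The second step is to prove the statement separately after inverting $\tau$ and modulo $\tau$, and then glue. After inverting $\tau$ the cellular $2$-complete $\c$-motivic category is monoidally equivalent to the classical $2$-complete stable category, Krause's motivic Morava $K$-theories collapse to the classical $K(n)$, and (after re-indexing the bigraded chromatic filtration) $L_{mn}[\tau^{-1}]$ becomes the classical finite-height localization $L_n = L_{E(n)}$, which is smashing by Hopkins--Ravenel; monoidality of the equivalence transports the conclusion. Modulo $\tau$ the category is algebraic --- a derived/comodule category over a motivic form of $BP_*BP$ --- where the corresponding localizations are finite localizations at homogeneous elements of a polynomial ring, hence smashing by the standard algebraic fact that a finite localization of a compactly generated category preserves coproducts. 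One then assembles these two cases using the arithmetic $\tau$-fracture square $X \simeq X[\tau^{-1}] \times_{(X[\tau^{-1}])^{\wedge}_{\tau}} X^{\wedge}_{\tau}$: because the two ``fiber'' localizations are smashing and interchange correctly with $\tau$-inversion and $\tau$-completion, the pullback localization is smashing as well. These two special cases are, in effect, the evidence we offer for the conjecture.

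The main obstacle is the first step, and it is of a genuinely different character than the two fiber computations. The classical proof that $L_n$ is smashing rests on the full strength of the Nilpotence Theorem and on the detailed structure of $BP_*BP$ --- precisely what is needed to know that $K(m,n)$-acyclicity of a finite spectrum is detected by a single generalized Moore complex, so that the acyclics are compactly generated --- and the motivic analogues of these structural results are not available in the generality required: a complete classification of thick subcategories of finite $2$-complete cellular $\c$-motivic spectra is not known, and the behavior of motivic Morava $K$-theory under smashing with finite complexes is only partially understood. A secondary difficulty is bookkeeping: to run the $\tau$-fracture argument one must also know a motivic chromatic convergence statement ensuring that the family $\{L_{mn}\}$ is exhausted by its two fibers along $\tau$, with no exotic motivic localizations escaping the square. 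For these reasons we expect the mod-$\tau$ and $\tau$-inverted statements to be provable by the argument above, while the integral statement, which depends on motivic nilpotence technology, remains out of reach for now.
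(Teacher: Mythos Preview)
The statement is a \emph{conjecture} in the paper, not a theorem: the paper offers no proof of the Motivic Smashing Conjecture and does not claim one. What the paper does is formulate the conjecture, observe that it (together with Corollary~\ref{Cor:Tel}) would make the three formulations of the motivic Telescope Conjecture equivalent, and then provide partial evidence in the cases $(m,n)=(1,0)$ and $(m,n)=(1,-1)$ by computing $L^f_{mn}S^{0,0}$ via the $kq$-resolution and relating it to $L_{cKW}$ or $L_{KQ}$. Even there the identification with $L_{mn}$ is left conditional on further Bousfield-class conjectures (Conjectures~\ref{Conj:cKWBous} and~\ref{Conj:Bous}), and the smashing property itself is never established. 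So there is no ``paper's own proof'' to compare your proposal against.

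Your proposal is thus a strategy for an open problem, and you are right to flag it as such. As a strategy it is reasonable, and you correctly identify the main obstruction (a motivic nilpotence and thick subcategory theorem). A few further issues are worth naming. First, your claim that $L_{mn}[\tau^{-1}]$ becomes the classical $L_n$ is imprecise: the exotic $K(\beta_{ij})$ for $j\geq 0$ are $C\tau$-modules and vanish upon $\tau$-inversion, so the $\tau$-inverted localization sees only the classical $K(i)$'s appearing in $K(\leq\beta_{mn})$, and which classical $L_n$ this yields depends on $(m,n)$ in a way that must be made explicit. Second, your mod-$\tau$ argument is circular as written: asserting the localization is a \emph{finite} localization is exactly what needs proving. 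The relevant input here is the paper's own remark that telescope conjectures always hold in derived categories of commutative Hopf algebroids (via Hovey--Palmieri--Strickland), combined with the Gheorghe--Wang--Xu equivalence of $C\tau$-modules with such a category; this gives the mod-$\tau$ smashing statement more directly than your sketch suggests. Third, the gluing step via a $\tau$-fracture square requires that $L_{mn}$ commute with both $\tau$-inversion and $\tau$-completion, which is itself nonobvious and likely presupposes smashing.
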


For the pairs $(m,n) = (1,0)$ and $(m,n) = (1,-1)$, we resolve these conjectures up to a question about motivic Bousfield classes. 

\subsection{Analysis of the $kq$-resolution}

We now summarize the analysis of the $kq$-resolution which is used to prove Theorems \ref{ThmA}-\ref{ThmC}. 

Theorem \ref{ThmA} follows from considering the $kq$-based Adams resolution and the associated $kq$-based Adams spectral sequence. The description of the $E_1$-term is immediate from the construction, convergence of the uncompleted spectral sequence \eqref{Eqn:ThmA} follows from a connectivity argument using \cite{RSO19}, and convergence of the completed spectral sequence \eqref{Eqn:ThmA2} follows from \cite{Man18}. Theorem \ref{ThmB} is obtained from Theorem \ref{ThmA} by careful bookkeeping and connectivity results from \cite{RSO19}. 

Theorems \ref{ThmD}-\ref{ThmC} are proven using the following main theorem about \eqref{Eqn:ThmA2}. Again, fix an algebraically closed field $F = \overline{F}$ of characteristic 0. 

\begin{thmx}[Main Theorem, Theorem \ref{Thm:kqLines}]\label{ThmF}
The completed $kq$-resolution \eqref{Eqn:ThmA2} satisfies the following:
\begin{enumerate}
\item The $0$-line is given by 
$$E^{0,*,*}_\infty \cong \m_2[h_0,h_1,v_1^4]/(h_0h_1, h_0v^4_1, \tau h_1^3)$$
where $|h_0| = (0,0)$, $|h_1| = (1,1)$, and $|v^4_1| = (8,4)$. 
\item The $1$-line is given by 
$$E^{1,*,*}_\infty \cong \bigoplus_{k \geq 0} \Sigma^{4k} \z/2^{\rho(k)}[\tau] \oplus \m_2[h_1,v^4_1]/(h_1^3 \tau)\{y\},$$
where $|y| = (9,5)$ and $\rho(k)$ is the $2$-adic valuation of $8k$. All of these classes are $v_1$-periodic. 
\item $E^{n,t,u}_\infty = 0$ whenever $6n > t+7$.
\end{enumerate}
\end{thmx}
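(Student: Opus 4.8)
The plan is to treat the three parts as three largely independent problems sharing a common structural input, running parallel to Mahowald's and Lellmann--Mahowald's analysis of the classical $bo$-Adams spectral sequence, with the motivic weight carried along as an extra grading and $\tau$ as the only genuinely new feature. The common input is a description of the $E_1$-page modulo $v_1$-torsion. This combines the computation of the ring $\pi_{**}(kq \wedge kq)$ modulo $v_1$-torsion with a motivic analog of the Milgram--Mahowald splitting of $kq \wedge \overline{kq}^{\wedge n}$ into wedge summands of the form $kq \wedge (\text{suspensions of motivic integral Brown--Gitler spectra})$ — at least on $\mathbb{F}_2$-homology after $2$-completion, which is all that is needed — together with the homotopy of $kq$, of $ksp$, and of the Brown--Gitler summands. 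This yields a complete description of $E_1^{n,*,*}$ modulo $v_1$-torsion for all $n$.

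For parts (1) and (2), the above already identifies $E_1^{0,*,*} \cong \pi_{**}(kq)$ and $E_1^{1,*,*}\cong \pi_{**}(\overline{kq})$ in the relevant range, where the homotopy of $\mathbb{C}$-motivic $kq$ can be recovered from its motivic Adams spectral sequence (or from \cite{ARO17}\cite{RSO19}). The next step is to compute the differential $d_1 \colon E_1^{0,*,*}\to E_1^{1,*,*}$: on the $v_1$-torsion-free classes this is governed by the motivic Adams operation $\psi^3$ and matches the map in \eqref{Eqn:jc}/\eqref{Eqn:ThmE}, so its kernel on the $0$-line is exactly $\m_2[h_0,h_1,v_1^4]/(h_0h_1,h_0v_1^4,\tau h_1^3)$ and its cokernel on the $1$-line is the stated sum of the $\z/2^{\rho(k)}[\tau]$ towers with the piece $\m_2[h_1,v_1^4]/(h_1^3\tau)\{y\}$. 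Since the differentials only increase resolution degree, $E_\infty^{0,*,*}$ is the intersection of the kernels of all $d_r$ off the $0$-line, so to conclude I would check, using sparseness together with the vanishing line of part (3), that the classes surviving $d_1$ on the $0$- and $1$-lines support no further differentials — equivalently, that they are detected by the known motivic image-of-$J$ elements in $\pi_{**}(S^{0,0})$.

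The main obstacle is part (3), the vanishing line of slope $1/5$. Here I would prove the sharper statement that $E_1^{n,t,u}$ itself vanishes for $6n > t+c$, with $c$ optimized afterward to $7$ by tracking the finitely many low-degree classes through the spectral sequence, by induction on the number $n$ of $\overline{kq}$-smash factors. The point — exactly as in Mahowald's original $bo$-resolution argument — is that $\pi_{**}(kq\wedge \overline{kq})$ modulo $v_1$-torsion is supported far enough above the bottom cell (this is the content of the Brown--Gitler bookkeeping, and is strictly stronger than what naive connectivity of $\overline{kq}$ gives) that iterating the splitting produces a vanishing region of slope $1/5$ in the stem rather than the steeper slope coming from connectivity alone. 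The delicate point will be verifying that the motivic Brown--Gitler summands have the connectivity and top-cell behavior the induction requires, and that the weight grading introduces no classes violating the bound; once this is in place the $E_1$ vanishing line, hence the $E_\infty$ statement of (3), follows, and together with the identification of the surviving $0$- and $1$-line classes as $v_1$-periodic elements this completes the proof.
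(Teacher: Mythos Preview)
Your plan for parts (1) and (2) is in the right neighborhood but differs from the paper in one important way: the paper does not compute the $d_1$-differential via a motivic Adams operation $\psi^3$ on $kq$. No such operation is constructed here (the fiber sequence \eqref{Eqn:ThmE} is built only later, by an entirely different mechanism). Instead, the paper imports Mahowald's classical $d_1$ via Betti realization: since the relevant sources and targets are $\tau$-free, the classical differential $d_1 : \pi_{4k}(bo) \to \pi_{4k}(bo\wedge\overline{bo})$ by $2^{\rho(k)}$ forces the same motivic $d_1$, and a weight argument (Corollary \ref{Cor:Half}) pins down that no $\tau$-power intervenes. The $\tau$-torsion $h_1$-towers are then handled separately using the $\pi_{**}(kq)$-module structure. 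Your sketch does not address the $\tau$-torsion classes at all.

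The genuine gap is in part (3). You propose to prove that $E_1^{n,t,u}$ already vanishes for $6n > t + c$, by induction on $n$ using Brown--Gitler connectivity. This is false, and it is also not what Mahowald does. The Brown--Gitler splitting gives no better connectivity than the naive bound: the bottom summand $\Sigma^{4n,2n}H\underline{\z}_1^{\otimes n}$ in $kq\wedge\overline{kq}^{\wedge n}$ contributes a nonzero $h_0$-tower at stem exactly $4n$, so $E_1$ only vanishes above slope $1/3$ (Lemma \ref{Lem:OneThird}). The region between slopes $1/3$ and $1/5$ is nonzero on $E_1$ and must be killed by $d_1$; the vanishing line is an $E_2$-statement. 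Mahowald's argument, which the paper follows and sharpens, is not an induction on $n$ at all: one first proves the slope-$1/5$ line on $E_2$ for the finite complex $A_1$ (where the $kq$-resolution coincides with the $H\f_2$-Adams resolution and the motivic May spectral sequence gives the bound), and then bootstraps via Bockstein-type long exact sequences on $E_2$ through $Y=S^{0,0}/(2,\eta)$, then $M=S^{0,0}/2$, then $S^{0,0}$. These long exact sequences only exist in the region above the $1/5$ line, and one uses the naive $1/3$ vanishing on $E_1$ to terminate each induction. The intermediate passage through $Y$ is new motivically and is exactly what controls the $\eta$-torsion that your proposal does not mention.
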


To prove Theorem \ref{ThmF}, we begin with the $E_1$-page of \eqref{Eqn:ThmA2}. The $0$-line over $\Spec(\c)$, $\pi_{**}^\c(kq)$,  was calculated by Isaksen-Shkembi \cite{IS11}. To obtain analogous results over algebraically closed fields, we extend results of Wilson-{\O}stv{\ae}r \cite{WO17} to show $\pi_{**}^F(kq) \cong \pi_{**}^\c(kq)$. The $1$-line $E_1^{1,*,*} = \pi_{**}^F(kq \wedge \overline{kq})$ is (the augmentation ideal of) the ring of $kq$-cooperations. These were studied after rationalization over fields of characteristic not two by Ananyevskiy \cite{Ana17} and $\eta$-locally by R{\"o}ndigs \cite{Ron16}. Our analysis yields the following theorem over algebraically closed fields. 

\begin{thmx}[$1$-line, Theorem \ref{Cor:AmodmodA(1)Decomp} and Theorem \ref{Lem:ExtOfHZn}]\label{ThmG}
There is an isomorphism of bigraded groups
$$\pi_{**}(kq \wedge kq) \cong \bigoplus_{i \geq 0} \Sigma^{4i,2i} \Ext^{***}_{A(1)^\vee}(\m_2,H\underline{\z}_i)$$
where $H\underline{\z}_i$ is the $i$-th motivic integral Brown-Gitler module. The right-hand side may be described explicitly, modulo $v_1$-torsion, in terms of suspensions of $kq$ and Adams covers of $bo$. 
\end{thmx}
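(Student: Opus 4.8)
The plan is to carry out the motivic analog of Mahowald's computation of the $bo$-cooperations algebra \cite{Mah81}, keeping careful track of weights and of the motivic parameter $\tau$. First I would feed $kq \wedge kq$ into the motivic $H\m_2$-based Adams spectral sequence. Its $E_1$-input is the mod $2$ homology, and since $(H\m_2)_{**}(kq) \cong A^\vee /\!/ A(1)^\vee$ as a comodule over the motivic dual Steenrod algebra $A^\vee$ (this is the fact underlying the Isaksen--Shkembi computation of $\pi_{**}(kq)$ \cite{IS11}), the motivic K\"unneth theorem identifies $(H\m_2)_{**}(kq \wedge kq)$ with $(A^\vee /\!/ A(1)^\vee) \otimes_{\m_2} (H\m_2)_{**}(kq)$ as an $A^\vee$-comodule, to which the standard shearing (untwisting) and change-of-rings argument applies --- this goes through verbatim because the relevant motivic Hopf algebroids are flat over $\m_2 = \f_2[\tau]$. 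This yields
\[
E_2^{s,(t,u)}(kq \wedge kq) \;\cong\; \Ext_{A(1)^\vee}^{s,(t,u)}\bigl(\m_2,\ A^\vee /\!/ A(1)^\vee\bigr),
\]
and cellularity of $kq$ guarantees convergence to $\pi_{**}(kq\wedge kq)$.

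The heart of the argument is the motivic \emph{integral Brown--Gitler splitting}. I would define the $i$-th motivic integral Brown--Gitler comodule $H\underline{\z}_i$ as the $A(1)^\vee$-sub-comodule of $A^\vee /\!/ A(1)^\vee$ cut out by the appropriate stage of the weight filtration (the motivic replacement for the length filtration used classically), normalized to have bottom class in bidegree $(0,0)$, and then prove
\[
A^\vee /\!/ A(1)^\vee \;\cong\; \bigoplus_{i \geq 0} \Sigma^{4i,2i}\, H\underline{\z}_i
\]
as $A(1)^\vee$-comodules. There are two routes: (a) directly, by writing $A^\vee /\!/ A(1)^\vee$ in the motivic Milnor basis (in the generators $\tau_j$ and $\xi_j$ with their weights), exhibiting the ``doubling'' comodule maps $H\underline{\z}_i \to H\underline{\z}_{2i}$, and checking on $\m_2$-module bases that the induced map out of the direct sum is an isomorphism; or (b) by base change --- over $\c$ the motivic $A(1)^\vee$ and $A^\vee /\!/ A(1)^\vee$ are obtained from their classical counterparts by extending scalars along $\f_2 \to \f_2[\tau]$ \cite{DI05}\cite{HKO11}, so the classical splitting \cite{Mah81} extends by flatness, and a general algebraically closed field $F$ of characteristic $0$ is reduced to $\c$ by the same rigidity input already used for $\pi_{**}^F(kq)$ \cite{WO17}. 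Either way, combining with the previous paragraph gives
\[
E_2(kq \wedge kq) \;\cong\; \bigoplus_{i \geq 0} \Sigma^{4i,2i}\, \Ext_{A(1)^\vee}\bigl(\m_2,\ H\underline{\z}_i\bigr).
\]

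Next I would argue that this motivic Adams spectral sequence collapses at $E_2$ with no hidden $2$- or $\tau$-extensions, so that the displayed $E_2$-page \emph{is} $\pi_{**}(kq \wedge kq)$ as a bigraded group. The mechanism is that --- by the identification in the next step --- every summand $\Ext_{A(1)^\vee}(\m_2, H\underline{\z}_i)$ is realized as the homotopy of an honest motivic spectrum (a suspension of $kq$ or of an Adams cover), and the comparison maps implementing the splitting are maps of spectra, so all classes are permanent cycles; the remaining additive extension problems match those resolved classically for $bo \wedge bo$ \cite{Mah81}, to which one compares via Betti realization and $\tau$-inversion. This establishes the first displayed isomorphism. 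For the second claim I would identify each $H\underline{\z}_i$, as an $A(1)^\vee$-comodule and up to $A(1)^\vee$-cofree summands (which contribute only $v_1$-torsion to $\Ext$), with a suspension of $\m_2$ --- whose $\Ext$ is $\pi_{**}(kq) = \Ext_{A(1)^\vee}(\m_2,\m_2)$ \cite{IS11} --- or with a small ``Adams-cover'' comodule whose $\Ext$ is an $h_0$-Bockstein variant identified with $\pi_{**}$ of an Adams cover of $bo$, the case being dictated by the $2$-adic expansion of $i$; modulo $v_1$-torsion this expresses the right-hand side as the asserted sum of suspensions of $\pi_{**}(kq)$ and of homotopy of Adams covers of $bo$.

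The hard part is the Brown--Gitler splitting of the second paragraph. Classically it is a delicate statement about the $A(1)$-module structure of $H_*bo$, and motivically one must first set up the theory of motivic integral Brown--Gitler comodules --- in particular pin down the weight grading, under which the bottom class of the $i$-th summand must acquire weight exactly $2i$ --- and then control the $\tau$-twisting in the motivic Steenrod coaction throughout the splitting argument (or establish the base-change comparison cleanly enough for fields beyond $\c$). Once that is in place, the setup of the spectral sequence, its collapse, and the explicit identification of the summands are all close formal or comparison-theoretic analogs of the classical arguments.
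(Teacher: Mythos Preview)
Your overall architecture matches the paper's, but two steps do not go through as written.

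First, route (b) is not available: the motivic dual Steenrod algebra and its quotients are \emph{not} obtained from their classical counterparts by extension of scalars along $\f_2 \to \f_2[\tau]$. The relations $\bar{\tau}_i^2 = \tau\bar{\xi}_{i+1}$ genuinely twist the algebra structure, so the classical comodule splitting cannot simply be tensored up. The paper carries out your route (a) directly, and along the way the correct definition of $H\underline{\z}_i$ is as the weight-$\leq 2i$ submodule of $(A/\kern-0.25em/A(0))^\vee$, not of $(A/\kern-0.25em/A(1))^\vee$; the splitting is produced via an explicit ``halving'' algebra map $\varphi: (A/\kern-0.25em/A(1))^\vee \to (A/\kern-0.25em/A(0))^\vee$, $\bar{\xi}_k \mapsto \bar{\xi}_{k-1}$, $\bar{\tau}_k \mapsto \bar{\tau}_{k-1}$, which carries the weight-$4k$ piece of the source isomorphically onto $H\underline{\z}_k$.

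Second, and more seriously, your collapse argument and your explicit identification both rest on the assertion that each $\Ext_{A(1)^\vee}(\m_2, H\underline{\z}_i)$ is realized as the homotopy of a motivic Adams cover of $kq$ or $ksp$, and this is false. Because $h_1 \in \Ext^{1,2,1}_{A(1)^\vee}(\m_2,\m_2)$ is non-nilpotent, any Adams cover of $kq$ carries a residual infinite $h_1$-tower; but direct computation (already for $H\underline{\z}_1^{\otimes 2}$, via the algebraic Atiyah--Hirzebruch spectral sequence) shows that $\Ext_{A(1)^\vee}(\m_2, H\underline{\z}_i)$ has no such tower for $i \geq 2$. There is also no spectrum-level splitting to invoke, since motivic integral Brown--Gitler spectra are not known to exist. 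The paper's substitute is to prove short exact sequences of $A(1)^\vee$-comodules relating $H\underline{\z}_{2j}$ and $H\underline{\z}_{2j+1}$ to $H\underline{\z}_j$ and to $\underline{kq}_{j-1} \otimes (A(1)/\kern-0.25em/A(0))^\vee$, and then to run an induction that describes $\Ext_{A(1)^\vee}(\m_2, H\underline{\z}_n)$ modulo $v_1$-torsion as a hybrid object $Z_{2n-\alpha(n)}$ built from several $\m_2[h_0]$-summands together with one suspended copy of $\pi_{**}(kq)$ or $\pi_{**}(ksp)$. Collapse of the Adams spectral sequence is argued independently: Betti realization forbids differentials among $\tau$-free classes, and the only $\tau$-torsion classes are in the $h_1$-towers, among which a motivic-weight count excludes any differential.
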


We use this to compute the higher lines of \eqref{Eqn:ThmA2} in Section \ref{Sec:E2}. Theorem \ref{ThmG} is a motivic analog of classical results of Mahowald \cite{Mah81} and Behrens-Ormsby-Stapleton-Stojanoska \cite{BOSS19}, but it is far from a corollary. In fact, we run into several surprising technical hurdles while proving Theorem \ref{ThmG} which do not appear in \cite{BOSS19}\cite{Mah81}. 

In particular, the reader familiar with Mahowald's work \cite{Mah81} might suspect that Theorem \ref{ThmG} can be proven using similar techniques. However, Mahowald's results rely heavily on the existence of integral Brown-Gitler spectra, and these currently have no motivic analog.\footnote{The construction of integral motivic Brown-Gitler spectra is the subject of work in progress by the authors.} Thus we turn to the algebraic approach of Behrens-Ormsby-Stapleton-Stojanoska \cite{BOSS19}, which we recall proceeds in three steps:
\begin{enumerate}
\item Define integral Brown-Gitler modules $H\underline{\z}_i^{cl}$ and prove an isomorphism of $A(1)^\vee$-comodules $(A/\kern-0.25em/A(1))^\vee \cong \bigoplus_{i \geq 0} \Sigma^{4i} H\underline{\z}_i^{cl}$. This proves the first part of Theorem \ref{ThmG} via a change of rings isomorphism. 
\item Produce short exact sequences of $A(1)^\vee$-comodules relating $H\underline{\z}_{2i}^{cl}$ and $H\underline{\z}_{2i+1}^{cl}$ to $H\underline{\z}_i^{cl}$. This yields an inductive procedure for studying the $\Ext$-groups of the right-hand side. 
\item Make the base-case $\Ext$ calculations and apply the inductive procedure to complete the proof of the theorem.
\end{enumerate}
Our proof of Theorem \ref{ThmG} generally follows this outline, but each step is more complicated in the motivic setting.

We define integral motivic Brown-Gitler modules $H\underline{\z}_i$ by assigning a weight to each element in the $2$-primary dual motivic Steenrod algebra \cite{HKO17}\cite{Voe03}. Despite the more complicated presentation of the motivic Steenrod algebra than the classical Steenrod algebra \cite{Mil58}, we obtain an isomorphism of $A(1)^\vee$-comodules in Theorem \ref{Cor:AmodmodA(1)Decomp} analogous to (1). Navigating similar technical difficulties, we also produce short exact sequences in Lemma \ref{Lem:SES} relating various Brown-Gitler modules (2).\footnote{Although we work over an algebraically closed field in the proofs of Theorem \ref{Cor:AmodmodA(1)Decomp} and Lemma \ref{Lem:SES}, the results can be obtained over arbitrary base fields where the motivic Steenrod algebra is known.}

The most difficult aspect of our algebraic analysis is the motivic analog of step (3). In the classical setting, $\Ext_{A(1)^\vee}^{**}(\f_2,H\underline{\z}_1^{cl}) \cong \Ext_{A(1)^\vee}^{**}(\f_2, H_*bsp)$, and more generally, the $\Ext$ groups of $H\underline{\z}_i^{cl}$ may be expressed in terms of the $\Ext$ groups of the homology of Adams covers of $bo$ and $bsp$. This drastically simplifies the inductive procedure since smash products of Adams covers are again Adams covers. 

Motivically, we find $\Ext_{A(1)^\vee}^{***}(\m_2,H\underline{\z}_1) \cong \Ext_{A(1)^\vee}^{***}(\m_2,ksp)$, but we cannot generally express the $\Ext$ groups of $H\underline{\z}_i$ using Adams covers of $kq$ and $ksp$. To see why, recall that $\eta \in \pi_{1,1}(kq)$ is detected by a non-nilpotent element $h_1 \in \Ext_{A(1)^\vee}^{1,2,1}(\m_2,\m_2)$. The infinite $h_1$-towers in $\Ext_{A(1)^\vee}^{***}(\m_2,\m_2)$ leave residual infinite $h_1$-towers in the $\Ext$ groups of Adams covers of $kq$ (and $ksp$), but our explicit calculations show that no such towers exist in $\Ext_{A(1)^\vee}^{***}(\m_2,H\underline{\z}_i)$. This leads to a more computationally intensive inductive procedure, but nevertheless we obtain explicit descriptions of the relevant $\Ext$-groups. 

Starting with the description of the $E_1$-page given by Theorem \ref{ThmG}, we obtain Theorem \ref{ThmF} using the following two results. 

\begin{thmx}[Differentials, Theorems \ref{Prop:kqDifferentials} and \ref{Prop:kqDiffl2}]\label{ThmH}
The completed $kq$-resolution \eqref{Eqn:ThmA2} satisfies the following:
\begin{enumerate}
\item Suppose that a $\tau$-torsion free class $x \in E_1^{n,t,u}$ with $n \geq 2$ is a cycle under $d_1$ and is represented in $\pi_{t,u}(kq \wedge \overline{kq}^{\wedge n})$ by an element of $H\f_2$-Adams filtration $f \geq 2$. Then $x$ is a boundary under $d_1$. 

\item Suppose that a $\tau$-torsion class $x \in E_1^{n,t,u}$ with $n \geq 2$ is a cycle under $d_1$ and is represented in $\pi_{t,u}(kq \wedge \overline{kq}^{\wedge n})$ by an element of $H\f_2$-Adams filtration $f \geq 2$. Then $x$ is a boundary under $d_1$.
\item Let $k \geq 1$. The $d_1$-differential $d_1 : \kqE^{0,4k,\ell}_1 \cong \z \to \z \cong \kqE^{1,4k,\ell}_1$ is given by multiplication by $2^{\rho(k)}$ for any $\ell \leq 2k$, where $\rho(k)$ is the $2$-adic valuation of $8k$. 
\end{enumerate}
\end{thmx}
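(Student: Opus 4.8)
The plan is to establish the three statements separately, reducing each computation of a $d_1$-differential either to classical $bo$-resolution input or to an exactness property of the algebraic model of the $kq$-resolution.

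\emph{The $d_1$ on the $0$-line, part (3).} First I would identify the differential $d_1\colon \kqE^{0,4k,2k}_1 \to \kqE^{1,4k,2k}_1$ with the corresponding differential in the classical $bo$-resolution of the $2$-complete sphere. Over $\Spec(\c)$, Betti realization takes the $2$-complete $kq$-resolution to the $bo$-resolution (sending $kq$ to $ko$ and $\overline{kq}$ to $\overline{bo}$), and --- more cleanly --- the $\tau$-inversion equivalence between $2$-complete cellular $\c$-motivic spectra and classical $2$-complete spectra realizes this comparison as an identification after inverting $\tau$. By Theorem~\ref{ThmG} and the Isaksen--Shkembi computation, $\kqE^{0,4k,\ell}_1$ and $\kqE^{1,4k,\ell}_1$ are infinite cyclic and $\tau$-torsion free for all $\ell \le 2k$, so $\tau$-inversion is injective on them and the motivic $d_1$ in weight $2k$ is determined by the classical one. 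Classically, the component of $d_1\colon \pi_{4k}(bo) \to \pi_{4k}(bo\wedge\overline{bo})$ mapping to the summand $\Sigma^{4}bsp \simeq \Sigma^{4}(bo\wedge H\underline{\z}_1)$ is, up to an odd unit, $\psi^3-1$, i.e.\ multiplication by $3^{2k}-1$; since $\nu_2(3^{2k}-1) = 3 + \nu_2(k) = \nu_2(8k) = \rho(k)$ (lifting the exponent), after $2$-completion it is multiplication by $2^{\rho(k)}$ --- this is exactly how the $bo$-resolution sees the image of $J$. To descend to weight $\ell < 2k$, I would use that $d_1$ is $\pi_{**}(S^{0,0})$-linear, hence commutes with multiplication by $\tau\in\pi_{0,-1}(S^{0,0})$, and that the generators in weight $\ell$ are $\tau^{2k-\ell}$ times those in weight $2k$; thus $d_1$ is multiplication by $2^{\rho(k)}$ for every $\ell\le 2k$.

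\emph{Acyclicity above Adams filtration one, parts (1) and (2).} Here I would filter each $E_1$-term $\pi_{**}(kq\wedge\overline{kq}^{\wedge n})$ by $H\f_2$-Adams filtration. The $d_1$-differential does not decrease this filtration, and the associated graded is the $E_1$-page of an \emph{algebraic $kq$-resolution}: its $n$-th term is $\Ext_{A(1)^\vee}^{***}(\m_2, M^{\otimes n})$, where $M = H_{**}(\overline{kq}) = \coker(\m_2\to (A/\kern-0.25em/A(1))^\vee)$ and the change of rings has absorbed the $H_{**}(kq)$-factor, and it converges to the $E_2$-page of the motivic Adams spectral sequence for $S^{0,0}$. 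It suffices to show: (i) the algebraic $kq$-resolution carries no homology on lines $n\ge2$ in cohomological (equivalently, $H\f_2$-Adams) degree $\ge2$; and (ii) this vanishing lifts back through the filtration to the topological $d_1$-complex. For (i) I would iterate Theorem~\ref{ThmG}: each $\pi_{**}(kq\wedge\overline{kq}^{\wedge n})$ is a sum of suspensions of $\Ext_{A(1)^\vee}(\m_2, H\underline{\z}_{i_1}\otimes\cdots\otimes H\underline{\z}_{i_n})$, and the short exact sequences of Lemma~\ref{Lem:SES} give an inductive handle on these; the key is that all $v_1$-periodic content of these $\Ext$-groups sits in $H\f_2$-Adams filtrations $0$ and $1$, so that above filtration $1$ the algebraic complex splits off reduced-bar-type pieces which are exact for $n\ge 2$. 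The $\tau$-torsion free and $\tau$-torsion summands are treated separately, since the motivic Brown--Gitler $\Ext$-groups split along $\tau$-torsion, producing parts (1) and (2) respectively. For (ii), the filtration on $E_1^{n,t,u}$ is finite once $n\ge2$, so a routine inductive diagram chase upgrades ``$d_1$-cycle of filtration $\ge2$'' to ``$d_1$-boundary''.

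\emph{Expected main obstacle.} The crux is statement (i), specifically that the motivic integral Brown--Gitler modules contribute no $v_1$-periodic classes above $H\f_2$-Adams filtration one. Classically this is nearly automatic: the groups $\Ext_{A(1)}(\f_2, H_*bo\langle j\rangle)$ and their tensor products are again of ``Adams cover'' type, so the algebraic $bo$-resolution visibly collapses onto exact complexes. As emphasized after Theorem~\ref{ThmG}, the motivic analog fails because $\Ext_{A(1)^\vee}(\m_2, H\underline{\z}_i)$ is \emph{not} the $\Ext$ of an Adams cover of $kq$ --- the infinite $h_1$-towers present in Adams covers of $kq$ are absent --- so one cannot import the classical collapse. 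Instead one must run the inductive procedure of Lemma~\ref{Lem:SES} by hand, carefully tracking the $v_1$-torsion (equivalently $\tau$-torsion) classes that obstruct a uniform Adams-cover description and verifying directly that, on lines $n\ge2$ and above filtration one, they assemble into exact complexes. This is where essentially all of the computational labor resides (and is carried out in Section~\ref{Sec:E2}); by contrast, part (3) is a short reduction to the image-of-$J$ calculation once the $\tau$-torsion freeness from Theorem~\ref{ThmG} is in hand.
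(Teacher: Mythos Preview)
Your treatment of part (3) matches the paper's: reduce to the classical $bo$-resolution via Betti realization (equivalently $\tau$-inversion), invoke Mahowald's identification of the relevant $d_1$ with $\psi^3-1$, and use $\tau$-linearity to propagate to lower weights. This is exactly what the paper does, in one line.

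For parts (1) and (2), however, your route diverges substantially from the paper's, and your sketch has a gap. You propose to filter by $H\f_2$-Adams filtration and prove acyclicity of the associated algebraic $kq$-resolution in filtration $\geq 2$ directly, using the Brown--Gitler inductive machinery. But your key claim---that ``all $v_1$-periodic content of these $\Ext$-groups sits in $H\f_2$-Adams filtrations $0$ and $1$''---is false: the $\m_2[h_0]$-towers and the $\beta$-periodic portion of $\Ext_{A(1)^\vee}(\m_2)$ inside each $Z_i$ live in arbitrarily high Adams filtration. The algebraic complex above filtration $1$ is not a sum of obviously-exact bar pieces; its exactness is precisely the content of Mahowald's Bounded Torsion Theorem, which classically requires the comparison with the $X$-resolution (see Remark~\ref{Rem:FDiff}). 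Your inductive scheme via Lemma~\ref{Lem:SES} gives the \emph{groups} but not, by itself, the exactness of the $d_1$-complex.

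The paper's approach sidesteps this entirely. For (1), one simply imports Mahowald's classical differentials as a black box via Betti realization: if $x$ is $\tau$-free and a $d_1$-cycle, then $x(\c)$ is a $d_1$-boundary classically, so $d_1(y)=\tau^{u'-u}x$ for some $\tau$-free $y$; the only work is the weight bookkeeping of Corollary~\ref{Cor:Half}, which forces $u'=u$. For (2), the paper observes that every $\tau$-torsion class in the $E_1$-term is an $\eta^j$-multiple with $j\geq 3$, and the $\pi_{**}(kq)$-module structure of the $kq$-resolution then propagates the fate of $\eta$ and $\eta^2$ (settled in part (1)) to these. This is dramatically shorter than the algebraic-exactness route you outline, and avoids reproving Mahowald's theorem in any form.
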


\begin{thmx}[Vanishing, Theorem \ref{Thm:kqVanishing}]\label{ThmI}
The $E_2$-page of the completed $kq$-resolution \eqref{Eqn:ThmA2} has a vanishing line of slope $1/5$. More precisely, one has $E_2^{n,t,*}(S^0) = 0$ if $t+7<6n$. 
\end{thmx}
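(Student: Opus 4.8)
The plan is to run the motivic counterpart of the classical argument for the slope-$1/5$ vanishing line of the $bo$-resolution, now with the explicit $E_1$-page supplied by Theorem~\ref{ThmG}, the short exact sequences of Lemma~\ref{Lem:SES}, and the $d_1$-formulas of Theorem~\ref{ThmH} as the basic inputs. The first step is a connectivity reduction. The $A(1)^\vee$-comodule decomposition $H_{**}(kq)\cong\bigoplus_{i\ge 0}\Sigma^{4i,2i}H\underline{\z}_i$ (with $H\underline{\z}_0=\m_2$) underlying Theorem~\ref{ThmG} shows that $\overline{kq}$, and hence $kq\wedge\overline{kq}^{\wedge n}$, has mod $2$ homology concentrated in internal degrees $\ge 4n$, so $E_1^{n,t,u}=0$ for $t<4n$. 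Since $6n-7\le 4n$ exactly when $n\le 3$, these cases are vacuous, so assume $n\ge 4$. For such $n$, iterating the K\"unneth and comodule decompositions gives an isomorphism of $A(1)^\vee$-comodules
\[
H_{**}(\overline{kq}^{\wedge n})\;\cong\;\bigoplus_{j_1,\dots,j_n\ge 1}\Sigma^{4J,2J}\,H\underline{\z}_{j_1}\otimes\cdots\otimes H\underline{\z}_{j_n},\qquad J:=j_1+\cdots+j_n\ge n,
\]
and, by a change-of-rings isomorphism, the $H\f_2$-Adams spectral sequence converging to $E_1^{n,*,*}=\pi_{**}(kq\wedge\overline{kq}^{\wedge n})$ has
\[
E_2\;=\;\bigoplus_{j_1,\dots,j_n\ge 1}\Sigma^{4J,2J}\,\Ext^{***}_{A(1)^\vee}\!\left(\m_2,\,H\underline{\z}_{j_1}\otimes\cdots\otimes H\underline{\z}_{j_n}\right).
\]
It therefore suffices to show that the $d_1$-homology of $E_1^{n,*,*}$ vanishes in the band $4n\le t<6n-7$.

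In that band I would stratify by $H\f_2$-Adams filtration. Parts (1) and (2) of Theorem~\ref{ThmH} show that, since $n\ge 4\ge 2$, any $d_1$-cycle in $E_1^{n,t,u}$ represented by an element of $H\f_2$-Adams filtration $\ge 2$ is a $d_1$-boundary; hence only classes of $H\f_2$-Adams filtration $0$ or $1$ can survive to $E_2^{n,t,u}$. These are governed by the groups $\Ext^{\le 1}_{A(1)^\vee}(\m_2,H\underline{\z}_{j_1}\otimes\cdots\otimes H\underline{\z}_{j_n})$, which I would compute by induction on $n$ and $J$: the base cases come from Theorem~\ref{ThmG} --- notably $\Ext_{A(1)^\vee}(\m_2,H\underline{\z}_1)\cong\Ext_{A(1)^\vee}(\m_2,ksp)$ and the decisive fact, proved there, that no $H\underline{\z}_i$ carries an infinite $h_1$-tower --- and the inductive step uses the long exact $\Ext$-sequences attached to the short exact sequences of Lemma~\ref{Lem:SES}. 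The filtration-$0$ part is spanned by primitives built from bottom classes of the tensor factors, contributing to $E_1^{n,*,*}$ in internal degrees $4J\ge 4n$; the filtration-$1$ part is a similarly explicit, bounded package of extension classes.

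It remains to check that the $d_1$-homology of this filtration-$\le 1$ sub-object vanishes throughout $4n\le t<6n-7$. Here Theorem~\ref{ThmH}(3) --- multiplication by $2^{\rho(\cdot)}$ on the bottom $\z$'s at the passage from the $0$-line to the $1$-line --- is the prototype of the relevant $d_1$-differentials, and its analogues at higher resolution filtration follow from the same $\tau$-Bockstein comparison applied to the structure maps of the $kq$-based Adams resolution. Running this bookkeeping shows that the low-filtration classes in the band either support a nonzero $d_1$ or are hit by one, so that the surviving $d_1$-cokernels --- such as the $\z/2^{\rho(k)}$ on the $1$-line recorded in Theorem~\ref{ThmF}(2) --- all lie in the complementary region $t+7\ge 6n$. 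Together with the filtration-$\ge 2$ vanishing, this gives $E_2^{n,t,u}=0$ whenever $t+7<6n$.

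The hardest part is the computation in the middle two steps: determining $\Ext^{\le 1}_{A(1)^\vee}(\m_2,H\underline{\z}_{j_1}\otimes\cdots\otimes H\underline{\z}_{j_n})$ together with its $d_1$-differentials, and doing so sharply enough to land on slope $1/5$ rather than the slope $1/3$ that connectivity alone yields. Classically this step is organized by integral Brown--Gitler spectra and a uniform description of the relevant $\Ext$-groups via Adams covers of $bo$ and $bsp$; as discussed around Theorem~\ref{ThmG}, neither tool is available motivically --- the infinite $h_1$-towers in $\Ext_{A(1)^\vee}(\m_2,\m_2)$ obstruct the Adams-cover description --- so the induction must be carried out purely algebraically through Lemma~\ref{Lem:SES}, and one has to rule out spurious low-filtration survivors inside the band caused by the motivic $h_1$-phenomena. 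A secondary obstacle is formulating and proving the higher-filtration generalizations of the $d_1$-formula of Theorem~\ref{ThmH}(3) that are needed to identify these differentials.
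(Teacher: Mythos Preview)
Your reduction to $H\f_2$-Adams filtration $\le 1$ via Theorem~\ref{ThmH}(1)(2) is correct and matches the paper's first move exactly. The gap is in everything after that: you propose to compute $\Ext^{\le 1}_{A(1)^\vee}$ of arbitrary tensor products $H\underline{\z}_{j_1}\otimes\cdots\otimes H\underline{\z}_{j_n}$ together with all of the $d_1$-differentials among them, and then check that the $d_1$-homology vanishes in the band. You do not carry this out, and the crucial claim that ``analogues [of Theorem~\ref{ThmH}(3)] at higher resolution filtration follow from the same $\tau$-Bockstein comparison'' is unsupported. Theorem~\ref{ThmH}(3) concerns only the differential from the $0$-line to the $1$-line; no general formula for $d_1$ between filtration-$0$ classes at higher $n$ is available in the paper, and Betti realization alone will not produce one since the classical $d_1$ on these classes is itself not known in closed form.

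The paper avoids this entirely by a bootstrap through finite complexes. One first proves the slope-$1/5$ vanishing for $A_1=S^{0,0}/(2,\eta,v_1)$: since $kq\wedge A_1\simeq H\f_2$, the $kq$-resolution for $A_1$ coincides with its $H\f_2$-Adams spectral sequence, and the vanishing line follows directly from the motivic May spectral sequence for $\Ext_{A^\vee}(A(1)^\vee)$. Then one climbs up the chain $A_1\to Y=S^{0,0}/(2,\eta)\to M=S^{0,0}/2\to S^{0,0}$ via cofiber sequences. At each stage one shows that, in the region above the desired line, the relevant self-map ($v_1$, then $\eta$, then $h_0$) acts trivially or injectively on $E_1$, yielding short exact sequences of $E_1$-complexes and hence long exact sequences on $E_2$; the connecting maps produce isomorphisms $E_2^{n,t,*}\cong E_2^{n+1,t',*}$ that eventually push into the naive slope-$1/3$ vanishing region, forcing everything to zero. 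The passage through $Y$ (inserted between $A_1$ and $M$) is what tames the motivic $\eta$-phenomena you flag as an obstacle; no direct computation of $d_1$ on filtration-$0$ classes is ever needed.
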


These results are motivic analogs of results of Mahowald \cite{Mah81} and Beaudry-Behrens-Bhattacharya-Culver-Xu \cite{BBBCX17}, but as in the proof of Theorem \ref{ThmG}, our approach is different.

We prove Theorem \ref{ThmH} using Betti realization and Mahowald's classical calculation \cite{Mah81} of differentials in the $bo$-resolution. There are two subtleties which arise. First, Betti realization only establishes the existence of differentials between $\tau$-torsion free classes, so the motivic lifts of Mahowald's differentials could produce $\tau$-torsion classes in subsequent pages; we rule out this possibility through a delicate argument using $\tau$-periodicity and motivic weight. Second, Mahowald's differentials do not lift to differentials between $\tau$-torsion classes, such as the differentials between infinite $h_1$-towers; we employ multiplicative structure and the lifted differentials between $\tau$-torsion free classes to obtain these. 

The proof of Theorem \ref{ThmI} is the most computationally difficult part of the paper. Our proof generally follows Mahowald's work \cite{Mah81}\cite{Mah84} where the vanishing line in the $bo$-resolution is established by showing that vanishing for the finite complex $A_1 = S^0/(2,\eta,v_1)$ persists through cofiber sequences relating $A_1$ to $S^0/2$ and then $S^0/2$ to $S^0$. In the motivic setting, we must also control the role $\eta$-periodicity and torsion as we pass between finite complexes, so we first pass from $A_1$ to $S^0/(2,\eta)$ and then from $S^0/(2,\eta)$ to $S^0/2$. This extra step is not only necessary motivically, but also greatly reduces the difficulty of Mahowald's arguments. This more straightforward approach proves vanishing above the line $6n > t+7$, which actually sharpens Mahowald's classical vanishing line $6n > t+14$. 

\subsection{Analysis of the $kq$-resolution over other base fields} \label{subsec: other base fields}
Throughout the paper, we have indicated when we know a result on the $2$-complete $kq$-resolution \eqref{Eqn:ThmA2} holds over more general base fields. If a general result is work in progress, we indicate our expectations, and if we expect a general result to be substantially harder to prove than in the algebraically closed case, we explain why. We summarize these remarks here for the reader's convenience:

\begin{enumerate}
\item Much of the analysis of the $1$-line of the $kq$-resolution in Section \ref{Section:Analysis} holds over general base fields, and preliminary computations show that it holds completely over $\Spec(\r)$. See Remarks \ref{Rem:3.14}, \ref{Rem:3.26}, and \ref{Rem:3.39}. 
\item We expect that determining differentials over general base fields will be substantially harder. We explain why and suggest an approach in Remark \ref{Rem:FDiff}. 
\item We do not know how to formulate a motivic Telescope Conjecture at all heights over general base fields because our formulation relies on work of Krause \cite{Kra18} which only holds over algebraically closed fields. We can, however, formulate the conjecture at low heights in greater generality; see Remark \ref{Rem:TelGen}. 
\item We know that the calculations in this paper hold for any algebraically closed field in characteristic 0. Indeed, if $f: F\to L$ is a morphism of algebraically closed fields, then $f^*: \SH(F)\to \SH(L)$ is a strong symmetric monoidal functor, and it follows from \cite[Lemma B.1]{BH17} that this base change functor commutes with taking very effective covers. This implies that there is a morphism of $kq$-resolutions
 	\[
 	\begin{tikzcd}
 		\pi^F_{t,u}(kq\wedge \overline{kq}^{\wedge s})\arrow[d]\arrow[r,Rightarrow] &\pi^F_{t-s,u}(S^{0,0})\arrow[d]\\
 		\pi^L_{t,u}(kq\wedge \overline{kq}^{\wedge s})\arrow[r,Rightarrow] & \pi^L_{t-s,u}(S^{0,0}).
 	\end{tikzcd}
 	\]
 	Moreover, it is an isomorphism on the $E_1$-page since the map 
 	\[
 	H_F^{**}(kq)\to H_L^{**}(kq)
 	\]
 	is an isomorphism of modules over the motivic Steenrod algebra, and hence an isomorphism on their motivic $H\f_2$-Adams spectral sequences. This shows that the map on $kq$-resolution $E_1$-terms is an isomorphism, and hence the $kq$-resolutions are isomorphic.
 	
 	To pass to algebraically closed fields $F$ in positive characteristic, one would consider the zig-zag of morphisms
 	\[
 	F\xleftarrow{\pi} W(F)\xrightarrow{\iota} \overline{K}
 	\]
 	where $W(F)$ is the Witt vectors on $F$ and $K$ is the fraction field on $W(F)$ and the corresponding base change functors in the motivic stable homotopy categories. Unfortunately, the authors do not know whether or not these base change functors commute with very effective covers. In particular, if $KQ_R$ denotes Hermitian $K$-theory in the $R$-motivic stable homotopy category, we do not whether or not $\pi^*\widetilde{f}_0KQ_{W(F)} = \widetilde{f}_0 KQ_F$, and similarly for $\iota^*$. If we knew this, then we would have a zig-zag of morphisms of spectral sequences relating the $kq$-based Adams spectral sequence over $F$ to the one over $\overline{K}$. The results of \cite[Section 6]{WO17} would show that the maps on $E_1$-terms are isomorphism, and hence we could import differentials from the characteristic 0 case. 
\end{enumerate}

\subsection{Outline}
In Section \ref{Sec:kqASS}, we define the $kq$-resolution and study its convergence (Theorem \ref{ThmA}). We apply the integral $kq$-resolution towards the calculation of low-dimensional Milnor-Witt stems (Theorem \ref{ThmB}) and explain the connection between the $p$-complete $kq$-resolution over algebraically closed fields and the classical $bo$-resolution of \cite{Mah81}. 

After Sectoin \ref{Sec:kqASS}, we always work over an algebraically closed field of characteristic zero. In Section \ref{Section:Analysis}, we analyze the $E_1$-page of the $2$-complete $kq$-resolution. We define motivic integral Brown-Gitler modules using a filtration of the dual motivic Steenrod algebra and verify that $A/\kern-0.25em/A(1)$ decomposes as a sum of motivic integral Brown-Gitler modules. We then identify the cohomology of these motivic integral Brown-Gitler modules with combinations of Adams covers of $bo$, $bsp$, $kq$, and $ksp$. We use these computations to prove that the motivic Adams spectral sequence \cite{DI10} converging to $\pi_{**}(kq \wedge kq)$ collapses at $E_2$. 

In Section \ref{Sec:E2}, we study the differentials and vanishing regions of the $kq$-resolution. We lift Mahowald's differentials in the $bo$-resolution to differentials between $\tau$-torsion free classes in the $kq$-resolution using Betti realization and propogate them to $\tau$-torsion classes using multiplicative structure. We then use our algebraic analysis from Section \ref{Section:Analysis} along with the calculated differentials to prove a vanishing region in the $E_2$-page of the $kq$-resolution (Theorem \ref{ThmI}). 

In Section \ref{Sec:MR}, we state our main computational result, Theorem \ref{ThmF}, which follows from the calculations in Sections \ref{Section:Analysis} and \ref{Sec:E2}. We use it to compute the $v_1$-periodic motivic stable stems (Theorem \ref{ThmD}) and to recalculate the $\eta$-periodic motivic stable stems (Theorem \ref{ThmC}) over algebraically closed fields. Our $v_1$-periodic calculations are new, and our $\eta$-periodic calculations recover work of Andrew-Miller \cite{AM17} which confirmed a conjecture of Guillou-Isaksen \cite[Conj. 1.3(b)]{GI15}.

In Section \ref{Sec:Tel}, we use the results of the previous sections to formulate and provide evidence for motivic Telescope and Smashing Conjectures (Conjectures \ref{Conj:MotTCLF} and \ref{Conj:MotSC}). We reinterpret our computations in terms of certain finite localizations and describe the remaining steps needed to prove some cases of the motivic Telescope Conjecture. 

In Section \ref{Sec:JHom}, we present a model for the connective motivic $j$ spectrum and identify its homotopy with the $v_1$-periodic motivic stable stems (Theorem \ref{Thm:Jo}). Our definition relies on further theoretical development of the work of Gheorghe-Isaksen-Krause-Ricka \cite{GIKR18} as well as an extension to algebraically closed fields. 

\subsection*{Conventions} Except in Section \ref{Sec:kqASS}, we will always work in the category of $2$-complete cellular motivic spectra over an algebraically closed field of characteristic zero. We will often abbreviate $\Ext_{A^\vee}(\m_2, C)$ by $\Ext_{A^\vee}(C)$. In case there is any ambiguity of which spectral sequence is being used, we use a left-hand superscript to indicate the spectral sequence. For example, we will write $\kqE_r^{n,t,u}(X)$ for the $r$-th page of the $kq$-based Adams spectral sequence for $X$. 

\subsection{Acknowledgements}
The authors thank Tom Bachmann, Mark Behrens, Prasit Bhattacharya, Jeremy Hahn, Dan Isaksen, Achim Krause, Oliver R{\"o}ndigs, Markus Spitzweck, and Mura Yakerson for helpful discussions. We further thank Dan Isaksen and Achim Krause for their comments on an earlier version. Additionally, we are indebted to two anonymous referees for many helpful comments, corrections, and suggestions. This project began at the Newton Institute Workshop on Derived Algebraic Geometry and Chromatic Homotopy Theory in 2018. Section \ref{Sec:Tel} benefited tremendously from the authors' visit to the Mathematisches Forschungsinstitut Oberwolfach for the Arbeitsgemeinschaft on Elliptic Cohomology according to Lurie and the second author's visit to the Institut f{\"u}r Mathematik der Universit{\"a}t Osnabr{\"u}ck. We thank all institutes, organizers, and hosts for their hospitality. The second author was partially supported by NSF grant DMS-1547292.


\section{The $kq$-based Adams spectral sequence}\label{Sec:kqASS}

Let $F$ be a field of characteristic not two and let $kq$ denote the very effective cover of Hermitian K-theory $KQ$ defined in \cite{ARO17}. In this section, we define the $kq$-resolution as the $kq$-based motivic Adams spectral sequence and discuss convergence of both the integral and $p$-complete versions. The results are recorded in Theorem \ref{Thm:kqASS}. We then apply the integral $kq$-resolution towards the computation of low-dimensional Milnor-Witt stems of $F$. 

\subsection{Construction and convergence of the $kq$-resolution}

The motivic Adams spectral sequence \cite{DI10}\cite{Mor99} and motivic Adams-Novikov spectral sequence \cite{HKO11} have been studied extensively in computational motivic stable homotopy theory. More generally, the $E$-based motivic Adams spectral sequence may be defined for any motivic ring spectrum $E$; see for example \cite[Sec. 6]{Man18}. 

The canonical $kq$-Adams resolution of the sphere is given by
\begin{equation}\label{Eqn:AdamsRes}
\begin{tikzcd}
	S^{0,0}\arrow[d] & \Sigma^{-1,0}\overline{kq}\arrow[d] \arrow[l]& \arrow[l] \Sigma^{-2,0}\overline{kq}^{\wedge 2}\arrow[d] & \cdots \arrow[l] \\
	kq & \Sigma^{-1,0}kq\wedge \overline{kq} & \Sigma^{-2,0}kq\wedge \overline{kq}^{\wedge 2} & 
\end{tikzcd}
\end{equation}
where $\overline{kq}$ denotes the cofiber of the unit map $S^{0,0} \to kq$. The canonical $p$-complete $kq$-Adams resolution is defined by replacing $kq$ by $kq_p^\wedge$. 

\begin{thm}\label{Thm:kqASS}
There is a strongly convergent spectral sequence, the \emph{$kq$-resolution}, of the form
\begin{equation}\label{Eqn:ZkqRes}
E_1^{n,t,u} = \pi_{t,u}^F(kq \wedge \overline{kq}^{\wedge n}) \Rightarrow \pi_{t-n,u}^F(S^{0,0}).
\end{equation}
The $d_r$-differentials have the form
$$d_r : E_r^{n,t,u} \to E_r^{n+r, t+r-1,u}.$$
There is an analogous strongly convergent spectral sequence of the form
\begin{equation}\label{Eqn:pkqRes}
E_1^{n,t,u} = \pi_{t,u}^F(kq_p^\wedge \wedge \overline{kq_p^\wedge}^{\wedge n}) \Rightarrow \pi_{t-n,u}^F((S^{0,0})_p^\wedge).
\end{equation}
\end{thm}

\begin{proof}
Applying $F$-motivic stable homotopy groups to \eqref{Eqn:AdamsRes} produces a spectral sequence
\begin{equation}
E_1^{n,t,u} = \pi_{t,u}(kq\wedge \overline{kq}^{\wedge n}) \Rightarrow \pi_{t-n,u}^F((S^{0,0})^{\wedge}_{kq})
\end{equation}
where $(S^{0,0})^{\wedge}_{kq}$ denotes the $kq$-nilpotent completion. Analogously, we obtain from a spectral sequence
\begin{equation}
E_1^{n,t,u} = \pi^F_{t,u}(kq_p^\wedge \wedge \overline{\kq_p^\wedge}^{\wedge n}) \Rightarrow \pi_{t-n,u}^F((S^{0,0})^\wedge_{kq_p^\wedge}).
\end{equation}
The differentials have the specified form by construction, so we only need to identify the abutments with those appearing in \eqref{Eqn:ZkqRes} and \eqref{Eqn:pkqRes} to prove the theorem.

To identify the $kq$-nilpotent completion of $S^{0,0}$, recall from \cite{Bac17} that the unit map $S^{0,0} \to kq$ induces an isomorphism on $\Pi_0$, so $\overline{kq}$ is $1$-connected in the homotopy t-structure on $SH(F)$ and $kq \wedge \overline{kq}^{\wedge n}$ is $n$-connected in the homotopy t-structure. Therefore we have $(S^{0,0})_{kq}^\wedge \simeq S^{0,0}$ and the abutment of \eqref{Eqn:ZkqRes} may be identified with $\pi_{t-n,u}^F(S^{0,0})$.\footnote{We refer the reader to \cite[Sec. 5]{SO12} for a more detailed convergence argument along these lines.} For the $p$-complete $kq$-resolution, convergence follows from applying \cite[Thms. 1.0.1 and 7.3.4]{Man18} to see that $(S^{0,0})^{\wedge_{kq^\wedge_p}} \simeq (S^{0,0})_p^\wedge$. 
\end{proof}

\subsection{Applications of the integral $kq$-resolution}

We now record some applications of the integral $kq$-resolution \eqref{Eqn:ZkqRes} to the calculation of low-dimensional Milnor-Witt stems. 

In the proof of Theorem \ref{Thm:kqASS}, we used the fact that $\Pi_0(\overline{kq}) = 0$ to show that $kq \wedge \overline{kq}^{\wedge n}$ is $n$-connected in the homotopy $t$-structure on $SH(F)$. In fact, a stronger statement can be made about the connectivity of $\overline{kq}$. 

\begin{thm}\cite{RSO19}
The map $\Pi_1^F(S^{0,0}) \to \Pi_1^F(kq)$ is surjective. In particular, $\Pi_1^F(\overline{kq}) = 0$. 
\end{thm}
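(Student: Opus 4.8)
This statement is essentially a repackaging of R{\"o}ndigs--Spitzweck--{\O}stv{\ae}r's computation of the first Milnor--Witt stem, so that is the route I would take. First I would recall the relevant output of \cite{RSO19}: over any field $F$ of characteristic not two, the unit map $S^{0,0}\to kq$ fits into a short exact sequence of homotopy modules
\[
0 \longrightarrow K^M_*/24 \longrightarrow \Pi_1^F(S^{0,0}) \longrightarrow \Pi_1^F(kq) \longrightarrow 0,
\]
and the surjectivity asserted in the theorem is read off immediately from the right-hand map. (If one prefers not to invoke the identification of the cokernel directly, one can instead quote \cite{RSO19} for the abstract value of $\Pi_1^F(S^{0,0})$, use the very effective slice filtration to compute $\Pi_1^F(kq)$, and check that the unit-induced comparison between them is onto; but reading the surjection off RSO's exact sequence is the cleanest.)

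For the ``in particular'', I would apply $\Pi_*^F(-)$ to the cofiber sequence $S^{0,0}\xrightarrow{\ u\ }kq\to\overline{kq}\to\Sigma^{1,0}S^{0,0}$, which induces a long exact sequence of Milnor--Witt stems, a segment of which reads
\[
\Pi_1^F(S^{0,0}) \xrightarrow{\ u_*\ } \Pi_1^F(kq) \longrightarrow \Pi_1^F(\overline{kq}) \longrightarrow \Pi_0^F(S^{0,0}) \xrightarrow{\ u_*\ } \Pi_0^F(kq).
\]
The left-hand $u_*$ is surjective by the previous step, so the map $\Pi_1^F(kq)\to\Pi_1^F(\overline{kq})$ is zero; by exactness, $\Pi_1^F(\overline{kq})\to\Pi_0^F(S^{0,0})$ is injective. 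But the right-hand $u_*$ is an isomorphism by \cite{Bac17} (as already used in the proof of Theorem~\ref{Thm:kqASS}), hence injective, so the image of $\Pi_1^F(\overline{kq})$ in $\Pi_0^F(S^{0,0})$ lies in $\ker(u_*)=0$. Therefore $\Pi_1^F(\overline{kq})=0$.

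There is no genuinely new obstacle here — the mathematical content is entirely contained in \cite{RSO19}, and what remains is bookkeeping with the long exact sequence. The one point demanding care is conventional: one must confirm that the quotient in RSO's exact sequence really is $\Pi_1^F$ of the very effective cover $kq=\widetilde{f}_0KQ$ in the relevant weights, and that the displayed surjection is induced by the unit of $kq$ (rather than by a map to some other connective model of Hermitian $K$-theory). If \cite{RSO19} is phrased via a different model, the only extra work is the routine low-degree slice-filtration comparison identifying $\Pi_1^F$ of that model with $\Pi_1^F(kq)$.
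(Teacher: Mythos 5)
Your argument takes essentially the same route as the paper: the surjectivity is imported from the literature, and the vanishing of $\Pi_1^F(\overline{kq})$ is extracted from the same segment of the long exact sequence of Milnor--Witt stems, using that $\Pi_0^F(S^{0,0}) \to \Pi_0^F(kq)$ is an isomorphism. The one point worth flagging is that the paper actually attributes the surjectivity of $\Pi_1^F(S^{0,0}) \to \Pi_1^F(kq)$ not to \cite{RSO19} alone but to forthcoming work of Bachmann--Hopkins building on \cite{RSO19}; this corresponds precisely to the model-identification issue you raise at the end of your proposal, namely confirming that the quotient appearing in RSO's exact sequence really is $\Pi_1^F$ of the very effective cover $\widetilde{f}_0KQ$ and that the surjection is the unit of $kq$.
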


\begin{proof}
The first claim follows from forthcoming work of Bachmann-Hopkins building on \cite{RSO19} and the second claim follows from the long exact sequence
$$\cdots \to \Pi_1^F(S^{0,0}) \to \Pi_1^F(kq) \to \Pi_1^F(\overline{kq}) \to \Pi_0^F(S^{0,0}) \overset{\cong}{\to} \Pi_0^F(kq) \to 0.$$
\end{proof}

\begin{cor}\label{Cor:Conn}
The motivic spectrum $kq \wedge \overline{kq}^{\wedge n}$ is $2n$-connective in the homotopy $t$-structure. In other words, we have $\Pi_i^F(kq \wedge \overline{kq}^{\wedge n}) = 0$ for $i < 2n$. 
\end{cor}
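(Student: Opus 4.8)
The plan is to induct on $n$, using the defining cofiber sequence $S^{0,0} \to kq \to \overline{kq}$ and its smash powers together with the connectivity input just established. First I would record the base case: for $n = 0$ the claim is that $kq$ is $0$-connective, which holds since $kq$ is an effective (indeed very effective) spectrum and effective spectra are connective in the homotopy $t$-structure. For $n = 1$, the preceding theorem gives $\Pi_1^F(\overline{kq}) = 0$, and $\Pi_0^F(\overline{kq}) = 0$ is the isomorphism $\Pi_0^F(S^{0,0}) \xrightarrow{\cong} \Pi_0^F(kq)$ from \cite{Bac17}; smashing $\overline{kq}$ with the connective spectrum $kq$ can only improve connectivity, so $kq \wedge \overline{kq}$ is $2$-connective.

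For the inductive step I would use the cofiber sequence
\[
kq \wedge \overline{kq}^{\wedge n} \to kq \wedge kq \wedge \overline{kq}^{\wedge n} \to kq \wedge \overline{kq}^{\wedge (n+1)}
\]
obtained by smashing $S^{0,0} \to kq \to \overline{kq}$ with $kq \wedge \overline{kq}^{\wedge n}$. The homotopy $t$-structure is compatible with the smash product in the sense that smashing a $j$-connective spectrum with a $k$-connective spectrum yields a $(j+k)$-connective spectrum; in particular $kq \wedge \bigl(kq \wedge \overline{kq}^{\wedge n}\bigr)$ is at least as connective as $kq \wedge \overline{kq}^{\wedge n}$. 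So by the inductive hypothesis the first two terms of the cofiber sequence are $2n$-connective. Then the long exact sequence in $\Pi_*^F$ shows $\Pi_i^F\bigl(kq \wedge \overline{kq}^{\wedge(n+1)}\bigr) = 0$ for $i < 2n$, which is not yet the desired bound — I need two more degrees of connectivity. To get them, I would instead argue that $\overline{kq}^{\wedge(n+1)}$ is $2(n+1)$-connective by induction purely on $\overline{kq}$: from the cofiber sequence $\overline{kq}^{\wedge n} \to kq \wedge \overline{kq}^{\wedge n} \to \overline{kq}^{\wedge(n+1)}$, the middle term is $2n$-connective (induction, already proven) and $\overline{kq}^{\wedge n}$ is $2n$-connective if one knows $\overline{kq}$ is $2$-connective — but the theorem only gives $\Pi_1^F(\overline{kq}) = 0$, i.e. $1$-connectivity, not $2$-connectivity. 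So this route stalls at slope $1$ per smash factor, not slope $2$.

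The resolution, and the step I expect to be the main obstacle, is that the slope-$2$ bound is genuinely a statement about $kq \wedge \overline{kq}^{\wedge n}$ and not about $\overline{kq}^{\wedge n}$ alone: one extra factor of connectivity comes from $kq$ being connective plus the two factors of connectivity of $\overline{kq}$ in each smash power, but the counting is $2n = 0 + n\cdot 1 + (\text{something})$, so the naive estimates are off by $n-1$. The correct argument must invoke the actual \cite{RSO19} computation of $\Pi_*^F(kq)$ — specifically that $\Pi_1^F(kq)$, $\Pi_2^F(kq)$, etc. are understood well enough to see that $\overline{kq}$ is in fact more highly connective after smashing with $kq$, or equivalently that the connecting maps in the relevant long exact sequences vanish. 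Concretely, I would combine the cofiber sequence $kq \wedge \overline{kq}^{\wedge n} \to kq \wedge kq \wedge \overline{kq}^{\wedge n} \to kq \wedge \overline{kq}^{\wedge(n+1)}$ with the splitting or module structure of $kq \wedge kq$ over $kq$ (so that $kq \wedge kq \wedge \overline{kq}^{\wedge n}$ is a sum of suspensions of $kq \wedge \overline{kq}^{\wedge n}$ by results on $kq$-cooperations, or at least that its bottom homotopy is controlled), feeding the improved connectivity back into the induction. The heart of the matter — and where I would spend the real effort — is extracting from \cite{RSO19} the precise vanishing of $\Pi_i^F(kq)$ in the range $2 \le i$ needed to boost each inductive step from slope $1$ to slope $2$; everything else is formal manipulation of cofiber sequences and the compatibility of the homotopy $t$-structure with smash products.
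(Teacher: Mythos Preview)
You have all the ingredients but trip over your own bookkeeping. In the $n=1$ paragraph you correctly record both $\Pi_0^F(\overline{kq}) = 0$ (from the unit being an isomorphism on $\Pi_0$) and $\Pi_1^F(\overline{kq}) = 0$ (from the preceding theorem). That is precisely the statement that $\overline{kq}$ is $2$-connective in the homotopy $t$-structure. Yet two paragraphs later you write ``the theorem only gives $\Pi_1^F(\overline{kq}) = 0$, i.e.\ $1$-connectivity, not $2$-connectivity,'' and then spend the rest of the proposal chasing a missing degree of connectivity that is not actually missing.

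Once you accept that $\overline{kq}$ is $2$-connective, the corollary is a one-line consequence of the fact that the homotopy $t$-structure is compatible with the smash product: $\overline{kq}^{\wedge n}$ is $2n$-connective, $kq$ is $0$-connective, hence $kq \wedge \overline{kq}^{\wedge n}$ is $2n$-connective. This is exactly the paper's (implicit) argument --- no induction on cofiber sequences, no analysis of $kq$-cooperations, and certainly no input about $\Pi_i^F(kq)$ for $i \geq 2$. Your proposed ``resolution'' in the final paragraph is solving a nonexistent problem.
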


\begin{thm}\label{Thm:MWStems}
There are isomorphisms of graded abelian groups
\begin{equation}\label{Eqn:Pi0}
\Pi_0^F(S^{0,0}) \cong \Pi_0^F(kq),
\end{equation}
and a short exact sequence of graded abelian groups
\begin{equation}\label{Eqn:Pi1}
0 \to \Pi_2((kq \wedge \overline{kq})/d_1) \to \Pi_1^F(S^{0,0}) \to \Pi_1^F(kq)  \to 0.
\end{equation}
More generally, $\Pi^F_k(S^{0,0})$ is detected in $E_\infty^{n,t,u}$ of \eqref{Eqn:ZkqRes} in the range of tridegrees $(n,t,u)$ where $t-n-u=k$ and $n \leq k$. 
\end{thm}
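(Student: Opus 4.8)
The plan is to derive everything from the convergence of the integral $kq$-resolution \eqref{Eqn:ZkqRes} together with the connectivity estimate of Corollary \ref{Cor:Conn}. First I would set up the bookkeeping: the Milnor-Witt $k$-stem $\Pi^F_k(-) = \bigoplus_{u\in\z}\pi^F_{k+u,u}(-)$ picks out exactly the tridegrees $(n,t,u)$ with $t-n-u = k$ from \eqref{Eqn:ZkqRes}, since the abutment $\pi^F_{t-n,u}(S^{0,0})$ contributes to $\Pi^F_{(t-n)-u}(S^{0,0})$. So for fixed $k$ the relevant part of the spectral sequence is the diagonal $\{(n,t,u) : t-n-u = k\}$, and the $E_1$-term along this diagonal in filtration $n$ is $\bigoplus_{u} \pi^F_{k+n+u,u}(kq\wedge\overline{kq}^{\wedge n}) = \Pi^F_{k+n}(kq\wedge\overline{kq}^{\wedge n})$.

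The key input is then Corollary \ref{Cor:Conn}: $kq\wedge\overline{kq}^{\wedge n}$ is $2n$-connective, so $\Pi^F_{k+n}(kq\wedge\overline{kq}^{\wedge n}) = 0$ whenever $k+n < 2n$, i.e.\ whenever $n > k$. Hence on the $k$-diagonal the $E_1$-page (and a fortiori every later page) is concentrated in filtrations $0 \le n \le k$; this is precisely the claimed range $n \le k$, $t-n-u = k$. Because the $kq$-resolution is strongly convergent (Theorem \ref{Thm:kqASS}), $\Pi^F_k(S^{0,0})$ admits a finite filtration whose associated graded is $\bigoplus_{n=0}^{k} E_\infty^{n,t,u}$ with $t-n-u=k$, which gives the general detection statement. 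For $k=0$ only $n=0$ survives, and $d_r$ raises filtration, so there are no differentials in or out of $E^{0,\ast,\ast}$ along the $0$-diagonal; thus $\Pi^F_0(S^{0,0}) \cong E_\infty^{0,\ast,\ast} = E_1^{0,\ast,\ast} = \Pi^F_0(kq)$, giving \eqref{Eqn:Pi0}. For $k=1$ only $n=0,1$ survive; the filtration gives a short exact sequence $0 \to E_\infty^{1} \to \Pi^F_1(S^{0,0}) \to E_\infty^{0} \to 0$. On the $0$-line the only possibly nonzero differential out is $d_1$, and (by the $k=0$ case plus connectivity) there is nothing mapping in, so $E_\infty^{0,\ast,\ast}$ on the $1$-diagonal is $\ker(d_1)\subseteq \Pi^F_1(kq)$; on the $1$-line $E_\infty^1 = \operatorname{coker}(d_1 : E_1^0 \to E_1^1)$ since $d_r$ for $r\ge 2$ lands in filtration $\ge 3$ which vanishes. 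I would package this as $\Pi_2((kq\wedge\overline{kq})/d_1)$ after checking the degree shift: the cofiber-of-$d_1$ notation accounts for both the kernel on the $0$-line and cokernel on the $1$-line, and a short computation with the trigrading confirms the internal degree is $2$. This yields \eqref{Eqn:Pi1} once one observes the map $\Pi^F_1(S^{0,0})\to \Pi^F_1(kq)$ is the edge map and its image is $\ker(d_1)$, but in fact by the companion theorem $\Pi^F_1(S^{0,0})\to\Pi^F_1(kq)$ is surjective, forcing $d_1 = 0$ on this diagonal and identifying $\Pi_2((kq\wedge\overline{kq})/d_1)$ with the full $\Pi_2(kq\wedge\overline{kq})$-contribution; one should state the sequence in the $d_1$-cofiber form to remain agnostic.

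The main obstacle I anticipate is purely organizational rather than deep: making the trigrading bookkeeping between the $(n,t,u)$-indexing of \eqref{Eqn:ZkqRes} and the $\Pi^F_k$-indexing completely precise, and in particular pinning down that the relevant internal degree of the subgroup appearing in \eqref{Eqn:Pi1} is exactly $2$ and that ``$(kq\wedge\overline{kq})/d_1$'' is the right object (i.e.\ that the $E_\infty$ contribution on both the $0$- and $1$-lines of the $1$-diagonal is correctly captured by this cofiber). A secondary point requiring a line of justification is that no higher differentials ($d_r$, $r\ge 2$) interfere on the $0$- and $1$-diagonals — this is immediate from the differential pattern $d_r : E_r^{n,t,u}\to E_r^{n+r,t+r-1,u}$ together with the filtration bound $n\le k$, but it should be spelled out. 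Everything else is a direct consequence of strong convergence (Theorem \ref{Thm:kqASS}) and the connectivity estimate (Corollary \ref{Cor:Conn}).
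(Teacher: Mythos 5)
Your overall approach — strong convergence of \eqref{Eqn:ZkqRes} plus the $2n$-connectivity of $kq\wedge\overline{kq}^{\wedge n}$ from Corollary \ref{Cor:Conn} — is exactly what the paper does, and your bookkeeping for the general detection statement and the $k=0$ case is correct. The trouble is in the final steps of the $k=1$ case, where you conflate two distinct $d_1$-differentials. The $d_1$ whose vanishing makes the edge map $\Pi_1^F(S^{0,0})\to\Pi_1^F(kq)$ surjective is $d_1:E_1^{0,t+1,t}\to E_1^{1,t+1,t}$; its target lies in Milnor--Witt stem $0$ of the $1$-line, which vanishes by connectivity (one does not need to invoke the external surjectivity theorem to conclude this). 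The $d_1$ appearing in $\Pi_2((kq\wedge\overline{kq})/d_1)$ is a different differential, namely $d_1:E_1^{0,k+2,k}\to E_1^{1,k+2,k}$, which goes from $\Pi_2^F(kq)$ (the Milnor--Witt stem $2$ part of the $0$-line) to $\Pi_2^F(kq\wedge\overline{kq})$ (the Milnor--Witt stem $2$ part of the $1$-line, contributing to the $k=1$ diagonal after the filtration shift $n=1$). That differential is \emph{not} forced to vanish — and indeed by the remark following the theorem its cokernel computes $K_*^M(F)/24$, which is generally a proper quotient — so your concluding sentence ``identifying $\Pi_2((kq\wedge\overline{kq})/d_1)$ with the full $\Pi_2(kq\wedge\overline{kq})$-contribution'' is false. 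Relatedly, the claim that ``the cofiber-of-$d_1$ notation accounts for both the kernel on the $0$-line and cokernel on the $1$-line'' is off: the kernel on the $0$-line at $k=2$ would contribute to $\Pi_2^F(S^{0,0})$, not $\Pi_1^F(S^{0,0})$, so only the cokernel enters the short exact sequence \eqref{Eqn:Pi1}. With those two points corrected — the two $d_1$'s kept separate, the subgroup term identified as just the cokernel — your argument lines up precisely with the paper's.
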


\begin{proof}
We begin by proving the general statement about $\Pi_k^F(S^{0,0})$. First, observe that the tridegrees $(n,t,u)$ for which $E_\infty^{n,t,u}$ can contribute to $\Pi_k^F(S^{0,0})$ must satisfy $t-n-u = k$, or equivalently, $t-u = n+k$. On the other hand, Corollary \ref{Cor:Conn} implies that
$$E_1^{n,t,u} = \pi_{t,u}^F(kq \wedge \overline{kq}^{\wedge n}) \subseteq \Pi_{t-u}^F(kq \wedge \overline{kq}^{\wedge n}) = 0$$
if $t-u < 2n$. Therefore the tridegrees with possible nonzero contributions to $\Pi_k^F(S^{0,0})$ are precisely those satisfying $t-n-u = k$ and $t-u = n+k \geq 2n$; the last inequality may be rewritten as $n \leq k$. 

Applying the general statement with $k=0$, we find that $\Pi_0(S^{0,0})$ is detected in the groups $\bigoplus_{t}E^{0,t,t}_\infty$. Classes in these groups cannot be the targets of differentials because they are concentrated in the $0$-line. Moreover, classes in these tridegrees are permanent cycles: the $d_r$-differentials decrease Milnor-Witt stem by $1$, and since $\Pi_{-1}^F(kq \wedge \overline{kq}^{\wedge n}) = 0$ for all $n$, there are no possible nonzero targets for differentials from $E_r^{0,t,t}$. We conclude that 
$$\Pi_0^F(S^{0,0}) \cong \bigoplus_{t \in \z} E_\infty^{0,t,t} \cong \bigoplus_{t \in \z} E_1^{0,t,t} \cong \bigoplus_{t \geq 0} \pi_{t,t}(kq) = \Pi_0(kq)$$
which proves \eqref{Eqn:Pi0}.

To prove \eqref{Eqn:Pi1}, we apply the general statement when $k=1$ to see that the contributions to $\Pi_1^F$ are $\bigoplus_{t \in \z} E_\infty^{0,t+1,t}$ and $\bigoplus_{t \in \z} E_\infty^{1,t+2,t}$. As in the $k=0$ case, the classes in $E_1^{0,t+1,t}$ are not the targets of differentials. Moreover, they are permanent cycles since their potential targets lie in the $0$-th Milnor-Witt stems of the higher lines, all of which are zero. Therefore $E_\infty^{0,t+1,t} \cong E_1^{0,t+1,t}$ and $\bigoplus_{t \in \z} E_\infty^{0,t+1,t} \cong \bigoplus_{t \in \z} E_1^{0,t+1,t} = \Pi_1(kq)$.

Similar considerations show that any $x \in E_1^{1,k+2,k}$ is a permanent cycle, so we have
$$\bigoplus_{k \in \z} E_\infty^{1,k+2,k} = \bigoplus_{k \in \z} \coker(d_1 : E_1^{0,k+2,k} \to E_1^{1,k+2,k}).$$
This completes the proof. 
\end{proof}

\begin{rem2}
Combining Theorem \ref{Thm:MWStems} with the main result of \cite{RSO19}, we find that 
$$\bigoplus_{k \in \z} \coker(d_1 : E_1^{0,k+2,k} \to E_1^{1,k+2,k}) \cong K^M_*(F)/24.$$ 
Work in progress suggests that the left-hand side can be identified with the cokernel of a map $\Pi_2(kq) \to \Pi_0(ksp)$ after $2$-completion. 
\end{rem2}

\subsection{Comparison with the $bo$-resolution}

We now implicitly $2$-complete everything. The $kq$-resolution is the motivic analog of the $bo$-resolution \cite{BBBCX17}\cite{LM87}\cite{Mah81}\cite{Mah84}. Let $\overline{bo}$ denote the cofiber of the unit map $S^0\to bo$. The canonical $bo$-based Adams resolution of the sphere is given by
\[
\begin{tikzcd}
	S^0\arrow[d] & \Sigma^{-1}\overline{bo}\arrow[d] \arrow[l]& \arrow[l] \Sigma^{-2}\overline{bo}^{\wedge 2}\arrow[d] & \cdots \arrow[l] \\
	bo & \Sigma^{-1}bo\wedge \overline{bo} & \Sigma^{-2}bo\wedge \overline{bo}^{\wedge 2} & 
\end{tikzcd}
\]
where $\overline{bo}$ is the cofiber of the unit map $S^0 \to bo$. Applying classical stable homotopy groups gives rise to a spectral sequence 
\[
E_1^{n,t} = \pi_t (bo\wedge \overline{bo}^{\wedge n})\implies \pi_{t-n}(S^0)^{\wedge}_{bo}. 
\]
It follows from a theorem of Bousfield \cite{Bou79} that $(S^0)^\wedge_{bo}\simeq S^0$. 

We can compare the $kq$- and $bo$-resolutions using the following theorem of Ananyevskiy-R{\o}ndigs-{\O}stv{\ae}r.

\begin{lem}\cite[Lem. 2.13]{ARO17} 
The Betti realization of $kq$ is $bo$. 
\end{lem}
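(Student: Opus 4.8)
The plan is to deduce the statement from three facts about complex Betti realization $\mathrm{Re}\colon\SH(\c)\to\SH$, which is a colimit-preserving symmetric monoidal functor sending $S^{p,q}$ to $S^p$: \emph{(A)} $\mathrm{Re}(KQ)\simeq KO$; \emph{(B)} $\mathrm{Re}$ carries very effective motivic spectra to connective spectra, so that $\mathrm{Re}(kq)=\mathrm{Re}(\widetilde{f}_0KQ)$ is connective; and \emph{(C)} the realized very effective cover map $\phi\colon\mathrm{Re}(kq)\to\mathrm{Re}(KQ)\simeq KO$ is the connective cover, whence $\mathrm{Re}(kq)\simeq\tau_{\geq 0}KO=bo$.

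Fact (B) is formal: $\SH(\c)^{veff}$ is the smallest full subcategory closed under colimits and extensions containing $\Sigma^\infty_+X\wedge\mathbb{G}_m^{\wedge q}$ for $X\in\mathrm{Sm}_\c$ and $q\geq 0$, and each such generator realizes to the connective spectrum $\Sigma^q\Sigma^\infty_+X(\c)$; since connective spectra are closed under colimits and extensions, $\mathrm{Re}(\SH(\c)^{veff})\subseteq\SH_{\geq 0}$. For (A) I would use the Bott-periodic presentation $KQ\simeq kq[\beta^{-1}]$ for the Hermitian Bott element $\beta\in\pi_{8,4}(KQ)$ (note $|v_1^4|=(8,4)$), together with the classical identification of the $\c$-points of the Hermitian $K$-theory presheaf with $\z\times BO$ (coming from $O(n,\c)\simeq O(n)$); alternatively one imports $\mathrm{Re}(KGL)\simeq KU$ and argues through the hyperbolic and forgetful maps relating $KQ$ and $KGL$. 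Monoidality then forces $\mathrm{Re}(\beta)$ to be a unit in $\pi_*(KO)$, hence $\mathrm{Re}(\beta)=\pm\beta_{KO}$, the topological Bott class.

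The main obstacle is (C). Writing $Y=\mathrm{Re}(kq)$, fact (A) and monoidality give $Y[\mathrm{Re}(\beta)^{-1}]\xrightarrow{\ \sim\ }KO$, so $\pi_nKO\cong\colim_k\pi_{n+8k}(Y)$ with transition maps multiplication by $\mathrm{Re}(\beta)$, and the colimit structure maps are $\phi$ followed by the isomorphism $\beta_{KO}^{-k}$. Thus it remains to prove that $\mathrm{Re}(\beta)\colon\pi_m(Y)\to\pi_{m+8}(Y)$ is an isomorphism for all $m\geq 0$, equivalently that $Y/\mathrm{Re}(\beta)=\mathrm{Re}(kq/\beta)$ has trivial homotopy in degrees $\geq 8$. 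I would obtain this from boundedness of $kq/\beta$ in the homotopy $t$-structure, i.e.\ $\underline{\pi}_s(kq/\beta)=0$ for $s\geq 8$, which follows from the known homotopy and slice structure of $kq$ (see \cite{IS11} and \cite{RSO19}) since $\beta$ acts invertibly on $\underline{\pi}_s(kq)$ for $s$ large, combined with the fact that $\mathrm{Re}$ of a cellular motivic spectrum bounded above in the homotopy $t$-structure is bounded above by the same degree — this last point via the identification of $\pi_*(\mathrm{Re}(X))$ with the $\tau$-inversion of $\pi_{**}(X)$ for cellular $X$. A more self-contained variant uses the same $\tau$-inversion description to compute $\pi_*(\mathrm{Re}(kq))\cong\pi_*(bo)$ directly from the Isaksen--Shkembi calculation of $\pi_{**}^\c(kq)$, and then upgrades this to an equivalence: the unit $S^0\to kq$ makes $\pi_0\phi$ surjective, hence an isomorphism $\z\to\z$, and $\beta$-periodicity in nonnegative degrees then pins $\phi$ down as the connective cover. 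The subtle point throughout is that $\mathrm{Re}$ does not respect the very effective cover in general, since that cover is a truncation for a $t$-structure that realization is only \emph{right} exact for; it is the Bott periodicity and boundedness special to $kq$ that force the conclusion.
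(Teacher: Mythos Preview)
The paper does not give its own proof of this lemma; it simply imports the result from \cite[Lem.~2.13]{ARO17} and uses it as a black box. So there is no proof in the paper to compare against.

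Your strategy is sound and would constitute an independent proof. Step (B) is exactly right and is the standard argument. Step (A) is also standard, though your sketch via Bott-inverting $kq$ is slightly circular (it presupposes control over $\mathrm{Re}(kq)$); the clean route is the geometric one you allude to, using that the motivic spaces representing $KQ$ realize to the spaces representing $KO$. Step (C) is, as you note, where the content lies. Your first argument for (C)---showing $\mathrm{Re}(kq/\beta)$ is bounded above via boundedness of $kq/\beta$ in the homotopy $t$-structure---requires knowing that $\beta$ acts invertibly on $\underline{\pi}_s(kq)$ for $s\geq 0$, which is essentially the statement that $kq\to KQ$ is a $\underline{\pi}_{\geq 0}$-isomorphism; this is true but is itself one of the nontrivial facts proved in \cite{ARO17} (it is not automatic from the definition of the very effective cover, which is a truncation for a different $t$-structure). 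Your second argument via $\tau$-inversion and the Isaksen--Shkembi calculation is cleaner and self-contained given cellularity of $kq$, though note that the $\tau$-inversion identification of $\pi_*(\mathrm{Re}(X))$ is typically stated after $p$-completion, so a small extra argument is needed integrally.

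In \cite{ARO17} the proof is more direct and does not pass through homotopy-group computations: it uses the explicit very effective slice decomposition of $kq$ together with the known Betti realizations of the slice pieces (motivic Eilenberg--MacLane spectra), assembling to $bo$ via its Postnikov tower. Your approach trades that structural input for computational input (Bott periodicity, the $\pi_{**}(kq)$ calculation); both are valid, but the source argument is what makes the result available over general base fields rather than just over $\c$.
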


Since Betti realization preserves cofiber sequences and is strong symmetric monoidal \cite{HO16}, we obtain the following corollary which will be used frequently throughout the sequel. 

\begin{cor}
	Betti realization takes the $kq$-resolution to the $bo$-resolution. In particular, Betti realization induces a multiplicative map of spectral sequences
	\[
	E_r^{s,t,*}\to E_r^{s,t}.
	\]
\end{cor}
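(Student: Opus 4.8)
The plan is to read off the result directly from the two facts stated just before it: that Betti realization $\text{Re}_B : \mathcal{SH}(\mathbb{C}) \to \mathcal{SH}$ (or $\mathcal{SH}(F) \to \mathcal{SH}$ for $F \hookrightarrow \mathbb{C}$, composing with the base change to $\mathbb{C}$) sends $kq$ to $bo$, and that it is strong symmetric monoidal and exact, hence preserves cofiber sequences. First I would observe that, since $\text{Re}_B$ is strong monoidal and takes the unit $S^{0,0}$ to $S^0$, it takes the unit map $S^{0,0} \to kq$ to the unit map $S^0 \to bo$; applying exactness to the defining cofiber sequence $S^{0,0} \to kq \to \overline{kq}$ then shows $\text{Re}_B(\overline{kq}) \simeq \overline{bo}$. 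Again using monoidality, $\text{Re}_B(kq \wedge \overline{kq}^{\wedge n}) \simeq bo \wedge \overline{bo}^{\wedge n}$ compatibly in $n$.

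Next I would compare the canonical towers. The $kq$-Adams resolution \eqref{Eqn:AdamsRes} is built, level by level, out of the spectra $\Sigma^{-n,0} kq \wedge \overline{kq}^{\wedge n}$ together with the structure maps coming from the unit $S^{0,0} \to kq$ and the projections to $\overline{kq}$; likewise the $bo$-resolution is built from $\Sigma^{-n} bo \wedge \overline{bo}^{\wedge n}$ with the analogous maps. Since $\text{Re}_B(\Sigma^{p,q} X) \simeq \Sigma^p \text{Re}_B(X)$ and $\text{Re}_B$ commutes with all the maps in sight (they are assembled from the unit, the cofiber projection, and smashing with identities), $\text{Re}_B$ carries the $kq$-Adams tower of $S^{0,0}$ to the $bo$-Adams tower of $S^0$ as a map of towers of spectra. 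Applying homotopy groups — and using that $\text{Re}_B$ induces $\pi_{t,u}^F(X) \to \pi_t(\text{Re}_B X)$ — gives the induced map of the associated spectral sequences $\kqE_r^{s,t,*}(S^0) \to {}^{bo}E_r^{s,t}(S^0)$, with the indicated tridegree-to-bidegree shift (the weight $u$ is simply forgotten). Multiplicativity of the map of spectral sequences follows from the fact that both resolutions are rings (the $kq$- and $bo$-Adams towers are multiplicative towers) and $\text{Re}_B$ is monoidal, so it respects the pairings on each page.

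There is no serious obstacle here: the statement is essentially a formal consequence of functoriality of the Adams-resolution construction together with the two cited inputs, and the only points that need a sentence of care are (i) identifying $\text{Re}_B(\overline{kq})$ with $\overline{bo}$, which is the cofiber-sequence argument above, and (ii) checking that the canonical $kq$-Adams resolution is natural enough that a monoidal exact functor carries it to the canonical $bo$-Adams resolution — this is immediate once one notes each map in \eqref{Eqn:AdamsRes} is built functorially from the unit map and smash products. If anything deserves to be called "the hard part," it is only the bookkeeping of matching indices $(n,t,u) \mapsto (n,t)$ and confirming that the differentials $d_r : E_r^{n,t,u} \to E_r^{n+r,t+r-1,u}$ map to the classical $d_r : E_r^{n,t} \to E_r^{n+r,t+r-1}$, which is automatic from the shape of the exact couples. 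Hence the corollary follows.
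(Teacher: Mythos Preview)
Your proposal is correct and follows exactly the same approach as the paper. The paper in fact gives no explicit proof of this corollary at all, stating only that it follows because ``Betti realization preserves cofiber sequences and is strong symmetric monoidal''; you have simply unpacked this one-line justification into the obvious argument, identifying $\mathrm{Re}_B(\overline{kq}) \simeq \overline{bo}$ via the cofiber sequence and then checking that the canonical tower maps to the canonical tower by monoidality.
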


\section{Analysis of the $E_1$-page}\label{Section:Analysis}

Just as in the classical situation, the spectrum $kq$ does not satisfy Adams' flatness condition. Consequently, the $E_2$-page of the $kq$-resolution does not have the usual algebraic description as an $\Ext$-group and we need to analyze its $E_1$-page. Since the homotopy groups $\pi_{**}(kq\wedge \overline{kq}^{\wedge s})$ make up the $E_1$-term of the $kq$-Adams spectral sequence, we begin by studying the $1$-line $\pi_{**}(kq\wedge \overline{kq})$. These constitute the $1$-line, which together with the known $0$-line, will be shown to detect the $v_1$-periodic and $\eta$-periodic stable stems. 

In this section and subsequent sections, we we will work over the field $\c$ of complex numbers for concreteness. By the remarks in Subsection \ref{subsec: other base fields}, what is shown below also holds over any algebraically closed field of characteristic 0.

\subsection{The $E_1$-page of the $bo$-resolution}

To start, we recall the analysis of the $E_1$-page of the $bo$-resolution. By \cite[Thm. 2.4]{Mah81}, there is a homotopy equivalence 
$$bo \wedge bo \simeq \bigvee_{i = 0}^\infty \Sigma^{4i} bo\wedge H\z_i^{cl}$$
where $H\z_i^{cl}$ is the $i$-th classical Brown-Gitler spectrum. Replacing the right-hand copy of $bo$ by $\overline{bo}$ gives the homotopy equivalence
$$bo \wedge \overline{bo} \simeq \bigvee_{i = 1}^\infty \Sigma^{4i}bo\wedge  H\z_i^{cl}.$$
Mahowald extends this description of the $1$-line of the $E_1$-page of the $bo$-resolution to the entire $E_1$-page via the K\"unneth isomorphism in the proof of \cite[Thm. 5.11]{Mah81}. In particular, one has
$$bo \wedge \overline{bo}^{\wedge n} \simeq \bigvee_{i_1,\ldots,i_n} \Sigma^{4(i_1+\cdots + i_n)}bo \wedge H\z_{i_1}^{cl} \wedge \cdots \wedge H\z_{i_n}^{cl}$$
where $i_j >0$ for all $1 \leq j \leq n$. Our goal in the remainder of the section is to obtain an analogous decomposition in the $\c$-motivic setting. 

\subsection{Motivic Brown-Gitler modules}\label{Section:BrownGitler}
We begin by recalling some facts about the motivic dual Steenrod algebra $A^\vee$ and the very effective cover of Hermitian K-theory $kq$ from ~\cite{Voe03} and ~\cite{ARO17}, respectively. We then define motivic integral Brown-Gitler modules following ~\cite{BOSS19}.

Let $\m_2 := \f_2[\tau]$, $|\tau| = (0,-1)$, denote the mod two motivic homology of a point. 

\begin{thm}\cite[Sec. 12]{Voe03}\label{thm: dual steenrod algebra}
The dual motivic Steenrod algebra $A^\vee$ is given by
$$\m_2[\bar{\xi}_1,\bar{\xi}_2,\ldots,\bar{\tau}_0,\bar{\tau}_1,\ldots]/(\bar{\tau}^2_i = \tau \bar{\xi}_{i+1}),$$
where
\begin{align*}
	|\oxi_i| = (2^{i+1}-2, 2^i-1), & &|\otau_i| = (2^{i+1}-1, 2^i-1).
\end{align*}
The coproduct is determined by 
\begin{align*}
	\psi(\oxi_k)= \sum_{i+j=k} \oxi_i\otimes \oxi_j^{2^i}, & & \psi(\otau_k)= \sum_{i+j=k}\otau_i\otimes \oxi_j^{2^i}+ 1\otimes \otau_k.
\end{align*}
\end{thm}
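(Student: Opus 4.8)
This statement is Voevodsky's computation of the mod-$2$ dual motivic Steenrod algebra \cite{Voe03}, so strictly the ``proof'' is the reference; here is the plan I would follow to reconstruct it. The strategy is the motivic analogue of Milnor's classical computation: construct the motivic Steenrod squares, compute their action on the motivic cohomology of the classifying space $B\mu_2$, dualize to obtain an $A^\vee$-comodule structure on $H_{**}(B\mu_2;\f_2)$, and extract the generators $\oxi_i$, $\otau_i$, their relations, and the coproduct as structure constants of that coaction, using a detection argument over products of $B\mu_2$ to show nothing further occurs. Concretely, the first and hardest step is to build the squares $Sq^i\in A$ via Voevodsky's total power operation $P$, obtained from the $\Sigma_2$-equivariant external square on $H^{2n,n}(-;\f_2)$ together with the computations of $H^{**}(B\mu_2;\f_2)$ and $H^{**}(B\Sigma_2;\f_2)$; one then extracts $Sq^{2i}$ and $Sq^{2i+1}$ and establishes $Sq^0=\mathrm{id}$, $Sq^1=\beta$, instability, the motivic Cartan formula, and the motivic Adem relations. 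These imply that the admissible monomials $Sq^I$ form an $\m_2$-basis of $A$, which determines the bigraded Poincar\'e series of $A$ and hence of its $\m_2$-dual $A^\vee$. I expect this step to be the main obstacle: in the motivic setting the class $\tau$ (and, over non-closed fields, $\rho=[-1]$) genuinely enters the Cartan and Adem formulas, and the construction itself requires the full apparatus of motivic cohomology operations, Thom isomorphisms, and the cohomology of $\mu_2$-classifying spaces.

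Next I would compute the action on $B\mu_2$ and dualize. Over $\c$ one has $\rho=0$, so $H^{**}(B\mu_2;\f_2)\cong\m_2[u,v]/(u^2-\tau v)$ with $|u|=(1,1)$, $|v|=(2,1)$, $v=\beta u$; instability forces $Sq(u)=u+v$ and $Sq(v)=v+v^2$. Dualizing the $A$-module structure on $H^{**}(B\mu_2;\f_2)$ produces an $A^\vee$-comodule structure on $H_{**}(B\mu_2;\f_2)$, and expanding the coaction on the monomial dual basis yields, as its structure constants, classes $\oxi_i\in A^\vee$ of bidegree $(2^{i+1}-2,\,2^i-1)$ and $\otau_i\in A^\vee$ of bidegree $(2^{i+1}-1,\,2^i-1)$ — the $\oxi_i$ coming from the $v$-part exactly as in the classical computation of $A_*$, and the additional family $\otau_i$ from the $u$-part, i.e.\ from the Bockstein.

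Finally I would pin down the relations, the coproduct, and exhaustiveness. The relation $\otau_i^2=\tau\oxi_{i+1}$ is forced by $u^2=\tau v$ in $H^{**}(B\mu_2;\f_2)$ together with multiplicativity of the coaction, and coassociativity of the coaction — tested against $B\mu_2\times B\mu_2$ — yields $\psi(\oxi_k)=\sum_{i+j=k}\oxi_i\otimes\oxi_j^{2^i}$ and $\psi(\otau_k)=\sum_{i+j=k}\otau_i\otimes\oxi_j^{2^i}+1\otimes\otau_k$. To see that the evident map $\m_2[\oxi_1,\oxi_2,\dots,\otau_0,\otau_1,\dots]/(\otau_i^2-\tau\oxi_{i+1})\to A^\vee$ is an isomorphism, I would argue as in Milnor's classical case: the coactions on $H_{**}((B\mu_2)^{\times N};\f_2)$ for $N\gg 0$ detect all of $A^\vee$, which forces the $\oxi_i$ and $\otau_i$ to be algebraically independent modulo $\otau_i^2-\tau\oxi_{i+1}$ and to generate, and a bigraded Poincar\'e-series comparison with the admissible-monomial count from the first step confirms there are no further relations.
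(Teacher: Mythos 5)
The paper does not prove this theorem; it is stated as a citation of Voevodsky \cite[Sec.~12]{Voe03}, so there is no in-paper argument to compare against. Your sketch is a faithful reconstruction of Voevodsky's argument (the Milnor-style computation via power operations, the cohomology of $B\mu_2$, dualization, and a detection/counting argument), and you correctly flag that the relation $\otau_i^2 = \tau\oxi_{i+1}$ in the form stated here uses $\rho=0$, which matches the paper's standing assumption of an algebraically closed base field from Section~3 onward.
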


\begin{defin}(compare with \cite[Def. 3.2.1]{Gre12} and \cite[Def. 2.4]{IS11})
For any $n\geq 0$, define an ideal $I(n) \subseteq A^\vee$ by
$$I(n) := (\bar{\tau}_{n+1},\bar{\tau}_{n+2},\ldots,\bar{\xi}_1^{2^n},\bar{\xi}_2^{2^{n-1}},\ldots,\bar{\xi}_n^2,\bar{\xi}_{n+1},\ldots).$$
We define the quotient $A^\vee(n) \subseteq A^\vee$ by
$$A(n)^\vee := A^\vee / I(n).$$
For $n \geq 0$, define $A(n)$ to be the $\m_2$-subalgebra of $A$ defined by
$$A(n) := \langle Sq^1, Sq^2, \ldots, Sq^{2^n} \rangle,$$
so
$$(A /\kern-0.25em/ A(n))^\vee \cong \m_2[\bar{\xi}_1^{2^{n}},\bar{\xi}_2^{2^{n-1}},\ldots,\bar{\tau}_{n+1},\bar{\tau}_{n+2},\ldots]/(\bar{\tau}^2_i = \tau \bar{\xi}_{i+1}).$$
\end{defin}

Observe that $H^{**}(H\z) \cong A/\kern-0.25em/A(0)$, and dually, $H_{**}(H\z) \cong (A/\kern-0.25em/A(0))^\vee$. We have the following:

\begin{thm}\cite{ARO17}
The mod two motivic cohomology of $kq$ is given by 
$$H^{**}(kq) \cong A/\kern-0.25em/ A(1).$$
\end{thm}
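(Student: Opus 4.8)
The plan is to pin down $H^{**}(kq)$ as a module over the motivic Steenrod algebra $A$ in two stages: first show it is a cyclic $A$-module that is visibly a quotient of $A/\kern-0.25em/A(1)$, and then check that the resulting surjection $A/\kern-0.25em/A(1)\twoheadrightarrow H^{**}(kq)$ is an isomorphism by a bigraded Poincar\'e series count. Throughout, write $\mathrm{Poin}(M)(s,w)$ for the Poincar\'e series of an $\m_2$-free $A$-module $M$, with $s$ and $w$ recording the two gradings (degree and weight).

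The main geometric ingredient is the motivic Wood cofiber sequence $\Sigma^{1,1}kq\xrightarrow{\eta}kq\to kgl$ --- equivalently $kq\wedge C\eta\simeq kgl$, with $C\eta$ the cofiber of $\eta\colon S^{1,1}\to S^{0,0}$ and $kgl$ the effective cover of algebraic $K$-theory --- which is established in \cite{ARO17} by comparing the slice towers of $KQ$ and $KGL$; here $kq\to kgl$ is complexification, a unital ring map. Since $\eta$ has trivial mod $2$ Hurewicz image, multiplication by $\eta$ acts as zero on $H_{**}(-;\f_2)$, so $\eta^*=0$ on $H^{**}(-;\f_2)$. Smashing $kq$ with the cofiber sequences $S^{1,1}\xrightarrow{\eta}S^{0,0}\to C\eta$ and $S^{0,0}\to C\eta\to S^{2,1}$ and applying $H^{**}(-;\f_2)$ therefore collapses the associated long exact sequences into a short exact sequence of $\m_2$-modules
\[ 0\longrightarrow \Sigma^{2,1}H^{**}(kq)\longrightarrow H^{**}(kgl)\xrightarrow{\ \mathrm{can}^*\ } H^{**}(kq)\longrightarrow 0, \]
with $\mathrm{can}^*$ an $A$-linear surjection sending the fundamental class of $kgl$ to that of $kq$. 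Invoking the known identification $H^{**}(kgl)\cong A/\kern-0.25em/E(1)$, where $E(1)=\Lambda_{\m_2}(Q_0,Q_1)$ with $|Q_0|=(1,0)$, $|Q_1|=(3,1)$ (itself extracted from the slices $\Sigma^{2q,q}H\z$ of $KGL$), we conclude that $H^{**}(kq)$ is cyclic --- a quotient of $A/\kern-0.25em/E(1)$ --- generated by the fundamental class $\iota$, so $H^{**}(kq)\cong A/J$ for a left ideal $J\supseteq A\cdot E(1)^{>0}$.

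To upgrade this to ``$H^{**}(kq)$ is a quotient of $A/\kern-0.25em/A(1)$'' it suffices to show $Sq^2\iota=0$. Using that $A$ is free over $A(1)$ and over $E(1)$ one has $A/\kern-0.25em/E(1)\cong A/\kern-0.25em/A(1)\otimes_{\m_2}A(1)/\kern-0.25em/E(1)$ as $\m_2$-modules, and $A(1)/\kern-0.25em/E(1)\cong\Lambda_{\m_2}(Sq^2)$ has Poincar\'e series $1+s^2w$; the short exact sequence above then forces
\[ (1+s^2w)\,\mathrm{Poin}(H^{**}(kq)) = \mathrm{Poin}(A/\kern-0.25em/E(1)) = (1+s^2w)\,\mathrm{Poin}(A/\kern-0.25em/A(1)), \]
hence $\mathrm{Poin}(H^{**}(kq))=\mathrm{Poin}(A/\kern-0.25em/A(1))$. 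In particular $H^{2,1}(kq)=(A/\kern-0.25em/A(1))^{2,1}=0$, since every algebra generator of $(A/\kern-0.25em/A(1))^\vee$ other than $\tau$ has first degree $\geq 4$. Thus $Sq^2\iota=0$, so $J\supseteq A\cdot E(1)^{>0}+A\cdot Sq^2 = A\cdot A(1)^{>0}$, and the canonical $A$-linear surjection $A/\kern-0.25em/A(1)=A/(A\cdot A(1)^{>0})\twoheadrightarrow A/J=H^{**}(kq)$ is a degreewise surjection of finite-dimensional $\f_2$-vector spaces with equal Poincar\'e series, hence an isomorphism.

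The real content sits in the two geometric inputs: constructing the Wood cofiber sequence at the level of the \emph{very effective} cover $kq$ (one must check $kq\wedge C\eta$ is again very effective and agrees with $kgl$, which is where \cite{ARO17} works hard with the slice filtration), and the computation $H^{**}(kgl)\cong A/\kern-0.25em/E(1)$; granting these, the remaining long exact sequence manipulation, cyclicity argument, and Poincar\'e series bookkeeping are formal. One could instead bypass $kgl$ and compute $H_{**}(kq)$ directly from the very effective slice spectral sequence of $kq$ --- whose slices $\widetilde{s}_q(kq)$ are explicit wedges of shifted $H\z$'s and $H\f_2$'s --- showing it collapses and then fixing the $A^\vee$-comodule structure by naturality along $S^{0,0}\to kq$ and $kq\to H\z$; on that route the main obstacle becomes controlling the comodule extensions rather than building the Wood equivalence.
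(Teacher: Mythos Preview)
The paper does not prove this statement; it is quoted as a result of Ananyevskiy--R{\o}ndigs--{\O}stv{\ae}r and cited to \cite{ARO17} without further argument. Your proposal is therefore not competing with a proof in the paper but rather reconstructing one, and the route you take---the Wood cofiber sequence $kq\wedge C\eta\simeq kgl$, the vanishing of $\eta$ on mod~$2$ (co)homology, the known $H^{**}(kgl)\cong A/\kern-0.25em/E(1)$, and a Poincar\'e series count---is essentially the argument one finds in \cite{ARO17} and the surrounding literature.

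Two small comments on execution. First, your claim that $A(1)/\kern-0.25em/E(1)\cong\Lambda_{\m_2}(Sq^2)$ is delicate motivically because of the relation $\otau_0^2=\tau\oxi_1$ in the dual; the Poincar\'e identity $(1+s^2w)\,\mathrm{Poin}(A/\kern-0.25em/A(1))=\mathrm{Poin}(A/\kern-0.25em/E(1))$ you actually need is cleaner to verify on the dual side, where $(A/\kern-0.25em/E(1))^\vee$ differs from $(A/\kern-0.25em/A(1))^\vee$ only in containing $\oxi_1$ rather than just $\oxi_1^2$, giving the $\m_2$-module splitting $(A/\kern-0.25em/E(1))^\vee\cong(A/\kern-0.25em/A(1))^\vee\oplus\oxi_1\cdot(A/\kern-0.25em/A(1))^\vee$. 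Second, you correctly flag that the genuine work lives in the two geometric inputs (the very effective Wood equivalence and the computation for $kgl$); both are carried out in \cite{ARO17} via the slice filtration, so your sketch is honest about where the content lies.
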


We obtain the following by dualizing. 

\begin{cor}\label{Cor:H_*kq}
The mod two motivic homology of $kq$ is given by
$$H_{**}(kq) \cong (A /\kern-0.25em/ A(1))^\vee \cong \m_2[\bar{\xi}_1^2,\bar{\xi}_2,\ldots,\bar{\tau}_2,\bar{\tau}_3,\ldots]/(\bar{\tau}_i^2 = \tau \bar{\xi}_{i+1}).$$
\end{cor}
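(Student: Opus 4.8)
The plan is to dualize the known computation $H^{**}(kq)\cong A/\kern-0.25em/A(1)$ of \cite{ARO17} and then read off the result as an explicit subalgebra of the dual motivic Steenrod algebra $A^\vee$.

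I would begin by recording the duality input. Since $kq$ is a very effective --- hence connective --- cellular motivic spectrum whose mod two cohomology is degreewise finitely generated over $\m_2$, its $\m_2$-homology and $\m_2$-cohomology are graded $\m_2$-linear duals of one another, with the $A^\vee$-comodule structure on $H_{**}(kq)$ dual to the $A$-module structure on $H^{**}(kq)$; moreover $H_{**}(kq)$ is $\m_2$-free, so no derived functors enter. Applying $\m_2$-linear duality to the isomorphism $H^{**}(kq)\cong A/\kern-0.25em/A(1)$, and using that $A/\kern-0.25em/A(1)$ is again connective and degreewise finite over $\m_2$ so that dualization is self-inverse on such objects, produces an isomorphism of $A^\vee$-comodules $H_{**}(kq)\cong (A/\kern-0.25em/A(1))^\vee$. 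Since $kq$ is a homotopy ring spectrum, $H_{**}(kq)$ is an $A^\vee$-comodule algebra, and one checks that this isomorphism respects the algebra structures.

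It remains to identify $(A/\kern-0.25em/A(1))^\vee$ with the displayed polynomial algebra, which is the $n=1$ case of the description of $(A/\kern-0.25em/A(n))^\vee$ recorded in the Definition above. To re-derive it directly: $A/\kern-0.25em/A(1)=A\otimes_{A(1)}\m_2$ is the quotient $A$-module $A/(A\cdot\overline{A(1)})$, so its dual is the sub-$A^\vee$-comodule of $A^\vee$ annihilating $A\cdot\overline{A(1)}$, equivalently the cotensor product $A^\vee\,\square_{A(1)^\vee}\,\m_2$ along the quotient $A^\vee\twoheadrightarrow A(1)^\vee=A^\vee/I(1)$. Using the presentation of $A^\vee$ and its coproduct from Theorem~\ref{thm: dual steenrod algebra}, the generators $\bar\xi_1^2,\bar\xi_2,\bar\xi_3,\dots$ and $\bar\tau_2,\bar\tau_3,\dots$ are exactly those vanishing in $A(1)^\vee$, and the coproduct formulas show that the $\m_2$-subalgebra they generate, subject to $\bar\tau_i^2=\tau\bar\xi_{i+1}$, lies in the cotensor product and --- by a bidegreewise dimension count against $A^\vee$ --- exhausts it. This yields $(A/\kern-0.25em/A(1))^\vee\cong\m_2[\bar\xi_1^2,\bar\xi_2,\dots,\bar\tau_2,\bar\tau_3,\dots]/(\bar\tau_i^2=\tau\bar\xi_{i+1})$, which combined with the dualization completes the proof.

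The argument is essentially formal, so there is no serious obstacle; the one point requiring care is this last identification, where one must confirm that the candidate subalgebra is neither too small nor too large by matching the coproduct formulas of Theorem~\ref{thm: dual steenrod algebra} against the definition of $I(1)$ and comparing Poincar\'e series bidegree by bidegree, throughout keeping track of the motivic relation $\bar\tau_i^2=\tau\bar\xi_{i+1}$ that has no classical counterpart. The homotopy-theoretic input needed for the dualization --- connectivity and finite type of $kq$ --- is standard and is already implicit in the very effective construction of $kq$.
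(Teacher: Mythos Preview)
Your proposal is correct and follows the same approach as the paper: the paper's proof consists of the single sentence ``We obtain the following by dualizing,'' applied to the preceding theorem $H^{**}(kq)\cong A/\kern-0.25em/A(1)$ from \cite{ARO17}, with the explicit description of $(A/\kern-0.25em/A(1))^\vee$ already recorded in the definition above. You have simply supplied the details the paper leaves implicit.
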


We will also need to know the structure of $H_{**}(kq)$ as a comodule over the dual Steenrod algebra. Note that $(A/\kern-0.25em/ A(1))^\vee$ is a subalgebra of the dual Steenrod algebra. The coaction is then obtained by restriction, yielding 
\[
\psi: (A/\kern-0.25em/A(1))^\vee \to A^\vee \otimes (A/\kern-0.25em/ A(1))^\vee.
\] 
In the next subsection, we will calculate the homotopy groups of $kq\wedge kq$ via the motivic Adams spectral sequence. The $E_2$-term of this spectral sequence is given by $\Ext_{A^\vee}((A/\kern-0.25em/A(1))^\vee\otimes (A/\kern-0.25em/ A(1))^\vee)$, where our convention is that 
$$\Ext_{A^\vee}(M) := \Ext_{A^\vee}(\m_2,M).$$
A change-of-rings isomorphism yields the $E_2$-term
\[
E_2 \cong \Ext_{A(1)^{\vee}}((A/\kern-0.25em/A(1))^\vee) \Rightarrow \pi_{**}(kq \wedge kq).
\]
Thus we want to know $(A/\kern-0.25em/A(1))^\vee$ as a comodule over $A(1)^\vee$. It follows from the definitions that there is a projection $\pi: A^\vee\to A(1)^\vee$. The $A(1)^\vee$-coaction on $A/\kern-0.25em/A(1)^\vee$ is given by the composite
\[
\begin{tikzcd}
	(A/\kern-0.25em/A(1))^\vee\arrow[r, "\psi"] & (A^\vee\otimes A/\kern-0.25em/A(1))^\vee \arrow[r, "\pi\otimes 1"] & A(1)^\vee\otimes (A/\kern-0.25em/A(1))^\vee.
\end{tikzcd}
\]
In order to have a practical means of calculating this coaction, we need the following. 

\begin{prop}
	As a Hopf algebra, we have 
	\[
	A(1)^\vee\cong \m_2[\oxi_1, \otau_0, \otau_1]/(\otau_0^2-\tau\oxi_1, \otau_1^2, \oxi_1^2)
	\]
\end{prop}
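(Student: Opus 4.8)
The plan is to compute $A(1)^\vee$ directly as a quotient of the dual motivic Steenrod algebra $A^\vee$, using the explicit presentation in Theorem \ref{thm: dual steenrod algebra} together with the definition of the ideal $I(1)$. By definition $A(1)^\vee = A^\vee/I(1)$, where
\[
I(1) = (\bar{\tau}_2, \bar{\tau}_3, \ldots, \bar{\xi}_1^2, \bar{\xi}_2, \bar{\xi}_3, \ldots).
\]
So the first step is to read off the generators of $A^\vee$ that survive in the quotient: since every $\bar{\xi}_i$ for $i \geq 2$ and every $\bar{\tau}_j$ for $j \geq 2$ is killed, and $\bar{\xi}_1$ survives only modulo $\bar{\xi}_1^2$, the image is generated as an $\m_2$-algebra by $\bar{\xi}_1$, $\bar{\tau}_0$, and $\bar{\tau}_1$. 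This gives a surjection $\m_2[\bar{\xi}_1,\bar{\tau}_0,\bar{\tau}_1] \twoheadrightarrow A(1)^\vee$.

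Next I would identify the relations. In $A^\vee$ we have $\bar{\tau}_i^2 = \tau\bar{\xi}_{i+1}$ for all $i \geq 0$. Setting $i=0$ gives $\bar{\tau}_0^2 = \tau\bar{\xi}_1$, which survives as the relation $\bar{\tau}_0^2 = \tau\bar{\xi}_1$ in $A(1)^\vee$. Setting $i=1$ gives $\bar{\tau}_1^2 = \tau\bar{\xi}_2$, and since $\bar{\xi}_2 \in I(1)$ this becomes $\bar{\tau}_1^2 = 0$. Finally $\bar{\xi}_1^2 \in I(1)$ gives $\bar{\xi}_1^2 = 0$. To see that these three relations generate \emph{all} the relations, one checks on $\m_2$-module bases: the quotient ring $\m_2[\bar{\xi}_1,\bar{\tau}_0,\bar{\tau}_1]/(\bar{\tau}_0^2 - \tau\bar{\xi}_1,\ \bar{\tau}_1^2,\ \bar{\xi}_1^2)$ is a free $\m_2$-module on the monomials $\bar{\tau}_0^{\epsilon_0}\bar{\tau}_1^{\epsilon_1}\bar{\xi}_1^{\epsilon_2}$ with $\epsilon_i \in \{0,1\}$ — an $8$-dimensional free module — and this matches the known $\m_2$-basis of $A(1)^\vee$ coming from dualizing the standard $\m_2$-basis of $A(1)$ (the monomials in $Sq^1, Sq^2$). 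Comparing ranks in each bidegree forces the surjection to be an isomorphism of rings.

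It remains to promote this to an isomorphism of Hopf algebras, i.e.\ to check that the coproduct on $A(1)^\vee$ — obtained by restricting $\psi$ along the inclusion and projecting — agrees with the coproduct determined by the formulas of Theorem \ref{thm: dual steenrod algebra} reduced modulo $I(1)$. Concretely: $\psi(\bar{\xi}_1) = \bar{\xi}_1 \otimes 1 + 1 \otimes \bar{\xi}_1$, $\psi(\bar{\tau}_0) = \bar{\tau}_0 \otimes 1 + 1 \otimes \bar{\tau}_0$, and $\psi(\bar{\tau}_1) = \bar{\tau}_1 \otimes 1 + \bar{\tau}_0 \otimes \bar{\xi}_1 + 1 \otimes \bar{\tau}_1$; since $I(1)$ is a Hopf ideal (the comultiplication of each listed generator lands in $A^\vee \otimes I(1) + I(1) \otimes A^\vee$, which is routine to verify term by term from the coproduct formulas), these descend to the quotient unchanged, exhibiting the displayed presentation as a Hopf algebra isomorphism.

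The routine part is the basis-counting and degree-matching; the one place to be a little careful is confirming that $I(1)$ really is a Hopf ideal so that $A(1)^\vee$ inherits a Hopf algebra structure and the coproduct formulas are literally the reductions of those in $A^\vee$ — in particular checking the terms $\bar{\xi}_0^{2^i}$-type contributions (with $\bar{\xi}_0 = 1$) and the effect of the relations $\bar{\tau}_i^2 = \tau\bar{\xi}_{i+1}$ on which monomials lie in $I(1)$. I do not expect any genuine obstacle here — the statement is the motivic mirror of the classical computation of $A(1)^\vee$, and the extra generator $\bar{\tau}_0$ with its relation $\bar{\tau}_0^2 = \tau\bar{\xi}_1$ is exactly what one expects from the $\tau$-twisted structure of the motivic dual Steenrod algebra.
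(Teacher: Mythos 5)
The paper states this proposition without proof, treating it as an immediate consequence of the definition $A(1)^\vee := A^\vee/I(1)$; your argument is correct and is essentially the intended one. You correctly identify the surviving generators, the three relations, and the coproduct, and your check that $I(1)$ is a Hopf ideal is sound.

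One small point of taste and one small logical dependency worth flagging. The basis-counting step — where you identify your presentation with the rank-$8$ dual of $A(1)$ (generated by $Sq^1, Sq^2$) — is not strictly needed and quietly imports a separate fact, namely that the pairing between the subalgebra $A(1) \subseteq A$ and the quotient $A^\vee/I(1)$ is perfect. The paper's definition of $A(1)^\vee$ is as the quotient $A^\vee/I(1)$, not as a linear dual, so you can avoid the duality statement entirely by computing the quotient directly: write $A^\vee = \m_2[\oxi_1,\oxi_2,\ldots,\otau_0,\otau_1,\ldots]/(\otau_i^2 - \tau\oxi_{i+1})$ and impose $\oxi_j = 0$ for $j\ge 2$, $\otau_j = 0$ for $j\ge 2$, and $\oxi_1^2 = 0$. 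Then the original relations for $i\ge 2$ become $0=0$, the relation for $i=1$ becomes $\otau_1^2 = 0$, and the relation for $i=0$ survives unchanged, giving $\m_2[\oxi_1,\otau_0,\otau_1]/(\otau_0^2-\tau\oxi_1,\otau_1^2,\oxi_1^2)$ by one application of the third isomorphism theorem — no rank comparison required. The Hopf-algebra part of your argument is fine as written.
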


Observe also that $A/\kern-0.25em/A(1)^\vee$ is a comodule algebra (since $kq$ is an commutative ring spectrum). Thus the coaction is completely determined by its values on $\oxi_i$ and $\otau_j$. 

\begin{cor}\label{cor: A(1) coaction}
	The $A(1)^\vee$-coaction on $A/\kern-0.25em/A(1)^\vee$ is completely determined by the following formulas
	\begin{align*}
		\psi(\oxi_1^2)&= 1\otimes \oxi_1^2, & \\
		\psi(\oxi_k) &= 1\otimes \oxi_k  + \oxi_1\otimes \oxi_{k-1}^2& \text{ for } k>1,\\
		\psi(\otau_k)&= 1\otimes \otau_k+ \otau_0\otimes \oxi_k+ \otau_1\otimes \oxi_{k-1}^2 & \text{ for } k>1.
	\end{align*}
	
\end{cor}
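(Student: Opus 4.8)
The proof will proceed by directly evaluating the composite coaction $(\pi\otimes 1)\circ\psi$ on the polynomial generators of $(A/\kern-0.25em/A(1))^\vee$, namely $\oxi_1^2$ together with $\oxi_k$ and $\otau_k$ for $k>1$. Since $kq$ is a commutative ring spectrum, $(A/\kern-0.25em/A(1))^\vee$ is a comodule algebra over $A(1)^\vee$, so the coaction is a map of algebras; hence its values on these generators, subject to multiplicativity and the relations $\otau_i^2=\tau\oxi_{i+1}$ (which any algebra map automatically respects), determine it completely, and it suffices to verify the three displayed formulas.

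The first step is to record the effect of the projection $\pi\colon A^\vee\to A(1)^\vee=A^\vee/I(1)$ on the generators of $A^\vee$. Because $I(1)=(\otau_2,\otau_3,\dots,\oxi_1^2,\oxi_2,\oxi_3,\dots)$, the map $\pi$ fixes $\oxi_1,\otau_0,\otau_1$ and annihilates every other polynomial generator, as well as $\oxi_1^2$ (and hence each $\oxi_1^{2^i}$ with $i\geq 1$). Consequently, writing elements of $A^\vee\otimes A^\vee$ in the evident monomial basis, a summand survives $\pi\otimes 1$ only if its left tensor factor is a scalar multiple of $1$, $\oxi_1$, $\otau_0$, or $\otau_1$.

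Now I would apply $(\pi\otimes 1)$ to the coproduct formulas of Theorem \ref{thm: dual steenrod algebra}. For $\oxi_1^2$ one has $\psi(\oxi_1^2)=(1\otimes\oxi_1+\oxi_1\otimes 1)^2=1\otimes\oxi_1^2+\oxi_1^2\otimes 1$ in characteristic two, and the second summand dies, leaving $1\otimes\oxi_1^2$. For $\oxi_k$ with $k>1$, only the $i=0$ and $i=1$ terms of $\psi(\oxi_k)=\sum_{i+j=k}\oxi_i\otimes\oxi_j^{2^i}$ have a surviving left factor, so the coaction equals $1\otimes\oxi_k+\oxi_1\otimes\oxi_{k-1}^2$ (the second term being $\oxi_1\otimes\oxi_1^2$ when $k=2$). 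For $\otau_k$ with $k>1$, $\psi(\otau_k)=\sum_{i+j=k}\otau_i\otimes\oxi_j^{2^i}+1\otimes\otau_k$, and the surviving contributions are the $i=0$ term $\otau_0\otimes\oxi_k$, the $i=1$ term $\otau_1\otimes\oxi_{k-1}^2$, and the extra summand $1\otimes\otau_k$, yielding $1\otimes\otau_k+\otau_0\otimes\oxi_k+\otau_1\otimes\oxi_{k-1}^2$. In each case the right-hand tensor factors lie in $(A/\kern-0.25em/A(1))^\vee$, so the outputs are genuine elements of $A(1)^\vee\otimes(A/\kern-0.25em/A(1))^\vee$, which completes the proof. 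The argument is essentially mechanical; the only things requiring care are the bookkeeping of which generators $\pi$ kills (so that only the $i\leq 1$ terms of each coproduct contribute) and the low-degree edge cases $\oxi_1^2$ and $k=2$, so no genuine obstacle arises.
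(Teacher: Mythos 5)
Your proof is correct and follows exactly the approach the paper intends: apply $\pi\otimes 1$ to the coproduct formulas of Theorem~\ref{thm: dual steenrod algebra}, using the fact that the coaction is an algebra map so it suffices to compute on generators. The paper states the corollary without writing out this routine computation, and your argument fills in those details faithfully (with the minor imprecision that your general claim about which left tensor factors survive $\pi\otimes 1$ is stated for arbitrary monomials but only holds — and is only needed — for the single-generator left factors $\oxi_i$, $\otau_i$, $1$ actually appearing in the relevant coproducts).
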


We will also need to use the $A(1)^\vee$-comodule structure of $A/\kern-0.25em/A(0)^\vee$. Computing this coaction is very similar to the analysis above; we record the result here for the reader's convenience:

\begin{prop}\label{prop: A(1)-comodule structure AmodmodA(0)}
	The subalgebra $A/\kern-0.25em/A(0)^\vee$ is an $A(1)^\vee$-comodule algebra and its coaction is completely determined on its algebra generators. These are given by the following formulas:
	\begin{align*}
		\psi(\oxi_1) &= \oxi_1\otimes 1+1\otimes \oxi_1, & \\
		\psi(\oxi_k) &= 1\otimes \oxi_k +\oxi_1\otimes \oxi_{k-1}^2 & k>2, \\
		\psi(\otau_1) &= \otau_1\otimes 1+ \otau_0\otimes \oxi_1+1\otimes \otau_1, & \\
		\psi(\otau_k) &= \otau_0\otimes \oxi_k+ \otau_1\otimes\oxi_{k-1}^2+ 1\otimes \otau_k & k>1 .
	\end{align*}
\end{prop}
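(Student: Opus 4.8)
The plan is to mirror the derivation of Corollary~\ref{cor: A(1) coaction}, replacing $A(1)$ by $A(0)$ throughout. First I would recall that $(A/\kern-0.25em/A(0))^\vee\cong\m_2[\oxi_1,\oxi_2,\ldots,\otau_1,\otau_2,\ldots]/(\otau_i^2=\tau\oxi_{i+1})$ is a subalgebra of $A^\vee$, and check that it is closed under the coproduct $\psi$ of Theorem~\ref{thm: dual steenrod algebra}, i.e.\ $\psi\bigl((A/\kern-0.25em/A(0))^\vee\bigr)\subseteq A^\vee\otimes(A/\kern-0.25em/A(0))^\vee$. This is immediate from $\psi(\oxi_k)=\sum_{i+j=k}\oxi_i\otimes\oxi_j^{2^i}$ and $\psi(\otau_k)=\sum_{i+j=k}\otau_i\otimes\oxi_j^{2^i}+1\otimes\otau_k$: for $k\geq 1$ every right-hand tensor factor is a polynomial in the $\oxi_j$ or equals $\otau_k$ with $k\geq 1$, and the one algebra generator of $A^\vee$ absent from $(A/\kern-0.25em/A(0))^\vee$, namely $\otau_0$, never appears on the right. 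Thus $\psi$ restricts to a left $A^\vee$-comodule algebra structure on $(A/\kern-0.25em/A(0))^\vee$---the one carried topologically by $H_{**}(H\z)$, exactly as for $H_{**}(kq)$---and composing with $\pi\otimes 1$, for the Hopf algebra quotient $\pi\colon A^\vee\to A(1)^\vee$, yields the asserted $A(1)^\vee$-comodule algebra structure.

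Next I would observe that this coaction is a map of $\m_2$-algebras and that $(A/\kern-0.25em/A(0))^\vee$ is generated as an algebra by the $\oxi_i$ ($i\geq 1$) together with the $\otau_j$ ($j\geq 1$); this establishes the ``completely determined on its algebra generators'' clause and reduces the problem to evaluating $(\pi\otimes 1)\circ\psi$ on each of these generators. To do so I would apply the coproduct formulas above and project via $\pi$, using that $\pi$ is the identity on $\oxi_1,\otau_0,\otau_1$ and annihilates $\oxi_1^2$, every $\oxi_k$ with $k\geq 2$, and every $\otau_k$ with $k\geq 2$. For $k\geq 2$ this leaves in $\psi(\oxi_k)$ only the summands $1\otimes\oxi_k$ and $\oxi_1\otimes\oxi_{k-1}^2$, and in $\psi(\otau_k)$ only $\otau_0\otimes\oxi_k$, $\otau_1\otimes\oxi_{k-1}^2$, and $1\otimes\otau_k$; the generators $\oxi_1$ and $\otau_1$ are handled the same way and keep one further summand each, as recorded in the statement. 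Reading off the surviving terms gives the displayed formulas.

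I do not expect a genuine obstacle here: the content is the bookkeeping just described, together with two standard facts already invoked implicitly in the discussion preceding Corollary~\ref{cor: A(1) coaction}---that $I(1)$ is a Hopf ideal of $A^\vee$, so that $\pi$ is a map of Hopf algebras and $(\pi\otimes 1)\psi$ really does define a comodule coaction, and that the algebraically defined comodule structure on $(A/\kern-0.25em/A(0))^\vee$ agrees with the structure induced on $H_{**}(H\z)$ by the $H\f_2$-homology coaction. The single point that warrants a second look is that the displayed $\oxi_k$-formula omits the case $k=2$; the same computation shows it holds there as well, since under $\pi\otimes 1$ the coproduct $\psi(\oxi_2)=1\otimes\oxi_2+\oxi_1\otimes\oxi_1^2+\oxi_2\otimes 1$ retains precisely $1\otimes\oxi_2$ and $\oxi_1\otimes\oxi_1^2$.
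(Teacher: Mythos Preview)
Your proposal is correct and takes essentially the same approach as the paper, which in fact omits the proof entirely, remarking only that ``computing this coaction is very similar to the analysis above'' (the analysis leading to Corollary~\ref{cor: A(1) coaction}). Your write-up fills in precisely those details---restricting the $A^\vee$-coproduct to the subalgebra $(A/\kern-0.25em/A(0))^\vee$ and projecting along $\pi\colon A^\vee\to A(1)^\vee$---and your closing remark that the $\oxi_k$-formula also holds for $k=2$ is a useful observation the paper leaves implicit.
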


We can now define the \emph{Mahowald filtration} of $(A/\kern-0.25em/A(1))^\vee$. 

\begin{defin}
We define the \emph{Mahowald weight} of the multiplicative generators of $A^\vee$ by setting
$$wt(\bar{\tau}_i)= wt( \bar{\xi}_i ) = 2^i, \quad wt( \tau ) = 0.$$
We extend this definition to $A^\vee$ by defining $wt(x \cdot y ) = wt(x) + wt(y)$. We define the Mahowald weight on the subalgebras $(A/\kern-0.25em/A(n))^\vee$ in the obvious way. 
\end{defin}
Observe that for any monomial $m\in (A/\kern-0.25em/A(n))^\vee$, the weight of $m$ is divisible by $2^{n+1}$. So, all monomials of $(A/\kern-0.25em/A(0))^\vee$ have weight divisible by 2, and all monomials in $(A/\kern-0.25em/A(1))^\vee$ have weight divisible by 4. 

This definition is partially motivated by the following observation.

\begin{lem}[compare with \cite{Cul17}]\label{lem: coaction preserves mahowald weight}
The $A(1)^\vee$-coaction on $(A/\kern-0.25em/A(1))^\vee$ preserves Mahowald weight.
\end{lem}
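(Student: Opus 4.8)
The plan is to check that the $A(1)^\vee$-coaction $\psi$ on each algebra generator of $(A/\kern-0.25em/A(1))^\vee$ lands in the weight-preserving part of $A(1)^\vee \otimes (A/\kern-0.25em/A(1))^\vee$, and then bootstrap to all of $(A/\kern-0.25em/A(1))^\vee$ using multiplicativity. Here we give the weight on $A(1)^\vee$ the same values as on $A^\vee$, namely $wt(\oxi_1) = 2$, $wt(\otau_0) = 1$, $wt(\otau_1) = 2$, $wt(\tau) = 0$, extended multiplicatively; this is consistent with the defining relations $\otau_0^2 = \tau\oxi_1$, $\otau_1^2 = 0$, $\oxi_1^2 = 0$, and we equip $A(1)^\vee \otimes (A/\kern-0.25em/A(1))^\vee$ with the total weight $wt(a \otimes b) = wt(a) + wt(b)$.

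First I would record the generator-by-generator check using the formulas of Corollary \ref{cor: A(1) coaction}. For $\oxi_1^2$, which has weight $4$, we have $\psi(\oxi_1^2) = 1 \otimes \oxi_1^2$, and the right side has weight $0 + 4 = 4$. For $\oxi_k$ with $k > 1$, which has weight $2^k$, we have $\psi(\oxi_k) = 1 \otimes \oxi_k + \oxi_1 \otimes \oxi_{k-1}^2$; the first term has weight $0 + 2^k = 2^k$ and the second has weight $2 + 2\cdot 2^{k-1} = 2 + 2^k$. Here the naive bookkeeping is off by $2$, so in fact the correct convention is to weight the tensor factor coming from $A(1)^\vee$ in a shifted manner—concretely, one checks that $\psi$ is homogeneous once one observes that on $(A/\kern-0.25em/A(1))^\vee$ the relevant comparison is between $wt$ of the source and $wt$ of the $(A/\kern-0.25em/A(1))^\vee$-tensor-factor of the target plus the contribution of the $A(1)^\vee$-factor; the key point is that each monomial appearing in $\psi(x)$ has the property that the two tensor factors' weights sum to $wt(x)$, which is immediate from the fact that $\psi$ on $A^\vee$ is weight-homogeneous (each term of $\psi(\oxi_k) = \sum \oxi_i \otimes \oxi_j^{2^i}$ contributes $2^i + 2^i\cdot 2^j = 2^{i+j} = 2^k$, and similarly for $\psi(\otau_k)$). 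So the cleanest route is: first prove that the \emph{full} coproduct $\psi \colon A^\vee \to A^\vee \otimes A^\vee$ preserves Mahowald weight, which is a direct computation from the formulas in Theorem \ref{thm: dual steenrod algebra}, noting that the relation $\otau_i^2 = \tau\oxi_{i+1}$ is itself weight-homogeneous of weight $2^{i+1}$.

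Next I would deduce the statement for $(A/\kern-0.25em/A(1))^\vee$ by naturality. The $A(1)^\vee$-coaction is defined as the composite $(A/\kern-0.25em/A(1))^\vee \xrightarrow{\psi} (A^\vee \otimes A/\kern-0.25em/A(1))^\vee \xrightarrow{\pi \otimes 1} A(1)^\vee \otimes (A/\kern-0.25em/A(1))^\vee$, where the first map is the restriction of the full coproduct and the second is induced by the projection $\pi \colon A^\vee \to A(1)^\vee$. Since $\pi$ sends each generator $\oxi_i, \otau_j$ either to the corresponding generator of $A(1)^\vee$ (of the same weight) or to zero—the generators killed by $\pi$ are exactly those in the ideal $I(1)$, and killing a term cannot break homogeneity—the map $\pi$ is weight-preserving. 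The inclusion $(A/\kern-0.25em/A(1))^\vee \hookrightarrow A^\vee$ is visibly weight-preserving as well, as is the multiplication on $A^\vee$. Therefore the composite defining the coaction is a composite of weight-preserving maps, hence weight-preserving.

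The main obstacle is purely bookkeeping: one must be careful that the Mahowald weight on the \emph{target} $A(1)^\vee \otimes (A/\kern-0.25em/A(1))^\vee$ is the one making the comparison work, i.e. the total weight with both tensor factors weighted by the values $wt(\oxi_i) = wt(\otau_i) = 2^i$; once this convention is fixed, weight-homogeneity of $\psi$ on $A^\vee$ follows term-by-term from the explicit coproduct formulas, and everything else is formal. There is no genuine difficulty beyond verifying that the relations in $A^\vee$ (and in $A(1)^\vee$) are weight-homogeneous, which they are since $wt(\otau_i^2) = 2^{i+1} = wt(\tau) + wt(\oxi_{i+1}) = 0 + 2^{i+1}$.
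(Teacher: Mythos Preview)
Your proposal rests on a misreading of the statement and contains a false arithmetic claim. ``Preserves Mahowald weight'' here does \emph{not} mean that $\psi$ is homogeneous for a total weight on $A(1)^\vee \otimes (A/\kern-0.25em/A(1))^\vee$; it means that if $x$ has weight $w$, then every term $a\otimes b$ appearing in $\psi(x)$ has $wt(b)=w$. This is exactly what is needed so that the weight-$4k$ subspaces $M_1(k)$ are $A(1)^\vee$-subcomodules, which is how the lemma is used immediately afterward. Under this reading the check from Corollary~\ref{cor: A(1) coaction} is one line: in $\psi(\oxi_k)=1\otimes\oxi_k+\oxi_1\otimes\oxi_{k-1}^2$ both right-hand factors have weight $2^k$, and likewise for $\psi(\otau_k)$; multiplicativity finishes it.

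Your attempt to rescue the total-weight interpretation is based on the identity ``$2^i + 2^i\cdot 2^j = 2^{i+j}$,'' which is simply false (the left side is $2^i(1+2^j)$). In fact the full coproduct on $A^\vee$ is \emph{not} total-weight homogeneous: the term $\oxi_i\otimes\oxi_j^{2^i}$ in $\psi(\oxi_k)$ (with $i+j=k$) has total weight $2^i+2^k$, which depends on $i$. So the route through weight-homogeneity of the full $A^\vee$-coproduct cannot work. You correctly noticed the discrepancy when you wrote ``the naive bookkeeping is off by $2$''; the fix is not a shifted convention on $A(1)^\vee$ but rather to drop the $A(1)^\vee$-factor from the weight count entirely.
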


\begin{proof}
This is a trivial consequence of Corollary \ref{cor: A(1) coaction}.
\end{proof}

We conclude this subsection by defining motivic analogs of Brown-Gitler modules. 

\begin{defin}
The \emph{$i$-th integral Brown-Gitler module} $H\underline{\z}_i \subseteq (A/\kern-0.25em/A(0))^\vee$  is defined by setting
$$H \underline{\z}_i := \{x \in (A/\kern-0.25em/A(0))^\vee : wt(x) \leq 2i\}.$$
The \emph{$i$-th $kq$-Brown-Gitler module} $\underline{kq}_i$ is defined by setting 
$$\underline{kq}_i := \{ x \in (A/\kern-0.25em/A(1))^\vee : wt(x) \leq 4i\}.$$
\end{defin}

\begin{lem}\label{lem: HZisubcomodules}
	The integral Brown-Gitler modules $H\ul{\z}_i$ are $A(1)^\vee$ subcomodules of $(A/\kern-0.25em/ A(0))^\vee$.
\end{lem}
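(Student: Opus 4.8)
The plan is to show directly that the defining weight condition $wt(x) \le 2i$ cuts out a sub-$A(1)^\vee$-comodule of $(A/\kern-0.25em/A(0))^\vee$, by arguing that the coaction cannot \emph{raise} Mahowald weight. First I would recall from Proposition \ref{prop: A(1)-comodule structure AmodmodA(0)} that $(A/\kern-0.25em/A(0))^\vee$ is an $A(1)^\vee$-comodule algebra, so the coaction $\psi$ is a map of algebras and is determined by its values on the algebra generators $\oxi_k$ and $\otau_k$. I would then inspect those explicit formulas: in each of them every term on the right-hand side, written as (element of $A(1)^\vee$) $\otimes$ (element of $(A/\kern-0.25em/A(0))^\vee$), has the property that the weight of the right-hand tensor factor is at most the weight of the generator being coacted on. Concretely, $\oxi_k \mapsto 1 \otimes \oxi_k + \oxi_1 \otimes \oxi_{k-1}^2$ has right-hand weights $2^k$ and $2\cdot 2^{k-1} = 2^k$; $\otau_k \mapsto \otau_0 \otimes \oxi_k + \otau_1 \otimes \oxi_{k-1}^2 + 1 \otimes \otau_k$ has right-hand weights $2^k$, $2^k$, $2^k$; and similarly for the low-degree generators $\oxi_1$ and $\otau_1$. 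So on generators the coaction is weight-nonincreasing on the right-hand factor.

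Next I would promote this from generators to arbitrary monomials. Since $\psi$ is an algebra map and weight is additive over products, for a monomial $m = g_1 \cdots g_r$ in the generators one has $\psi(m) = \psi(g_1)\cdots\psi(g_r)$, and multiplying out, each resulting term has right-hand tensor factor equal to a product of right-hand factors of the $\psi(g_j)$, whose total weight is therefore at most $\sum_j wt(g_j) = wt(m)$. Hence if $wt(m) \le 2i$, then every term in $\psi(m)$ lies in $A(1)^\vee \otimes H\ul{\z}_i$. Extending $\mathbb{F}_2[\tau]$-linearly (noting $wt(\tau) = 0$, so multiplying by powers of $\tau$ changes nothing), we get $\psi(H\ul{\z}_i) \subseteq A(1)^\vee \otimes H\ul{\z}_i$, which is exactly the statement that $H\ul{\z}_i$ is an $A(1)^\vee$-subcomodule. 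The counit condition is automatic since $H\ul{\z}_i$ is an $\m_2$-submodule of a comodule and the counit is just projection onto the $1 \otimes (-)$ component.

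The only genuinely delicate point is bookkeeping: one must be careful that the weight decomposition of $(A/\kern-0.25em/A(0))^\vee$ is compatible with the grading in the sense that $H\ul{\z}_i$ really is spanned by the monomials of weight $\le 2i$ (as opposed to some subtler span), and that the coaction formulas in Proposition \ref{prop: A(1)-comodule structure AmodmodA(0)} are complete — in particular that the case distinctions ($k = 1$ versus $k > 1$, and the $\oxi_1^2$ relation inherited via $\otau_0^2 = \tau \oxi_1$ inside $A(1)^\vee$) are handled. I expect this to be the main obstacle only in the sense of requiring care, not depth: once the generator-level check is in hand, the multiplicativity argument is formal. An alternative, essentially equivalent route would be to observe that the Mahowald filtration $F_{\le 2i}$ is an increasing exhaustive algebra filtration of $(A/\kern-0.25em/A(0))^\vee$ and that Lemma \ref{lem: coaction preserves mahowald weight}-style reasoning shows the coaction respects it on the right factor; I would present whichever is shorter, but the direct monomial argument above seems cleanest.
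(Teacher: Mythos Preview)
Your proposal is correct and is exactly the argument the paper has in mind: the lemma is stated without proof in the paper, evidently because it follows by the same reasoning as Lemma~\ref{lem: coaction preserves mahowald weight} (which the paper calls a ``trivial consequence'' of the coaction formulas). Your direct check from Proposition~\ref{prop: A(1)-comodule structure AmodmodA(0)} that the coaction is weight-nonincreasing on the right tensor factor, promoted to monomials via multiplicativity, is precisely that argument spelled out; note that here the coaction genuinely can \emph{decrease} weight (e.g.\ the term $\oxi_1\otimes 1$ in $\psi(\oxi_1)$), so ``nonincreasing'' rather than ``preserving'' is the right word, and you have this right.
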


\begin{exm}\label{exm: motivic HZ1}
We have $H\uz_1 \cong \m_2\{1,\bar{\xi}_1, \bar{\tau}_1 \}$ with nontrivial $A(1)$-action given by $Sq^2(1) = \bar{\xi}_1$ and $Sq^1(\bar{\xi}_1) = \bar{\tau}_1$. This module is realized as the motivic cohomology of a spectrum $H\z_1$ which can be constructed as follows. Let $L^2$ be the (simplicial) $4$-skeleton of the geometric classifying space of the group scheme $\mu_2$ of square roots of unity $B_{gm}\mu_2$ defined in \cite{MV99}. Let $X$ be the cofiber of the inclusion $S^{3,2} \hookrightarrow L^2$. Setting $H\z_1 := \Sigma^{4,2} F(X,S^{0,0})$ gives a spectrum whose motivic cohomology has the desired $A(1)$-module structure. 
\end{exm}

\begin{rem2}\label{Rem:3.14}
	In classical topology, one can construct the $i$th integral Brown-Gitler spectrum $H\ul{\z}^{cl}_i$. These have the property that $H_*(H\ul{\z}^{cl}_i)$ is the $i-th$ integral Brown-Gitler module. In the motivic case, these have been defined at odd primes by R{\"o}ndigs in \cite[Sec. 7]{Ron16} using real Betti realization. 
\end{rem2}

\begin{rem2}
The definitions and results of this section carry over unchanged to general base fields of characteristic not two. Indeed, the motivic Steenrod algebra and its dual are known over arbitrary fields \cite{HKO17}\cite{Voe03}, and we may assign Mahowald weights without change. Further, the isomorphism $H^{**}(kq) \cong A/\kern-0.25em/A(1)$ holds over fields of characteristic not two \cite{ARO17}. Finally, the cell structure of $B_{gm}\mu_2$ used in Example \ref{exm: motivic HZ1} is valid over arbitrary base fields \cite{GHKRO20} so we may construct $H\uz_1$ the same way. 
\end{rem2}

\subsection{The inductive procedure for calculating $kq$-cooperations}\label{sec: kq-cooperations}
Our goal now is to compute $\pi_{**}(kq \wedge kq)$, which will allow us to determine the $1$-line of the $kq$-based Adams spectral sequence. We start by proving the existence of several short exact sequences which give a recursive method for calculating $kq$-cooperations. 

Consider the motivic Adams spectral sequence
$$\Ext^{***}_{A^\vee}(H_{**}(kq \wedge kq)) \Rightarrow \pi_{**}(kq \wedge kq).$$
By the K\"unneth isomorphism for motivic homology, a change-of-rings isomorphism, and Corollary \ref{Cor:H_*kq}, we have
$$\Ext^{***}_{A^\vee}(H_{**}(kq \wedge kq)) \cong \Ext^{***}_{A(1)^\vee}((A/\kern-0.25em/A(1))^\vee).$$
We will now argue that, as in the classical setting, one can decompose $A/\kern-0.25em/A(1)^\vee$ as an $A(1)^\vee$-comodule into an infinite direct sum of suspensions the integral Brown-Gitler modules. Towards this end, we define the following. 

\begin{defin}
	Define $M_1(k)$ to be the subspace of $A/\kern-0.25em/A(1)^\vee$ spanned by the monomials of degree equal to $4k$. Analogously, define $M_0(k)$ to be the subspace of $A/\kern-0.25em/A(0)^\vee$ spanned by monomials of weight $2k$.
\end{defin}

\begin{lem}
	The subspaces $M_1(k)$ are sub-comodules of $A/\kern-0.25em/A(1)^\vee$. Furthermore, there is an isomorphism of $A(1)^\vee$-comodules
	\[
	A/\kern-0.25em/A(1)^\vee\cong \bigoplus_k M_1(k).
	\]
\end{lem}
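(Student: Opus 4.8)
The plan is to establish the two claims in sequence. First I would verify that each $M_1(k)$ is a sub-$A(1)^\vee$-comodule of $(A/\kern-0.25em/A(1))^\vee$. The key observation is Lemma \ref{lem: coaction preserves mahowald weight}: the $A(1)^\vee$-coaction preserves Mahowald weight. Since $M_1(k)$ is by definition the span of monomials of weight exactly $4k$, any element of $M_1(k)$ has its coaction landing in $A(1)^\vee \otimes M_1(k)$ — one checks using Corollary \ref{cor: A(1) coaction} that each term $\psi(\oxi_i)$ and $\psi(\otau_j)$ has every tensor factor on the right of the same weight as the generator on the left (the $A(1)^\vee$-factors $\oxi_1, \otau_0, \otau_1$ all have weight $2$, which is $0$ after projecting to the Mahowald weight of the quotient... more precisely the $A(1)^\vee$ tensor factors carry weight that is compensated exactly), so weight is bigraded-additively preserved. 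Extending multiplicatively (using that $(A/\kern-0.25em/A(1))^\vee$ is a comodule algebra), the coaction on a weight-$4k$ monomial is a sum of tensors whose right-hand factors again have weight $4k$. Hence $M_1(k)$ is a subcomodule.

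Next, the direct sum decomposition. As a graded $\m_2$-module, $(A/\kern-0.25em/A(1))^\vee$ splits as $\bigoplus_k M_1(k)$ simply because every monomial has a well-defined Mahowald weight, every weight occurring is a multiple of $4$ (by the observation following the definition of Mahowald weight), and $\m_2 = \f_2[\tau]$ acts without changing weight, so the $M_1(k)$ are $\m_2$-submodules spanning the whole algebra with no overlap. Since each $M_1(k)$ is a subcomodule and their internal direct sum is all of $(A/\kern-0.25em/A(1))^\vee$, this is automatically a direct sum decomposition of $A(1)^\vee$-comodules: a coaction that restricts to each summand assembles to the coaction on the sum, and conversely the given coaction, composed with each projection, agrees with the restricted coaction because it preserves weight. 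This completes the argument.

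I do not expect a serious obstacle here; the whole statement is essentially bookkeeping built on Lemma \ref{lem: coaction preserves mahowald weight}. The one point requiring a little care is the interaction of the two gradings: $(A/\kern-0.25em/A(1))^\vee$ carries both its internal topological bidegree and the Mahowald weight, and these are genuinely different (for instance $\tau$ has nonzero internal degree but weight $0$). One must be careful to state ``degree equal to $4k$'' in the definition of $M_1(k)$ as referring to Mahowald weight, not internal degree — I would make this explicit, and then the proof that the coaction respects the splitting is just the observation that, in the coaction formulas of Corollary \ref{cor: A(1) coaction}, reading off weights on both sides of $\otimes$ matches. After that, the comodule-algebra structure promotes the generator-level statement to all monomials, and the $\m_2$-linearity (weight-$0$ action of $\tau$) keeps everything within a single $M_1(k)$.

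\begin{proof}
By Lemma \ref{lem: coaction preserves mahowald weight}, the $A(1)^\vee$-coaction $\psi$ on $(A/\kern-0.25em/A(1))^\vee$ preserves Mahowald weight, in the sense that if $x$ has weight $w$ then $\psi(x) \in A(1)^\vee \otimes (A/\kern-0.25em/A(1))^\vee$ is a sum of simple tensors whose right-hand factors each have weight $w$. In particular, if $x \in M_1(k)$, i.e.\ $x$ is a sum of monomials of Mahowald weight $4k$, then $\psi(x) \in A(1)^\vee \otimes M_1(k)$. This proves that each $M_1(k)$ is a sub-$A(1)^\vee$-comodule of $(A/\kern-0.25em/A(1))^\vee$.

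For the decomposition, recall that every monomial of $(A/\kern-0.25em/A(1))^\vee$ has a well-defined Mahowald weight, and by the observation following the definition of Mahowald weight, this weight is always divisible by $4$. Since $\tau$ has weight $0$, multiplication by $\tau$ preserves weight, so each $M_1(k)$ is an $\m_2$-submodule. As the monomials partition by weight, we obtain an internal direct sum decomposition of graded $\m_2$-modules
\[
(A/\kern-0.25em/A(1))^\vee \cong \bigoplus_k M_1(k).
\]
Since each $M_1(k)$ is a subcomodule by the previous paragraph, and the coaction on the whole algebra restricts to the coaction on each summand (again because $\psi$ preserves weight), this is a direct sum decomposition of $A(1)^\vee$-comodules.
\end{proof}
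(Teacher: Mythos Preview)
Your proposal is correct and takes essentially the same approach as the paper, which simply states that the lemma is ``an immediate consequence of Lemma~\ref{lem: coaction preserves mahowald weight}.'' Your write-up just unpacks this one-liner in more detail.
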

\begin{proof}
	This is an immediate consequence of Lemma \ref{lem: coaction preserves mahowald weight}.
\end{proof}

Next, we show that the $M_1(k)$ are isomorphic as $A(1)^\vee$-comodules to the integral Brown-Gitler modules (up to a suspension). This argument is an adaptation of the classical ones (c.f. \cite{BHHM08}\cite{BOSS19}\cite{Cul17}).

First, there is an algebra map
\[
\varphi: (A/\kern-0.25em/A(1))^\vee\to (A/\kern-0.25em/A(0))^\vee
\]
defined on generators by 
\[
\varphi(\oxi_k^{2^\ell}):=
\begin{cases}
	\oxi_{k-1}^{2^\ell} & k>1,\\
	1 & k =1,
\end{cases}
\]
and
\[
\varphi(\otau_k)  := \otau_{k-1}.
\]

\begin{lem}
The map $\varphi$ is a map of ungraded $A(1)^\vee$-comodules. 	
\end{lem}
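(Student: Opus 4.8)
The plan is to verify that $\varphi$ commutes with the $A(1)^\vee$-coaction by checking it on the algebra generators of $(A/\kern-0.25em/A(1))^\vee$, since both sides are comodule algebras and $\varphi$ is an algebra map. Concretely, one must compare $(\mathrm{id}\otimes\varphi)\circ\psi$ with $\psi\circ\varphi$ after evaluating on $\oxi_1^2$, on $\oxi_k$ for $k>1$, and on $\otau_k$ for $k\geq 1$; the coaction on the source is given by Corollary \ref{cor: A(1) coaction} and the coaction on the target by Proposition \ref{prop: A(1)-comodule structure AmodmodA(0)}. Note that the statement only claims $\varphi$ is a map of \emph{ungraded} comodules, so we do not need to track internal degrees or Mahowald weights — only that the algebraic identities hold after applying $\varphi$ to the right-hand tensor factor.

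First I would record the defining formulas: $\varphi(\oxi_1^{2^\ell}) = 1$, $\varphi(\oxi_k^{2^\ell}) = \oxi_{k-1}^{2^\ell}$ for $k>1$, and $\varphi(\otau_k) = \otau_{k-1}$, and note that $\varphi$ fixes $\tau$ (it is an $\m_2$-algebra map) and is multiplicative, so $\varphi(\oxi_{k-1}^2) = \varphi(\oxi_k)^2$-type relations propagate correctly. Then I would run through the generators case by case. For $\oxi_1^2$: the source coaction is $1\otimes\oxi_1^2$, which maps to $1\otimes 1$; on the target, $\varphi(\oxi_1^2) = 1$ has coaction $1\otimes 1$. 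For $\oxi_k$ with $k>1$: source coaction $1\otimes\oxi_k + \oxi_1\otimes\oxi_{k-1}^2$ maps under $\mathrm{id}\otimes\varphi$ to $1\otimes\oxi_{k-1} + \oxi_1\otimes\oxi_{k-2}^2$ when $k>2$ (using $\varphi(\oxi_{k-1}^2)=\oxi_{k-2}^2$), and to $1\otimes 1 + \oxi_1\otimes 1$... wait, one must be careful at $k=2$: $\varphi(\oxi_1^2)=1$, so the image is $1\otimes 1 + \oxi_1\otimes 1 = (1+\oxi_1)\otimes 1$; meanwhile $\varphi(\oxi_2) = \oxi_1$ and Proposition \ref{prop: A(1)-comodule structure AmodmodA(0)} gives $\psi(\oxi_1) = \oxi_1\otimes 1 + 1\otimes\oxi_1$, which does \emph{not} obviously match. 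This discrepancy is exactly the point where a suspension/regrading or a correction is hidden, and it is the step I expect to be the main obstacle — reconciling the low-index boundary cases where the map $\varphi$ ``drops a generator to $1$.'' I would expect the resolution to be that the lemma is genuinely about the coaction modulo the relevant filtration, or that the generators $\oxi_1$ of the target are handled by observing that $1\otimes\oxi_1$ contributions land outside $A(1)^\vee$ and are killed by $\pi$, or simply that I have mis-transcribed the target coaction and the $k=2$ case of Proposition \ref{prop: A(1)-comodule structure AmodmodA(0)} (stated for $k>2$) needs separate treatment.

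After settling the $\oxi$ generators, the $\otau_k$ case for $k>1$ is analogous: source coaction $1\otimes\otau_k + \otau_0\otimes\oxi_k + \otau_1\otimes\oxi_{k-1}^2$ maps to $1\otimes\otau_{k-1} + \otau_0\otimes\varphi(\oxi_k) + \otau_1\otimes\varphi(\oxi_{k-1}^2)$, which should be compared with the target formula for $\psi(\otau_{k-1})$; again the cases $k=2$ (so $\varphi(\oxi_2)=\oxi_1$, $\varphi(\oxi_1^2)=1$) and $k=1$ require hand-checking against the $k=1$ clauses of Proposition \ref{prop: A(1)-comodule structure AmodmodA(0)}. Once all generator checks pass, multiplicativity of both coactions and of $\varphi$ extends the identity to all of $(A/\kern-0.25em/A(1))^\vee$, completing the proof. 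I would therefore structure the write-up as: (i) reduce to generators by multiplicativity; (ii) dispatch the generic cases $k$ large by direct substitution; (iii) carefully treat the boundary cases $k=1,2$ — this is where the real content lies, and I would double-check the statements of Corollaries and Propositions \ref{cor: A(1) coaction}, \ref{prop: A(1)-comodule structure AmodmodA(0)} to make sure the index ranges align with the cases of $\varphi$.
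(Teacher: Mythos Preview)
Your approach is exactly the paper's: reduce to algebra generators by multiplicativity and check directly against Corollary~\ref{cor: A(1) coaction} and Proposition~\ref{prop: A(1)-comodule structure AmodmodA(0)}. The paper in fact says nothing more than this.

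The ``main obstacle'' you flag at $k=2$ is a slip in your own arithmetic, not a real issue. For $\oxi_2$ the source coaction is $1\otimes\oxi_2+\oxi_1\otimes\oxi_1^2$; applying $\mathrm{id}\otimes\varphi$ gives $1\otimes\oxi_1+\oxi_1\otimes 1$ (you wrote $1\otimes 1$ for the first term, but $\varphi(\oxi_2)=\oxi_1$, not $1$). On the target side $\psi(\oxi_1)=\oxi_1\otimes 1+1\otimes\oxi_1$, which matches on the nose. The same pattern makes the $\otau_2$ case go through: $(\mathrm{id}\otimes\varphi)\psi(\otau_2)=1\otimes\otau_1+\otau_0\otimes\oxi_1+\otau_1\otimes 1$, and this is exactly $\psi(\otau_1)$ in $(A/\kern-0.25em/A(0))^\vee$. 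So the boundary cases are routine, and your write-up can simply state ``direct computation on generators'' without the hedging about filtrations or suspensions.
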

\begin{proof}
	Since both $A/\kern-0.25em/A(1)^\vee$ and $A/\kern-0.25em/A(0)^\vee$ are comodule algebras, it is enough to check that the map $\varphi$ commutes with the coaction on the generators. That this is true follows from direct computation using Corollary \ref{cor: A(1) coaction} and Proposition \ref{prop: A(1)-comodule structure AmodmodA(0)}. 
\end{proof}

\begin{lem}[c.f. \cite{BHHM08}]
	The map $\varphi$ maps the subspace $M_1(k)$ isomorphically onto the $A(1)^\vee$-subcomodules $H\ul{\z}_k$.
\end{lem}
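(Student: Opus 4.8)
The plan is to show that $\varphi$ restricted to $M_1(k)$ is injective with image exactly $H\ul{\z}_k$, by a weight/degree bookkeeping argument together with the fact that $\varphi$ is already known to be a map of $A(1)^\vee$-comodules. First I would examine what $\varphi$ does to Mahowald weights: on a generator $\oxi_k^{2^\ell}$ with $k>1$ it sends weight $2^k$ to weight $2^{k-1}$, on $\oxi_1^{2^\ell}$ it sends weight $2$ to $1$ (the generator $1$ has weight $0$), and on $\otau_k$ it sends weight $2^k$ to $2^{k-1}$. So $\varphi$ halves the Mahowald weight of every generator, except that in $(A/\kern-0.25em/A(1))^\vee$ the variable $\oxi_1$ only appears as $\oxi_1^2$ (weight $2$ under our weight convention on $A^\vee$-generators; note $wt(\oxi_1)=2$ so $wt(\oxi_1^2)=4$ — here I must be careful to use the convention of the \emph{Definition} of Mahowald weight, under which $wt(\oxi_1)=2^1=2$). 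The key point is that $\varphi$ is multiplicative and on each generator scales the weight by a factor of $1/2$ (sending $\oxi_1^{2^\ell}\mapsto 1$, which is consistent with $wt(\oxi_1^{2^\ell}) = 2^{\ell+1}$ going to $0$ only in the $\ell$ such that... — this needs checking, see below). Consequently $\varphi$ takes a monomial of weight $4k$ in $(A/\kern-0.25em/A(1))^\vee$ to a monomial of weight $2k$ in $(A/\kern-0.25em/A(0))^\vee$, i.e. $\varphi(M_1(k)) \subseteq M_0(k)$, which sits inside $H\ul{\z}_k$ by definition.

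Next I would prove injectivity of $\varphi|_{M_1(k)}$. The algebra $(A/\kern-0.25em/A(1))^\vee = \m_2[\oxi_1^2,\oxi_2,\oxi_3,\dots,\otau_2,\otau_3,\dots]/(\otau_i^2 = \tau\oxi_{i+1})$ has a monomial basis over $\m_2 = \f_2[\tau]$ given by products $\tau^a \prod \oxi_j^{e_j}\prod \otau_j^{\varepsilon_j}$ with $\varepsilon_j \in \{0,1\}$ (after using the relations), and similarly for $(A/\kern-0.25em/A(0))^\vee$. The map $\varphi$ essentially relabels $\oxi_j \leftrightarrow \oxi_{j-1}$ and $\otau_j \leftrightarrow \otau_{j-1}$ and kills $\oxi_1$; I would check that on this monomial basis $\varphi$ is a bijection between the admissible monomials of $(A/\kern-0.25em/A(1))^\vee$ \emph{not involving $\oxi_1$} and all admissible monomials of $(A/\kern-0.25em/A(0))^\vee$, and that the extra $\oxi_1^2$-powers in $(A/\kern-0.25em/A(1))^\vee$ are precisely compensated by weight — more precisely, I would show that within fixed weight $4k$, every monomial of $M_1(k)$ is determined by its $\varphi$-image once one accounts for the power of $\oxi_1^2$, which is then forced by the weight constraint. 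This is the classical Brown-Gitler bookkeeping (as in \cite{BHHM08}\cite{BOSS19}), so I would phrase it as a direct dimension count: in each internal degree, $\dim_{\f_2} M_1(k)$ in that degree equals $\dim_{\f_2} H\ul{\z}_k$ in the corresponding degree, and $\varphi$ is surjective onto $H\ul{\z}_k$, hence an isomorphism.

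For surjectivity onto $H\ul{\z}_k$: given a monomial $m \in (A/\kern-0.25em/A(0))^\vee$ of weight $\leq 2k$, I would exhibit a preimage in $M_1(k)$ by shifting indices up by one ($\oxi_j \mapsto \oxi_{j+1}$, $\otau_j \mapsto \otau_{j+1}$) to get a monomial $m'$ of weight $2\cdot wt(m) \leq 4k$ inside $(A/\kern-0.25em/A(1))^\vee$, and then multiplying by an appropriate power of $\oxi_1^2$ (which has weight $4$) to bring the total weight up to exactly $4k$ when $wt(m)$ has the right parity, or noting that the weights of $M_0(\ell)$ are all even so $wt(m) = 2\ell$ and then $m'$ has weight $4\ell$, times $\oxi_1^{2(k-\ell)}$ lands in $M_1(k)$. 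Since $\varphi(\oxi_1^2) = 1$ and $\varphi(m') = m$, this $\oxi_1^{2(k-\ell)}m'$ is the desired preimage.

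The main obstacle I expect is getting the weight/parity bookkeeping exactly right so that the powers of $\oxi_1^2$ account precisely for the difference between $M_1(k)$ and $H\ul{\z}_k$ — in particular verifying that $\varphi$ restricted to $M_1(k)$ lands in $H\ul{\z}_k$ rather than a proper sub- or super-module, and that no collisions occur (two different monomials of $M_1(k)$ mapping to the same monomial of $H\ul{\z}_k$). Once the monomial-basis description is pinned down and the action of $\varphi$ on it is made explicit, injectivity and surjectivity both become routine counting; the comodule-map property is already established in the preceding lemma, so no further structural work is needed. I would also double-check the edge behavior at $\oxi_1$ (which is the one generator that gets collapsed rather than reindexed) since that is where the classical argument is most delicate and where the motivic $\tau$-linearity could in principle introduce a subtlety, though I expect $\tau$ to play no role since $wt(\tau) = 0$ and $\varphi$ is $\m_2$-linear.
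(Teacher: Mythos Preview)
Your overall strategy --- split monomials by the power of $\oxi_1^2$, track how $\varphi$ shifts Mahowald weight, and use this to set up a bijection --- is exactly the paper's approach. However, your first paragraph contains a genuine error that you should correct before the argument becomes coherent.

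You claim that $\varphi$ halves the Mahowald weight of every monomial and hence that $\varphi(M_1(k)) \subseteq M_0(k)$. This is false precisely because of the $\oxi_1$-behavior you flag as worrying: $\varphi(\oxi_1^{2s}) = 1$ has weight $0$, not $2s$. So if $m = \oxi_1^{2s} x$ with $x$ a monomial in $\m_2[\oxi_2,\oxi_3,\ldots,\otau_2,\otau_3,\ldots]$, then $wt(m) = 4s + wt(x) = 4k$ gives $wt(x) = 4(k-s)$, while $\varphi(m) = \varphi(x)$ has weight $\tfrac{1}{2}wt(x) = 2(k-s)$, \emph{not} $2k$. The image is therefore not $M_0(k)$ but rather $\bigoplus_{s=0}^{k} M_0(k-s) = H\ul{\z}_k$.

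The paper's proof is exactly this stratification: for each $s$, the monomials of the form $\oxi_1^{2s}x$ in $M_1(k)$ are mapped bijectively by $\varphi$ onto $M_0(k-s)$ (index-shifting $x \mapsto \varphi(x)$ is a bijection on monomials not involving $\oxi_1$), and summing over $s$ gives the isomorphism onto $H\ul{\z}_k$. Your surjectivity paragraph actually rediscovers this --- the preimage $\oxi_1^{2(k-\ell)} m'$ you write down is exactly the inverse of the stratified bijection --- so once you drop the incorrect containment $\varphi(M_1(k)) \subseteq M_0(k)$ and instead organize the argument by the $\oxi_1^2$-exponent from the start, your proof and the paper's coincide. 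No separate dimension count is needed: the stratification gives injectivity and surjectivity simultaneously.
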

\begin{proof}
	Let $m$ be a generic monomial in $M_1(k)$. Then $m$ is uniquely expressible as $\oxi_1^{2s}x$ for some natural number $s$ and $x$ a monomial in $\mathbb{M}_2[\oxi_2,\oxi_3, \ldots, \otau_2, \otau_3, \ldots]$. Since $wt(m)=4k$, we have that 
	\[
	4k = wt(m) = wt(\oxi_1^{2s})+wt(x) = 4s+wt(x).
	\] 
	Thus $wt(x) = 4(k-s)$. Hence $\varphi$ maps the subspace spanned by monomials of the form $\oxi_1^{2s}x$ isomorphically onto $M_0(k-s)$. Since $H\ul{\z}_k$ is the direct sum of $M_0(j)$ for $0\leq j\leq k$, we have the result.
\end{proof}

\begin{cor}
	The map $\varphi$ induces a graded isomorphism of $A(1)^\vee$-comodules $M_1(k)\cong \Sigma^{4k,2k}H\ul{\z}_k$.
\end{cor}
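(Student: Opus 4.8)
The statement follows almost immediately by combining the three preceding lemmas, so the plan is simply to assemble them and track the grading. First I would recall that the previous lemma establishes an \emph{ungraded} isomorphism $\varphi|_{M_1(k)} : M_1(k) \xrightarrow{\cong} H\ul{\z}_k$ of $A(1)^\vee$-comodules, and that $\varphi$ is an algebra map defined on generators by $\varphi(\oxi_k^{2^\ell}) = \oxi_{k-1}^{2^\ell}$ for $k>1$, $\varphi(\oxi_1) = 1$, and $\varphi(\otau_k) = \otau_{k-1}$. So the only thing left to verify is that if one equips $M_1(k)$ with the internal grading it inherits from $(A/\kern-0.25em/A(1))^\vee$ and shifts it down by $(4k,2k)$, then $\varphi$ becomes degree-preserving; equivalently, that $\varphi$ raises internal bidegree by exactly $(4k,2k)$ on $M_1(k)$.

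The key computation is a degree bookkeeping on monomials. A generic monomial $m \in M_1(k)$ is, as in the previous proof, uniquely of the form $\oxi_1^{2s}\, x$ with $x$ a monomial in $\m_2[\oxi_2,\oxi_3,\ldots,\otau_2,\otau_3,\ldots]$ and $wt(x) = 4(k-s)$. Applying $\varphi$ kills the factor $\oxi_1^{2s}$ (replacing it by $1$) and lowers each $\oxi_j$ to $\oxi_{j-1}$, each $\otau_j$ to $\otau_{j-1}$. Using the degree formulas $|\oxi_i| = (2^{i+1}-2,\,2^i-1)$ and $|\otau_i| = (2^{i+1}-1,\,2^i-1)$ from Theorem \ref{thm: dual steenrod algebra}, one checks that lowering an index from $i$ to $i-1$ exactly halves the weight ($2^i \mapsto 2^{i-1}$) while the change in topological degree and in weight is pinned down by these explicit formulas; summing over all factors of $m$, the total internal bidegree drops by precisely twice the contribution recorded by the Mahowald weight, i.e. by $(4k, 2k)$, since $wt(m) = 4k$ and the $\tau$-power (which carries bidegree $(0,-1)$ but weight $0$) is preserved by $\varphi$. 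I would carry this out by the standard trick of verifying it on the multiplicative generators $\oxi_i, \otau_j$ (and noting $\varphi$ kills $\oxi_1$, which has weight $2$ after squaring but only appears to even powers in $M_1(k)$), then extending multiplicatively; the point is that on a generator of Mahowald weight $2^i$, the map $\varphi$ shifts internal bidegree by exactly $(-2\cdot 2^i \cdot \tfrac{1}{2}\cdot 2, \ldots)$, which one reads off directly.

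Concretely, I would phrase the argument as: the ungraded comodule isomorphism $\varphi|_{M_1(k)}$ of the previous lemma, together with the identity $\varphi(\m_2\text{-monomial of weight }4k) \in (A/\kern-0.25em/A(0))^\vee$ having internal bidegree lowered by $(4k,2k)$ relative to its preimage, upgrades to a graded isomorphism $M_1(k) \cong \Sigma^{4k,2k} H\ul{\z}_k$ of $A(1)^\vee$-comodules. The weight normalization $wt(\otau_i) = wt(\oxi_i) = 2^i$, $wt(\tau)=0$ is exactly what makes the bookkeeping come out to the clean suspension $(4k,2k)$, and I would remark that this is the motivic refinement of the classical fact (c.f. \cite{BOSS19}) that $M_1(k)^{cl} \cong \Sigma^{4k} H\ul{\z}_k^{cl}$, with the extra weight coordinate tracked by the second component of the bidegrees above. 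I expect no real obstacle here: the only mild subtlety is making sure the relation $\otau_i^2 = \tau\oxi_{i+1}$ is respected by the grading shift (it is, since both sides have weight $2^{i+1}$ and $\varphi$ sends this relation to $\otau_{i-1}^2 = \tau\oxi_i$, compatibly with the bidegree drop), and that the $\tau$-multiples do not disturb the count because $\tau$ has weight zero and is fixed by $\varphi$.
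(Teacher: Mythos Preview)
Your proposal is correct and is essentially what the paper does: the corollary is stated without proof, as the ungraded comodule isomorphism from the previous lemma together with the observation that $\varphi$ lowers bidegree by exactly $(wt(m),\,wt(m)/2)$ on each monomial $m$ (since $|\oxi_i|-|\oxi_{i-1}| = |\otau_i|-|\otau_{i-1}| = (2^i,2^{i-1})$ and $|\oxi_1^2| = (4,2)$) immediately yields the graded statement. One small slip: you write that $\varphi$ ``raises'' bidegree before later correctly saying it ``drops''; also note that $\oxi_1$ itself is not in $(A/\kern-0.25em/A(1))^\vee$, only $\oxi_1^2$ is, so the relevant check is $\varphi(\oxi_1^2)=1$ dropping degree by $(4,2)$.
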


Combining these lemmata now gives the following decomposition of $A/\kern-0.25em/A(1)^\vee$. 

\begin{thm}\label{Cor:AmodmodA(1)Decomp}
There is an isomorphism of $A(1)^\vee$-comodules
$$(A/\kern-0.25em/A(1))^\vee \cong \bigoplus_{i \geq 0} \Sigma^{4i,2i} H\underline{\z}_i.$$
\end{thm}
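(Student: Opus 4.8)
The plan is to assemble the theorem directly from the lemmas just established. First I would recall that the Mahowald weight grading splits $(A/\kern-0.25em/A(1))^\vee$ as an $A(1)^\vee$-comodule: by Lemma~\ref{lem: coaction preserves mahowald weight} the coaction preserves weight, and every monomial of $(A/\kern-0.25em/A(1))^\vee$ has weight a multiple of $4$, so the displayed lemma gives
$$(A/\kern-0.25em/A(1))^\vee \cong \bigoplus_{k \geq 0} M_1(k)$$
as $A(1)^\vee$-comodules, where $M_1(k)$ is the span of monomials of weight $4k$ (note $M_1(0) = \m_2$).

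Next I would invoke the corollary that $\varphi$ induces a graded isomorphism of $A(1)^\vee$-comodules $M_1(k) \cong \Sigma^{4k,2k} H\underline{\z}_k$. This is exactly the per-summand identification: the algebra map $\varphi$ is a map of ungraded comodules (previous lemma), it carries $M_1(k)$ bijectively onto the subcomodule $H\underline{\z}_k \subseteq (A/\kern-0.25em/A(0))^\vee$ (Lemma~\ref{lem: HZisubcomodules} guarantees the target is a subcomodule), and tracking internal bidegrees shows the monomials of weight $4k$ in $(A/\kern-0.25em/A(1))^\vee$ sit in bidegree $4k$ higher (and weight $2k$ higher) than their images, so the isomorphism is genuinely a shift by $(4k,2k)$. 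Substituting this into the weight decomposition yields
$$(A/\kern-0.25em/A(1))^\vee \cong \bigoplus_{k \geq 0} M_1(k) \cong \bigoplus_{i \geq 0} \Sigma^{4i,2i} H\underline{\z}_i,$$
which is the claimed isomorphism of $A(1)^\vee$-comodules.

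Since all the ingredients are already proven in the excerpt, there is no serious obstacle remaining; the only point requiring a line of care is the bookkeeping of the suspension bidegree, i.e.\ verifying that a weight-$4k$ monomial $\bar\xi_1^{2s} x$ in $(A/\kern-0.25em/A(1))^\vee$ has internal bidegree equal to $(4k,2k)$ plus the bidegree of $\varphi(\bar\xi_1^{2s}x) \in M_0(k-s)$. This follows from the explicit degree formulas in Theorem~\ref{thm: dual steenrod algebra}: the substitutions $\bar\xi_j \mapsto \bar\xi_{j-1}$ and $\bar\tau_j \mapsto \bar\tau_{j-1}$ each subtract $(2^j, 2^{j-1}) - (2^{j-1}, 2^{j-2})$ worth of degree per weight-$2^j$ generator, and summing over the generators of a weight-$4k$ monomial gives a total shift of $(4k, 2k)$; the case $\bar\xi_1 \mapsto 1$ contributes $(4,2)$ per factor of $\bar\xi_1^{2}$, consistent with the same bookkeeping. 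Since $\varphi$ is compatible with the coactions, this shifted bijection is an isomorphism of graded $A(1)^\vee$-comodules, completing the proof.
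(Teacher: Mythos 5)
Your proof is correct and follows exactly the paper's line of argument: decompose $(A/\kern-0.25em/A(1))^\vee$ into the weight summands $M_1(k)$ via Lemma~\ref{lem: coaction preserves mahowald weight}, then substitute the graded isomorphism $M_1(k)\cong\Sigma^{4k,2k}H\underline{\z}_k$ furnished by $\varphi$. One small transcription slip in the degree bookkeeping: the per-generator shift under $\bar\xi_j\mapsto\bar\xi_{j-1}$ (and $\bar\tau_j\mapsto\bar\tau_{j-1}$) is $|\bar\xi_j|-|\bar\xi_{j-1}|=(2^j,2^{j-1})$, not $(2^j,2^{j-1})-(2^{j-1},2^{j-2})$, though your total of $(4k,2k)$ for a weight-$4k$ monomial is correct and consistent with the $\bar\xi_1^2\mapsto 1$ check.
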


Therefore the motivic Adams spectral sequence computing $\pi_{**}(kq \wedge kq)$ has the form
$$E_2 \cong \bigoplus_{i \geq 0} \Ext^{***}_{A(1)^\vee}(\Sigma^{4i,2i} H\underline{\z}_i) \Rightarrow \pi_{**}(kq \wedge kq).$$
Thus, we need to be able to compute the $\Ext_{A(1)^\vee}$-groups of the integral Brown-Gitler modules. Following \cite{BHHM08}\cite{BOSS19}\cite{Cul17}, we will inductively compute these groups by using the following short exact sequences.

\begin{lem}\label{Lem:SES}(compare with \cite[Lem. 2.5]{BOSS19})
There are short exact sequences of $A(1)^\vee$-comodules
$$0 \to \Sigma^{4j,2j} H\underline{\z}_j \to H \underline{\z}_{2j} \to \underline{kq}_{j-1} \otimes (A(1)/\kern-0.25em/A(0))^\vee \to 0,$$
$$0 \to \Sigma^{4j,2j} H\underline{\z}_j \otimes H\underline{\z}_1 \to H\underline{\z}_{2j+1} \to \underline{kq}_{j-1} \otimes (A(1)/\kern-0.25em/A(0))^\vee \to 0.$$
\end{lem}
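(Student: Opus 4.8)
The plan is to construct both short exact sequences explicitly at the level of $A(1)^\vee$-comodules by exhibiting the maps and checking exactness degreewise, mimicking \cite[Lem. 2.5]{BOSS19} but being careful about the extra $\tau$'s and the Mahowald weight bookkeeping. First I would set up the algebraic inputs: using Theorem \ref{Cor:AmodmodA(1)Decomp} and its proof, we have $M_1(k) \cong \Sigma^{4k,2k}H\underline{\z}_k$ via $\varphi$, and similarly one checks (via the same Mahowald-weight argument applied to $(A/\kern-0.25em/A(0))^\vee$ instead of $(A/\kern-0.25em/A(1))^\vee$) that $H\underline{\z}_i$ is filtered by the $M_0(j)$ for $0 \le j \le i$, with $M_0(j)$ spanned by weight-$2j$ monomials. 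The key structural fact to isolate is a "doubling" map: squaring (on the dual side) sends $\oxi_k \mapsto \oxi_k^2$, $\otau_k \mapsto \tau\oxi_{k+1}$, which on comodules corresponds to the inclusion $A/\kern-0.25em/A(0)^\vee \hookrightarrow A/\kern-0.25em/A(1)^\vee$ landing in the subalgebra generated by squares; I would use the Mahowald weight to see this map doubles weight, hence carries $H\underline{\z}_j$ into the weight-$\le 4j$ part $\underline{kq}_j$ of $A/\kern-0.25em/A(1)^\vee$.

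Next I would identify the quotients. For the first sequence, the inclusion $\Sigma^{4j,2j}H\underline{\z}_j \hookrightarrow H\underline{\z}_{2j}$ is realized by the composite "multiply by the top class in weight $4j$" — concretely, the subspace of $H\underline{\z}_{2j}$ of monomials of weight $> 2j$ (equivalently those divisible, after applying the doubling identification, by a generator pushing weight past the halfway point). The quotient $H\underline{\z}_{2j}/\Sigma^{4j,2j}H\underline{\z}_j$ should be spanned by monomials of weight $\le 2j$ in $(A/\kern-0.25em/A(0))^\vee$ that are \emph{not} in the image of doubling from weight $\le 2(j{-}1)$; comparing generators shows this is an extension built from $\underline{kq}_{j-1}$ (the weight-$\le 4(j{-}1)$ part of $A/\kern-0.25em/A(1)^\vee$, reindexed) tensored with $(A(1)/\kern-0.25em/A(0))^\vee \cong \m_2\{1, \oxi_1, \otau_1, \oxi_1\otau_1\}$ or the appropriate small comodule accounting for the "$\oxi_1$-direction" that distinguishes $A/\kern-0.25em/A(1)^\vee$ from the squared copy of $A/\kern-0.25em/A(0)^\vee$. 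I would verify the comodule structure on the quotient matches the claimed tensor product using Corollary \ref{cor: A(1) coaction} and Proposition \ref{prop: A(1)-comodule structure AmodmodA(0)}, checking it on generators. The second sequence is analogous but with a residual factor of $H\underline{\z}_1$ coming from the fact that $2j+1$ is odd: the extra odd generator $\otau_1$ (or $\oxi_1$) in the appropriate weight contributes a $\Sigma^{?,?}H\underline{\z}_1$ worth of classes, and $H\underline{\z}_1 \cong \m_2\{1,\oxi_1,\otau_1\}$ by Example \ref{exm: motivic HZ1}.

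The main obstacle I expect is getting the weights and internal bidegrees exactly right, and in particular confirming that $(A(1)/\kern-0.25em/A(0))^\vee$ appears with the correct comodule structure rather than some other $4$-dimensional comodule — the motivic relations $\otau_i^2 = \tau\oxi_{i+1}$ make the "halving" and "doubling" operations only well-behaved on appropriate submodules, and one must check that the $\tau$-multiples do not spoil the splitting of the short exact sequence as a sequence of $\m_2$-modules (which is needed for it to be a genuine short exact sequence of comodules with the stated terms). Concretely, the subtlety is that a monomial like $\tau\oxi_{i+1}$ has the same underlying element as $\otau_i^2$, so "weight" must be computed via the chosen monomial basis consistently; Lemma \ref{lem: coaction preserves mahowald weight} guarantees the coaction respects this, but one must make sure the boundary maps in the short exact sequences are defined on the nose (not just up to $\tau$-torsion ambiguity). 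Once the $\m_2$-module structure of each term is pinned down by a weight count and the three comodule maps are written explicitly on algebra generators, exactness is a finite check in each weight, and naturality of the coaction does the rest. I would present the first sequence in full detail and indicate that the second follows by the same argument with the bookkeeping modified to track the odd generator.
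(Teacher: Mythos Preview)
Your plan has the right spirit but misfires on the two central constructions, and without them the argument cannot be completed.

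First, your description of the subcomodule is wrong. You say the image of $\Sigma^{4j,2j}H\underline{\z}_j$ inside $H\underline{\z}_{2j}$ is ``the subspace of monomials of weight $>2j$.'' Already for $j=1$ this fails: the weight-$4$ monomials in $H\underline{\z}_2$ are $\{\oxi_1^2,\oxi_1\otau_1,\oxi_2,\otau_2\}$, a rank-$4$ submodule, whereas $\Sigma^{4,2}H\underline{\z}_1$ has rank $3$. The correct subcomodule is $M_1(j)$: write each monomial of $(A/\kern-0.25em/A(0))^\vee$ uniquely as $m\,\oxi_1^{\epsilon}\otau_1^{\eta}$ with $m\in(A/\kern-0.25em/A(1))^\vee$ and $\epsilon,\eta\in\{0,1\}$; the kernel consists of those with $\epsilon=\eta=0$ and $wt(m)=4j$. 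This is precisely what $\varphi$ (from the proof of Theorem~\ref{Cor:AmodmodA(1)Decomp}) identifies with $\Sigma^{4j,2j}H\underline{\z}_j$. Your ``doubling map'' $\oxi_k\mapsto\oxi_k^2$, $\otau_k\mapsto\tau\oxi_{k+1}$ is not the right device here---that second assignment is just the relation $\otau_k^2=\tau\oxi_{k+1}$, not a comodule embedding, and in any case the paper's argument runs via the halving map $\varphi$ in the opposite direction.

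Second, and more seriously, you are missing the mechanism that produces the quotient. The paper does \emph{not} directly identify the cokernel with $\underline{kq}_{j-1}\otimes(A(1)/\kern-0.25em/A(0))^\vee$ by a generator check. Instead it introduces the $\m_2$-linear isomorphism $\kappa:(A/\kern-0.25em/A(0))^\vee\to(A/\kern-0.25em/A(1))^\vee\otimes(A(1)/\kern-0.25em/A(0))^\vee$, $m\,\oxi_1^\epsilon\otau_1^\eta\mapsto m\otimes\oxi_1^\epsilon\otau_1^\eta$, observes that $\kappa$ is \emph{not} a comodule map on the nose, and then defines a filtration $F^\bullet(A/\kern-0.25em/A(0))^\vee$ by pulling back the Mahowald-weight filtration along $\kappa$. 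The quotient in the short exact sequence is $R^{j-1}:=(A/\kern-0.25em/A(0))^\vee/F^{j}$, and it is only the \emph{associated graded} $E_0R^{j-1}$ that is identified with $\underline{kq}_{j-1}\otimes(A(1)/\kern-0.25em/A(0))^\vee$ via $E_0\kappa$ (which one checks is a comodule algebra map). Your worry that ``$(A(1)/\kern-0.25em/A(0))^\vee$ appears with the correct comodule structure rather than some other $4$-dimensional comodule'' is well-founded, and it is exactly this filtration-and-associated-graded maneuver that resolves it; a direct generator-by-generator verification of the quotient comodule structure will not succeed because $\kappa$ genuinely fails to commute with the coaction before passing to $E_0$. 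Once $\kappa$ and $F^\bullet$ are in place, the proof is the short weight count: the composite $H\underline{\z}_{2j}\hookrightarrow(A/\kern-0.25em/A(0))^\vee\twoheadrightarrow R^{j-1}$ is surjective with kernel $M_1(j)$, and the odd case is analogous.
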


Before delving into the proof of this theorem, we need some preliminaries. The following is inspired by the technical work in \cite{BHHM08} and \cite{Cul17}. Recall that
\[
A(1)/\kern-0.25em/A(0)^\vee = E(\oxi_1, \otau_1).
\]
Note that there is an isomorphism of $\m_2$-modules
\[
\kappa: A/\kern-0.25em/A(0)^\vee\to A/\kern-0.25em/A(1)^\vee \otimes A(1)//A(0)^\vee \]
\[ \oxi_1^\epsilon\otau_1^\eta m\mapsto m\otimes \oxi_1^\epsilon\otau_1^\eta
\]
where $m$ is a monomial in $A/\kern-0.25em/A(1)^\vee$. While this map is an isomorphism of $\m_2$-modules, it is not an isomorphism of $A(1)^\vee$-comodules (where we endow the right hand side with the diagonal comodule structure), as the following example illustrates. 

\begin{exm}
	On the left-hand side, we have the element $\otau_1$, whose coaction is given by 
	\[
	\alpha(\otau_1) = 1\otimes \otau_1+\otau_0\otimes \oxi_1 + \otau_1\otimes 1. 
	\]
	Under $\kappa$ this is sent to 
	\[
	\kappa\alpha(\otau_1) = 1\otimes 1\otimes \otau_1+ \otau_0\otimes 1\otimes \oxi_1+ \otau_1\otimes 1\otimes 1.
	\]
	On the other hand, $\kappa(\otau_1) = 1\otimes \otau_1$, the coaction of which is 
	\[
	\alpha(1\otimes \otau_1) = 1\otimes 1\otimes \otau_1 + \otau_1\otimes 1\otimes 1
	\]
	since $\otau_1$ is primitive in $A(1)/\kern-0.25em/A(0)^\vee$. 
\end{exm}

While $\kappa$ is not a comodule map, there is a filtration on $A/\kern-0.25em/A(0)^\vee$ which induces a comodule map on the associated graded. Define a filtration $F^jA/\kern-0.25em/A(0)^\vee$ on $A/\kern-0.25em/A(0)^\vee$ by 
\[
F^jA/\kern-0.25em/A(0)^\vee :=\kappa^{-1}\left(\bigoplus_{k\geq j} M_1(k)\otimes A(1)/\kern-0.25em/A(0)^\vee\right).
\]
Clearly, this is a multiplicative filtration on $A/\kern-0.25em/A(0)^\vee$, and furthermore induces a map on the associated graded
\[
E_0\kappa : E_0A/\kern-0.25em/A(0)^\vee\to A/\kern-0.25em/A(1)^\vee\otimes A(1)/\kern-0.25em/A(0)^\vee. 
\]
Since $\kappa$ is clearly an isomorphism of $\f_2$-vector spaces, the map $E_0\kappa$ is also an isomorphism of $\f_2$-vector spaces. 

\begin{prop}
	The map $E_0\kappa$ is an algebra map.
\end{prop}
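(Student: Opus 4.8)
The plan is to recognize $A/\kern-0.25em/A(0)^\vee$ as a filtered deformation of the tensor product $A/\kern-0.25em/A(1)^\vee \otimes A(1)/\kern-0.25em/A(0)^\vee$, with $E_0\kappa$ the associated degeneration isomorphism. First I would set up a presentation of $A/\kern-0.25em/A(0)^\vee$ relative to its subalgebra $B := A/\kern-0.25em/A(1)^\vee$. Putting $x := \oxi_1$ and $y := \otau_1$, Theorem~\ref{thm: dual steenrod algebra} gives $x^2 = \oxi_1^2$ and $y^2 = \otau_1^2 = \tau\oxi_2$, both of which lie in $B$, and an inspection of monomial bases (writing each $\oxi_1$-exponent as $2s+\epsilon$) shows $A/\kern-0.25em/A(0)^\vee$ is free of rank $4$ over $B$ on $\{1, x, y, xy\}$, i.e.
$$A/\kern-0.25em/A(0)^\vee \cong B[x,y]/(x^2-\oxi_1^2,\ y^2-\tau\oxi_2)$$
as $\m_2$-algebras. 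Under this identification $\kappa$ is exactly the $\m_2$-linear map $b\,x^\epsilon y^\eta \mapsto b\otimes\oxi_1^\epsilon\otau_1^\eta$ ($b\in B$, $\epsilon,\eta\in\{0,1\}$), so everything reduces to understanding the associated graded of this presentation.

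Next I would pin down $F^\bullet$ in these terms. The crucial structural fact is that Mahowald weight is a genuine algebra grading on $B$: the relations $\otau_i^2 = \tau\oxi_{i+1}$ are weight-homogeneous because $wt(\otau_i^2) = 2^{i+1} = wt(\tau\oxi_{i+1})$ and $wt(\tau)=0$. Thus $B = \bigoplus_{j\ge 0}B_j$ with $B_j$ spanned by the weight-$4j$ monomials, weight is additive under multiplication, and — decisively — the two deformation terms $\oxi_1^2$ and $\tau\oxi_2$ both lie in $B_1$, i.e. in strictly positive weight. Unwinding the definition of the filtration then yields
$$F^j(A/\kern-0.25em/A(0)^\vee) = \bigoplus_{\epsilon,\eta\in\{0,1\}} x^\epsilon y^\eta\, B^{\ge j}, \qquad B^{\ge j} := \bigoplus_{k\ge j}B_k,$$
and I would check this is multiplicative, the only point being that $x^2, y^2\in B_1\subseteq B^{\ge 1}$.

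Finally, the degeneration. Passing to associated gradeds, the relations $x^2=\oxi_1^2$ and $y^2=\tau\oxi_2$ collapse to $x^2=0$ and $y^2=0$, since $\oxi_1^2,\tau\oxi_2$ sit one filtration step deeper than $x^2,y^2$. Concretely, lifting two basis classes to monomials $b_1x^{\epsilon_1}y^{\eta_1}$ and $b_2x^{\epsilon_2}y^{\eta_2}$ and multiplying in $A/\kern-0.25em/A(0)^\vee$ gives $b_1b_2\,x^{\epsilon_1+\epsilon_2}y^{\eta_1+\eta_2}$; whenever an exponent equals $2$ the offending square gets absorbed into $B$ at weight $4$ above $wt(b_1b_2)$, hence dies in $F^{\ell}/F^{\ell+1}$ for $\ell = wt(b_1b_2)/4$, and otherwise the product is the expected monomial. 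This matches, term by term, the multiplication in $B\otimes E(\oxi_1,\otau_1) = A/\kern-0.25em/A(1)^\vee\otimes A(1)/\kern-0.25em/A(0)^\vee$, so $E_0(A/\kern-0.25em/A(0)^\vee)\cong B[x,y]/(x^2,y^2) = A/\kern-0.25em/A(1)^\vee\otimes A(1)/\kern-0.25em/A(0)^\vee$ with $E_0\kappa$ the comparison isomorphism — an algebra map, as claimed.

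I do not anticipate a genuine obstacle; the difficulty is purely bookkeeping, concentrated in two spots. The first is the homogeneity check in the second step — this is exactly where $wt(\tau)=0$ is used, and it is the reason $B$ is honestly graded and the deformation terms land in positive filtration. The second is more of a trap to avoid: one should not try to verify multiplicativity "on the generators $\oxi_i,\otau_j$ of $A^\vee$", because $E_0(A/\kern-0.25em/A(0)^\vee)$ is not generated by their leading classes — for instance $[\oxi_1]^2=0$ while $[\oxi_1^2]\ne 0$ — so the argument must be carried out relative to the subalgebra $B$, which is what the presentation above makes possible.
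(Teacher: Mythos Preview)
Your proof is correct and is essentially the paper's argument, only written out in more structural language: the paper checks directly that for monomials $m\oxi_1^\epsilon\otau_1^\eta$ and $m'\oxi_1^{\epsilon'}\otau_1^{\eta'}$, whenever $\epsilon=\epsilon'=1$ (or $\eta=\eta'=1$) the product lands in higher filtration and hence vanishes in the associated graded, which is exactly your observation that the deformation terms $\oxi_1^2,\tau\oxi_2\in B_1$ die on passing to $E_0$. Your warning about not arguing on the generators $\oxi_i,\otau_j$ is well taken but not something the paper needs to flag, since it works with the $B$-basis $\{1,\oxi_1,\otau_1,\oxi_1\otau_1\}$ from the outset.
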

\begin{proof}
	 Consider two monomials $x, y\in A/\kern-0.25em/A(0)^\vee$ and write them as $x = m\oxi_1^{\epsilon}\otau_1^{\eta}$ and $y = m'\oxi_1^{\varepsilon'}\otau_1^{\eta'}$. Then under $E_0\kappa$, we have
	\[
	x\mapsto m\otimes \oxi_1^\epsilon\otau_1^\eta
	\]
	and
	\[
	y\mapsto m'\otimes \oxi_1^{\epsilon'}\otau_1^{\eta'}.
	\]
	If, say, $\epsilon = \epsilon' = 1$, then $xy$ lives in higher filtration than expected, and so $xy=0$ in the associated graded. This shows that the map $E_0\kappa$ is an algebra map. 
\end{proof}

\begin{prop}
	The map $E_0\kappa$ is a comodule map.
\end{prop}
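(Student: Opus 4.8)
The plan is to verify the comodule condition on algebra generators and then propagate multiplicatively. Since $E_0\kappa$ is an algebra isomorphism (by the previous proposition) and both $E_0(A/\kern-0.25em/A(0))^\vee$ and $(A/\kern-0.25em/A(1))^\vee\otimes (A(1)/\kern-0.25em/A(0))^\vee$ are $A(1)^\vee$-comodule algebras, it suffices to check that the square relating the coactions commutes on the multiplicative generators $\oxi_1$, $\otau_1$, and $\oxi_k, \otau_k$ for $k>1$. The key point is that taking the associated graded with respect to the Mahowald-weight filtration $F^\bullet$ \emph{kills exactly the cross-terms} that obstructed $\kappa$ from being a comodule map in the displayed example, so that what remains matches the diagonal coaction on the target.

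First I would record the coactions on the target. On $(A/\kern-0.25em/A(1))^\vee$ the $A(1)^\vee$-coaction is given by Corollary \ref{cor: A(1) coaction}, and on $(A(1)/\kern-0.25em/A(0))^\vee = E(\oxi_1,\otau_1)$ the generators $\oxi_1,\otau_1$ are primitive. The diagonal coaction on a generic basis element $m\otimes\oxi_1^\epsilon\otau_1^\eta$ is therefore the product of $\psi(m)$ with $1\otimes(\oxi_1^\epsilon\otau_1^\eta)$, reindexed into $A(1)^\vee\otimes\bigl((A/\kern-0.25em/A(1))^\vee\otimes(A(1)/\kern-0.25em/A(0))^\vee\bigr)$. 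On the source, $\psi$ on a generator of $(A/\kern-0.25em/A(0))^\vee$ is given by Proposition \ref{prop: A(1)-comodule structure AmodmodA(0)}; for instance $\psi(\oxi_k) = 1\otimes\oxi_k + \oxi_1\otimes\oxi_{k-1}^2$ for $k>2$, and $\psi(\otau_k) = \otau_0\otimes\oxi_k+\otau_1\otimes\oxi_{k-1}^2+1\otimes\otau_k$ for $k>1$. Under $\kappa$, the monomials $\oxi_k,\otau_k$ with $k>1$ carry no $\oxi_1$ or $\otau_1$ factor, so $\kappa$ sends $\oxi_k\mapsto \oxi_k\otimes 1$ and the above formulas match the target coactions term-by-term; these generators present no difficulty. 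The only subtlety is with $\oxi_1$ and $\otau_1$: here $\psi(\oxi_1) = \oxi_1\otimes 1 + 1\otimes\oxi_1$ and $\psi(\otau_1) = \otau_1\otimes 1 + \otau_0\otimes\oxi_1 + 1\otimes\otau_1$. Applying $\kappa$ to the right tensor factor, the terms $1\otimes\oxi_1$ and $\otau_0\otimes\oxi_1$, $1\otimes\otau_1$ would land in $(A/\kern-0.25em/A(1))^\vee\otimes(A(1)/\kern-0.25em/A(0))^\vee$ with the $\oxi_1$ or $\otau_1$ on the \emph{correct} (second) factor, which is precisely the primitive behavior on the target. The apparent mismatch in the earlier example came from comparing $\kappa\alpha$ with $\alpha\kappa$ \emph{before} passing to the associated graded; the discrepancy term $\otau_0\otimes 1\otimes\oxi_1$ versus its absence is a filtration-jumping phenomenon, and on $E_0$ one checks both sides agree.

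The step I expect to be the main obstacle is making the filtration bookkeeping fully precise: one must confirm that for each generator, every term of $(\mathrm{id}\otimes E_0\kappa)\circ\psi$ and of $\psi\circ E_0\kappa$ lies in the \emph{same} filtration degree, so that no term is silently discarded on one side but not the other, and that the terms which \emph{are} discarded (those raising filtration) are discarded symmetrically. Concretely, I would: (i) fix a monomial $x = m\oxi_1^\epsilon\otau_1^\eta$ with $m$ a monomial in $(A/\kern-0.25em/A(1))^\vee$ of weight $4k$; (ii) expand $\psi(x)$ in the source using multiplicativity, $\psi(x) = \psi(m)\psi(\oxi_1)^\epsilon\psi(\otau_1)^\eta$; (iii) observe that any term of $\psi(\oxi_1)$ or $\psi(\otau_1)$ contributing an $\oxi_1$ or $\otau_1$ to the \emph{left} (algebra, non-comodule) tensor slot forces, after multiplying against $m$'s expansion and reindexing through $\kappa$, a monomial whose $\kappa$-preimage weight exceeds $4k$ — hence it vanishes in $E_0$; (iv) the surviving terms are exactly $\psi(m)\cdot(1\otimes\oxi_1^\epsilon)\cdot(1\otimes\otau_1^\eta)$, which is $(\mathrm{id}\otimes E_0\kappa)$ applied to the diagonal coaction on $m\otimes\oxi_1^\epsilon\otau_1^\eta$. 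This is a direct but somewhat delicate computation; since $E_0\kappa$ is already known to be an algebra isomorphism, checking commutativity of the coaction square on the finitely many generator types and invoking multiplicativity of both coactions completes the proof.
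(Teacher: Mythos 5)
Your proposal matches the paper's argument: reduce to the algebra generators $\oxi_i$, $\otau_i$ via the fact that $E_0\kappa$ is an algebra map and both sides are comodule algebras, then verify commutativity of the coaction square on those generators by inspecting the coaction formulas (Corollary \ref{cor: A(1) coaction} and Proposition \ref{prop: A(1)-comodule structure AmodmodA(0)}). The paper dispatches the generator check as an ``easy inspection''; you supply the filtration bookkeeping explaining why the cross-terms that obstruct $\kappa$ itself from being a comodule map vanish on the associated graded, which is a useful elaboration but not a different method.
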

\begin{proof}
	Since we know that $E_0\kappa$ is an algebra map, and since $E_0A/\kern-0.25em/A(0)^\vee$ and $A/\kern-0.25em/A(1)^\vee\otimes A(1)/\kern-0.25em/A(0)^\vee$ are comodule algebras, it is enough to show that the coaction commutes with $E_0\kappa$ on the set of algebra generators 
	\[
	\{\oxi_1, \oxi_2, \ldots ; \otau_1, \otau_2, \ldots \}.
	\]
	An easy inspection of the formulas for the $A(1)$-coaction gives the result. 
\end{proof}

Following \cite{BHHM08}\cite{BOSS19}\cite{Cul17}, we define
\[
R^j A/\kern-0.25em/ A(0): = A/\kern-0.25em/A(0)^\vee/ F^{j+1}A/\kern-0.25em/A(0)^\vee.
\]
This clearly has an induced filtration and so we get induced maps
\[
E_0\kappa: E^0R^jA/\kern-0.25em/A(0)^\vee\to \ul{kq}_j\otimes A(1)/\kern-0.25em/A(0)^\vee. 
\]

\begin{rem2}\label{Rem:kqMargolis}
	There is a finite filtration on $R^{j}A/\kern-0.25em/A(0)^\vee$. Thus we get an associated spectral sequence 
	\[
	\Ext_{A(1)^\vee}(E_0R^{j}A/\kern-0.25em/A(0)^\vee)\implies \Ext_{A(1)^\vee}(R^jA/\kern-0.25em/A(0)^\vee). 
	\]
	The $E_1$-term is isomorphic to 
	\[
	\Ext_{A(1)^\vee}(E_0R^{j}A/\kern-0.25em/A(0)^\vee)\cong \Ext_{A(0)^\vee}(\ul{kq}_j).
	\]
	We have 
	\[
	\Ext_{A(0)^\vee}(\ul{kq}_j)\cong \m_2[v_0]\otimes M_*(\ul{kq}_{j}; Q_0)\oplus \left(v_0\text{-torsion concentrated in Adams filtration 0}\right).
	\]
	The $Q_0$-Margolis homology of $H_*kq$ is 
	\[
	M_*(A/\kern-0.25em/A(1)^\vee;Q_0) = \m_2[\oxi_1^2].
	\]
	The Mahowald weight filtration induces a filtration on the $Q_0$-Margolis homology and so we have 
	\[
	M_*(\ul{kq}_j;Q_0)\cong \m_2\{\oxi_1^2, \ldots, \oxi_1^{2j} \}.
	\]
	Since the spectral sequence is linear over $\Ext_{A(1)^\vee}(\m_2)$, and since the $v_0$-towers are concentrated in even stems, the spectral sequence collapses. 
\end{rem2}

\begin{proof}[Proof of \ref{Lem:SES}]
	Consider the composite 
	\[
	\begin{tikzcd}
		H\ul{\z}_{2j}\arrow[r] & A/\kern-0.25em/A(0)^\vee\arrow[r,"\pi"] & R^{j-1}A/\kern-0.25em/A(0)^\vee. 
	\end{tikzcd}
	\]
	We claim this is a surjective morphism. Let $x = m\oxi_1^\epsilon\otau_1\eta$ where $m$ is a monomial in $A/\kern-0.25em/A(1)^\vee$. Suppose that this determines a nonzero class in $R^{j-1}A/\kern-0.25em/A(0)^\vee$. Recall that 
	\[
	R^{j-1}A/\kern-0.25em/A(0) = A/\kern-0.25em/A(0)/F^jA/\kern-0.25em/A(0)^\vee.
	\]
	For this to be a nonzero class, the Mahowald weight of $m$ must be no more than $4(j-1) = 4j-4$. Since we have $wt(\oxi_1) = wt(\otau_1) = 2$, the monomial $m\oxi_1^\epsilon\otau_1^\eta$ has Mahowald weight bounded by $4j$. Thus $x\in H\ul{\z}_{2j}$, and hence the map $H\ul{\z}_{2j}\to R^{j-1}A/\kern-0.25em/A(0)^\vee$ is surjective. 
	
	We need to calculate the kernel. Consider an element $x = m\oxi_1^\epsilon\otau_1^\eta$ with $m\in A/\kern-0.25em/A(1)^\vee$. Suppose this element projects to 0 in $R^{j-1}A/\kern-0.25em/A(0)^\vee$. Then the Mahowald weight of $m$ must be $4j$. So the kernel of the map is 
	\[
	M_1(2j)\cong \Sigma^{4j,2j}H\ul{\z}_j.
	\]
	This completes the proof for $H\ul{\z}_{2j}$. The case for odd indices follows similarly.
\end{proof}

\begin{rem2}\label{Rem:3.26}
The computations in this section generalize to arbitrary base fields. The key point is that the generators of the dual motivic Steenrod algebra are the same for all base fields. 
\end{rem2}

\subsection{$\Ext_{A(1)^\vee}$ of Brown-Gitler modules}
The goal of this section is to prove Theorem \ref{Lem:ExtOfHZn} which computes $\Ext^{***}_{A(1)^\vee}(H\underline{\z}_j)$ modulo $v_1$-torsion for all $n$. 
Lemma \ref{Lem:SES} allows us to compute $\Ext^{***}_{A(1)^\vee}(H\underline{\z}_j)$ inductively from $\Ext^{***}_{A(1)^\vee}(H\underline{\z}_1^{\otimes i})$. We can compute $\Ext^{***}_{A(1)^\vee}(H\underline{\z}_1)$ using the \emph{algebraic Atiyah-Hirzebruch spectral sequence} (\emph{aAHSS}),
$$E^{***}_1 = \bigoplus_{x \in B} \Sigma^{|x|} \Ext^{*,*-stem(x),*-wt(x)}_{A(1)^\vee}(\m_2) \Rightarrow \Ext^{***}_{A(1)^\vee}(H\uz_1)$$
where $B = \{1, \bar{\xi}_1, \bar{\tau}_1\}$. By \cite[Thm. 6.6]{IS11}, we have
$$\Ext^{***}_{A(1)^\vee}(\m_2) \cong \dfrac{\m_2[h_0,h_1,\alpha,\beta]}{(h_0h_1,\tau h_1^3, h_1 \alpha, \alpha^2 = h_0^2 \beta)}$$
with $|h_0| = (1,0,0)$, $|h_1| = (1,1,1)$, $|\alpha| = (3,4,2)$, and $|\beta| = (4,8,4)$. 

The $E_1$-page of the aAHSS is depicted in Figure \ref{Fig:HZ1E1}, the $E_2$-page of the aAHSS is depicted in Figure \ref{Fig:HZ1E2}, and the $E_3 = E_\infty$-page of the aAHSS is depicted in Figure \ref{Fig:HZ1E3}. Differentials can be read off from the $A^\vee$-comodule structure of $H\underline{\z}_1$ as in the classical setting. The hidden extensions shown in Figure \ref{Fig:HZ1E3} may be deduced by applying the Betti realization functor $Re_\c : SH_\c \to SH$ and comparing to the classical computation of $\Ext^{**}_{A(1)^{cl}_*}(H\uz_1^{cl})$. 

\begin{fig}\label{Fig:HZ1E1}
The $E_1$-page of the aAHSS converging to $\Ext^{***}_{A(1)}(H\uz_1)$ with $d_1$-differentials. A black $\bullet$ represents $\m_2$, a red $\bullet$ represents $\f_2$, and a black $\square$ represents $\m_2[h_0]$. Vertical black lines also represent multiplication by $h_0$. Differentials are blue, $\tau$-linear, and they preserve motivic weight. 
\begin{center}
\includegraphics{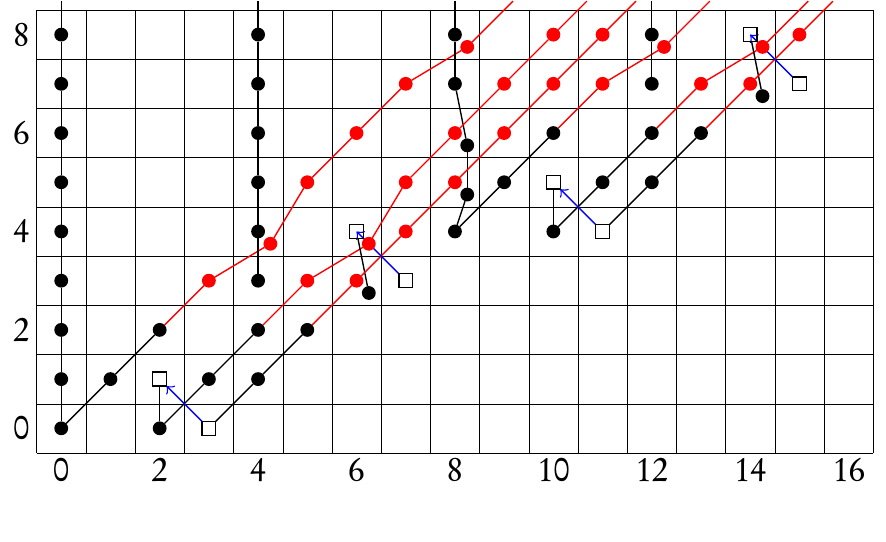}
\end{center}

\end{fig}

\begin{fig}\label{Fig:HZ1E2}
The $E_2$-page of the aAHSS converging to $\Ext^{***}_{A(1)}(H\uz_1)$ with $d_2$-differentials. Notation is as in Figure \ref{Fig:HZ1E1}, with dashed blue arrows representing differentials where either the source, target, or both are $\tau$-torsion. 

\begin{center}
	\includegraphics{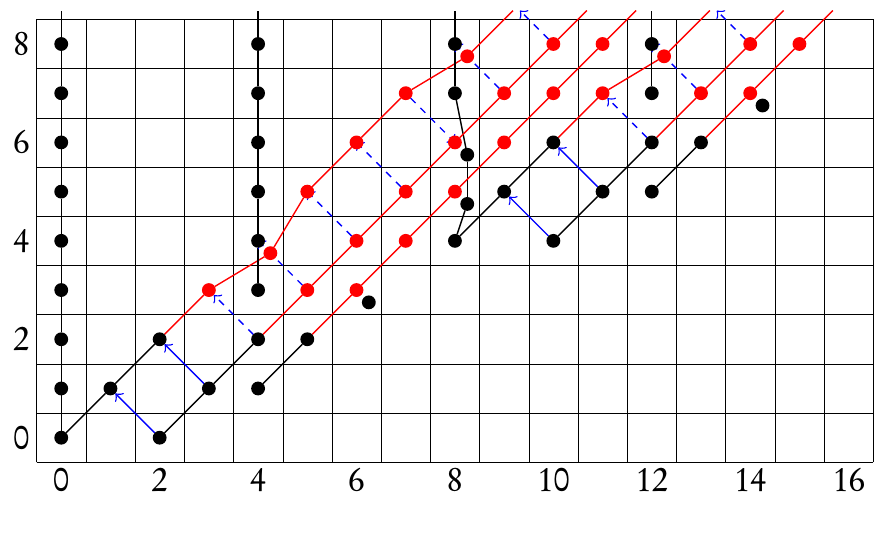}
\end{center}

\end{fig}

\begin{fig}\label{Fig:HZ1E3}
The $E_3 = E_\infty$-page of the aAHSS converging to $\Ext^{***}_{A(1)}(H\uz_1)$. Notation is as in Figure \ref{Fig:HZ1E1}. Vertical green lines indicate hidden $h_0$-extensions while a green $\bullet$ indicates that two classes are connected by a hidden $h_0$- or $\tau$-extension. 
\begin{center}
	\includegraphics{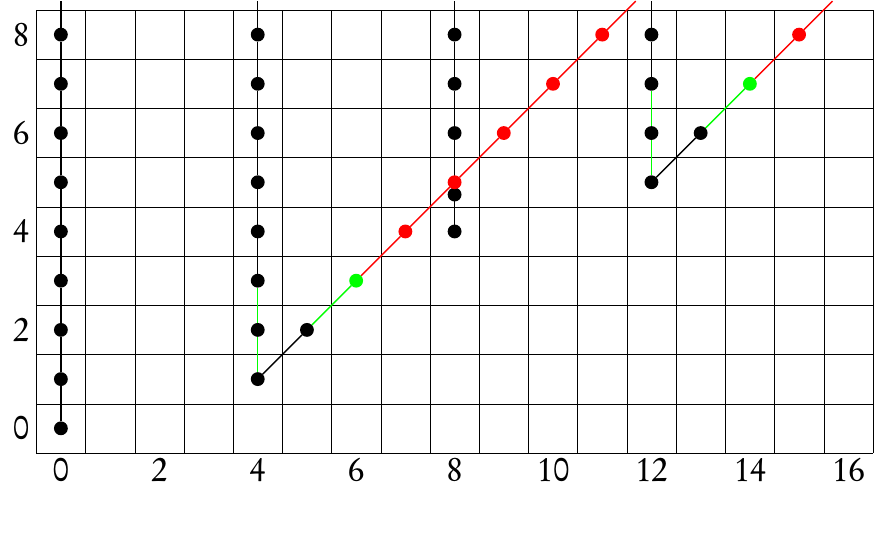}
\end{center}
\end{fig}

\begin{defin}
Let $ksp$ be the very effective cover of $\Sigma^{4,2}KQ$.
\end{defin}

\begin{cor}\label{Cor:HZ1}
There is an isomorphism
$$\Ext^{***}_{A(1)^\vee}(H\underline{\z}_1) \cong \pi_{**}(ksp).$$
\end{cor}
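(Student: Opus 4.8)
The plan is to produce $\pi_{**}(ksp)$ from a motivic $H\f_2$-Adams spectral sequence whose $E_2$-page is precisely $\Ext^{***}_{A(1)^\vee}(H\uz_1)$, and then to feed in the computation of that $\Ext$-group just carried out with the algebraic Atiyah--Hirzebruch spectral sequence (Figures~\ref{Fig:HZ1E1}--\ref{Fig:HZ1E3}). The crucial input is a description of the mod $2$ motivic (co)homology of $ksp$: one wants
$$H_{**}(ksp) \cong (A/\kern-0.25em/A(1))^\vee \otimes H\uz_1$$
as $A^\vee$-comodules, with $H\uz_1$ the $A(1)^\vee$-comodule of Example~\ref{exm: motivic HZ1}; dually, $H^{**}(ksp) \cong A\otimes_{A(1)} H\uz_1$. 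A convenient way to obtain this is via the spectrum $H\z_1$ of Example~\ref{exm: motivic HZ1}: one builds a map $kq\wedge H\z_1 \to \Sigma^{4,2}KQ$ out of the ring structure of $KQ$ and its Bott class, observes that $kq\wedge H\z_1$ is already very effective (so the map factors through $ksp = \widetilde{f}_0\Sigma^{4,2}KQ$), and checks that $kq\wedge H\z_1 \to ksp$ is an $H\f_2$-homology isomorphism; since all spectra in sight are $2$-complete, cellular, and bounded below, this promotes to an equivalence $ksp\simeq kq\wedge H\z_1$, after which the K\"unneth isomorphism $H_{**}(kq\wedge H\z_1) \cong H_{**}(kq)\otimes_{\m_2}H_{**}(H\z_1)$ together with Corollary~\ref{Cor:H_*kq} and Example~\ref{exm: motivic HZ1} gives the displayed formula. (Alternatively, if one imports $H^{**}(ksp)$ from the literature on the very effective covers of Hermitian $K$-theory, the cohomology identification is immediate.)

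Granting this, a change-of-rings isomorphism (restricting the coaction along the quotient $A^\vee \to A(1)^\vee$) identifies the $E_2$-page of the motivic Adams spectral sequence for $ksp$:
$$\Ext^{***}_{A^\vee}(H_{**}(ksp)) \cong \Ext^{***}_{A(1)^\vee}(H\uz_1) \Rightarrow \pi_{**}(ksp).$$
The remaining steps are formal. From the explicit $E_\infty$-chart of the aAHSS computing $\Ext^{***}_{A(1)^\vee}(H\uz_1)$ — concentrated in the expected $v_1^4$-periodic pattern, with $h_0$-towers only in even stems and no infinite $h_1$-towers — there is no room for a nonzero $d_r$ in the motivic Adams spectral sequence, just as for $kq$, whose motivic Adams spectral sequence also collapses; hence the spectral sequence collapses at $E_2$. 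The only possible indeterminacy is in hidden $h_0$- and $\tau$-extensions, but these have already been pinned down inside $\Ext^{***}_{A(1)^\vee}(H\uz_1)$ via Betti realization and comparison with the classical computation $\Ext^{**}_{A(1)^{cl}_*}(H\uz_1^{cl}) \cong \pi_*(bsp)$ — exactly the input used in Figure~\ref{Fig:HZ1E3}. Assembling these facts yields the isomorphism $\Ext^{***}_{A(1)^\vee}(H\uz_1) \cong \pi_{**}(ksp)$.

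The main obstacle is the first step: computing $H_{**}(ksp)$, equivalently proving $ksp\simeq kq\wedge H\z_1$. This cannot be deduced formally from $H^{**}(kq)\cong A/\kern-0.25em/A(1)$, because the very effective cover functor $\widetilde{f}_0$ does not interact with mod $2$ cohomology in a naive, truncation-like way, so one genuinely needs structural facts about Hermitian $K$-theory — its ring structure, Bott periodicity, and the cohomology of its very effective covers, building on \cite{ARO17}. Once $H_{**}(ksp)$ is identified, the change of rings, the collapse by sparseness, and the extension bookkeeping via Betti realization are all routine and mirror the classical identification $\pi_*(bsp)\cong\Ext^{**}_{A(1)^{cl}_*}(H\uz_1^{cl})$.
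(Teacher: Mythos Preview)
Your approach is sound but considerably more elaborate than what the paper does. The paper's proof is a one-liner: it cites Bachmann's computation of $\pi_{**}(ksp)$ (via the very effective slice filtration, \cite[Thm.~16]{Bac17}) and simply observes that the answer visibly matches the $\Ext$-group already computed via the aAHSS in Figures~\ref{Fig:HZ1E1}--\ref{Fig:HZ1E3}. No structural identification of $H_{**}(ksp)$, no motivic Adams spectral sequence for $ksp$, no collapse argument --- just an inspection of two independently computed charts.

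Your route, by contrast, aims to realize the isomorphism as the collapse of the motivic Adams spectral sequence for $ksp$ after a change of rings. That is a more conceptual statement: it would pin down the isomorphism as compatible with the $\Ext_{A(1)^\vee}(\m_2)$-module structure and with Betti realization, rather than being a bare coincidence of bigraded groups. The cost is exactly the obstacle you flag --- establishing $H_{**}(ksp)\cong (A/\kern-0.25em/A(1))^\vee\otimes H\uz_1$, or equivalently $ksp\simeq kq\wedge H\z_1$, which is not in \cite{ARO17} and requires real input about very effective Hermitian $K$-theory. The paper sidesteps this entirely by outsourcing the homotopy computation to Bachmann. So your argument is correct in outline and would yield a stronger statement, but for the corollary as written the paper's ``compute both sides and compare'' is both shorter and self-contained given the cited reference.
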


\begin{proof}
The right-hand side can be calculated using \cite[Thm. 16]{Bac17}, and the result is clearly isomorphic to the left-hand side. 
\end{proof}

In \cite{Mah81} (see also \cite{BOSS19}), Mahowald expresses $\Ext^{***}_{A(1)^{\vee}}(H\underline{\z}_1^{\otimes i})$ in terms of the Adams covers of $bo$ and $bsp$. The motivic analog of this result is slightly more complicated; as an example, we begin by computing $\Ext^{***}_{A(1)^\vee}(H\underline{\z}_1 \otimes H\underline{\z}_1)$. One way to approach this calculation is via the aAHSS for the functor $\Ext_{A(1)^\vee}^{***}(H\ul{\z}_1\otimes -)$. This is a spectral sequence taking the form 
\[
\Ext_{A(1)^\vee}^{***}(H\ul{\z}_1)\otimes_{\m_2}\m_2\{1, \oxi_1, \otau_1\}\implies \Ext_{A(1)^\vee}^{***}(H\ul{\z}_1^{\otimes 2}).
\]
The differentials and extensions can be determined as in the previous example. We present the calculation in the following figures. 

\begin{fig}\label{fig: HZ1HZ1E1}
$E_1$-page of the aAHSS for $\Ext^{***}_{A(1)^\vee}(H\underline{\z}_1 \otimes H\underline{\z}_1)$ with $d_1$-differentials. A black $\bullet$ represents $\m_2$, a red $\bullet$ represents $\f_2$. Differentials are blue and $\tau$-linear.
\begin{center}
	\includegraphics{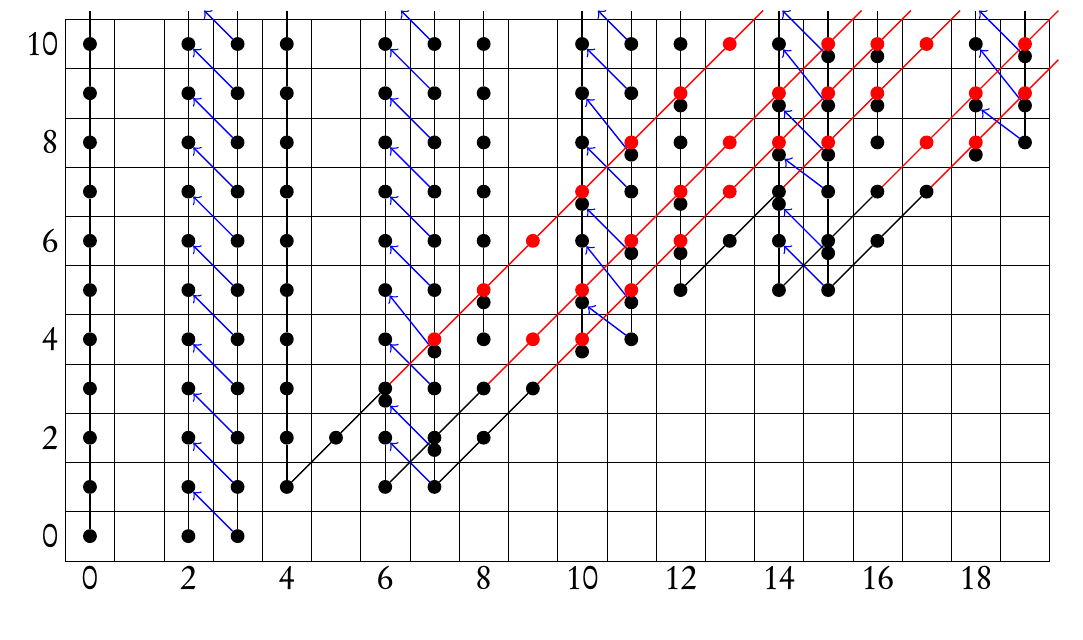}
\end{center}
\end{fig}

\begin{fig}\label{fig: HZ1HZ1E2}
The $E_2$-page of the aAHSS for $\Ext^{***}_{A(1)^\vee}(H\underline{\z}_1 \otimes H\underline{\z}_1)$ with $d_2$-differentials. A black $\bullet$ represents $\m_2$, a red $\bullet$ represents $\f_2$. Differentials are blue and $\tau$-linear, with a dashed differential indicating that the target, source, or both are $\tau$-torsion. 
\begin{center}
	\includegraphics{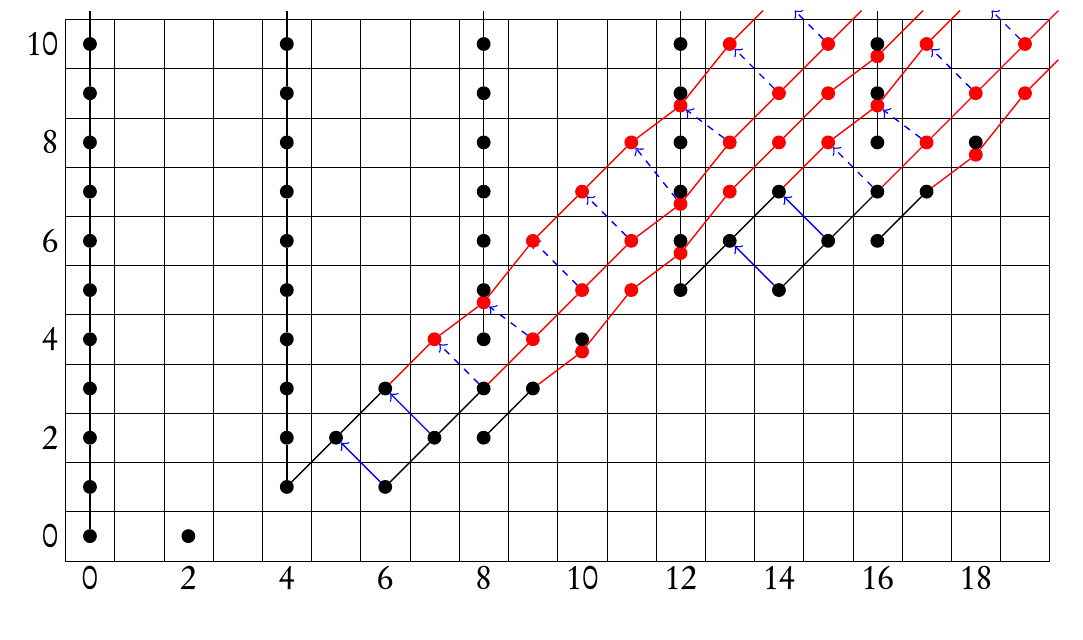}
\end{center}
\end{fig}

\begin{fig}\label{Fig:aAHSSi2}
The $E_3 = E_\infty$-page of the aAHSS for $\Ext^{***}_{A(1)^\vee}(H\underline{\z}_1 \otimes H\underline{\z}_1)$. A black $\bullet$ represents $\m_2$, a red $\bullet$ represents $\f_2$. Vertical green lines indicate hidden $h_0$-extensions while a green $\bullet$ indicates that two classes are connected by a hidden $h_0$- or $\tau$-extension. 

\begin{center}
	\includegraphics{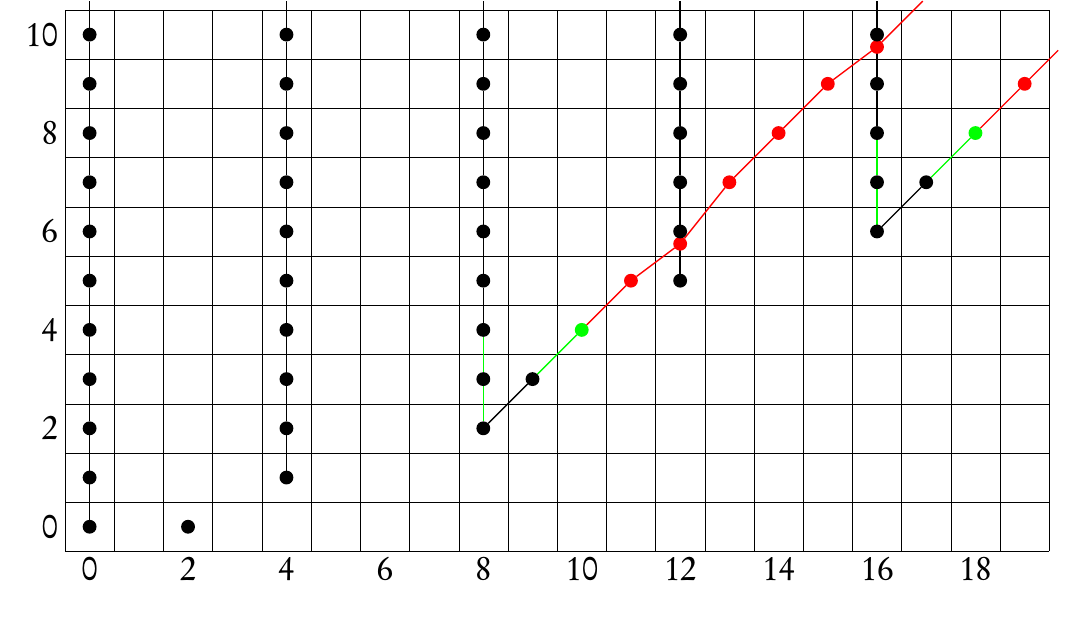}
\end{center}
\end{fig}

Already, we see that the classical description of $\Ext^{***}_{A(1)^{\vee}}(H \underline{\z}_1^{\otimes i})$ in terms of the Adams covers of $kq$ and $ksp$ fails when $i=2$. In particular, there would be an infinite $\tau$-torsion $h_1$-tower beginning in bidegree $(2,0)$ of Figure \ref{Fig:aAHSSi2} if this were the case. However, one may express the above computation as
$$\Ext_{A(1)^\vee}^{***}(H\underline{\z}_1^{\otimes 2}) \cong \left( \m_2 \otimes \tau_{<4} \Ext^{**}_{A(1)^{cl}}(bo^{\langle 2 \rangle}) \right)  \oplus  \tau_{\geq 4} \Ext_{A(1)^\vee}^{***}( kq^{\langle 2 \rangle})$$
where $\Ext^{**}_{A(1)^{cl}}(bo^{\langle 2 \rangle})$ is made into a trigraded object in an appropriate way. 

More generally, we see that $\Ext^{***}_{A(1)^\vee}(H\underline{\z}_1^{\otimes i}) / (v_1\mathrm{-tor})$ may be expressed as a combination of a classical Adams cover of $bo$ or $bsp$ and a suspension of $kq$ or $ksp$. Recall from \cite[Lem. 3.2]{BOSS19} that there are isomorphisms
\[
\dfrac{\Ext_{A(1)^{cl}_*}^{**}(H\underline{\z}_1^{\otimes i})}{v_1\mathrm{-tor}} \cong \begin{cases} \Ext^{**}_{A(1)^{cl}_*}(bo^{\langle i \rangle}), \quad & \text{ if } i \equiv 0 \mod 2, \\
\Ext^{**}_{A(1)^{cl}_*}(bsp^{\langle i - 1 \rangle}), \quad & \text{ if } i \equiv 1 \mod 2.
\end{cases}
\]
If $i \equiv 0 \mod 4$, then $i = 4k$ and the $\Ext$-groups on the right-hand side above can be expressed as 
$$\Ext^{**}_{A(1)^{cl}_*}(bo^{\langle i \rangle}) \cong \bigoplus_{j = 0}^{2k-1} \Sigma^{4j} \f_2[h_0] \oplus \Sigma^{4k} \Ext^{**}_{A(1)^{cl}_*}(bo).$$
Similar decompositions hold for other congruence classes of $i$; these decompositions lead us to the following definition.

\begin{defin}\label{Def:Zi}
For $i \geq 0$, let $Z_i$ be the trigraded group defined as follows. When $i \equiv 0 \mod 4$, let
$$Z_i := \bigoplus_{j = 0}^{i/2-1} \Sigma^{4j,2j} \m_2[h_0] \oplus \Sigma^{2i,i} \Ext^{***}_{A(1)^\vee}(\m_2).$$
When $i \equiv 1 \mod 4$, let
$$Z_i := \bigoplus_{j=0}^{(i-1)/2 -1} \Sigma^{4j,2j} \m_2[h_0] \oplus \Sigma^{2i-2,i-1} \Ext^{***}_{A(1)^\vee}(H\uz_1).$$
When $i \equiv 2 \mod 4$, let
$$Z_i := \bigoplus_{j=0}^{i/2-1} \Sigma^{4j,2j} \m_2[h_0] \oplus \Sigma^{2i-2,i} \m_2 \oplus \Sigma^{2i,i}  \Ext^{***}_{A(1)^\vee}(H\uz_1)[1].$$
When $i \equiv 3 \mod 4$, let
$$Z_i := \bigoplus_{j=0}^{(i-1)/2} \Sigma^{4j,2j} \m_2[h_0] \oplus \Sigma^{2i-1,i} \m_2[h_1]/(h_1^2) \oplus \Sigma^{2i+2,i+1} \Ext^{***}_{A(1)^\vee}(H\uz_1)[2].$$
In all cases, the tridegrees of the generators are $|\tau| = (0,0,-1)$, $|h_0| = (1,1,0)$, $|h_1| = (1,2,1)$, and $|x| = (s,t,u)$ for $x \in \Ext^{s,t,u}$. The functor $\Sigma^{k,\ell}$ shifts an element in tridegree $(s,t,u)$ to tridegree $(s,t+k,u+\ell)$ and the functor $(-)[m]$ shifts an element in tridegree $(s,t,u)$ to tridegree $(s+m,t+m,u)$. 
\end{defin}

\begin{lem}\label{Lem:ExtOfOtimes}
There is an isomorphism of $A(1)^\vee$-comodules $\Ext_{A(1)^\vee}(\m_2)$-modules
$$\dfrac{\Ext^{***}_{A(1)^\vee}(H\underline{\z}_1^{\otimes i })}{v_1-tor} \cong Z_i.$$
\end{lem}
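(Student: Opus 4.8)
The plan is to induct on $i$ using the short exact sequences of Lemma~\ref{Lem:SES}, bootstrapping from the explicit base cases $i = 0, 1, 2$ already computed (the aAHSS figures for $H\uz_1$ and $H\uz_1^{\otimes 2}$). First I would reorganize the inductive data: since $\Ext_{A(1)^\vee}(H\uz_1^{\otimes i})$ is built from the comodules $H\uz_i$ via the isomorphism $H\uz_1^{\otimes i} \cong \bigoplus(\text{Brown-Gitler pieces})$ implicit in iterating Lemma~\ref{Lem:SES}, I would instead prove the cleaner statement that $\Ext_{A(1)^\vee}(H\uz_j)/(v_1\text{-tor})$ has the form predicted by $Z_j$ (suitably reindexed), and deduce Lemma~\ref{Lem:ExtOfOtimes} by assembling the pieces. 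The key input is that modulo $v_1$-torsion, the long exact sequences in $\Ext$ induced by Lemma~\ref{Lem:SES} become short exact: the connecting homomorphisms land in or emanate from $v_1$-torsion classes and hence vanish after quotienting. This requires knowing where the $v_1$-torsion lives, which is exactly the content of Remark~\ref{Rem:kqMargolis} — the $Q_0$-Margolis homology computation shows $\Ext_{A(0)^\vee}(\ul{kq}_j)$ splits as a free $\m_2[v_0]$-part (in even stems) plus $v_0$-torsion in Adams filtration $0$, and the $v_1$-torsion in $\Ext_{A(1)^\vee}(\ul{kq}_{j-1} \otimes A(1)/\kern-0.25em/A(0)^\vee)$ is controlled similarly.

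The inductive step would then proceed by cases on $i \bmod 4$. Using the first sequence of Lemma~\ref{Lem:SES} for even $i = 2j$ and the second for odd $i = 2j+1$, I would apply $\Ext_{A(1)^\vee}(-)$ to get a long exact sequence relating $\Ext(H\uz_{2j})$ (resp.\ $\Ext(H\uz_{2j+1})$) to $\Ext(\Sigma^{4j,2j}H\uz_j)$ (resp.\ $\Ext(\Sigma^{4j,2j}H\uz_j \otimes H\uz_1)$) and $\Ext_{A(1)^\vee}(\ul{kq}_{j-1} \otimes (A(1)/\kern-0.25em/A(0))^\vee)$. A change-of-rings isomorphism identifies the last term with $\Ext_{A(0)^\vee}(\ul{kq}_{j-1})$, which by Remark~\ref{Rem:kqMargolis} is $\m_2[v_0] \otimes \m_2\{\oxi_1^2, \ldots, \oxi_1^{2(j-1)}\}$ plus $v_0$-torsion in filtration $0$; modulo $v_1$-torsion (note $v_0$-towers here are $v_1$-torsion in the $A(1)^\vee$-sense once we remember $h_0 v_1^4$-relations, so care is needed — I would check precisely which towers survive) this contributes exactly the $\bigoplus_j \Sigma^{4j,2j}\m_2[h_0]$ summands in Definition~\ref{Def:Zi}. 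The remaining summand comes from $\Ext(H\uz_j)$ or $\Ext(H\uz_j \otimes H\uz_1)$ by induction, with the extra tensor factor $H\uz_1$ in the odd case accounting for the shift from $\Ext(\m_2)$-type summands to $\Ext(H\uz_1)$-type summands (i.e.\ from $kq$ to $ksp$), matching the classical pattern of \cite[Lem. 3.2]{BOSS19} and the classical decomposition $\Ext_{A(1)^{cl}_*}^{**}(bo^{\langle i\rangle}) \cong \bigoplus_{j=0}^{2k-1}\Sigma^{4j}\f_2[h_0] \oplus \Sigma^{4k}\Ext_{A(1)^{cl}_*}^{**}(bo)$ recalled above.

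I would verify the module structure over $\Ext_{A(1)^\vee}(\m_2)$ by tracking the $h_0$-, $h_1$-, $\alpha$-, $\beta$-actions through the long exact sequences; the relations defining $Z_i$ (e.g.\ $h_1$ acting as zero on the $\m_2[h_0]$-summands, the truncations $\m_2[h_1]/(h_1^2)$ in the $i \equiv 3$ case) should follow from the corresponding relations in $\Ext_{A(1)^\vee}(H\uz_1)$ established via the aAHSS figures together with the fact that the boundary maps are $\Ext(\m_2)$-linear. Betti realization $Re_\c$ provides a consistency check at each stage: the $\tau$-torsion-free part must map to the classical answer of \cite[Lem. 3.2]{BOSS19}, which pins down any otherwise-ambiguous extensions.

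\textbf{Main obstacle.} The hard part will be the bookkeeping of $v_1$-torsion — specifically, confirming that the connecting maps in the $\Ext$ long exact sequences really do vanish modulo $v_1$-torsion in \emph{every} degree and for every congruence class of $i$, rather than producing stray $v_1$-torsion-free classes. This is subtle because $v_1 = \beta$ (or a power thereof) and the interaction of $h_0$-towers with $v_1$-periodicity is exactly where the motivic story diverges from the classical one (the absence of residual infinite $h_1$-towers noted in the introduction). Getting the precise statement of which $v_0$-towers in $\Ext_{A(0)^\vee}(\ul{kq}_{j-1})$ become $v_1$-periodic versus $v_1$-torsion after inducing up to $A(1)^\vee$ — and checking this is compatible with the claimed form of $Z_i$ in all four cases — is where the real work lies; everything else is a careful but routine transcription of the classical argument of \cite{BOSS19} with motivic weights attached.
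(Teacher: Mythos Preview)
Your proposal has a genuine conceptual gap: you have conflated $H\uz_1^{\otimes i}$ with $H\uz_i$. Lemma~\ref{Lem:SES} gives short exact sequences for the Brown--Gitler modules $H\uz_{2j}$ and $H\uz_{2j+1}$, not for tensor powers $H\uz_1^{\otimes i}$; there is no ``isomorphism $H\uz_1^{\otimes i} \cong \bigoplus(\text{Brown--Gitler pieces})$ implicit in iterating Lemma~\ref{Lem:SES}'' --- that lemma does not produce such a splitting. What you have outlined (using Lemma~\ref{Lem:SES}, change of rings, and Remark~\ref{Rem:kqMargolis} to identify the $\ul{kq}_{j-1}\otimes (A(1)/\kern-0.25em/A(0))^\vee$ piece) is essentially the paper's proof of the \emph{later} Theorem~\ref{Lem:ExtOfHZn} about $\Ext(H\uz_n)$. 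But that proof \emph{uses} Lemma~\ref{Lem:ExtOfOtimes} as input: in the odd case the left term of the short exact sequence is $\Sigma^{4j,2j} H\uz_j \otimes H\uz_1$, and to compute its $\Ext$ the paper runs the aAHSS for $\Ext(- \otimes H\uz_1)$, relying on the structure already established for tensor powers. Your plan therefore has the logical dependence reversed, and step~2' (deduce $\Ext(H\uz_1^{\otimes i})$ from $\Ext(H\uz_j)$) has no mechanism.

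The paper's proof of Lemma~\ref{Lem:ExtOfOtimes} is more direct and does not touch Lemma~\ref{Lem:SES} at all. It inducts on $i$ via the algebraic Atiyah--Hirzebruch spectral sequence for the three-cell filtration of $H\uz_1$ (cells in dimensions $0,2,3$), regarding $\Ext_{A(1)^\vee}(H\uz_1^{\otimes (i-1)}\otimes -)$ as a homology theory. This yields
\[
\Ext_{A(1)^\vee}(H\uz_1^{\otimes i}) \otimes_{\m_2} \m_2\{x[0],x[2],x[3]\} \;\Longrightarrow\; \Ext_{A(1)^\vee}(H\uz_1^{\otimes(i+1)}),
\]
with differentials $d_1(\alpha x[3]) = h_0\alpha\, x[2]$ and $d_2(\alpha x[2]) = h_1\alpha\, x[0]$ read off directly from cobar representatives. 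Hidden extensions are then resolved by inverting $\tau$ and comparing with the classical computation of \cite{BOSS19}. Your instinct to use Betti realization for extensions is correct, but the inductive engine is the cell filtration of $H\uz_1$, not the short exact sequences of Lemma~\ref{Lem:SES}.
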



\begin{proof}
Regarding $\Ext^{***}_{A(1)^\vee}(H\underline{\z}_1^{\otimes {i-1}} \otimes -)$ as a homology theory in the category of $A(1)^\vee$-comodules, we may inductively compute $\Ext^{***}_{A(1)^\vee}(H\underline{\z}_1^{\otimes i})$ via the aAHSS associated to the filtration of $H\underline{\z}_1$ by topological dimension. Thus we obtain a convergent spectral sequence
\[
\Ext_{A(1)^\vee}^{***}(H\ul{\z}_1^{\otimes i})\otimes_{\m_2}\m_2\{x[0], x[2],x[3]\}\implies \Ext_{A(1)^\vee}^{***}(H\ul{\z}_1^{\otimes i+1}).
\]
Here, an expression $\alpha x[k]$ where $\alpha\in \Ext_{A(1)^\vee}(H\ul{\z}_1^{\otimes i})$ denotes the element of $\Ext(H\ul{\z}_1^{\otimes i})$ on the cell of dimension $k$. In particular, this spectral sequence arises from applying the $\Ext$-functor to the following filtration by comodules of $H\ul{\z}_1$. Let $F^kH\ul{\z}_1$ denote the subspace spanned by generators in degrees $\leq k$. This is clearly a filtration by comodules. Thus we get 
\[
\begin{tikzcd}
0 \arrow[r] & F^0 H\ul{\z}_1\arrow[r]\arrow[d] & F^1H\ul{\z}_1\arrow[r] \arrow[d]& F^2H\ul{\z}_1\arrow[r]\arrow[d] &  F^3H\ul{\z}_1 \arrow[d]= H\ul{\z}_1 \\
& F^0H\ul{\z}_1 & 0 & F^2/F^2 & F^3/F^2
\end{tikzcd}
\]
and our spectral sequence arises from the exact couple coming from applying $\Ext$. Here the differentials are obtained by taking a cobar representative for a class $\alpha x[k]$, lifting them to the cobar complex for $F^kH\ul{\z}_1$, applying the cobar differential, and then projecting on to the highest filtration forms. Consider an element $\alpha x[3]$ on $E_1$. This is represented in the (normalized) cobar complex for $F^3/F^2$ by $a\otimes \otau_1$, where $a$ is a cocycle of the cobar complex for $H\ul{\z}_1^{\otimes i}$ representing $\alpha$. Since $a$ is a cocycle, the cobar differential yields
\[
d(\alpha\otimes \otau_1) = \alpha\otimes \otau_0|\oxi_1.
\]
Thus we derive for each $\alpha\in \Ext(H\ul{\z}_1^{\otimes i})$ 
\[
d_1(\alpha x[3]) = \alpha h_0x[2].
\]
Similar considerations show that there is a $d_2$-differential 
\[
d_2(\alpha x[2]) = \alpha h_1 x[0].
\]
Degree considerations show that these are the only differentials which occur. 

In order to get the desired hidden extensions, we use the isomorphism $H\underline{\z}_1^{\otimes i}[\tau^{-1}] \cong (H\underline{\z}^{cl}_1)^{\otimes i}[\tau^{\pm 1}]$ which induces a ring homomorphism
\[
\Ext_{A(1)^\vee}(H\ul{\z}_1^{\otimes i})\to \Ext_{A(1)^{cl}}((H\ul{\z}^{cl}_1)^{\otimes i}).
\]
Comparison with the classical computation (cf. \cite{BOSS19}) yields the desired extensions. 
\end{proof}

\begin{rem2}
Working modulo $v_1$-torsion serves two purposes. First, it drastically simplifies calculations since we may ignore the numerous $v_1$-torsion classes in Adams filtration $0$. Second, we are only interested in $v_1$-periodic and $\eta$-periodic classes in the stable stems. All $\eta$-periodic classes are also $v_1$-periodic, so we do not miss anything by ignoring $v_1$-torsion. 
\end{rem2}

This gives us the input to calculate the $\Ext$-groups for the comodules $H\ul{\z}_n$ using long exact sequences in $\Ext$ arising from the short exact sequences of Lemma \ref{Lem:SES}. The following should be compared with \cite[Prop. 2.6]{Mah81} and \cite[Prop. 3.3]{BOSS19}. 

\begin{thm}\label{Lem:ExtOfHZn}
There is an isomorphism of $A(1)^\vee$-comodules $\Ext_{A(1)^\vee}(\m_2)$-modules
\[
\dfrac{\Ext^{***}_{A(1)^\vee}(H\underline{\z}_n)}{v_1\mathrm{-tor}} \cong Z_{2n-\alpha(n)},
\]
where $\alpha(n)$ is the number of $1$'s in the dyadic expansion of $n$. 
\end{thm}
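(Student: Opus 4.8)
The plan is to run the induction suggested in Section \ref{sec: kq-cooperations}, using the short exact sequences of Lemma \ref{Lem:SES} as the inductive engine and Lemma \ref{Lem:ExtOfOtimes} as the base case. First I would set up the induction on $n$, noting that the claim is tautological for $n \leq 1$: when $n = 1$ we have $2n - \alpha(n) = 1$ and $Z_1 = \Ext^{***}_{A(1)^\vee}(H\uz_1)$ by Definition \ref{Def:Zi}, matching Corollary \ref{Cor:HZ1} and Lemma \ref{Lem:ExtOfOtimes} with $i = 1$. For the inductive step, I would split into the cases $n = 2j$ and $n = 2j+1$ and apply $\Ext_{A(1)^\vee}(\m_2, -)$ to the two short exact sequences of Lemma \ref{Lem:SES}, obtaining long exact sequences
\[
\cdots \to \Ext^{***}_{A(1)^\vee}(\underline{kq}_{j-1} \otimes (A(1)/\kern-0.25em/A(0))^\vee) \to \Ext^{***}_{A(1)^\vee}(H\uz_{2j}) \to \Sigma^{4j,2j}\Ext^{***}_{A(1)^\vee}(H\uz_j) \to \cdots
\]
and the analogous one for $H\uz_{2j+1}$ involving $\Ext^{***}_{A(1)^\vee}(H\uz_j \otimes H\uz_1)$.

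The key computational inputs are: (i) by Remark \ref{Rem:kqMargolis}, $\Ext^{***}_{A(1)^\vee}(\underline{kq}_{j-1} \otimes (A(1)/\kern-0.25em/A(0))^\vee) \cong \Ext^{***}_{A(0)^\vee}(\underline{kq}_{j-1})$ is, modulo $v_1$-torsion concentrated in Adams filtration $0$, a sum of $j$ copies of $\m_2[v_0]$ on classes $\oxi_1^2, \ldots, \oxi_1^{2(j-1)}$ in even stems; (ii) by the inductive hypothesis $\Ext^{***}_{A(1)^\vee}(H\uz_j)/(v_1\text{-tor}) \cong Z_{2j - \alpha(j)}$; and (iii) for the odd case, $\Ext^{***}_{A(1)^\vee}(H\uz_j \otimes H\uz_1)/(v_1\text{-tor})$, which I would compute from the aAHSS for $\Ext^{***}_{A(1)^\vee}(H\uz_1 \otimes -)$ applied to $H\uz_j$ exactly as in the proof of Lemma \ref{Lem:ExtOfOtimes} (the same $d_1(\alpha x[3]) = \alpha h_0 x[2]$ and $d_2(\alpha x[2]) = \alpha h_1 x[0]$ pattern), yielding $Z_{2j - \alpha(j) + 1}$ after identifying the tensor with $H\uz_1$ with an extra cell. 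Then I would extract the connecting homomorphism in each long exact sequence: modulo $v_1$-torsion, the $\m_2[v_0] = \m_2[h_0]$-towers coming from the $\underline{kq}$ factor are precisely the "extra" $\bigoplus_j \Sigma^{4j,2j}\m_2[h_0]$ summands in Definition \ref{Def:Zi}, and the connecting map must kill exactly one $h_0$-tower against the bottom class of the suspended $Z$-term (mirroring the classical boundary in \cite[Prop. 3.3]{BOSS19}), which accounts bookkeeping-wise for the shift from $2 \cdot (2j - \alpha(j)) \cdots$ down to $2(2j) - \alpha(2j) = 4j - \alpha(j)$ and $2(2j+1) - \alpha(2j+1) = 4j + 1 - \alpha(j)$, using $\alpha(2j) = \alpha(j)$ and $\alpha(2j+1) = \alpha(j) + 1$.

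The main obstacle will be pinning down the connecting homomorphism and the resulting extension problems precisely enough to identify the answer on the nose with $Z_{2n - \alpha(n)}$ rather than merely up to the associated graded — in particular, verifying that the $h_0$- and $h_1$-towers splice together in the way encoded by the shift functors $\Sigma^{k,\ell}$ and $(-)[m]$ in Definition \ref{Def:Zi}, and checking that no unexpected $v_1$-periodic classes are created or destroyed. As in the proof of Lemma \ref{Lem:ExtOfOtimes}, I would resolve the ambiguous extensions and hidden $h_0$-/$\tau$-extensions by inverting $\tau$ and comparing with the classical statement \cite[Prop. 2.6]{Mah81}, \cite[Prop. 3.3]{BOSS19} via the ring map $\Ext_{A(1)^\vee}(H\uz_n) \to \Ext_{A(1)^{cl}}(H\uz_n^{cl})[\tau^{\pm 1}]$; the classical result forces the $\tau$-free part, and a weight/degree count rules out spurious $\tau$-torsion (the same kind of argument used throughout Section \ref{Sec:E2}). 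The remaining work is then the purely combinatorial verification, separately in the four residue classes of $n \bmod 4$ crossed with the parity of $j$, that the recursive recipe reproduces Definition \ref{Def:Zi} with index $2n - \alpha(n)$; this is routine but tedious, and I would present it as a case check rather than in prose.
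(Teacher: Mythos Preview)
Your overall strategy---induction on $n$ via the short exact sequences of Lemma \ref{Lem:SES}, with Remark \ref{Rem:kqMargolis} handling the $\underline{kq}_{j-1}\otimes (A(1)/\kern-0.25em/A(0))^\vee$ term, the aAHSS for the $\otimes H\uz_1$ factor in the odd case, and comparison with the classical computation to resolve extensions---is exactly the route the paper takes.

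There is, however, one concrete misconception. You expect the connecting homomorphism in the long exact sequence to be nontrivial (``the connecting map must kill exactly one $h_0$-tower against the bottom class of the suspended $Z$-term''), and you try to make the index arithmetic work via such a cancellation. In fact the boundary map is \emph{zero} modulo $v_1$-torsion, for a simple degree reason: the $h_0$-towers in $\Ext_{A(1)^\vee}(\underline{kq}_{j-1}\otimes (A(1)/\kern-0.25em/A(0))^\vee)$ live in stems $0,4,\ldots,4(j-1)$, while $\Sigma^{4j,2j}\Ext_{A(1)^\vee}(H\uz_j)$ is concentrated in stems $\geq 4j$. So the long exact sequence splits (mod $v_1$-torsion) into a direct sum, and no cancellation occurs. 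Once you see this, the index bookkeeping is straightforward: for $n=2j$ one gets $j$ towers from the $\underline{kq}$ side together with $\Sigma^{4j,2j}Z_{2j-\alpha(j)}$ from the inductive hypothesis, and these assemble directly into $Z_{4j-\alpha(j)}=Z_{2n-\alpha(n)}$ (using $\alpha(2j)=\alpha(j)$); the odd case is analogous with $\alpha(2j+1)=\alpha(j)+1$. Your garbled ``shift from $2\cdot(2j-\alpha(j))$ down to $4j-\alpha(j)$'' is a symptom of expecting a cancellation that isn't there. Also note that your displayed long exact sequence has the terms in the wrong order relative to the short exact sequence in Lemma \ref{Lem:SES}: the sub is $\Sigma^{4j,2j}H\uz_j$ and the quotient is $\underline{kq}_{j-1}\otimes(A(1)/\kern-0.25em/A(0))^\vee$, not the other way around.
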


\begin{proof}
For this proof only, all $\Ext$-groups are implicitly calculated modulo $v_1$-torsion. We proceed by induction on $n$. The lemma is clear for $n = 1$ by Corollary \ref{Cor:HZ1}. Assume now that the lemma holds for all $H\underline{\z}_i$ with $i < n$.

If $n$ is even, then $H\underline{\z}_n$ fits into the short exact sequence
$$0 \to \Sigma^{2n,n} H\underline{\z}_{n/2} \to H\underline{\z}_{n} \to \underline{kq}_{n/2-1} \otimes (A(1)/\kern-0.25em/A(0))^{\vee} \to 0$$
by Lemma \ref{Lem:SES}. Applying $\Ext_{A(1)^\vee}^{***}(-)$, we see that the boundary map in the resulting long exact sequences is zero (modulo $v_1$-torsion). Therefore $\Ext^{***}_{A(1)^\vee}(H\underline{\z}_n)$ decomposes into the $\Ext$ groups of the left-hand side and right-hand side. By Remark \ref{Rem:kqMargolis}, the $\Ext$-groups of the right-hand side are isomorphic to
$$\Ext^{***}_{A(1)^\vee}(\underline{kq}_{n/2-1} \otimes (A(1)/\kern-0.25em/A(0))^\vee) \cong \bigoplus_{j =0}^{n/2-1} \Sigma^{4j,2j} \m_2[h_0].$$
The $\Ext$-groups of the left-hand side are given by
$$\Ext^{***}_{A(1)^\vee}(\Sigma^{2n,n} H\underline{\z}_{n/2}) \cong \Sigma^{2n,n} Z_{n-\alpha(n/2)}$$
by the induction hypothesis, so we have
$$\Ext^{***}_{A(1)^\vee}(H\underline{\z}_n) \cong \bigoplus_{j  = 0}^{n/2-1} \Sigma^{4j,2j} \m_2[h_0] \oplus \Sigma^{2n,n} Z_{n-\alpha(n/2)} \cong Z_{2n-\alpha(n)}$$
(note that $\alpha(n) = \alpha(n/2)$). 

On the other hand, if $n$ is odd, then $H\underline{\z}_n$ fits into the short exact sequence
$$0 \to \Sigma^{2(n-1),n-1} H\underline{\z}_{(n-1)/2} \otimes H\underline{\z}_1 \to H\underline{\z}_n \to \underline{kq}_{(n-1)/2-1} \otimes (A(1)/\kern-0.25em/A(0))^\vee \to 0$$
by Lemma \ref{Lem:SES}. The $\Ext$-groups of the right-hand side are
$$\Ext^{***}_{A(1)^\vee}(\underline{kq}_{(n-1)/2-1} \otimes (A(1)/\kern-0.25em/A(0))^\vee) \cong \bigoplus_{j = 0}^{(n-1)/2-1} \Sigma^{4j,2j} \m_2[h_0]$$
by Remark \ref{Rem:kqMargolis}, where the $v_0$-torsion classes in Adams filtration are suppressed since they are $v_1$-torsion. It remains to calculate
$$\Ext^{***}_{A(1)^\vee}\left(\Sigma^{2(n-1),n-1}H\underline{\z}_{(n-1)/2} \otimes H\underline{\z}_1\right).$$
As in the proof of Lemma \ref{Lem:ExtOfOtimes}, we analyze the aAHSS associated to the homology theory $\Ext^{***}_{A(1)^\vee}(- \otimes H\underline{\z}_1)$ in the category of $A(1)^\vee$-comodules. This spectral sequence takes the form
$$\Ext^{***}_{A(1)^\vee}(H\underline{\z}_{(n-1)/2}) \otimes_{\m_2} \m_2\{x[0],x[2],x[3]\} \Rightarrow \Ext^{***}_{A(1)^\vee}(H\underline{\z}_{(n-1)/2} \otimes H\underline{\z}_1).$$
By the induction hypothesis, the left-hand side is isomorphic as $A(1)^\vee$-comodules to
$$Z_{n-1-\alpha((n-1)/2)} \otimes_{\m_2} \m_2\{x[0],x[2],x[3]\}.$$

Suppose that $n-1-\alpha((n-1)/2) \equiv 0 \mod 4$. Then $Z_{n-1-\alpha((n-1)/2)}$ decomposes as a direct sum
\begin{align*}
\bigoplus_{j = 0}^{(n-1-\alpha(\frac{n-1}{2}))/2-1} \Sigma^{4j,2j} \m_2[h_0] \oplus \Sigma^{2(n-1-\alpha(\frac{n-1}{2})),n-1-\alpha(\frac{n-1}{2})} \Ext^{***}_{A(1)^\vee}(kq)[0] \\
\cong \bigoplus_{j=0}^{(n-\alpha(n))/2-1} \Sigma^{4j,2j} \m_2[h_0] \oplus \Sigma^{2n-2\alpha(n),n-\alpha(n)} \Ext^{***}_{A(1)^\vee}(kq)[0]
\end{align*}
where we have used the relation $\alpha(\frac{n-1}{2}) = \alpha(n)-1$ to rewrite the upper bound of the direct sum and the bidegree of the suspension. This splitting gives rise to an analogous decomposition of the aAHSS. On the left-hand summand, the aAHSS collapses at $E_2$ and is identical to the $E_1$-page (recall that we are working modulo $v_1$-torsion). The aAHSS for the right-hand summand was calculated in Corollary \ref{Cor:HZ1} to be $Z_1$. Therefore we obtain an isomorphism of $A(1)^\vee$-comodules 
$$\Ext^{***}_{A(1)^\vee}\left(H\underline{\z}_{(n-1)/2} \otimes H\underline{\z}_1\right) \cong \bigoplus_{j=0}^{(n-\alpha(n))/2-1} \Sigma^{4j,2j}\m_2[h_0] \oplus \Sigma^{2n-2\alpha(n),n-\alpha(n)} Z_1[0].$$
Altogether, we see that $\Ext^{***}_{A(1)^\vee}(H\underline{\z}_n)$ is isomorphic to 
$$\bigoplus_{j=0}^{(n-1)/2-1} \Sigma^{4j,2j} \m_2[h_0] \oplus \Sigma^{2(n-1),n-1} \left( \bigoplus_{k=0}^{(n-\alpha(n))/2-1} \Sigma^{4k,2k} \m_2[h_0] \oplus \Sigma^{2n-2\alpha(n),n-\alpha(n)} Z_1[0] \right)$$
which can be rewritten as
$$\bigoplus_{j=0}^{(2n-\alpha(n))/2-1} \Sigma^{4j,2j} \m_2[h_0] \oplus \Sigma^{4n-2\alpha(n),2n-\alpha(n)} \Ext^{***}_{A(1)^\vee}(kq) \cong Z_{2n-\alpha(n)}.$$

The calculations for other congruence classes of $n-1-\alpha((n-1)/2)$ modulo $4$ are similar. The only subtlety arises in the case $n-1-\alpha((n-1)/2) \equiv 2 \mod 4$, in which case the copy of $\m_2$ suspended by $(2i-2,i)$ is related to the copy of $\m_2[h_0]$ beginning in cohomological filtration $1$ by a hidden $h_0$-extension. This can be seen by comparing to the classical case. 
\end{proof}

\begin{rem2}\label{Rem:3.39}
Analogous methods can be used to show that $\Ext_{A(1)^\vee}^{***}(H\uz_n^\r)$ (modulo $v_1$-torsion) admits a similar description over $\Spec(\r)$. Hill's computation \cite{Hil11} of $\Ext_{A(1)}^{***}(\m_2^\r)$ serves as the input for the algebraic Atiyah-Hirzebruch spectral sequence, and base-change can be used to resolve hidden extensions. The generalization to arbitrary base fields is the subject of ongoing investigation. 
\end{rem2}

We observe the following relation between the motivic weight and stem of the generators of infinite $\tau$-towers in these $Ext$-groups.

\begin{cor}\label{Cor:Half1}
Suppose that $x$ is a $\tau$-free class in $\Ext^{s,t,u}_{A(1)^\vee}(H\underline{\z}_n) / (v_1-tor)$ which is not $\tau$-divisible. Then we have $u =
\lceil (t-s)/2 \rceil$ if $t-s \equiv 0,1 \mod 4$ and $u = \lceil (t-s)/2 \rceil + 1$ if $t-s \equiv 2 \mod 4$. Moreover, there are no $\tau$-free classes with $t-s \equiv 3 \mod 4$. 
\end{cor}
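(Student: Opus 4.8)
The plan is to reduce everything to the explicit description of $\Ext^{***}_{A(1)^\vee}(H\underline{\z}_n)$ modulo $v_1$-torsion supplied by Theorem \ref{Lem:ExtOfHZn}. Since that theorem gives an isomorphism of $\Ext_{A(1)^\vee}(\m_2)$-modules $\Ext^{***}_{A(1)^\vee}(H\underline{\z}_n)/(v_1\text{-tor}) \cong Z_{2n-\alpha(n)}$, it suffices to prove the corresponding statement for the trigraded group $Z_i$ of Definition \ref{Def:Zi}, for every $i \geq 0$: a non-$\tau$-divisible $\tau$-free class in tridegree $(s,t,u)$ has $u=\lceil(t-s)/2\rceil$ when $t-s\equiv 0,1\bmod 4$, has $u=\lceil(t-s)/2\rceil+1$ when $t-s\equiv 2\bmod 4$, and none exists when $t-s\equiv 3\bmod 4$. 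Note that non-$\tau$-divisible $\tau$-free classes are exactly the generators of the infinite $\tau$-towers, so this is precisely the assertion of the corollary.

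First I would check the statement for the elementary building blocks of $Z_i$. In $\m_2[h_0]$ the non-$\tau$-divisible $\tau$-free classes are the powers $h_0^a$, in tridegree $(a,a,0)$ by the conventions of Definition \ref{Def:Zi}, i.e.\ stem $0$, weight $0$; in $\m_2$ it is the unit in $(0,0,0)$; and in $\m_2[h_1]/(h_1^2)$ they are the unit in $(0,0,0)$ and $h_1$ in $(1,2,1)$, i.e.\ stem $1$, weight $1$. Each satisfies the relation. For the summand $\Ext^{***}_{A(1)^\vee}(\m_2)$ I would use the presentation of \cite[Thm.\ 6.6]{IS11}: as the only $\tau$-relation is $\tau h_1^3=0$, the non-$\tau$-divisible $\tau$-free classes are, modulo $\tau$, exactly the monomials $h_0^a\beta^b$, $h_0^a\alpha\beta^b$, $h_1\beta^b$, $h_1^2\beta^b$, with stems $8b$, $4+8b$, $1+8b$, $2+8b$ and weights $4b$, $2+4b$, $1+4b$, $2+4b$; this matches the relation case by case and produces nothing in a stem $\equiv 3\bmod 4$. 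Finally, for the summand $\Ext^{***}_{A(1)^\vee}(H\underline{\z}_1)$ I would invoke the identification with $\pi_{**}(ksp)$ of Corollary \ref{Cor:HZ1} together with Figure \ref{Fig:HZ1E3}: its non-$\tau$-divisible $\tau$-free classes are $(8,4)$-periodic and concentrated in tridegrees of the form $(\ast,4k,2k)$, hence in stems $\equiv 0\bmod 4$ with weight half the stem, as required.

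It then remains to check that the suspensions in Definition \ref{Def:Zi} preserve the relation. The cell-shift $(-)[m]$ adds $m$ to both $s$ and $t$, fixing the stem $t-s$ and the weight, so it is harmless. A shift $\Sigma^{4j,2j}$ adds $4j\equiv 0\bmod 4$ to the stem and $2j=(4j)/2$ to the weight, so it preserves the relation. The remaining shifts $\Sigma^{2i-2,i-1}$, $\Sigma^{2i,i}$, $\Sigma^{2i+2,i+1}$, $\Sigma^{2i-1,i}$, $\Sigma^{2i-2,i}$ occur only in the residue classes of $i$ dictated by Definition \ref{Def:Zi}; in those classes the stem-shift is $\equiv 0\bmod 4$ and the weight-shift is exactly half of it, the single exception being $\Sigma^{2i-2,i}$ with $i\equiv 2\bmod 4$, where the stem-shift $2i-2\equiv 2\bmod 4$ and the weight-shift $i$ is half the stem-shift plus one. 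This is exactly why a bare copy of $\m_2$, rather than a suspension of $\Ext_{A(1)^\vee}(\m_2)$, appears there, and why that summand lands correctly in the $\equiv 2\bmod 4$ case. Running through the four residue classes of $i$ completes the argument. The only ingredient that is not pure bookkeeping with the gradings of Definition \ref{Def:Zi} is reading off the $\tau$-free classes of $\Ext^{***}_{A(1)^\vee}(H\underline{\z}_1)$ from Figure \ref{Fig:HZ1E3}; I expect that, rather than the combinatorics, to be the place where care is needed.
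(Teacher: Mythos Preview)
Your reduction to Theorem \ref{Lem:ExtOfHZn} and a case-by-case check of the building blocks of $Z_i$ is exactly the argument the paper has in mind (the corollary is stated without proof, immediately after that theorem). The bookkeeping on $\m_2[h_0]$, $\m_2$, $\m_2[h_1]/(h_1^2)$, and $\Ext_{A(1)^\vee}(\m_2)$ is correct, and your treatment of the functors $(-)[m]$ and of the suspensions $\Sigma^{4j,2j}$ is fine.

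There is, however, a genuine error precisely where you anticipated one. Your description of the $\tau$-free, non-$\tau$-divisible classes in $\Ext^{***}_{A(1)^\vee}(H\underline{\z}_1)\cong\pi_{**}(ksp)$ as ``concentrated in tridegrees of the form $(\ast,4k,2k)$, hence in stems $\equiv 0\bmod 4$'' is false. Betti realization identifies the $\tau$-free part with $\pi_*(bsp)[\tau]$, and $bsp$ has nonzero homotopy in stems $5$ and $6$ (namely $h_1$ and $h_1^2$ times the stem-$4$ generator), repeating with period $8$. Motivically these lift to $\tau$-free classes in $(\text{stem},\text{weight})=(5,3)$ and $(6,4)$, and more generally $(5+8b,3+4b)$ and $(6+8b,4+4b)$. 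These \emph{do} satisfy the relation in the corollary ($5\equiv 1$ with $\lceil 5/2\rceil=3$; $6\equiv 2$ with $\lceil 6/2\rceil+1=4$), so once you include them the argument goes through. But as written, your verification for this summand is incomplete.

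A smaller slip: the claim that ``in those classes the stem-shift is $\equiv 0\bmod 4$'' with the single exception $\Sigma^{2i-2,i}$ is not quite right. For $i\equiv 3\bmod 4$ the shift $\Sigma^{2i-1,i}$ on $\m_2[h_1]/(h_1^2)$ has stem-shift $2i-1\equiv 1\bmod 4$. This shift still preserves the relation (it sends stems $0,1$ to stems $\equiv 1,2$ with the correct weights), but your blanket assertion needs adjusting.
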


We can now compute the ring of $kq$-cooperations $\pi_{**}(kq \wedge kq)$ using the motivic Adams spectral sequence. First, we summarize the $E_2$-page. 

\begin{prop}\label{Prop:MASSE2}
The $E_2$-page of the motivic Adams spectral sequence converging to $\pi_{**}(kq \wedge kq)$ is given, modulo $v_1$-torsion, by
$$E_2^{s,t,u} = \Ext^{s,t,u}_{A^\vee}(H_{**}(kq \wedge kq)) \cong \bigoplus_{i \geq 0} \Sigma^{4i,2i} Z_{2i-\alpha(i)},$$
where $Z_j$ is as in Definition \ref{Def:Zi}. 
\end{prop}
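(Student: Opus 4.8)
The plan is to assemble the algebraic facts established earlier in this section. As recalled at the start of Section~\ref{Section:Analysis} and in Subsection~\ref{sec: kq-cooperations}, the motivic Adams spectral sequence \cite{DI10} converging to $\pi_{**}(kq \wedge kq)$ has
\[
E_2^{s,t,u} = \Ext^{s,t,u}_{A^\vee}\bigl(H_{**}(kq \wedge kq)\bigr),
\]
and combining the K\"unneth isomorphism for motivic homology (valid since $H_{**}(kq) \cong (A/\kern-0.25em/A(1))^\vee$ is free over $\m_2 = \f_2[\tau]$), Corollary~\ref{Cor:H_*kq}, and a change-of-rings isomorphism along the Hopf-algebra surjection $A^\vee \to A(1)^\vee$ gives
\[
E_2^{***} \cong \Ext^{***}_{A(1)^\vee}\bigl((A/\kern-0.25em/A(1))^\vee\bigr).
\]

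Next I would invoke the Brown--Gitler decomposition of Theorem~\ref{Cor:AmodmodA(1)Decomp}, namely the isomorphism of $A(1)^\vee$-comodules $(A/\kern-0.25em/A(1))^\vee \cong \bigoplus_{i \geq 0} \Sigma^{4i,2i} H\underline{\z}_i$. Since $\Sigma^{4i,2i} H\underline{\z}_i$ is concentrated in topological degrees $\geq 4i$, only finitely many summands contribute in each fixed tridegree, so $\Ext^{***}_{A(1)^\vee}(-)$ commutes with the direct sum and
\[
E_2^{***} \cong \bigoplus_{i \geq 0} \Sigma^{4i,2i}\, \Ext^{***}_{A(1)^\vee}(H\underline{\z}_i).
\]
Passing to the quotient by $v_1$-torsion respects this decomposition, since a class is $v_1$-torsion precisely when each of its components is, and then Theorem~\ref{Lem:ExtOfHZn} identifies $\Ext^{***}_{A(1)^\vee}(H\underline{\z}_i)/(v_1\text{-tor})$ with $Z_{2i-\alpha(i)}$ as $\Ext_{A(1)^\vee}(\m_2)$-modules. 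Substituting yields the asserted isomorphism $E_2^{s,t,u} \cong \bigoplus_{i \geq 0} \Sigma^{4i,2i} Z_{2i-\alpha(i)}$ modulo $v_1$-torsion.

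All of the substantive content is in the already-proven Theorems~\ref{Cor:AmodmodA(1)Decomp} and~\ref{Lem:ExtOfHZn}, so the only work in this proof is bookkeeping: one must check that the internal-degree and weight shifts are carried consistently through the K\"unneth and change-of-rings isomorphisms, and that the suspension $\Sigma^{4i,2i}$ coming from the Brown--Gitler splitting is compatible with the tridegree conventions for $Z_j$ fixed in Definition~\ref{Def:Zi}. I expect this degree-tracking --- rather than any conceptual issue --- to be the main, and rather mild, obstacle.
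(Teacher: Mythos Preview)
Your proposal is correct and follows exactly the approach of the paper, which simply says ``Combine Theorem~\ref{Cor:AmodmodA(1)Decomp} with Theorem~\ref{Lem:ExtOfHZn}.'' You have merely spelled out the intermediate steps (K\"unneth, change of rings, commuting $\Ext$ with the direct sum) more carefully than the paper bothers to.
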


\begin{proof}
Combine Theorem \ref{Cor:AmodmodA(1)Decomp} with Theorem \ref{Lem:ExtOfHZn}. 
\end{proof}

Now, recall the following theorem of Mahowald:

\begin{thm}\cite[Thm. 2.9]{Mah81}
The Adams spectral sequence converging to $\pi_{*}(bo \wedge bo)$ collapses at $E_2$, i.e. $E_2 = E_\infty$. 
\end{thm}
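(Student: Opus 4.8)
The plan is to reduce the collapse statement to the collapse of the Adams spectral sequences of a short list of well-understood spectra, following the structure of Mahowald's original argument. The starting point is the splitting $bo\wedge bo\simeq\bigvee_{i\geq 0}\Sigma^{4i}bo\wedge H\z_i^{cl}$ recalled above from \cite[Thm. 2.4]{Mah81}: it identifies the Adams spectral sequence for $bo\wedge bo$ with the direct sum, over $i\geq 0$, of the Adams spectral sequences for $\Sigma^{4i}bo\wedge H\z_i^{cl}$, so it is enough to show that each of these collapses at $E_2$. For a fixed $i$, the K\"unneth isomorphism gives $H_*(bo\wedge H\z_i^{cl};\f_2)\cong(A/\kern-0.25em/A(1))\otimes H\z_i^{cl}$ as a comodule over the dual Steenrod algebra, and a change-of-rings isomorphism then identifies the $E_2$-page of the Adams spectral sequence for $bo\wedge H\z_i^{cl}$ with $\Ext_{A(1)^{cl}_*}(H\z_i^{cl})$.

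The next step is to record the explicit structure of these $\Ext$-groups, which is produced by Mahowald's inductive procedure: the short exact sequences of $A(1)^{cl}_*$-comodules relating $H\z_{2j}^{cl}$ and $H\z_{2j+1}^{cl}$ to $H\z_j^{cl}$ (the classical model for Lemma \ref{Lem:SES}) exhibit $\Ext_{A(1)^{cl}_*}(H\z_i^{cl})$ as a finite direct sum of pieces, each of which is an internal-degree shift by a multiple of $4$ of one of: $\Ext_{A(0)^{cl}_*}(\f_2)=\f_2[h_0]$ (the $E_2$-page of $H\z$), $\Ext_{A(1)^{cl}_*}(\f_2)$ (the $E_2$-page of $bo$), $\Ext_{A(1)^{cl}_*}(H\z_1^{cl})$ (the $E_2$-page of $bsp$), or an Adams cover of one of the last two, i.e. the same chart with a finite initial segment of an $h_0$-tower removed. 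Two features of this description are used below. First, $\Ext_{A(1)^{cl}_*}(\f_2)$ and $\Ext_{A(1)^{cl}_*}(H\z_1^{cl})$ are concentrated in stems $\equiv 0,1,2\bmod 4$, and since every shift involved is a multiple of $4$, the whole $E_2$-page of the $bo\wedge bo$ Adams spectral sequence vanishes in stems $\equiv 3\bmod 4$. Second, away from the $h_0$-towers the chart is spanned by isolated $h_1$- and $h_1^2$-multiples of $h_0$-tower generators.

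The collapse itself I would then argue as follows. The unit $bo\to bo\wedge bo$ is a ring map, so the Adams spectral sequence for $bo\wedge bo$ is a module over the Adams spectral sequence for $bo$, and the latter collapses at $E_2$ (a classical fact; one may for instance compare the chart $\Ext_{A(1)^{cl}_*}(\f_2)$ directly with the known groups $\pi_*bo$, or invoke $bo$-Adams periodicity). Consequently all Adams differentials on the $E_*$-pages for $bo\wedge bo$ are linear over the classes $h_0,h_1,\alpha,\beta\in\Ext_{A(1)^{cl}_*}(\f_2)$, each of which is a permanent cycle. Using the module description of the $E_2$-page, each summand $\Ext_{A(1)^{cl}_*}(H\z_i^{cl})$ is generated over $\Ext_{A(1)^{cl}_*}(\f_2)$ by finitely many classes lying in low Adams filtration; detecting each such generator by a $bo$-module map from $bo\wedge H\z_i^{cl}$ to a suspension of $bo$, $bsp$, $H\z$, or $H\f_2$ (built from the ring structure of $bo$ and the Brown--Gitler splitting) shows the generators are permanent cycles, and then $h_0,h_1,\alpha,\beta$-linearity forces every class of $E_2$ to be a permanent cycle, whence $E_2=E_\infty$. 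As a consistency check, the resulting additive structure of $\pi_*(bo\wedge bo)$ matches the known description of the ring of $bo$-cooperations.

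The step I expect to be the main obstacle is the production of the $bo$-module detecting maps in the last paragraph, equivalently the upgrade of the purely algebraic splitting of $\Ext_{A(1)^{cl}_*}(H\z_i^{cl})$ to an honest wedge decomposition of $bo\wedge H\z_i^{cl}$ into suspensions of $bo$, $bsp$, and Eilenberg--MacLane spectra (together with their Adams covers), whose Adams spectral sequences are already known to collapse. This requires an induction on $i$ using the cofiber sequences among the integral Brown--Gitler spectra $H\z_i^{cl}$ and the multiplicative structure of $bo$, and is the technical core of Mahowald's theorem; by contrast, the reduction, the change-of-rings identification, and the sparseness observations are routine.
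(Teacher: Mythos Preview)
The paper does not give a proof of this statement: it is quoted verbatim as Mahowald's result \cite[Thm.~2.9]{Mah81} and used only as a black box to deduce the motivic analogue (Corollary~\ref{Cor:kqCollapse}) via Betti realization. So there is no proof in the paper to compare your proposal against.

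That said, your sketch is a faithful outline of Mahowald's original argument. The reduction via the Brown--Gitler splitting and the change-of-rings identification of the $E_2$-page with $\bigoplus_i\Ext_{A(1)^{cl}_*}(H\underline{\z}_i^{cl})$ are correct, and you have correctly located the genuine content: upgrading the algebraic decomposition of $\Ext_{A(1)^{cl}_*}(H\underline{\z}_i^{cl})$ to a spectrum-level splitting of $bo\wedge H\underline{\z}_i^{cl}$ as a wedge of (Adams covers of) $bo$, $bsp$, $H\z$, and $H\f_2$. One streamlining remark: once that splitting is in hand, the ``module structure plus detecting maps'' step you describe is superfluous, since the Adams spectral sequence for each wedge summand is already known to collapse (for $H\z$ and $H\f_2$ trivially, for $bo$, $bsp$, and their Adams covers by direct comparison with $\pi_*$), and collapse passes to wedges. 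Conversely, your sparseness observation (vanishing in stems $\equiv 3\bmod 4$) together with $h_0$- and $h_1$-linearity does not by itself force collapse without further work, because $v_1$-torsion classes in Adams filtration $0$ could a priori support differentials into stems $\equiv 1,2\bmod 4$; ruling these out is exactly what the geometric splitting accomplishes.
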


\begin{cor}\label{Cor:kqCollapse}
The motivic Adams spectral sequence converging to $\pi_{**}(kq \wedge kq)$ collapses at $E_2$. 
\end{cor}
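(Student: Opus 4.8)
The plan is to deduce collapse of the motivic Adams spectral sequence for $\pi_{**}(kq \wedge kq)$ from Mahowald's classical collapse theorem for $\pi_*(bo \wedge bo)$, together with the explicit description of the $E_2$-page from Proposition \ref{Prop:MASSE2}. First I would observe that, modulo $v_1$-torsion, every summand $\Sigma^{4i,2i} Z_{2i-\alpha(i)}$ appearing in $E_2$ is built (via the explicit formulas in Definition \ref{Def:Zi}) out of three kinds of pieces: copies of $\m_2[h_0]$, bounded pieces like $\m_2$, $\m_2[h_1]/h_1^2$, and suspensions of $\Ext_{A(1)^\vee}(\m_2)$ or $\Ext_{A(1)^\vee}(H\uz_1) \cong \pi_{**}(ksp)$. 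Each of these is concentrated in a sparse enough pattern of $(s,t,u)$-tridegrees that potential Adams differentials $d_r$ can be controlled by degree reasons, just as in the classical case. Since the Betti realization functor $Re_\c$ sends this motivic Adams spectral sequence to the classical one for $bo \wedge bo$ (the homology $H_{**}(kq \wedge kq)$ realizes to $H_*(bo \wedge bo)$, and the comparison is compatible with the $A^\vee \to A_*^{cl}$ reduction), and since Mahowald's theorem says the target collapses at $E_2$, any $d_r$-differential in the motivic spectral sequence must have source or target supported on $\tau$-torsion classes.

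The heart of the argument is therefore to rule out differentials among $\tau$-torsion classes as well. Here I would use the weight/stem constraint recorded in Corollary \ref{Cor:Half1}: the $\tau$-free generators of the infinite $\tau$-towers in $\Ext_{A(1)^\vee}(H\uz_n)/(v_1\text{-tor})$ satisfy a rigid relation between motivic weight $u$ and Milnor-Witt-type degree $t-s$, which pins down exactly which tridegrees can support nonzero classes. An Adams $d_r$-differential preserves motivic weight and sends $(s,t,u) \mapsto (s+r, t+r-1, u)$, i.e. it decreases stem $t-s$ by one while fixing $u$. Combining this with the parity restrictions in Corollary \ref{Cor:Half1} (in particular the absence of $\tau$-free classes in stems $\equiv 3 \bmod 4$, and the forced value of $u$ in the other residue classes) shows that no pair of tridegrees $(s,t,u)$ and $(s+r,t+r-1,u)$ can both be occupied by classes that are not already accounted for by the collapse downstairs. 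The $\tau$-torsion classes themselves sit directly below $\tau$-free classes in the same $(s,t)$ but smaller weight, so once the $\tau$-free classes are permanent cycles and there are no incoming differentials, $\tau$-linearity (the spectral sequence is a module over $\Ext_{A^\vee}(\m_2) \ni \tau$) forces the $\tau$-torsion classes to be permanent cycles with no incoming differentials either.

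Finally I would assemble these observations: modulo $v_1$-torsion, $E_2 = E_\infty$ by the weight argument combined with Betti realization; and the $v_1$-torsion classes all live in Adams filtration $0$ (as noted in Remark \ref{Rem:kqMargolis}), so they cannot support differentials, and a differential hitting filtration $0$ is impossible by definition of the filtration — hence they too are permanent cycles. Therefore the motivic Adams spectral sequence for $\pi_{**}(kq \wedge kq)$ collapses at $E_2$. The main obstacle I anticipate is the bookkeeping in the second step: one must check that \emph{every} tridegree appearing across the infinite family of summands $\Sigma^{4i,2i}Z_{2i-\alpha(i)}$ is excluded as the target of a differential, and the $\m_2[h_0]$-summands (which are not $v_1$-torsion and carry infinite $h_0$-towers) require care to confirm that the classical $h_0$-tower structure, which survives to $E_\infty$ classically, likewise survives motivically with no $\tau$-torsion differentials sneaking in. This is exactly where Corollary \ref{Cor:Half1} and $\tau$-linearity do the essential work.
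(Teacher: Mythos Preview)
Your overall architecture matches the paper's: use Betti realization and Mahowald's classical collapse to rule out differentials between $\tau$-free classes, and then argue separately that the $\tau$-torsion classes cannot participate in differentials. The first step and the remark about $v_1$-torsion in Adams filtration~$0$ are fine.

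The gap is in your handling of the $\tau$-torsion classes. The sentence ``the $\tau$-torsion classes themselves sit directly below $\tau$-free classes in the same $(s,t)$ but smaller weight'' is not correct: the $\tau$-torsion in $\Ext_{A(1)^\vee}(\m_2)$ (and in each $Z_j$) consists of the classes $h_1^k$ for $k\geq 3$, which live in tridegrees $(s,t,u)$ where there is \emph{no} $\tau$-free class at all. They are not truncations of $\tau$-towers, and no amount of $\tau$-linearity lets you deduce their fate from that of $\tau$-free classes in the same $(s,t)$. Corollary~\ref{Cor:Half1} is a statement about $\tau$-free generators, so invoking it here does not locate the $\tau$-torsion classes either.

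What the paper actually uses is the observation that every $\tau$-torsion class is $h_1$-torsion~free, i.e.\ lies on an infinite $h_1$-tower. The bases of these towers (one per summand $\Sigma^{4i,2i}Z_{2i-\alpha(i)}$) are separated from one another by stem $4i$ and weight $2i$; along a given tower, $h_1$-multiplication increases both stem and weight by~$1$, so $(\text{stem}-\text{weight})$ is constant and even on every tower. An Adams $d_r$ preserves weight and lowers stem by~$1$, hence changes $(\text{stem}-\text{weight})$ by~$-1$. Thus no differential can have both source and target on $h_1$-towers, and the $\tau$-torsion part collapses for tridegree reasons. Replacing your $\tau$-linearity paragraph with this $h_1$-tower argument would make the proof complete and would then coincide with the paper's.
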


\begin{proof}
Since Betti realization is obtained by inverting $\tau$ and then setting $\tau = 1$, there can be no differentials in the motivic Adams spectral sequence where both the source and target are $\tau$-free. On the other hand, the only $\tau$-torsion classes are $h_1$-torsion free. Therefore the only possible differentials are between $\tau$-torsion $h_1$-towers, but these are not possible for tridegree reasons. More precisely, the beginning of each $\tau$-torsion $h_1$-tower is separated from another by stem $4i$ and motivic weight $2i$, $i \geq 1$. Since $stem(h_1) = 1$ and $wt(h_1) = 1$, the possible targets of a $kq$-Adams differential are in too high of a weight.
\end{proof}

\begin{fig}\label{fig: kqresE1}
The $E_2 = E_\infty$-page of the motivic Adams spectral sequence converging to $\pi_{**}(kq \wedge kq)$ with $v_1$-torsion classes suppressed. The horizontal axis indicates stem, and the vertical axis indicates which $i$ in the decomposition above is used. Adams filtration and motivic weight are suppressed. A $\square$ represents $\m_2[h_0]$, a black $\bullet$ represents $\m_2$, and a red $\textcolor{red}{\bullet}$ indicates $\f_2$. Horizontal lines indicate multiplication by $h_1$, with a red horizontal line indicating that the target is simple $\tau$-torsion. A red horizontal arrow represents an infinite $h_1$-tower. 
\hskip -.5in
\begin{center}
\includegraphics[width = \textwidth]{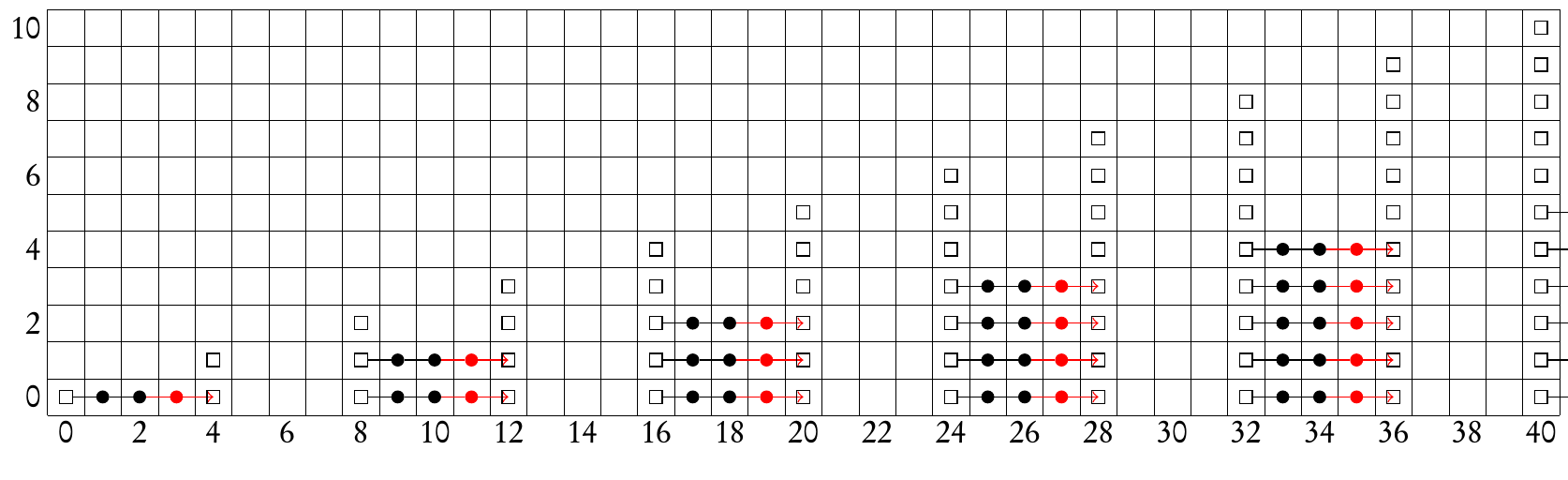}
\end{center}
\end{fig}

\subsection{A vanishing line of slope $1/3$ on $E_1$ of the $kq$-resolution}

We now show that the $E_1$-page of the $kq$-resolution has a vanishing line of slope $1/3$. Recall that we have the cofiber sequence
\[
S^{0,0}\to kq\to \overline{kq}.
\] 
The long exact sequence in homotopy groups shows that $\pi_{t,*}\overline{kq} = 0$ for $t\leq 3$. Thus, the smash powers $kq\wedge \overline{kq}^{\wedge s}$ are $4s-1$-connected. Thus we can conclude the following. 

\begin{lem}\label{Lem:OneThird}
The $E_1$-page of the $kq$-resolution satisfies $E_1^{s,t,*} = 0$ whenever $t<4s$. 
\end{lem}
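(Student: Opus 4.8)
The plan is to deduce the vanishing line directly from the connectivity of the smash powers $kq \wedge \overline{kq}^{\wedge s}$, exactly as sketched in the paragraph preceding the statement. First I would recall the cofiber sequence $S^{0,0} \to kq \to \overline{kq}$ and run the associated long exact sequence on homotopy groups. Since $S^{0,0}$ and $kq$ are both connective in the usual (Morel) sense, and since the unit map $S^{0,0} \to kq$ is an isomorphism on $\pi_{0,*}$ and $\pi_{1,*}$ and more (one should check $\pi_{t,*}$ for $t \leq 3$ — this follows from the explicit computation $\pi_{**}(kq) \cong \pi_{**}^\c(kq)$ of Isaksen--Shkembi together with the fact that the low-dimensional stable stems agree), the long exact sequence forces $\pi_{t,*}(\overline{kq}) = 0$ for $t \leq 3$, i.e. $\overline{kq}$ is $3$-connected.

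Next I would use that the smash product of connective spectra adds connectivities: if $\overline{kq}$ is $3$-connected then $\overline{kq}^{\wedge s}$ is $(4s-1)$-connected (each factor contributes $4$ to the connectivity, with a shift by one from the convention $3$-connected means $\pi_t = 0$ for $t \leq 3$). Since $kq$ is $(-1)$-connected, $kq \wedge \overline{kq}^{\wedge s}$ is again $(4s-1)$-connected, so $\pi_{t,*}(kq \wedge \overline{kq}^{\wedge s}) = 0$ for $t \leq 4s-1$, equivalently for $t < 4s$. Because $E_1^{s,t,*} = \pi_{t,*}(kq \wedge \overline{kq}^{\wedge s})$ by the construction of the $kq$-resolution (Theorem \ref{Thm:kqASS}), this is precisely the claimed vanishing $E_1^{s,t,*} = 0$ whenever $t < 4s$, which is a vanishing line of slope $1/4$ in the $(t,s)$-plane — the ``slope $1/3$'' in the subsection title refers to the slope $1/3$ in the conventional Adams-style $(t-s, s)$-coordinates, since $t < 4s$ rewrites as $t - s < 3s$.

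The only real subtlety — and the step I expect to require the most care — is justifying the claim $\pi_{t,*}(\overline{kq}) = 0$ for $t \leq 3$ rather than merely for $t \leq 1$, which is all that was needed for the convergence argument in Theorem \ref{Thm:kqASS}. This requires knowing that $\pi_{t,*}^\c(S^{0,0}) \to \pi_{t,*}^\c(kq)$ is surjective with the kernel killed appropriately, or more simply that the two agree in the relevant range; one can read this off the explicit description of $\pi_{**}(kq)$ coming from \cite{IS11}\cite{Bac17} together with the low-degree motivic stable stems. Alternatively, one can phrase the whole argument in the homotopy $t$-structure using Corollary \ref{Cor:Conn}, which already gives that $kq \wedge \overline{kq}^{\wedge n}$ is $2n$-connective in the homotopy $t$-structure; but to get the sharper $4s-1$ bound in the ordinary (topological) connectivity one genuinely needs the better connectivity of $\overline{kq}$, so the bookkeeping around which grading and which $t$-structure one is using is where I would be most careful.
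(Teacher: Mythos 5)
Your proof follows the paper's argument exactly: the paper (in the paragraph preceding the lemma) uses the long exact sequence of the cofiber sequence $S^{0,0} \to kq \to \overline{kq}$ to conclude $\pi_{t,*}(\overline{kq}) = 0$ for $t \leq 3$, then the additivity of connectivity under smash products to deduce that $kq \wedge \overline{kq}^{\wedge s}$ is $(4s-1)$-connected, giving $E_1^{s,t,*} = 0$ for $t < 4s$. Your additional remarks---that $t < 4s$ is slope $1/3$ in $(t-s,s)$-coordinates, and that the connectivity of $\overline{kq}$ really does require checking surjectivity/injectivity of $\pi_{t,*}(S^{0,0}) \to \pi_{t,*}(kq)$ up to $t \leq 3$ rather than just the $t \leq 1$ needed for convergence---correctly flag the one step the paper leaves implicit (though note the two sides need not literally agree in this range; e.g.\ $\nu \in \pi_{3,2}(S^{0,0})$ dies in $kq$, which is harmless since only surjectivity at $t=3$ and injectivity at $t\leq 2$ are used).
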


In other words, on the $E_1$-page of the $kq$-resolution, the region above the line of slope $1/3$ passing through the origin consists only of trivial groups. This result will be used in the sequel to prove that there is a vanishing line of slope $1/5$ on the $E_2$-page. 

\section{Differentials and a vanishing line of slope $1/5$ on the $E_2$-page}\label{Sec:E2}

\subsection{Review of the $E_\infty$-page of the $bo$-resolution}
We begin by reviewing the analysis of the $bo$-resolution
$$E_1^{n,t} = \pi_t(bo \wedge \overline{bo}^{\wedge n}) \Rightarrow \pi_{t-n}(S^0)$$
from \cite{Mah81} and \cite{Mah84}. Mahowald's main theorem consists of two parts: the calculation of the $0$- and $1$-lines of the $E_\infty$-page of the $bo$-resolution (\cite[Thm. 1.1.(a-b)]{Mah81}) and the vanishing line of slope $1/5$ on the $E_2$-page of the $bo$-resolution \cite[Thm. 1.1.(c)]{Mah81}. 

Both parts depend on understanding differentials in the $bo$-resolution. The first set of differentials is discussed in the following theorem, which is also called the ``Bounded Torsion Theorem" in \cite[Cor. 3.7]{BBBCX17}. 

\begin{thm}\label{Thm:boDifferentials}\cite[Thm. 5.11]{Mah81}\cite[(c')]{Mah84}
Each homotopy class in $\pi_*(bo \wedge \overline{bo}^{\wedge n})$ is either of Adams filtration zero or one, or in the image of $d_n$, or is mapped essentially under $d_n$, or $n = 0$ or $n = 1$ and the homotopy can be identified with $\Ext^{**}_{A(1)^{cl}_*}(H\underline{\z}_1^{cl})$. 
\end{thm}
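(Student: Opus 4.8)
The plan is to reduce everything to Mahowald's computation of $\Ext_{A(1)^{cl}_*}$ of tensor powers of Brown--Gitler modules together with an explicit analysis of the $d_1$-differential of the $bo$-resolution. First I would use the homotopy splitting $bo\wedge\overline{bo}^{\wedge n}\simeq\bigvee_{i_1,\dots,i_n>0}\Sigma^{4(i_1+\cdots+i_n)}bo\wedge H\z_{i_1}^{cl}\wedge\cdots\wedge H\z_{i_n}^{cl}$ recalled above to reduce to the homotopy of a single smash summand. By the K\"unneth isomorphism for $A(1)^{cl}_*$-comodules the $E_2$-page of its classical Adams spectral sequence is $\Ext_{A(1)^{cl}_*}\!\big(H\z_{i_1}^{cl}\otimes\cdots\otimes H\z_{i_n}^{cl}\big)$, and this is computed, modulo $v_1$-torsion, by the classical analogue of Theorem \ref{Lem:ExtOfHZn}: $\Ext_{A(1)^{cl}_*}(H\z_i^{cl})$ is a sum of $\Ext$-groups of Adams covers of $bo$ and $bsp$ modulo $v_1$-torsion, with the $v_1$-torsion itself concentrated in Adams filtration $0$ (the classical counterpart of Remark \ref{Rem:kqMargolis}). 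Running the algebraic Atiyah--Hirzebruch filtration of $H\z_1^{cl}$ by topological degree --- whose only differentials are the $h_0$- and $h_1$-linear ones $d(\alpha x[3])=\alpha h_0 x[2]$ and $d(\alpha x[2])=\alpha h_1 x[0]$, exactly as in the proof of Lemma \ref{Lem:ExtOfOtimes} --- then produces the homotopy of each summand, and hence the full $E_1$-page of the $bo$-resolution.

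The point of the second step is that this $E_1$-page is \emph{not} concentrated in low Adams filtration: the torsion classes it contains (the residual $\eta$- and $\eta^2$-multiples, since $h_1^3=0$ in $\Ext_{A(1)^{cl}_*}(\f_2)$) lie one or two filtrations above the Adams covers they come from, and the $v_1$-periodic torsion-free classes lie even higher, so within a fixed summand they are spread over unboundedly many Adams filtrations. The theorem is therefore a statement about how the $bo$-resolution differential collapses this, and the work is to compute that differential. The $d_1$-differential $E_1^{n,*}\to E_1^{n+1,*}$ is governed block-by-block by the way $H\z_i^{cl}\wedge H\z_j^{cl}$ decomposes into lower Brown--Gitler spectra together with their $v_1^4$-attaching maps; carrying this out shows that $d_1$ hits, or is supported by, every class of Adams filtration $\ge 2$ on every line $n\ge 2$, so all such classes fall into the ``image of $d_1$ / maps essentially under $d_1$'' cases. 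The exceptional lines are handled by the edge homomorphism: the $0$-line is $\pi_*(bo)$ tautologically, and the leading block of the $1$-line contributes $\Ext_{A(1)^{cl}_*}(H\z_1^{cl})\cong\pi_*(bsp)$.

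I expect the computation of the $d_1$-differential to be the main obstacle. The honest way to do it is Mahowald's original argument identifying the attaching structure of $\overline{bo}^{\wedge 2}$ in terms of Brown--Gitler spectra, or --- following Beaudry--Behrens--Bhattacharya--Culver--Xu --- reorganizing the $bo$-resolution into a $bo$-Bockstein spectral sequence and transporting the $d_1$'s through the cofiber sequences relating $S^0/(2,\eta,v_1)$, $S^0/2$, and $S^0$; in either approach the key check is that the differential is as nontrivial as the image-of-$J$ computation of $v_1$-periodic $\pi_*(S^0)$ forces it to be, since a surviving $\eta$- or $\eta^2$-class on a line $n\ge 2$ would detect $v_1$-torsion in $\pi_*(S^0)$ where there is none. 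The splitting and the algebraic bookkeeping, by contrast, are formal once the $\Ext$-groups of Brown--Gitler modules and their Margolis homology are in hand --- the same division of labor mirrored later in this paper, where Theorems \ref{Lem:ExtOfHZn} and \ref{Lem:ExtOfOtimes} supply the algebra and Theorems \ref{Prop:kqDifferentials}, \ref{Prop:kqDiffl2}, and \ref{Thm:kqVanishing} supply the differentials.
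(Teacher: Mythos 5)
The paper does not prove Theorem \ref{Thm:boDifferentials} --- it is imported verbatim from Mahowald, and the only thing the paper says about the proof is in Remark \ref{Rem:FDiff}: ``Both theorems follow from an explicit comparison with the $X$-resolution, where $X := Th(\Omega S^5 \to BO)$.'' Your proposal never mentions this Thom spectrum or the $X$-resolution, which is the central technical device in Mahowald's argument for producing the $d_1$-differentials; Mahowald shows that the $X$-resolution has a tractable $E_1$-term, carries all of the relevant $v_1$-periodic homotopy, and maps to the $bo$-resolution, and then reads off the differentials by comparison. Omitting that tool is not a stylistic choice --- it is the missing idea.

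Your outline is otherwise a sensible framing: the splitting of $bo\wedge\overline{bo}^{\wedge n}$ into suspensions of $bo\wedge H\z_I^{cl}$, the $\Ext_{A(1)^{cl}_*}$ computation via the algebraic Atiyah--Hirzebruch filtration of $H\z_1^{cl}$, and the observation that each summand contains residual $h_1$-multiples and a $v_1$-periodic tail in unbounded Adams filtration, so the theorem is really a statement about the $d_1$. But the only step carrying mathematical content --- ``carrying this out shows that $d_1$ hits, or is supported by, every class of Adams filtration $\ge 2$ on every line $n\ge 2$'' --- is asserted rather than argued, and you flag it yourself as ``the main obstacle.'' The BBBCX Bockstein reorganization you mention is a legitimate alternative, but in that paper it serves the vanishing line, not the Bounded Torsion Theorem, which they still deduce from Mahowald. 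Finally, be careful with your proposed ``key check'' via the image of $J$: the phrasing conflates $v_1$-torsion classes in $\pi_*(bo\wedge\overline{bo}^{\wedge n})$ (the residual $h_1$-multiples on Adams covers, which vanish after inverting $v_1$) with $v_1$-torsion in $\pi_*(S^0)$, and the constraint you want is not ``$\pi_*(S^0)$ has no $v_1$-torsion'' (it has plenty in high stems) but rather that the $v_1$-periodic part of $\pi_*(S^0)$ is exactly what is detected on lines $0$ and $1$. That last statement is a \emph{consequence} of Theorem \ref{Thm:boDifferentials} together with the vanishing line, so using it as the verification risks circularity unless you are explicit that you are importing the Adams--Quillen--Sullivan image-of-$J$ computation as an external input.
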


The second set of differentials produces the relevant $2$-torsion in the $1$-line of the $bo$-resolution. 

\begin{thm}\label{Thm:boDiffl2}\cite[Pg. 380]{Mah81}
Let $k \geq 1$. The $d_1$-differential in the $bo$-resolution $d_1 : E^{0,4k}_1 \cong \z \to \z \cong E^{1,4k}_1$ is given by multiplication by $2^{\rho(k)}$, where $\rho(k)$ is the $2$-adic valuation of $8k$. 
\end{thm}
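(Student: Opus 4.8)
The plan is to compute $d_1$ directly in terms of the two units $bo \to bo \wedge bo$ and then to read off the relevant $2$-adic valuation from the Adams operation $\psi^3$. First I would unwind the differential: from the cofiber sequence $S^{0} \to bo \to \overline{bo}$ and the fact that one of the unit maps $\pi_*(bo) \to \pi_*(bo \wedge bo)$ is split injective (split by the multiplication $bo \wedge bo \to bo$), one obtains a natural identification $\pi_*(bo \wedge \overline{bo}) \cong bo_*bo / \eta_L\pi_*(bo)$ under which $d_1(x) = [\eta_R(x)]$. In degree $4k$ the source is $E_1^{0,4k} = \pi_{4k}(bo) = KO_{4k} \cong \z$, generated by a Bott-type class $\gamma_k$ (a monomial in the generators $\alpha \in \pi_4(bo)$ and $\beta \in \pi_8(bo)$, whose precise form depends on the parity of $k$). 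For the target I would invoke Mahowald's splitting $bo \wedge \overline{bo} \simeq \bigvee_{i \geq 1} \Sigma^{4i} bo \wedge H\z_i^{cl}$ together with the classical equivalence $bo \wedge H\z_1^{cl} \simeq bsp$, so that the bottom ($i=1$) summand $\Sigma^4 bo \wedge H\z_1^{cl} \simeq \Sigma^4 bsp$ contributes a copy of $\pi_{4k-4}(bsp) \cong \z$; this is the $\z$-summand of $E_1^{1,4k}$ in the statement, and a connectivity count shows that $d_1(\gamma_k)$ lands there.

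Next I would identify the composite $bo \xrightarrow{\eta_R} bo \wedge bo \to bo \wedge \overline{bo} \xrightarrow{\mathrm{pr}} \Sigma^4 bsp$. On homotopy this is a map $\z = \pi_{4k}(bo) \to \pi_{4k-4}(bsp) = \z$, and the key claim is that, after $2$-completion, it agrees up to an odd unit with the map $\psi^3 - 1$ in the fiber sequence $j \to bo \xrightarrow{\psi^3-1} \Sigma^4 bsp$ of \eqref{Eqn:jc} --- equivalently, with the Adams $e$-invariant map detecting the image of $J$ in $\pi_{4k-1}(S^0)$. The cleanest route seems to be via the structure of $bo_*bo = KO_*KO$ (Adams-Harris-Switzer): in $KO_*KO$ the difference $\eta_R - \eta_L$ is governed by the stable Adams operations, and its $\psi^3$-component is exactly what survives in the bottom Brown-Gitler summand. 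Alternatively, following Mahowald, one computes the map directly on integral (co)homology: $\gamma_k$ and the generator of $\pi_{4k-4}(bsp)$ are detected by explicit Pontryagin-type classes, and $\eta_R$ acts on these by the substitution encoding $\psi^3$.

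Given this identification the valuation computation is elementary. The operation $\psi^3$ acts on $KO_{4k} \cong \z$ by multiplication by $3^{2k}$, so $\psi^3 - 1$ acts by $3^{2k} - 1$, and by the lifting-the-exponent lemma $\nu_2(3^{2k} - 1) = \nu_2(3-1) + \nu_2(3+1) + \nu_2(2k) - 1 = 3 + \nu_2(k) = \nu_2(8k) = \rho(k)$. The two cases $k$ even and $k$ odd --- where $\gamma_k$ is $\beta^{k/2}$, respectively $\alpha\beta^{(k-1)/2}$, and the Bott-periodicity exponent for $\psi^3$ changes accordingly --- both feed into the same calculation. Since $d_1$ is injective on this $\z$ (rationally $\eta_R(\gamma_k) \neq \eta_L(\gamma_k)$ because the Adams operations act nontrivially in positive degrees, so $d_1 \otimes \q \neq 0$), it follows that, after normalizing generators, $d_1 \colon \z \to \z$ is exactly multiplication by $2^{\rho(k)}$.

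The main obstacle is the identification in the second step: pinning down which rank-one free summand of $\pi_{4k}(bo \wedge \overline{bo})$ --- among those arising from the various Brown-Gitler pieces and from the $bo$/$bsp$ Adams-cover decomposition of each $bo \wedge H\z_i^{cl}$ --- receives $\gamma_k$, and verifying that the induced map there is $\psi^3 - 1$ up to an odd unit rather than some other operation. This is precisely where a fine understanding of $KO_*KO$ and the splitting maps (or Mahowald's explicit characteristic-class bookkeeping) is required. The tempting shortcut of deducing $\coker(d_1) \cong \z/2^{\rho(k)}$ from the known image of $J$, together with the vanishing-line and collapse statements that force $d_1$ to be the only relevant differential, is somewhat circular if one wants this computation to be logically prior to the image-of-$J$ calculation; the direct approach is the honest one.
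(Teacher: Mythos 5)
The paper does not actually prove this statement: Theorem~\ref{Thm:boDiffl2} is cited verbatim from Mahowald \cite[Pg.~380]{Mah81}, whose argument there proceeds via an explicit comparison with the $X$-resolution for $X = Th(\Omega S^5 \to BO)$, as the authors themselves note in Remark~\ref{Rem:FDiff}. Your route is genuinely different: you propose to identify the projection of $d_1$ onto the bottom Brown--Gitler summand with $\psi^3-1 \colon bo \to \Sigma^4 bsp$, and then read off the valuation $\nu_2(3^{2k}-1)=3+\nu_2(k)=\rho(k)$. That numerical step is correct (check against $k=1,2,3,4$), and the spirit of your plan matches the informal discussion the authors give in Section~\ref{Sec:Jo}, where the cokernel of $d_1$ is equated with the cokernel of $\psi^3-1$ after pinching onto $\Sigma^4 bsp$. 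In that sense your approach is both more arithmetic and more aligned with how the rest of the paper actually uses the result than Mahowald's $X$-resolution argument; the latter is self-contained inside the $bo$-resolution formalism, while yours leans on the known $KO_*KO$/Adams-operation theory.

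That said, there is a real gap, and you identify it yourself: the assertion that the composite $bo \xrightarrow{\eta_R} bo\wedge bo \to bo\wedge\overline{bo} \xrightarrow{\mathrm{pr}} \Sigma^4 bsp$ agrees with $\psi^3-1$ up to an odd $2$-adic unit is exactly the heart of the theorem and is not established in your sketch. Without that identification (via Adams--Harris--Switzer's description of $KO_*KO$, or via Mahowald's characteristic-class bookkeeping, or via the $X$-resolution), the proof is not complete, and indeed one cannot substitute the known image-of-$J$ computation without circularity, as you also observe. Two smaller issues: the phrase ``a connectivity count shows that $d_1(\gamma_k)$ lands there'' is not correct as stated --- $d_1(\gamma_k)$ will generally have nonzero components in the higher summands $\Sigma^{4i} bo\wedge H\z_i^{cl}$ for $i \le k$, and what the theorem is really asserting (made precise in Mahowald's paper, where $E_1^{1,4k}\cong\z$ is shorthand for a particular $\z$-quotient) is only about the projection onto $\Sigma^4 bsp$; and the sign/normalization of the generators of $\pi_{4k}(bo)$ versus $\pi_{4k}(\Sigma^4 bsp)$ needs a word, since the natural comparison map is not literally the identity. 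Neither of these sinks the argument, but the $\psi^3$-identification remains the nontrivial step that would have to be supplied to turn the sketch into a proof.
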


\begin{rem2}\label{Rem:FDiff}
Both theorems follow from an explicit comparison with the $X$-resolution, where $X := Th(\Omega S^5 \to BO)$. In the next section, we will produce analogous differentials in the $kq$-resolution. Instead of defining a motivic analog of $X$, we produce differentials in the $kq$-resolution by comparing to the $bo$-resolution using Betti realization. 

Over more general base fields, comparison to the classical case will not produce all of the relevant differentials. Instead, we expect that a combination of equivariant Betti realization (with respect to $C_2 = \Gal(\c/\r)$) and base-change functors will suffice for producing these differentials. This approach relies on analyzing the $C_2$-equivariant Adams spectral sequenced based on the $C_2$-spectrum $ko_{C_2}$ of \cite[Def. 10.1]{GHIR19}, as well as the construction of a $C_2$-equivariant analog of $X$. We plan to carry out this program in future work. 
\end{rem2}

The vanishing line requires an additional argument which we sketch now. Let $A_1$ be a spectrum with $H^*(A_1) \cong A(1)$. Then the $E_2$-page of the $bo$-resolution for $A_1$ may be identified with the Adams spectral sequence for $A_1$. The latter has a vanishing line of slope $1/5$ on its $E_2$-page which can be proven using the May spectral sequence. If one takes $A_1 = S^0/(2,\eta,v_1)$, then one might hope to produce the vanishing line in the $bo$-resolution for $S^0$ through a series of Bockstein spectral sequences. However, this runs into the issue that one only has a Bockstein spectral sequence at the level of $E_1$-pages in general. 

Instead, Mahowald relates the $E_2$-page of the $bo$-resolution for the mod two Moore spectrum $S^0/2$ to the $E_2$-page of the $bo$-resolution for $A_1$ above a line of slope $1/5$. This leads to a vanishing line of slope $1/5$ in the $E_2$-page of the $bo$-resolution for $S^0/2$. To obtain the vanishing line for the sphere, Mahowald then uses the cofiber sequence $$S^0 \overset{2}{\to} S^0 \to S^0/2$$
along with the explicit calculation of $2$-torsion and periodicity in the $E_2$-page above a line of slope $1/5$. This explicit calculation essentially follows from the expression for the $E_1$-page in terms of Adams covers of $bo$ and $bsp$. 

\subsection{Differentials}
In this section, we prove the motivic analog of Mahowald's Bounded Torsion Theorem.

The relevant differentials in the $kq$-resolution 
$$\kqE_1^{n,t,u} = \pi_{t,u}(kq \wedge \overline{kq}^{\wedge n}) \Rightarrow \pi_{t-n,u}(S^{0,0})$$
can be determined via Betti realization, $\tau$-linearity, and $\eta$-linearity. Recall that $\pi_*(X(\c)) \otimes \f_2[\tau,\tau^{-1}] \cong \pi_{**}(X)[\tau^{-1}]$, so the realization of a $\tau$-torsion class is zero and the realization of a $\tau$-free class is nonzero. This fact will allow us to deduce many of the relevant differentials in the $kq$-resolution from the differentials calculated by Mahowald in the $bo$-resolution \cite{Mah81}. We obtain the remaining differentials using the $\pi_{**}(kq)$-module structure of the $kq$-resolution. 

Recall the isomorphism
$$\pi_{**}(kq \wedge \overline{kq}^{\wedge n})[\tau^{-1}] \cong \pi_*(bo \wedge \overline{bo}^{\wedge n}) \otimes \f_2[\tau,\tau^{-1}].$$
We have shown above that $\pi_{**}(kq \wedge kq)$ is a direct sum of combinations of $\pi_*(bo^{\langle j \rangle})$ (in appropriate bidegrees) and $\pi_{**}(kq)$ for various values of $j$, together with $\m_2$-summands in Adams filtration zero. The same holds when we consider $\pi_{**}(kq \wedge \overline{kq}^{\wedge n})$ for any $n$ by a similar analysis.

In more detail, Mahowald expresses the $n$-line of the $bo$-resolution using multi-indices in the proof of \cite[Thm. 5.11]{Mah81}. One has
$$\Ext_{A(1)^\vee}^{**}(\overline{bo}^{\wedge n}) \cong \bigoplus_{I \in \mathcal{I}_n} \Sigma^{|I|} \Ext_{A(1)^\vee}^{**}(H\underline{\z}^{cl}_I)$$
where $\mathcal{I}_n = \{ I = (i_1,\ldots,i_n) : i_j \geq 1 \text{ for all } 1 \leq j \leq n\}$ and if $I = (i_1,\ldots,i_n)$, then $\Sigma^{|I|} = \Sigma^{4(i_1 + \cdots + i_n)}$ and $H\underline{\z}^{cl}_I = H\underline{\z}^{cl}_{i_1} \otimes \cdots \otimes H\underline{\z}^{cl}_{i_n}.$ This follows easily from the K{\"u}nneth isomorphism for the classical mod two homology and the classical analog of Theorem \ref{Cor:AmodmodA(1)Decomp}. Since motivic mod two homology also has a K{\"u}nneth isomorphism, we obtain an analogous expression
$$\Ext_{A(1)^\vee}^{***}(\overline{kq}^{\wedge n}) \cong \bigoplus_{I \in \mathcal{I}_n} \Sigma^{|I|,|I|/2} \Ext_{A(1)^\vee}^{***}(H\underline{\z}_I)$$
where $\Sigma^{|I|,|I|/2} = \Sigma^{4(i_1+\cdots+i_n), 2(i_1+\cdots+i_n)}$. 

We can say more about the $n$-line. Using the long exact sequences of Lemma \ref{Lem:SES} and the aAHSS, we may show that modulo $v_1$-torsion, each summand of the $n$-line is expressible as a sum of suspensions of $\m_2[h_0]$, one or two copies of $\m_2$, and $\pi_{**}(kq)$ or $\pi_{**}(ksp)$. Moreover, we have the following corollary of Corollary \ref{Cor:Half1}.

\begin{cor}\label{Cor:Half}
Suppose that $x$ is a $\tau$-free class in $\kqE_1^{n,t,u}$ which is not $\tau$-divisible. Then we have $u = \lceil t/2 \rceil$ is $t \equiv 0,1 \mod 4$ and $u = \lceil t/2 \rceil +1$ if $t \equiv 2 \mod 4$. Moreover, there are no $\tau$-tree classes with $t \equiv 3 \mod 4$. 
\end{cor}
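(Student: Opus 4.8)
The plan is to prove the analogous weight--stem relation on the $E_2$-page of the motivic Adams spectral sequence converging to $\pi_{**}(kq\wedge\overline{kq}^{\wedge n})$, and then transport it to the homotopy groups. As in the proof of Corollary \ref{Cor:kqCollapse}, Betti realization rules out Adams differentials with $\tau$-free source and target, and the description of the $n$-line modulo $v_1$-torsion (a sum of suspensions of $\m_2[h_0]$, isolated copies of $\m_2$, and copies of $\pi_{**}(kq)$ and $\pi_{**}(ksp)$) leaves no room, for degree reasons, for hidden $\tau$-extensions between $\tau$-free classes. Hence a $\tau$-free, non-$\tau$-divisible class in $\kqE_1^{n,t,u}=\pi_{t,u}(kq\wedge\overline{kq}^{\wedge n})$ is detected by a $\tau$-free, non-$\tau$-divisible class of Adams stem $t$ and weight $u$ in $\Ext^{s,\,t+s,\,u}_{A^\vee}\big(H_{**}(kq\wedge\overline{kq}^{\wedge n})\big)$, so it suffices to establish the congruence there.

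By change-of-rings and the motivic K\"unneth isomorphism recalled above,
\[
\Ext^{***}_{A^\vee}\big(H_{**}(kq\wedge\overline{kq}^{\wedge n})\big)\cong\bigoplus_{I\in\mathcal{I}_n}\Sigma^{|I|,\,|I|/2}\,\Ext^{***}_{A(1)^\vee}(H\underline{\z}_I),\qquad |I|=4(i_1+\cdots+i_n).
\]
Each suspension $\Sigma^{|I|,|I|/2}$ raises the Adams stem by $|I|\equiv 0\pmod 4$ and the weight by exactly $|I|/2$; such a shift preserves the residue of the stem modulo $4$ and the truth of the conclusion. So the statement reduces to the corresponding one for $\Ext^{***}_{A(1)^\vee}(H\underline{\z}_I)/(v_1\text{-tor})$ for each multi-index $I$.

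For a single Brown--Gitler module this is exactly Corollary \ref{Cor:Half1}. For general $I$ I would run the induction from the proof of Theorem \ref{Lem:ExtOfHZn}: iterating the short exact sequences of Lemma \ref{Lem:SES} and the algebraic Atiyah--Hirzebruch spectral sequence for $\Ext_{A(1)^\vee}((-)\otimes H\underline{\z}_1)$ presents $\Ext_{A(1)^\vee}(H\underline{\z}_I)$, modulo $v_1$-torsion, as a direct sum of suspensions of the building blocks $\m_2[h_0]$, $\m_2$, $\m_2[h_1]/(h_1^2)$, $\Ext_{A(1)^\vee}(\m_2)$, and $\Ext_{A(1)^\vee}(H\underline{\z}_1)$, where the suspensions are the shifts $\Sigma^{k,\ell}$ and cell-shifts $(-)[m]$ occurring in Definition \ref{Def:Zi}. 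One checks the two base cases directly (the former is given explicitly above; the latter is the $E_\infty$-page of Figure \ref{Fig:HZ1E3}), and then verifies that each building block, equipped with its suspension, obeys the asserted congruence; this is exactly the bookkeeping that makes $Z_i$ a correct description in Definition \ref{Def:Zi} and that underlies Corollary \ref{Cor:Half1}.

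The main obstacle is precisely that last bookkeeping: one must confirm that every suspension $\Sigma^{k,\ell}$ and every cell-shift $(-)[m]$ produced in resolving $H\underline{\z}_I$ raises the Adams stem by a multiple of $4$ and the weight by exactly half of the stem-shift --- equivalently, that it is compatible with the ``$+1$ when $t\equiv 2\pmod 4$'' correction and never creates a $\tau$-free class with stem $\equiv 3\pmod 4$. This amounts to tracking the bidegrees $|\bar\xi_i|=(2^{i+1}-2,\,2^i-1)$ and $|\bar\tau_i|=(2^{i+1}-1,\,2^i-1)$ against the Mahowald weight throughout the induction; once the base cases are settled, the propagation through Lemma \ref{Lem:SES} and the aAHSS is formal.
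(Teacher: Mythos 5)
Your proposal follows the paper's proof essentially verbatim: reduce via the K\"unneth/change-of-rings decomposition to $\Ext_{A(1)^\vee}(H\underline{\z}_I)$ for multi-indices $I$, observe that the suspensions $\Sigma^{|I|,|I|/2}$ with $|I|\equiv 0\pmod 4$ preserve the congruence, and then invoke Corollary \ref{Cor:Half1} together with the inductive machinery of Lemma \ref{Lem:SES} and the algebraic Atiyah--Hirzebruch spectral sequence. The extra care you take in the first paragraph --- ruling out hidden $\tau$-extensions and Adams differentials to justify the passage from $\Ext$ back to $\pi_{t,u}(kq\wedge\overline{kq}^{\wedge n})$ --- is a point the paper treats implicitly by simply asserting $\pi_{t,u}(kq\wedge H\underline{\z}_I)\cong\bigoplus_s\Ext^{s,t+s,u}_{A(1)^\vee}(H\underline{\z}_I)$, and is a welcome addition.
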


\begin{proof}
The class $x$ is a sum of elements in $\pi_{t,u}(kq \wedge H\z_I)$ where $I \in \mathcal{I}_n$. Since $\pi_{t,u}(kq \wedge H\underline{\z}_I) \cong \bigoplus_{s \in \z}\Ext_{A(1)^\vee}^{s,t+s,u}(H\underline{\z}_I)$, the result follows from the calculations in Corollary \ref{Cor:Half1} and the long exact sequences of Lemma \ref{Lem:SES}. 
\end{proof}

We may compare the $kq$-resolution and $bo$-resolution precisely since $kq(\c) \simeq bo$ and Betti realization is strong symmetric monoidal. In particular, the piece of the $n$-line of the $kq$-resolution coming from a multi-index $I \in \mathcal{I}_n$ realizes to the piece of the $n$-line of the $bo$-resolution coming from the same multi-index $I$. The following statements are clear from the above observations:
\begin{enumerate}
\item Any class in $\pi_{**}(kq \wedge \overline{kq}^{\wedge n})$ coming from $\pi_*(bo^{\langle j \rangle})$ (placed in the correct bidegrees) is $\tau$-torsion free. In particular, the subset of these classes which are not $\tau$-divisible are in one-to-one correspondence with a subset $S$ of classes in $\pi_*(bo \wedge \overline{bo}^{\wedge n})$. 
\item The classes in $\pi_{**}(kq \wedge \overline{kq}^{\wedge n})$ coming from $\pi_{**}(\Sigma^{?,?}kq)$ satisfy precisely one of the following:
\begin{enumerate}
\item The class is $\tau$-torsion free. The subset of such $\tau$-torsion free classes is in one-to-one correspondence with the complement of $S$ in $\pi_*(bo \wedge \overline{bo}^{\wedge n })$. 
\item The class is simple $\tau$-torsion. Any such class is $\eta$-torsion free. 
\end{enumerate}
\end{enumerate}

\begin{thm}\label{Prop:kqDifferentials}\mbox{}
\begin{enumerate}
\item Suppose that a $\tau$-torsion free class $x \in \kqE_1^{n,t,u}$ with $n \geq 2$ is a cycle under $d_1$ and is represented in $\pi_{t,u}(kq \wedge \overline{kq}^{\wedge n})$ by an element of $H\f_2$-Adams filtration $f \geq 2$. Then $x$ is a boundary under $d_1$. 

\item Suppose that a $\tau$-torsion class $x \in \kqE_1^{n,t,u}$ with $n \geq 2$ is a cycle under $d_1$ and is represented in $\pi_{t,u}(kq \wedge \overline{kq}^{\wedge n})$ by an element of $H\f_2$-Adams filtration $f \geq 2$. Then $x$ is a boundary under $d_1$.
\end{enumerate}
\end{thm}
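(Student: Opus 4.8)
The plan is to deduce both statements from Mahowald's Bounded Torsion Theorem (Theorem~\ref{Thm:boDifferentials}) via Betti realization, handling the $\tau$-torsion free case (1) first and then bootstrapping to the $\tau$-torsion case (2) using the module structure over $\pi_{**}(kq)$, exactly as flagged in the introduction. For part (1), let $x \in \kqE_1^{n,t,u}$ be $\tau$-torsion free, $n \geq 2$, a $d_1$-cycle, with $H\f_2$-Adams filtration $f \geq 2$. Since $x$ is $\tau$-torsion free, its Betti realization $Re_\c(x) \in E_1^{n,t}$ of the $bo$-resolution is nonzero (using $\pi_{**}(kq \wedge \overline{kq}^{\wedge n})[\tau^{-1}] \cong \pi_*(bo \wedge \overline{bo}^{\wedge n}) \otimes \f_2[\tau^{\pm 1}]$), and realization carries the $H\f_2$-Adams filtration to the classical one, so $Re_\c(x)$has filtration $\geq 2$; moreover $Re_\c$ commutes with $d_1$ since it is a map of spectral sequences. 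By Theorem~\ref{Thm:boDifferentials}, a class of classical Adams filtration $\geq 2$ with $n \geq 2$ that is a $d_n$-cycle (in particular a $d_1$-cycle) is either in the image of $d_n$ or mapped essentially under $d_n$; since we assume $x$, hence $Re_\c(x)$, is a $d_1$-cycle, the "mapped essentially" alternative cannot produce a $d_1$ out of it, so $Re_\c(x)$ must be a $d_1$-boundary. The task is then to lift this to a $d_1$-boundary motivically: write $Re_\c(x) = d_1(\bar{y})$ for some $\bar{y}$, choose a $\tau$-torsion free lift $y \in \kqE_1^{n-1,t+1,?}$ using the explicit description of the $\tau$-torsion free part of each line (items (1)--(2)(a) in the bulleted list preceding the theorem, together with Corollary~\ref{Cor:Half} to pin down the weight), and observe that $d_1(y) - x$ is $\tau$-torsion (it realizes to zero); a weight count via Corollary~\ref{Cor:Half} forces $d_1(y) - x = 0$ because there is no room for a $\tau$-torsion class in the weight dictated by the $\tau$-torsion free class $x$. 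This last weight-matching step is the main obstacle and the place requiring genuine care: one must check that the $\tau$-torsion free summands in the source $\kqE_1^{n-1,*,*}$ contain a class realizing to $\bar{y}$ in the correct weight, which is where the detailed bookkeeping of Section~\ref{Section:Analysis} (the $Z_i$ decomposition and Corollary~\ref{Cor:Half1}) gets used.

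For part (2), let $x \in \kqE_1^{n,t,u}$ be $\tau$-torsion, $n \geq 2$, a $d_1$-cycle, of $H\f_2$-Adams filtration $f \geq 2$. Now Betti realization is useless directly since $Re_\c(x) = 0$. Instead I would exploit the fact, recorded in item (2)(b) above, that every simple $\tau$-torsion class in $\pi_{**}(kq \wedge \overline{kq}^{\wedge n})$ is $\eta$-torsion free, so such classes sit in infinite $h_1$-towers, and the $kq$-resolution is a module over $\pi_{**}(kq)$, hence $h_1$-linear. Writing $x = h_1^k \cdot x'$ for a generator $x'$ of its $h_1$-tower (or expressing $x$ in terms of multiplication by $h_1$ from a lower stem), I would use that $d_1$ is $h_1$-linear and that the bottom classes of these $\tau$-torsion $h_1$-towers are themselves related by multiplication to $\tau$-torsion free classes already handled in part (1) — concretely, the $\tau$-torsion $h_1$-tower generators arise as $h_1$-multiples (or as boundaries detecting $h_1$-multiples) of the $\tau$-torsion free classes appearing in the $Z_i$-decomposition. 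The strategy is: from part (1) we know the relevant $\tau$-torsion free classes of filtration $\geq 2$ are $d_1$-boundaries, say $= d_1(y)$; then $h_1 \cdot y$ (or an appropriate $\pi_{**}(kq)$-multiple) is a class in $\kqE_1^{n-1}$ whose $d_1$-image is $h_1 \cdot (\text{that class}) = x$ up to classes of lower $h_1$-divisibility, and one induces down the tower. The obstacle here is purely organizational: one must verify that the $h_1$-multiplication relating $\tau$-torsion and $\tau$-torsion free classes is realized already at the level of the comodule $\Ext$ computations of Section~\ref{Section:Analysis}, and that multiplying a $d_1$-preimage $y$ by $h_1$ does not accidentally land in a filtration $< 2$ regime (it does not, since multiplication by $h_1$ only raises Adams filtration). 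The degree/weight check from Corollary~\ref{Cor:Half} again rules out spurious correction terms.

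In summary: (1) realize to the $bo$-resolution, apply Theorem~\ref{Thm:boDifferentials}, lift the $d_1$-preimage using the explicit $\tau$-torsion free summand description, and kill the ambiguity by a weight count (Corollary~\ref{Cor:Half}); (2) propagate from (1) along infinite $h_1$-towers using $\pi_{**}(kq)$-linearity of $d_1$ and the $\eta$-torsion-freeness of the simple $\tau$-torsion classes. I expect the weight-matching in the lifting step of part~(1) to be the genuinely delicate point; everything else is bookkeeping against the structural results of Section~\ref{Section:Analysis}.
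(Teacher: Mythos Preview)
Your proposal is correct and follows essentially the same route as the paper's proof: for part~(1) the paper also reduces to the $bo$-resolution via Betti realization, invokes Theorem~\ref{Thm:boDifferentials} to produce a classical $d_1$-preimage, lifts it, and then uses exactly Corollary~\ref{Cor:Half} to match weights and eliminate the $\tau$-power ambiguity; for part~(2) the paper likewise observes that every $\tau$-torsion class is an $\eta^j$-multiple (for $j\geq 3$) of a $\tau$-torsion free class and uses the $\pi_{**}(kq)$-module structure of the resolution to reduce to part~(1). Your write-up is somewhat more elaborate than the paper's (which dispatches part~(2) in two sentences), but the strategy and the key inputs are identical.
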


\begin{proof}\mbox{}
\begin{enumerate}
\item This follows from Theorem \ref{Thm:boDifferentials} and Betti realization. Assume that $x$ is not $\tau$-divisible. Then Theorem \ref{Thm:boDifferentials} implies that there is a differential $d_1(y(\c)) = x(\c)$ for some $y(\c) \in \kqE_1^{n-1,t}$. The properties of Betti realization described above imply that there is a differential $d_1(y) = \tau^{u'-u} x$ for some $y \in \kqE_1^{n-1,t,u'}$. We may assume that $y$ is not $\tau$-divisible; indeed, if $y=\tau^j y'$ then $\tau$-lineariity of $d_1$-differentials implies that $d_1(y') = \tau^{u'-u-j}x$. In this case, we know that $stem(x) = t-n$ and $wt(x) = \lceil t/2 \rceil$ if $t \equiv 0,1 \mod 4$ and $wt(x) = \lceil t/2 \rceil + 1$ if $t \equiv 2 \mod 4$ by Corollary \ref{Cor:Half}. We also know that $stem(y') = t-n+1$ and $wt(y') = \lceil t/2 \rceil$ if $t \equiv 0,1 \mod 4$ and $wt(y') = \lceil t/2 \rceil + 1$ if $t \equiv 2 \mod 4$ by the same corollary. Therefore $u = u'$ and the differential has the form $d_1(y') = x$. 

\item The only $\tau$-torsion classes contain $\eta^j$ for $j \geq 3$ as a factor. The multiplication map $kq \wedge kq \to kq$ equips the $kq$-resolution with a $\pi_{**}(kq)$-module structure, so the fate of $\eta^j$ for $j \geq 3$ is the same as the fate of $\eta$ and $\eta^2$. This was determined in the first part of the proposition. 
\end{enumerate}
\end{proof}

\begin{thm}\label{Prop:kqDiffl2}
Let $k \geq 1$. The $d_1$-differential $d_1 : \kqE^{0,4k,\ell}_1 \cong \z \to \z \cong \kqE^{1,4k,\ell}_1$ is given by multiplication by $2^{\rho(k)}$ for any $\ell \leq 2k$, where $\rho(k)$ is the $2$-adic valuation of $8k$. 
\end{thm}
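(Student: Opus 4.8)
The plan is to derive this quantitative statement from its classical counterpart, Theorem~\ref{Thm:boDiffl2}, by Betti realization, in the same spirit as the proof of Theorem~\ref{Prop:kqDifferentials}. The $d_1$-differential $\kqE_1^{0,t,u}\to\kqE_1^{1,t,u}$ is the map on $\pi_{t,u}(-)$ induced by a map of motivic spectra coming from the canonical $kq$-Adams resolution \eqref{Eqn:AdamsRes}, and Betti realization carries the $kq$-resolution to the $bo$-resolution; hence for each $\ell$ there is a commutative square
\[
\begin{tikzcd}
\pi_{4k,\ell}(kq)\arrow[r,"d_1"]\arrow[d] & \pi_{4k,\ell}(kq\wedge\overline{kq})\arrow[d]\\
\pi_{4k}(bo)\arrow[r,"d_1"] & \pi_{4k}(bo\wedge\overline{bo})
\end{tikzcd}
\]
whose bottom arrow is multiplication by $2^{\rho(k)}$ on the relevant $\z$-summands by Theorem~\ref{Thm:boDiffl2}. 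The whole content is therefore to check that the two vertical maps restrict to isomorphisms $\z\xrightarrow{\cong}\z$ on the summands named in the statement.

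I would organize the argument in three steps. First, reduce to $\ell=2k$: the $d_1$-differential is $\tau$-linear, and by the computation of $\pi_{**}(kq)$ (Isaksen--Shkembi, together with $\pi_{**}^F(kq)\cong\pi_{**}^\c(kq)$) the group $\kqE_1^{0,4k,\ell}\cong\z$ is generated by $\tau^{2k-\ell}$ times a non-$\tau$-divisible class $g$ of weight $2k$; Corollary~\ref{Cor:Half} likewise places the non-$\tau$-divisible generator of $\kqE_1^{1,4k,\ell}\cong\z$ in weight $2k$, so the weight-$\ell$ groups are $\tau^{2k-\ell}$ times the weight-$2k$ groups and it suffices to compute $d_1$ in weight $2k$. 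Second, at weight $2k$ the generators of $\kqE_1^{0,4k,2k}$ and $\kqE_1^{1,4k,2k}$ are $\tau$-free and not $\tau$-divisible; since Betti realization sends $\tau\mapsto 1$ and, as established in Sections~\ref{Sec:kqASS}--\ref{Section:Analysis}, the motivic Adams and algebraic Atiyah--Hirzebruch spectral sequences computing these groups $\tau$-invert to the classical ones computing $\pi_{4k}(bo)$ and $\pi_{4k}(bo\wedge\overline{bo})$, these $\tau$-free generators map to generators of the corresponding $\z$-summands on the bottom row. Third, commutativity of the square together with Theorem~\ref{Thm:boDiffl2} forces $d_1(g)$ to generate $2^{\rho(k)}\z$; hence $d_1$ in weight $2k$ --- and, by $\tau$-linearity, in every weight $\ell\le 2k$ --- is multiplication by $2^{\rho(k)}$ (up to a unit, which is harmless: the map is injective with cokernel $\z/2^{\rho(k)}$).

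The hard part is the bookkeeping in the second step: verifying that in these precise tridegrees the relevant motivic groups really are single copies of $\z$ on which Betti realization is an \emph{isomorphism} --- not, a priori, multiplication by a power of $2$ --- and that no stray $\tau$-torsion class (for instance the bottom of an infinite $h_1$-tower) sits in the same weight and interferes. This rests on combining the explicit description of $\pi_{**}(kq)$, the analysis of the $1$-line in Section~\ref{Section:Analysis} (Theorems~\ref{Cor:AmodmodA(1)Decomp}, \ref{Lem:ExtOfHZn}, and~\ref{Cor:kqCollapse}), the weight constraint of Corollary~\ref{Cor:Half}, and the standing convention that everything is taken modulo $v_1$-torsion, which discards the bulk of the Adams filtration $0$ classes and leaves the $v_1$-periodic $\z$ detecting the relevant differential. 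As a consistency check one can also propagate the value of $d_1$ between weights using the $\pi_{**}(kq)$-module structure on the $kq$-resolution, exactly as in the proof of Theorem~\ref{Prop:kqDifferentials}.
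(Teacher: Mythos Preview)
Your proposal is correct and takes essentially the same approach as the paper: the paper's proof is simply ``This follows immediately from Theorem~\ref{Thm:boDiffl2} and Betti realization,'' and you have spelled out the details (the commutative square, the reduction to weight $2k$ via $\tau$-linearity and Corollary~\ref{Cor:Half}, and the check that realization identifies the $\z$-summands) that the paper leaves implicit.
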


\begin{proof}
This follows immediately from Theorem \ref{Thm:boDiffl2} and Betti realization. 
\end{proof}

These propositions completely determine the $0$- and $1$-lines of the $kq$-resolution; see Parts (a) and (b) of Theorem \ref{Thm:kqLines}. 

\begin{rem2}
It is unlikely that Betti realization and base-change will suffice to produce all of the differentials involving the $0$- and $1$-lines of the $kq$-resolution over general base fields. 
\end{rem2}

\subsection{A vanishing line of slope $1/5$}

As we saw in the last section, there is a vanishing line of slope $1/3$ in the $E_1$-page of the $kq$-resolution. It turns out that this naive vanishing line suffices to calculate $\pi_{**}(S^{0,0})[\eta^{-1}]$, but in order to calculate $\pi_{**}(S^{0,0})[v_1^{-1}]$,  we will need a stronger vanishing result. To prove this stronger vanishing statement, we generalize Mahowald's arguments \cite[pp. 380-381]{Mah81} and \cite[(c')]{Mah84}. 

We begin by defining some motivic analogs of classical finite complexes. 

\begin{defin}
Let $M := \mathrm{cofib}(S^{0,0} \overset{\cdot 2}{\to} S^{0,0})$  be the motivic mod two Moore spectrum. Using the long exact sequence in homotopy, one can show that there exists a unique lift of $\eta \in \pi_{1,1}(S^{0,0})$ to a map $\tilde{\eta} : \Sigma^{1,1} M \to M$. 

 Let $Y := \mathrm{cofib}(\Sigma^{1,1} M \overset{\tilde{\eta}}{\to} M)$. Then $Y \simeq V(0) \wedge C\eta$ where $C\eta := \mathrm{cofib}(S^{1,1} \overset{\eta}{\to} S^{0,0})$. Using the Atiyah-Hirzebruch spectral sequence, one can show that $Y$ admits a self-map $v_1 : \Sigma^{2,1} Y \to Y$. Finally, define $A_1$ by $A_1 := \mathrm{cofib}(\Sigma^{2,1} Y \overset{v_1}{\to} Y)$. 
\end{defin}

\begin{rem2}
The Betti realization of $Y$ is the classical spectrum $Y$, and a choice motivic self-map $v_1 : \Sigma^{2,1} Y \to Y$ realizes to a corresponding classical self-map $v_1^{cl} : \Sigma^2 Y \to Y$. We will fix these choices in the sequel. Since the classical self-map is non-nilpotent and the motivic self-map is $\tau$-torsion free, the motivic self-map is non-nilpotent. 
\end{rem2}

\begin{lem}
The motivic spectrum $A_1$ realizes $A(1)$. In other words,  there is an isomorphism of $A(1)$-modules $H^{**}(A_1) \cong A(1)$. 
\end{lem}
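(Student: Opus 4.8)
The plan is to compute the mod two motivic cohomology $H^{**}(-) := H^{**}(-;\f_2)$ of $M$, then $Y$, then $A_1$, using the three cofiber sequences in the definition and tracking the $A$-module (hence $A(1)$-module) structure at each step. Recall $H^{**}(S^{0,0}) = \m_2$ is free of rank one over $\m_2$. From $S^{0,0}\xrightarrow{\cdot 2}S^{0,0}\to M$ the long exact sequence degenerates, since multiplication by $2$ is the zero map on the $\f_2$-vector space $\m_2$; thus $H^{**}(M)$ is $\m_2$-free on classes $\iota$, $\sigma$ with $|\iota| = (0,0)$, $|\sigma| = (1,0)$, and $Sq^1\iota = \sigma$ because $Sq^1$ is the Bockstein and $M$ is the mod two Moore spectrum, while $Sq^2$ acts trivially for degree reasons. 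Using the identification $Y\simeq V(0)\wedge C\eta = M\wedge C\eta$ together with the Künneth isomorphism, $H^{**}(Y)\cong H^{**}(M)\otimes_{\m_2}H^{**}(C\eta)$ is $\m_2$-free of rank four, where $H^{**}(C\eta) = \m_2\{1,a\}$ with $|a| = (2,1)$ and $Sq^2(1) = a$. Writing $g = \iota\otimes 1$, the motivic Cartan formula gives $Sq^1 g = \sigma\otimes 1$, $Sq^2 g = \iota\otimes a$, and $Sq^2Sq^1 g = Sq^1 Sq^2 g = \sigma\otimes a$; in particular $(Sq^1Sq^2 + Sq^2Sq^1)g = 0$, so $H^{**}(Y)$ is the cyclic $A(1)$-module generated by $g$ subject to this single relation. (One could instead run the long exact sequence for $\tilde\eta$, noting $\tilde\eta^* = 0$ on $H^{**}$ purely for degree reasons: $\tilde\eta$ lowers bidegree by $(1,1)$ while $H^{**}(M)$ lives in bidegrees $(0,0)$ and $(1,0)$.)

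Now consider the cofiber sequence $\Sigma^{2,1}Y\xrightarrow{v_1}Y\to A_1$. The self-map $v_1$ induces the zero map on mod two motivic cohomology: its Betti realization is the classical $v_1$-self-map $v_1^{cl}\colon\Sigma^2 Y^{cl}\to Y^{cl}$, which is well known to be $H\f_2$-trivial, and since $H^{**}(Y)$ is $\tau$-torsion free, any motivic endomorphism data of $H^{**}(Y)$ realizing to zero must already vanish. (Alternatively, $v_1$ has positive Adams filtration, hence is $H\f_2$-trivial.) Therefore the long exact sequence splits into a short exact sequence of $\m_2$-modules $0\to\Sigma^{3,1}H^{**}(Y)\to H^{**}(A_1)\to H^{**}(Y)\to 0$, so $H^{**}(A_1)$ is $\m_2$-free of rank eight, concentrated in exactly the eight bidegrees $(0,0),(1,0),(2,1),(3,1),(3,1),(4,1),(5,2),(6,2)$ occupied by an $\m_2$-basis of $A(1)$.

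It then remains to show that the bottom class $\iota\in H^{0,0}(A_1)$ generates $H^{**}(A_1)$ over $A(1)$; granting this, the surjection $A(1) = A(1)\cdot\iota\twoheadrightarrow H^{**}(A_1)$ of free $\m_2$-modules of rank eight is forced to be an isomorphism of $A(1)$-modules, identifying $H^{**}(A_1)$ with $A(1)$ as a free rank-one $A(1)$-module. The quotient map $H^{**}(A_1)\to H^{**}(Y)$ identifies $\iota$, $Sq^1\iota$, $Sq^2\iota$ and one of the two degree-three classes with the $A(1)$-generators of $H^{**}(Y)$, while $\Sigma^{3,1}H^{**}(Y)$ is generated over $A(1)$ by its bottom class $w$ in bidegree $(3,1)$; since $Q_1 := Sq^1Sq^2 + Sq^2Sq^1$ has bidegree $(3,1)$ and $Q_1\iota$ maps to $Q_1 g = 0$ in $H^{**}(Y)$, we get $Q_1\iota\in\m_2\{w\}$, and the only remaining point is that $Q_1\iota\ne 0$. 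This follows from Betti realization once more: $Re_\c$ sends the cofiber sequence defining $A_1$ to the one defining the classical complex $A_1^{cl}$ with $H^*(A_1^{cl})\cong A(1)$, giving a map of $A$-modules $H^{**}(A_1)\to H^*(A_1^{cl})$ over $A\to A^{cl}$ that carries $\iota$ to the nonzero bottom class; hence $Re_\c(Q_1\iota) = Q_1^{cl}\cdot(\text{bottom class})\ne 0$, so $Q_1\iota\ne 0$.

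The main obstacle is precisely this last nonvanishing, i.e. ruling out the degenerate possibility that $H^{**}(A_1)\cong H^{**}(Y)\oplus\Sigma^{3,1}H^{**}(Y)$ with $Q_1\iota = 0$; one must see that the attaching of the top four cells, governed by the class of $v_1$ in $\Ext^1_{A^\vee}$ of the relevant comodule, carries a nontrivial $Q_1$-operation. We settle this by comparison with the classical $A_1$ via Betti realization, as above; a self-contained motivic argument would instead need to pin down $v_1\in[\Sigma^{2,1}Y,Y]$ precisely enough — say through the motivic Adams spectral sequence for $Y$ — to read off the induced Steenrod operation directly.
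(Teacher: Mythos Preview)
Your proof is correct and follows essentially the same route as the paper: compute cohomology step by step through the cofiber sequences defining $M$, $Y$, and $A_1$, tracking the $A(1)$-action. The paper's proof is a one-line sketch (``use the long exact sequences together with the facts that $Sq^1$ detects $2$, $Sq^2$ detects $\eta$, and $Q_1$ detects $v_1$''), and your argument is a careful unpacking of exactly that.

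The only substantive difference is how you establish $Q_1\iota\neq 0$. The paper invokes directly that $Q_1$ detects $v_1$ in the motivic Adams spectral sequence, which forces the extension in cohomology to be nontrivial; you instead appeal to Betti realization and the classical computation $H^*(A_1^{cl})\cong A(1)^{cl}$. Both work. You in fact anticipate the paper's approach in your final paragraph when you remark that a self-contained motivic argument would pin down $v_1$ via the motivic Adams spectral sequence---that is precisely what the paper's phrase ``$Q_1$ detects $v_1$'' encodes.
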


\begin{proof}
This follows from the long exact sequences in motivic cohomology associated to the cofiber sequences defining $A_1$, along with the facts that $Sq^1$ detects $2$, $Sq^2$ detects $\eta$, and $Q_1$ detects $v_1$. 
\end{proof}
%

\begin{lem}\label{Lem:A1}
The $kq$-resolution for $A_1$ coincides with the motivic $H\f_2$-Adams resolution for $A_1$. In particular, we have
$$^{kq}E_2(A_1) \cong \mbox{}^{mASS}E_2(A_1) \cong \Ext_{A^\vee}^{***}(A(1)^\vee).$$
\end{lem}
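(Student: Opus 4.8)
The plan is to prove that the canonical $kq$-Adams tower of $A_1$ is a motivic $H\f_2$-Adams resolution in the standard sense, so that it computes the motivic $H\f_2$-Adams spectral sequence of $A_1$. Concretely, I would establish two properties of the tower: (i) each associated graded piece $kq\wedge\overline{kq}^{\wedge n}\wedge A_1$ is a generalized motivic Eilenberg--MacLane spectrum, i.e.\ a wedge of bigraded suspensions of $H\f_2$; and (ii) each structure map in the tower induces the zero homomorphism on mod $2$ motivic cohomology. Granting (i) and (ii), the standard comparison of Adams resolutions identifies $\kqE_r(A_1)$ with $\mbox{}^{mASS}E_r(A_1)$ for $r\geq 2$; in particular $\kqE_2(A_1)\cong\Ext^{***}_{A^\vee}(\m_2,H_{**}(A_1))\cong\Ext^{***}_{A^\vee}(A(1)^\vee)$ by the previous lemma.

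For (i), I would compute cohomology. Since the unit $S^{0,0}\to kq$ realizes the augmentation $H^{**}(kq)\cong A/\kern-0.25em/A(1)\to\m_2$, which is surjective, the cofiber sequence $S^{0,0}\to kq\to\overline{kq}$ gives $H^{**}(\overline{kq})\cong\overline{A/\kern-0.25em/A(1)}$, the augmentation ideal. Hence, by the K\"unneth isomorphism for motivic cohomology together with $H^{**}(A_1)\cong A(1)$ (the previous lemma),
$$H^{**}(kq\wedge\overline{kq}^{\wedge n}\wedge A_1)\cong (A/\kern-0.25em/A(1))\otimes_{\m_2}(\overline{A/\kern-0.25em/A(1)})^{\otimes n}\otimes_{\m_2}A(1)$$
as modules over the motivic Steenrod algebra $A$ with the diagonal action. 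Now the motivic untwisting isomorphism $(A/\kern-0.25em/A(1))\otimes_{\m_2}N\cong A\otimes_{A(1)}\mathrm{res}_{A(1)}N$, valid for any $A$-module $N$ by the Hopf algebra structure of $A^\vee$ (Theorem \ref{thm: dual steenrod algebra}), together with the fact that $P\otimes_{\m_2}A(1)$ is a free $A(1)$-module for any $A(1)$-module $P$, identifies this cohomology with a free $A$-module. Since $kq$, $\overline{kq}$, and $A_1$ are cellular, connective, and of finite type, and a connective finite-type cellular motivic spectrum with free cohomology over $A$ is a wedge of bigraded suspensions of $H\f_2$, property (i) follows. (When $n=0$ the module is free of rank one on a generator in degree $(0,0)$, so in fact $kq\wedge A_1\simeq H\f_2$, the motivic analogue of the classical equivalence $bo\wedge A_1\simeq H\f_2$.)

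For (ii), the cofiber sequence $kq\to\overline{kq}\xrightarrow{h}S^{1,0}$ and the surjectivity of the augmentation above force $h^*=0$ on mod $2$ cohomology. The $n$-th structure map of the $kq$-Adams tower of $A_1$ is (a bigraded suspension of) $h$ smashed with $\overline{kq}^{\wedge(n-1)}\wedge A_1$, so by the K\"unneth isomorphism it is also zero on mod $2$ cohomology. Together with (i), this exhibits the $kq$-Adams tower of $A_1$ as a motivic $H\f_2$-Adams resolution and finishes the proof.

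The main point requiring care is not conceptual but infrastructural: one needs the motivic untwisting isomorphism and the statement ``free cohomology over $A$ implies generalized Eilenberg--MacLane'' in the $2$-complete cellular motivic category, and one must track bigradings throughout. I would stress that this route deliberately avoids any spectrum-level splitting of $kq\wedge\overline{kq}^{\wedge n}$ into Brown--Gitler summands --- which is not available motivically --- by working entirely with $A$-module cohomology.
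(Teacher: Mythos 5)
Your argument is correct and follows the same overall strategy as the paper: exhibit the canonical $kq$-Adams tower of $A_1$ as a genuine $H\f_2$-Adams resolution by verifying the two defining conditions, and then conclude that the two spectral sequences agree from $E_2$ onward. The only substantive difference is how you establish that each $kq\wedge\overline{kq}^{\wedge n}\wedge A_1$ is a generalized Eilenberg--MacLane spectrum. The paper runs a short motivic Adams spectral sequence argument for $kq\wedge A_1$ (change-of-rings to $\Ext_{A(1)^\vee}(A(1)^\vee)$, which is concentrated in cohomological degree $0$) to conclude $kq\wedge A_1\simeq H\f_2$, and then the rest of the pieces $kq\wedge\overline{kq}^{\wedge n}\wedge A_1\simeq H\f_2\wedge\overline{kq}^{\wedge n}$ are automatically GEM because anything smashed with $H\f_2$ is. You instead compute the $A$-module cohomology of the pieces via K\"unneth, apply the shearing/untwisting isomorphism to exhibit them as free $A$-modules, and invoke the criterion that a connective finite-type cellular spectrum with free $A$-cohomology is GEM. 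Both are valid; the paper's route is slightly more economical since it avoids the untwisting step and the explicit freeness criterion, while yours is more self-contained on the cohomology side and recovers $kq\wedge A_1\simeq H\f_2$ as a special case. Your condition (ii), that the tower structure maps vanish on mod $2$ cohomology, is the exact dual of the paper's condition (2), that the unit maps $\overline{kq}^{\wedge n}\wedge A_1\to kq\wedge\overline{kq}^{\wedge n}\wedge A_1$ are injective on homology, so these are the same check.
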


\begin{proof}
Our argument follows Mahowald's in \cite{Mah81}, but we add some details. The motivic Adams spectral sequence 
$$E_2 = \Ext_{A^\vee}^{***}(H_{**}(A_1 \wedge kq)) \cong \Ext_{A(1)^\vee}(A(1)^\vee) \Rightarrow \pi_{**}(A_1 \wedge kq)$$
collapses to show that $A_1 \wedge kq \simeq H\f_2$. We claim that the canonical $kq$-Adams resolution is an $H\f_2$-Adams resolution. To show this, it needs to be checked that 
\begin{enumerate}
	\item The spectra $kq\wedge \overline{\kq}^{\wedge n}\wedge A_1$ are generalized Eilenberg-MacLane spectra, and 
	\item The canonical maps $\overline{kq}^{\wedge n}\wedge A_1\to kq\wedge \overline{kq}^{\wedge n}\wedge A_1$ are injections in motivic homology. 
\end{enumerate} 
Since $kq\wedge A_1\simeq H\f_2$, the first condition follows immediately. For the second, it is enough to show this in the case that $n=1$. In this case, this is the map 
\[
S^{0,0}\to kq
\]
smashed with $A_1$. Since $H_{**}A_1\cong A(1)^\vee$, the map 
\[
A_1 \to kq\wedge A_1
\]
gives the map 
\[
A(1)^\vee=\m_2\otimes_{\m_2}A(1)^\vee\to A/\kern-0.25em/A(1)^\vee\otimes_{\m_2}A(1)^\vee \cong A^\vee.
\]
Thus, this map is injective, proving that the canonical $kq$-Adams resolution for $A_1$ is an $H\f_2$-Adams resolution. 
\end{proof}

\begin{prop}\label{prop: vanishing line, A(1)}
We have $\Ext_{A^\vee}^{n,t,*}(A(1)) = 0$ for $6n > t +5$ 
\end{prop}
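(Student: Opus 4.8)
The plan is to compute $\Ext_{A^\vee}^{***}(A(1)^\vee)$ with the motivic May spectral sequence and to read the vanishing line off its $E_1$-page, adapting the classical May-spectral-sequence computation of $\Ext_{A^{cl}}(A(1))$ that underlies Mahowald's slope-$1/5$ line, while tracking motivic weights in order to obtain the sharp intercept. Conceptually, the reason a \emph{genuine} vanishing line is available (rather than merely a periodic pattern) is that $A_1 = S^{0,0}/(2,\eta,v_1)$ is a type-$2$ complex on which $2$, $\eta$, and $v_1$ act nilpotently with bounded exponent; accordingly $h_0$, $h_1$, and the relevant $v_1$-class act nilpotently on $\Ext_{A^\vee}^{***}(A(1)^\vee)$, so every infinite family is generated by a $v_2$-type periodicity class, and all of these sit on or below a line of slope $1/5$.

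Concretely, I would filter the cobar complex $C^\bullet(A^\vee; A(1)^\vee)$ by the May filtration to obtain a spectral sequence
\[
E_1^{\mathrm{May}} = \Ext_{\mathrm{gr}\,A^\vee}^{***}(\mathrm{gr}\,A(1)^\vee) \Rightarrow \Ext_{A^\vee}^{***}(A(1)^\vee).
\]
Over $\c$ the Hopf algebra $\mathrm{gr}\,A^\vee$ is primitively generated \cite{DI10}, so $\Ext_{\mathrm{gr}\,A^\vee}(\m_2)$ is polynomial on classes $h_{i,j}$ together with $\tau$, each carrying an explicit internal degree and motivic weight. After identifying $\mathrm{gr}\,A(1)^\vee$ as a $\mathrm{gr}\,A^\vee$-comodule, one computes $E_1^{\mathrm{May}}$ in a bounded range: the nilpotence of $h_0$, $h_1$, $v_1$ noted above (read off from the $Q_0$- and $Q_1$-Margolis homologies of $A(1)$, or imported from the classical calculation) shows $E_1^{\mathrm{May}}$ is finitely generated over the subalgebra generated by the ``slope $\ge 1/5$'' classes, whence a finite computation gives $E_1^{\mathrm{May},\,n,t,u} = 0$ whenever $6n > t+5$. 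This is exactly where the weight grading does real work: each $h_{i,j}$ carries a weight, the weight constraint makes $E_1^{\mathrm{May}}$ sparser than its classical counterpart, and carrying weights through the finite computation produces the intercept $5$ in place of the classical value.

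Propagating the vanishing line to $E_\infty$ is then automatic: the May differentials have the form $d_r^{\mathrm{May}}\colon E_r^{n,t,u} \to E_r^{n+1,t,u}$, so they preserve $(t,u)$ and raise $n$ by one, and the region $\{6n > t+5\}$ is closed under both their sources and their targets. Hence vanishing on $E_1^{\mathrm{May}}$ forces it on every later page and on $E_\infty^{\mathrm{May}} = \mathrm{gr}\,\Ext_{A^\vee}^{***}(A(1)^\vee)$, which yields the statement. (As a consistency check, inverting $\tau$ recovers a classical slope-$1/5$ vanishing line for $\Ext_{A^{cl}}(A(1))$, which the motivic bound sharpens.)

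The main obstacle is the bounded computation of $E_1^{\mathrm{May}}$: honestly pinning down $\mathrm{gr}\,A(1)^\vee$ as a comodule over $\mathrm{gr}\,A^\vee$, establishing the nilpotence bounds on $h_0$, $h_1$, and $v_1$ (these are not formal and depend on the precise $A$-module structure of $A(1)$), and ruling out an infinite family hugging the slope-$1/5$ line. A possible alternative, paralleling the finite-complex induction used for the sphere, is to deduce the line for $A_1$ from that of $S^{0,0}/(2,\eta)$ via the $v_1$-Bockstein spectral sequence attached to $\Sigma^{2,1}Y \xrightarrow{v_1} Y \to A_1$, using nilpotence of $v_1$ on $A_1$ to kill the slope-$1/2$ $v_1$-towers above the slope-$1/5$ line; but this merely relocates the difficulty to controlling that Bockstein, so the direct May-spectral-sequence route looks the more economical one.
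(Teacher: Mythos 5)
Your proposal correctly identifies the main tool (the motivic May spectral sequence), but the argument as written misses the actual mechanism and invokes the wrong nilpotence, so it does not close.

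The central issue is which classes live in the May $E_1$-page for $\Ext^{***}_{A^\vee}(A(1))$. After change of rings, the $E_1$-page is the polynomial algebra $\m_2[h_{i,j} : (i,j) \in I]$ where $I = \{(i,j) : i>0, j\geq 0\} \setminus \{(1,0),(1,1),(2,0)\}$ — precisely \emph{because} you are computing $\Ext$ of $A(1)$, the classes $h_0 = h_{1,0}$, $h_1 = h_{1,1}$, and the $v_1$-generator $h_{2,0}$ have been \emph{removed}, not rendered nilpotent. Arguing from ``nilpotence of $h_0, h_1, v_1$ read off Margolis homology'' is therefore aimed at classes that aren't in the algebra at all; their absence is what forbids the slope-$1$, slope-$1/2$, and slope-$1/2$-ish towers, but it says nothing about whether the remaining generators force a slope-$1/5$ line. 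The key fact you are missing is this: among the generators in $I$, the unique one whose slope exceeds $1/5$ is $h_{1,2}$ (slope $1/3$), and the relation $h_{1,2}^4 = h_2 h_1^2 h_3 = 0$ from \cite[Tables 2--3]{DI10} makes it nilpotent of order $4$. Everything else lies on or below slope $1/5$, so the $E_1$-page is finite over the slope-$\leq 1/5$ polynomial subalgebra, with the finitely many cosets $1, h_{1,2}, h_{1,2}^2, h_{1,2}^3$ producing the intercept $+5$. Your proposal claims the intercept comes from motivic weight constraints; it does not — the weight grading plays no role in the paper's proof, and the intercept is purely a bookkeeping consequence of $h_{1,2}^3$ sitting in $(s,t) = (3,12)$. (Your sentence ``finitely generated over the subalgebra generated by the slope $\geq 1/5$ classes'' also appears to have the inequality reversed; finite generation over the steep classes would not give a vanishing line.) The propagation from $E_1^{\mathrm{May}}$ to $E_\infty$ that you describe is correct but overkill, since vanishing on $E_1$ passes to all subquotients automatically.
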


\begin{proof}
Let $I = \{(i,j) \in \z \times \z : i>0, j \geq 0\} \setminus \{(1,0),(1,1),(2,0)\}.$ The motivic May spectral sequence \cite{DI10} converging to $\Ext_A^{***}(A(1))$ has $E_1$-page $\m_2[h_{i,j} | (i,j) \in I\}$. Define $s(i,j) = 1 / stem(h_{i,j})$ to be the slope of the line spanned by $h_{i,j}$ in Adams grading, i.e. with Adams filtration on the vertical axis and stem on the horizontal axis. Then it is easy to check that 
$$\max_{(i,j) \in I} s(i,j) = 1/3, \quad \text{ and } \quad \max_{(i,j) \in I \setminus \{(1,2)\}} s(i,j) = 1/5.$$
Moreover, the maximal $s(i,j)$ for $(i,j) \in I$ is realized by $(i,j) = (1,2)$. Since $h_{1,2}^4 = h_{2} \cdot h_1^2 h_3=  0$ by \cite[Tables 2-3]{DI10}, we conclude that aside from a finite number of $h_{1,2}$ multiples, the May spectral sequence is generated by classes on or below a line of slope $1/5$. Since $h_{1,2}^4=0$, it follows that if $n>\frac 15 (t-n)+1$ then $\Ext^{n,t,*}(A(1))=0$. Thus we conclude that the $\Ext$-group is trivial when $6n>t+5$.
\end{proof}

Combining the previous proposition with Lemma \ref{Lem:A1}, we obtain the following vanishing region in the $kq$-resolution for $A_1$. 

\begin{cor}\label{cor: vanishing line for A_1}
We have $^{kq}E^{n,t,*}_2(A_1) = 0$ for $6n > t+5$ 
\end{cor}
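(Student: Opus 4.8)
The plan is to deduce the corollary immediately from the two preceding results. First I would invoke Lemma~\ref{Lem:A1}, which identifies the $kq$-resolution for $A_1$ with the motivic $H\f_2$-Adams resolution for $A_1$ and hence gives an isomorphism of trigraded groups $^{kq}E_2^{n,t,u}(A_1) \cong \Ext_{A^\vee}^{n,t,u}(A(1)^\vee)$. The one bookkeeping point worth noting is that the resolution degree, internal degree, and weight are indexed the same way in the $kq$-resolution and in the motivic Adams spectral sequence: both have $E_1$-terms (resp.\ $E_2$-terms) in tridegree $(n,t,u)$ abutting to $\pi_{t-n,u}$, so the identification in Lemma~\ref{Lem:A1} is index-preserving and no regrading is required.

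Second I would apply Proposition~\ref{prop: vanishing line, A(1)}, which via the motivic May spectral sequence --- using that the only generator $h_{i,j}$ of slope $>1/5$ is $h_{1,2}$ (of slope $1/3$) and that $h_{1,2}^4 = 0$ --- gives $\Ext_{A^\vee}^{n,t,*}(A(1)) = 0$ whenever $6n > t+5$. Transporting this vanishing across the isomorphism from the first step yields $^{kq}E_2^{n,t,*}(A_1) = 0$ in exactly the same range. Since the inequality $6n > t+5$ is literally the one appearing in Proposition~\ref{prop: vanishing line, A(1)}, there is no arithmetic to redo.

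The only subtlety --- and essentially the sole place one could slip --- is the passage between comodule and module $\Ext$: Lemma~\ref{Lem:A1} produces $\Ext_{A^\vee}(\m_2, A(1)^\vee)$, whereas Proposition~\ref{prop: vanishing line, A(1)} is stated in terms of $\Ext_A(A(1))$. These agree by the standard duality $\Ext_{A^\vee}^{***}(\m_2, A(1)^\vee) \cong \Ext_A^{***}(A(1),\m_2)$ for modules of finite type over the motivic Steenrod algebra, combined with the change-of-rings isomorphism $\Ext_A^{***}(A(1),\m_2) \cong \Ext_{A(1)}^{***}(\m_2,\m_2)$; I would record this identification in a single sentence. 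No genuine obstacle arises, so the proof is short: cite Lemma~\ref{Lem:A1}, cite Proposition~\ref{prop: vanishing line, A(1)}, and conclude.
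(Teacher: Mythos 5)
Your proposal is correct and is exactly the route the paper takes: the paper deduces the corollary in one line by combining Lemma~\ref{Lem:A1} (identifying $^{kq}E_2(A_1)$ with $\Ext_{A^\vee}^{***}(A(1)^\vee)$) with Proposition~\ref{prop: vanishing line, A(1)}. Your remark about the module/comodule convention is a reasonable clarification of the paper's slightly loose notation, but no genuine issue is being papered over, and the indexing point is handled correctly.
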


Our goal is to establish a $1/5$-vanishing line for $S^{0,0}$. The idea is to begin with the vanishing line for $A_1$, and use various cofiber sequences to boot-strap our way up from $A_1$ to $S^{0,0}$. 



At this point, we would like to produce a ``$v_1$-Bockstein spectral sequence'' on the $kq$-ASS $E_2$-term, since then we would be able to immediately obtain a 1/5-vanishing line for $Y$. In order for this to occur, we would need a short exact sequence on the $\kqE_1$-terms. Indeed, then we would obtain a long exact sequence on $\kqE_2$-terms from which we could produce a genuine $v_1$-Bockstein spectral sequence. Unfortunately, as we will see below, this does not happen. However, there is a work around which we will explain in due course. The following lemma will be useful. 

\begin{lem}
	The Adams spectral sequences for $kq\wedge \overline{kq}^{\wedge n}\wedge Y$ and $kq\wedge \overline{kq}^{\wedge n}\wedge A_1$ collapse at $E_2$.
\end{lem}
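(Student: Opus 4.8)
We treat the two families of spectral sequences in turn.

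For $kq\wedge\overline{kq}^{\wedge n}\wedge A_1$ the argument is short. By Lemma~\ref{Lem:A1} the unit induces an equivalence $kq\wedge A_1\simeq H\f_2$, so $kq\wedge\overline{kq}^{\wedge n}\wedge A_1\simeq H\f_2\wedge\overline{kq}^{\wedge n}$ is a free $H\f_2$-module spectrum. Its mod $2$ motivic homology $A^\vee\otimes_{\m_2}H_{**}(\overline{kq}^{\wedge n})$ is therefore an extended (cofree) $A^\vee$-comodule, so $\Ext^{s,*,*}_{A^\vee}(\m_2,-)$ of it vanishes for $s\geq 1$. Hence the motivic Adams $E_2$-page is concentrated in Adams filtration $0$ and the spectral sequence collapses.

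For $kq\wedge\overline{kq}^{\wedge n}\wedge Y$ I would first identify the $E_2$-page. Smashing the cofiber sequence $\Sigma^{2,1}Y\xrightarrow{v_1}Y\to A_1$ with $kq\wedge\overline{kq}^{\wedge n}$ and using that $v_1$ has positive $H\f_2$-Adams filtration (so $(1\wedge v_1)_*=0$ on mod $2$ homology) gives short exact sequences of $A^\vee$-comodules
\[
0\to H_{**}(kq\wedge\overline{kq}^{\wedge n}\wedge Y)\to H_{**}(kq\wedge\overline{kq}^{\wedge n}\wedge A_1)\to\Sigma^{3,1}H_{**}(kq\wedge\overline{kq}^{\wedge n}\wedge Y)\to 0.
\]
The associated long exact sequence in $\Ext_{A^\vee}(\m_2,-)$, together with the vanishing just proved in the $A_1$-case, shows that the connecting map is an isomorphism $\Ext^{s,t,u}_{A^\vee}(kq\wedge\overline{kq}^{\wedge n}\wedge Y)\xrightarrow{\ \cong\ }\Ext^{s+1,t+3,u+1}_{A^\vee}(kq\wedge\overline{kq}^{\wedge n}\wedge Y)$ for all $s\geq 1$, given by multiplication by $v_1\in\Ext^{1,3,1}_{A^\vee}(\m_2)$. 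Thus the $E_2$-page is, as a module over $\m_2[v_1]$, generated in Adams filtrations $0$ and $1$, and $v_1$ acts via the self-map $1\wedge v_1$ and so is a permanent cycle. In addition, using $H^{**}(kq)\cong A/\kern-0.25em/A(1)$, the computation $H^{**}(Y)\cong A(1)/\kern-0.25em/E(Q_1)$ (immediate from the cofiber sequences defining $Y$, since $Sq^1$ detects $2$ and $Sq^2$ detects $\eta$), the K\"unneth isomorphism, and two change-of-rings plus untwisting isomorphisms, one has $\Ext^{***}_{A^\vee}\bigl(\m_2,H_{**}(kq\wedge\overline{kq}^{\wedge n}\wedge Y)\bigr)\cong\Ext^{***}_{E(Q_1)^\vee}\bigl(\m_2,\overline{(A/\kern-0.25em/A(1))^\vee}{}^{\otimes n}\big\vert_{E(Q_1)^\vee}\bigr)$, where $E(Q_1)^\vee\cong E_{\m_2}(\bar\tau_1)$ is exterior over $\m_2$. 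Since $(A/\kern-0.25em/A(1))^\vee$ is a free $\m_2$-module, the comodule in question is $\tau$-torsion free, and over an exterior $\m_2$-algebra every finite-type $\tau$-torsion free comodule splits as a sum of free and trivial summands; hence this $E_2$-page is $\tau$-torsion free, with the $\m_2[v_1]$-towers governed by $M_*(-;Q_1)$.

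The last step is to rule out differentials. Because the $E_2$-page is $\tau$-torsion free, every class survives complex Betti realization, so any nonzero motivic Adams differential for $kq\wedge\overline{kq}^{\wedge n}\wedge Y$ realizes to a nonzero classical Adams differential for $Re_\c(kq\wedge\overline{kq}^{\wedge n}\wedge Y)=bo\wedge\overline{bo}^{\wedge n}\wedge Y$. It therefore suffices to prove the classical collapse, which is part of Mahowald's analysis of the $bo$-resolution \cite{Mah81}\cite{Mah84}: by the same change-of-rings computation the classical $E_2$-page is a finitely generated free $\f_2[v_1]$-module on classes of bounded Adams filtration, and a $v_1$-linear Adams differential is incompatible with the nontriviality of the $v_1$-self map on $bo\wedge\overline{bo}^{\wedge n}\wedge Y$, which forces the $v_1$-towers to survive. (For $n=0$ one argues directly: $\Ext^{***}_{E(Q_1)^\vee}(\m_2)\cong\m_2[v_1]$ is concentrated in even stems, so no Adams differential is possible.)

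The step I expect to be the main obstacle is this last one — ruling out the $v_1$-linear differentials for $n\geq 1$. Betti realization reduces the problem to the classical case, but one must still control $\Ext_{E(Q_1)}$ of tensor powers of $\overline{H_*bo}$ precisely enough to see that the $v_1$-tower generators occur only in a bounded range of filtrations, and then combine this with the $v_1$-periodicity of $bo\wedge\overline{bo}^{\wedge n}\wedge Y$ to conclude that no differential can truncate a $v_1$-tower.
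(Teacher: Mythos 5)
Your proof of the $A_1$ case is the same as the paper's, and your setup for the $Y$ case — the cofiber sequence argument showing that $v_1$ acts isomorphically in positive filtration, the change of rings to $\Ext_{E(Q_1)^\vee}$, and the identification of the $v_1$-towers via $Q_1$-Margolis homology — all matches the paper's approach. The gap is in your final step. You reduce to the classical case via Betti realization and gesture at Mahowald's $bo$-analysis to rule out differentials, acknowledging you don't see how to close this; and your claim that "over an exterior $\m_2$-algebra every finite-type $\tau$-torsion free comodule splits as a sum of free and trivial summands" is not correct as stated (a two-cell comodule with coaction $\psi(a)=1\otimes a+\tau\,\bar\tau_1\otimes b$, $\psi(b)=1\otimes b$ is a $\tau$-torsion-free counterexample).

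The clean finish — which the paper uses and which you actually noticed for $n=0$ but didn't see generalizes — is a parity argument. The Margolis homology is $M_*(\overline{(A/\kern-0.25em/A(1))^\vee};Q_1)\cong\overline{E}(\oxi_2,\oxi_3,\ldots)$, whose generators $\oxi_i$ all have even stem $2^{i+1}-2$, so the $n$-fold tensor power is also concentrated in even stems; and $v_1$ has even stem. Hence the entire $\m_2[v_1]$-free part of $E_2$ lies in even stems, for every $n\geq 0$, not just $n=0$. An Adams $d_r$ shifts stem by $-1$ and raises filtration by $r\geq 2$, so its target from a $v_1$-free class would lie in an odd stem in positive filtration, where nothing lives. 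The remaining summand $V$ is concentrated in filtration $0$ and killed by $v_1$, so by $v_1$-linearity any $d_r(v)$ would be a $v_1$-torsion element of the $v_1$-free part, hence zero. This avoids the Betti realization detour and the appeal to Mahowald entirely.
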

\begin{proof}
	The statement is immediate for $A_1$ since $kq\wedge A_1\simeq H$, and so the Adams spectral sequences for $kq\wedge \overline{kq}^{\wedge n}\wedge A_1$ are concentrated in Adams filtration 0.
	
	For $Y$, note that we have the change of rings isomorphism 
	\[
	\Ext_{A^\vee }(H_{**}(kq\wedge \overline{kq}^{\wedge n}\wedge Y))\cong \Ext_{E(Q_1)}((\overline{A/\kern-0.25em/A(1)}^{\vee})^{\otimes n}).
	\]
	The latter term is given by 
	\[
	\Ext_{E(Q_1)}((\overline{A/\kern-0.25em/A(1)}^{\vee})^{\otimes n})\cong \left(\m_2[v_1]\otimes M_*((\overline{A/\kern-0.25em/A(1)}^{\vee})^{\otimes n};Q_1)\right)\oplus V
	\]
	where $V$ is a free $\m_2$-module concentrated in Adams filtration 0 and $M_*(-;Q_1)$ denotes Margolis homology with respect to $Q_1$. The formulas for the coaction in Theorem \ref{thm: dual steenrod algebra} show that the action of $Q_1$ on $A/\kern-0.25em/A(1)^{\vee}$ is determined by 
	\[
	Q_1(\otau_k) = \oxi_{k-1}^2
	\]
	for $k\geq 2$ and $Q_i\oxi_k=0$ for all $j$. This shows that 
	\[
	M_*(A/\kern-0.25em/A(1)^\vee; Q_1)\cong E(\oxi_2, \oxi_3, \ldots)
	\]
	and hence that 
	\[
	M_*(\overline{A/\kern-0.25em/A(1)}^\vee;Q_1)\cong \overline{E}(\oxi_2, \oxi_3, \ldots):= E(\oxi_2, \oxi_3, \ldots)/\m_2\{1\}.
	\]
	Since the Margolis homology for $(\overline{A/\kern-0.25em/A(1)}^{\vee})^{\otimes n}$ is torsion free over $\m_2$, we can apply the K\"unneth formula to obtain
	\[
	M_*((\overline{A/\kern-0.25em/A(1)}^{\vee})^{ \otimes n};Q_1)\cong \overline{E}(\oxi_2, \oxi_3, \ldots)^{\otimes n}.
	\]
	Thus the Adams $E_2$-term for $kq\wedge \overline{kq}^{\wedge n}\wedge Y$ consists of a $v_1$-torsion free component concentrated in even stems and a free $\m_2$-module concentrated in Adams filtration 0. Since the Adams differentials are $v_1$-linear, it follows that the Adams spectral sequence must collapse at $E_2$.
	\end{proof}
	
	We record for later use the following immediate corollary from the proof of the previous lemma. 
	
	\begin{cor}\label{cor: Margolis homology}
		The $Q_1$-Margolis homology of $A/\kern-0.25em/A(1)$ and $\overline{A/\kern-0.25em/A(1)}$ are respectively given by 
		\[
		E(\oxi_2, \oxi_3, \ldots)
		\]
		and 
		\[
		\overline{E}(\oxi_2, \oxi_3, \ldots).
		\]
		Here and below we write $\overline{E}(x,y,\ldots)$ to denote the quotient $\m_2$-module $E(x,y,\ldots)/\m_2\{1\}$.
	\end{cor}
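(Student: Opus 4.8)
\emph{Proof plan.} This is exactly the computation that appears inside the proof of the preceding lemma, so the plan is simply to isolate and record it; I sketch the argument here for the reader's convenience. First I would pin down the $Q_1$-action on $(A/\kern-0.25em/A(1))^\vee$. Since $Q_1$ is primitive in $A$ and $(A/\kern-0.25em/A(1))^\vee = H_{**}(kq)$ is a comodule algebra, $Q_1$ acts as an $\m_2$-linear derivation, and extracting the coefficient of $\otau_1$ from the coproduct formulas in Theorem \ref{thm: dual steenrod algebra} gives $Q_1(\otau_k) = \oxi_{k-1}^2$ for $k\ge 2$ (with $\oxi_1$ occurring in the algebra only as $\oxi_1^2$) and $Q_1(\oxi_k)=0$ for all $k$. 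Thus $\bigl((A/\kern-0.25em/A(1))^\vee, Q_1\bigr)$ is the differential graded $\m_2$-algebra $\m_2[\oxi_1^2,\oxi_2,\oxi_3,\ldots,\otau_2,\otau_3,\ldots]/(\otau_i^2=\tau\oxi_{i+1})$ equipped with the differential above.

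Next I would compute the homology of this complex by a Künneth argument. The differential couples $\otau_k$ with $\oxi_{k-1}$, so I would organize $(A/\kern-0.25em/A(1))^\vee$, as a complex of free $\m_2$-modules, into a tensor product $\bigotimes_{k\ge 2} C_k$, where $C_2=\m_2[\oxi_1^2]\{1,\otau_2\}$ with $Q_1(\otau_2)=\oxi_1^2$, and $C_k=\m_2[\oxi_{k-1}]\{1,\otau_k\}$ with $Q_1(\otau_k)=\oxi_{k-1}^2$ for $k\ge 3$. A direct check shows $H_*(C_2;Q_1)=\m_2$ and $H_*(C_k;Q_1)=\m_2[\oxi_{k-1}]/(\oxi_{k-1}^2)\cong E(\oxi_{k-1})$ for $k\ge 3$. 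Since every monomial involves only finitely many generators, $(A/\kern-0.25em/A(1))^\vee$ is a filtered colimit of finite tensor products, so Margolis homology commutes with the tensor product and one obtains $M_*((A/\kern-0.25em/A(1))^\vee;Q_1)\cong \bigotimes_{j\ge 2} E(\oxi_j) = E(\oxi_2,\oxi_3,\ldots)$, as claimed (the cohomological statement for $A/\kern-0.25em/A(1)$ follows by duality). For $\overline{A/\kern-0.25em/A(1)}^\vee=(A/\kern-0.25em/A(1))^\vee/\m_2\{1\}$ I would use that $\m_2\{1\}$ is a $Q_1$-stable $\m_2$-module summand: the resulting short exact sequence of $Q_1$-complexes yields a long exact sequence in Margolis homology in which $M_*(\m_2\{1\};Q_1)=\m_2\{1\}$ maps isomorphically onto the unit summand of $E(\oxi_2,\oxi_3,\ldots)$, so $M_*(\overline{A/\kern-0.25em/A(1)}^\vee;Q_1)\cong E(\oxi_2,\oxi_3,\ldots)/\m_2\{1\}=\overline{E}(\oxi_2,\oxi_3,\ldots)$.

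The one point that genuinely deserves care, and the step I would check most carefully, is the tensor decomposition in the second paragraph: because the relations $\otau_i^2=\tau\oxi_{i+1}$ mix the would-be tensor factors, $(A/\kern-0.25em/A(1))^\vee$ is \emph{not} the tensor product of the $C_k$ as algebras. It is, however, isomorphic to $\bigotimes_{k\ge 2}C_k$ as a complex of $\m_2$-modules, via the ``concatenate the monomials'' map, because $Q_1$ is a derivation and none of the relations are triggered on the relevant monomial basis; verifying that this identification intertwines the Leibniz differential with the tensor-product differential is routine but is where all the bookkeeping goes. Everything else is formal: comparison with the classical $Q_1$-Margolis homology of $H_*(bo)$ provides an independent sanity check after inverting $\tau$.
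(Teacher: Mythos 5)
Your proposal is correct and follows the same route the paper takes: identify the $Q_1$-action on $(A/\kern-0.25em/A(1))^\vee$ from the coaction formulas ($Q_1(\otau_k)=\oxi_{k-1}^2$, $Q_1(\oxi_k)=0$) and read off the Margolis homology. The paper simply asserts the homology computation as ``immediate'' inside the proof of the preceding lemma, whereas you fill in the missing step with the tensor-of-Koszul-complexes decomposition $\bigotimes_{k\ge 2} C_k$ and a Künneth argument; you are also right to flag and dismiss the potential issue with the relations $\otau_i^2=\tau\oxi_{i+1}$, since on the monomial basis with $\epsilon_i\in\{0,1\}$ those relations are never invoked, so the isomorphism of $\m_2$-module complexes holds and $Q_1$ distributes across factors by Leibniz. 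This is a faithful, more explicit version of the argument the authors take for granted.
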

	
	The preceding lemma shows that in order to get short exact sequence on $\kqE_1$-terms, it is enough if we get a SES in $\Ext$-groups. We now investigate whether or not this is the case. At this point, we find that it is convenient to introduce some notation. 
	
	\begin{defin}
		We write $S_j$ for $\overline{kq}^{\wedge j}$. Note that $kq\wedge S_n\wedge X$ gives the $n$th line in the $E_1$-term of the $kq$-resolution for $X$.  We will also often abbreviate  $\Ext_{A^\vee}^{s,t}(\m_2, H_*(kq\wedge S_j\wedge X))$ by $\Ext_{A(1)^{\vee}}(S_j\wedge X)$.
	\end{defin}
	
	Applying homology to the cofiber sequence for $A_1$, obtain the short exact sequence
	\[
	0\to H_{**}(Y)\to H_{**}(A_1)\to H_{**}(\Sigma^{3,1}Y)\to 0
	\]
	since $v_1$ has Adams filtration 1. This induces a long exact sequence
	\[
	0\to \Ext^{0,t,w}_{A(1)^\vee}(S_n\wedge Y)\to \Ext^{0,t,w}_{A(1)^\vee}(S_n\wedge A_1)\to \Ext^{0,t,w}_{A(1)^\vee}(S_n\wedge \Sigma^{3,1}Y)\to \Ext^{1,t,w}_{A(1)^\vee}(S_n\wedge Y)\to \cdots .
	\]
	In this case, the connecting homomorphism is multiplication by $v_1$. Since $\Ext_{A(1)^\vee}^{s,t,w}(S_n\wedge A_1)=0$ if $s>0$, multiplication by $v_1$ is an isomorphism in this region. Thus we have a short exact sequence 
	\[
	0\to \Ext^{0,t,w}_{A(1)^\vee}(S_n\wedge Y)\to \Ext_{A(1)^\vee}^{0,t,w}(S_n\wedge A_1)\to \Ext_{A(1)^\vee}^{0,t,w}(S_n\wedge \Sigma^{3,1}Y)[v_1^{\infty}]\to 0
	\]
	where the last term denotes the $v_1$-torsion in $\Ext_{A(1)^\vee}^{0,t,w}(S_n\wedge \Sigma^{3,1}Y)$. The second map in this short exact sequence corresponds to the connecting homomorphism 
	\[
	\partial: \kqE_1^{n,t,w}(A_1)\to \kqE_1^{n,t-1,w}(\Sigma^{2,1}Y).
	\]
	This means, unfortunately, that we do not obtain a short exact sequence on $\kqE_1$-terms in general. 
	
	However, we do get a short exact sequence in a region. From Corollary \ref{cor: Margolis homology}, we find that the lowest degree term in the Margolis homology of $(\overline{A/\kern-0.25em/A(1)}^\vee)^{ \otimes n}$ is $\oxi_2^{\otimes n}$. The degree of $\oxi_2$ is 6, and hence the first non-zero term in $M_*((\overline{A/\kern-0.25em/A(1)}^\vee)^{ \otimes n};Q_1)$ is in degree $6n$. Thus we have the following. 
	
	\begin{prop}
		If $t<6n$, then $\Ext^{***}_{A(1)^\vee}(S_n\wedge Y)$ consists only of $v_1$-torsion concentrated in Adams filtration 0. Consequently we have isomorphisms
		\[
		\kqE_1^{n,t,w}\cong \Ext^{0,t,w}_{A(1)^\vee}(S_n\wedge Y).
		\]
		Moreover, this is an identification of cochain complexes with respect to the $d_1$-differential. 
	\end{prop}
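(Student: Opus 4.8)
The plan is to read everything off the structural computation carried out in the proof of the collapse lemma above, together with the collapse of the Adams spectral sequence for $kq\wedge \overline{kq}^{\wedge n}\wedge Y$. First I would recall from that lemma, Corollary~\ref{cor: Margolis homology}, and the discussion preceding the statement that, after the change-of-rings isomorphism
\[
\Ext_{A^\vee}\bigl(H_{**}(kq\wedge \overline{kq}^{\wedge n}\wedge Y)\bigr)\cong \Ext_{E(Q_1)}\bigl((\overline{A/\kern-0.25em/A(1)}^\vee)^{\otimes n}\bigr),
\]
the group $\Ext^{***}_{A(1)^\vee}(S_n\wedge Y)$ splits as $\bigl(\m_2[v_1]\otimes \overline{E}(\oxi_2,\oxi_3,\ldots)^{\otimes n}\bigr)\oplus V_n$, with $V_n$ a free $\m_2$-module consisting of $v_1$-torsion in Adams filtration $0$. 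Since $|\oxi_2|=(6,3)$, every $\m_2$-basis element of $\overline{E}(\oxi_2,\oxi_3,\ldots)^{\otimes n}$ has internal degree at least $6n$, the minimum $6n$ being realised by $\oxi_2^{\otimes n}$. Hence a class $v_1^k m$ in the $v_1$-torsion-free summand sits in Adams filtration $k$ and in topological stem $2k+|m|\geq |m|\geq 6n$. Consequently, for $t<6n$ the $v_1$-torsion-free summand contributes nothing, so $\Ext^{***}_{A(1)^\vee}(S_n\wedge Y)$ agrees with $V_n$ there; in particular it is $v_1$-torsion concentrated in Adams filtration $0$, and $\Ext^{s,t,w}_{A(1)^\vee}(S_n\wedge Y)=0$ for all $s>0$ as soon as $t<6n$. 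This is the first assertion.

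For the isomorphism $\kqE_1^{n,t,w}\cong \Ext^{0,t,w}_{A(1)^\vee}(S_n\wedge Y)$ I would use that the motivic Adams spectral sequence $\Ext^{s,t',w}_{A(1)^\vee}(S_n\wedge Y)\Rightarrow \pi_{t'-s,w}(kq\wedge \overline{kq}^{\wedge n}\wedge Y)$ converges and, by the collapse lemma above, degenerates at $E_2$. In a stem $t<6n$ there are no classes of positive Adams filtration, by the count just made, and a class in $\Ext^{0,t,w}$ can neither support a differential (its $d_r$-target would lie in positive filtration in stem $t-1<6n$) nor receive one (negative filtration). Hence $\pi_{t,w}(kq\wedge \overline{kq}^{\wedge n}\wedge Y)$ is concentrated in Adams filtration $0$ when $t<6n$, there is no extension problem, and the filtration-$0$ edge homomorphism yields $\kqE_1^{n,t,w}(Y)=\pi_{t,w}(kq\wedge \overline{kq}^{\wedge n}\wedge Y)\cong \Ext^{0,t,w}_{A(1)^\vee}(S_n\wedge Y)$.

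Finally, for the identification of cochain complexes: after absorbing the suspension bookkeeping of the canonical $kq$-Adams resolution, the differential $d_1\colon \kqE_1^{n,t,w}(Y)\to \kqE_1^{n+1,t,w}(Y)$ is induced by a single degree-preserving map of spectra $f_n\colon kq\wedge \overline{kq}^{\wedge n}\wedge Y\to kq\wedge \overline{kq}^{\wedge(n+1)}\wedge Y$, namely the composite of the resolution's connecting map with a unit map. On $H_{**}(-)$ the map $f_n$ is a morphism of $A^\vee$-comodules, hence induces, after change of rings, a map $\Ext^{0,t,w}_{A(1)^\vee}(S_n\wedge Y)\to \Ext^{0,t,w}_{A(1)^\vee}(S_{n+1}\wedge Y)$, and this family of maps is the differential of the complex $\Ext^{0,*,w}_{A(1)^\vee}(S_\bullet\wedge Y)$. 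Since $t<6n$ forces $t<6(n+1)$ as well, both source and target of $d_1$ lie in the range where the filtration-$0$ edge homomorphism is an isomorphism; as that edge homomorphism is natural in the spectrum, the square comparing $d_1$ with $f_{n,*}$ on $\Ext^0$ commutes, which is the asserted identification. The step I expect to require the most care is precisely this last one: one must verify that the edge-homomorphism identification of $\pi_{**}$ with $\Ext^0$ in the good range genuinely intertwines the topological $d_1$ with the $\Ext^0$-level differential, rather than merely being a degreewise isomorphism of groups.
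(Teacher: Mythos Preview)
Your proof is correct and takes essentially the same approach as the paper: the paper does not give a separate proof but places the key observation immediately before the proposition, namely that the lowest-degree class in $M_*((\overline{A/\kern-0.25em/A(1)}^\vee)^{\otimes n};Q_1)$ is $\oxi_2^{\otimes n}$ in degree $6n$, and this together with the collapse lemma yields the statement. You have filled in the details the paper leaves implicit, particularly the edge-homomorphism argument for the second assertion and the naturality check for the cochain-complex identification; your caution about that last step is warranted, since the paper simply asserts it.
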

	
	\begin{rem2}
		This observation is similar to the one found in \cite[Obs. 1.3]{BBBCX17}.
	\end{rem2}
	
	This proposition then tells us that we do in fact have the following short exact sequence in the region $t<6n$:
	\[
	0\to \kqE_1^{n,t,w}(Y)\to \kqE_1^{n,t,w}(A_1)\to \kqE_1^{n,t-1,w}(\Sigma^{2,1}Y)\to 0.
	\]
	This does lead to a long exact sequence on $E_2$-terms 
	\[
	\cdots \to \kqE_2^{n,t,w}(Y)\to\kqE_2^{n,t,w}(A_1)\to \kqE_2^{n,t-1,w}(\Sigma^{2,1}Y)\to \kqE_2^{n+1,t,w}(Y)\to \cdots . 
	\]
	
	Now in the region $t+5<6n$ we have already established that $\kqE_2^{n,t,w}(A_1)=0$ (Corollary \ref{cor: vanishing line for A_1}). Thus, so long as $t+5<6n$ we have 
	\[
	\kqE_2^{n,t,w}(Y)\cong \kqE_2^{n+1,t-1,w}(\Sigma^{2,1}Y)\cong \kqE_2^{n+1,t-3,w-1}(Y).
	\]
	Since if $t+5<6n$ then $(t-3)+5<6(n+1)$, we can iterate to get a string of isomorphism
	\[
	\kqE_2^{n,t,w}(Y)\cong \kqE_2^{n+j,t-3j,w-j}(Y).
	\]
	For $j\gg 0$, we will have that 
	\[
	t-3j< 4(n+j)
	\]
	and hence reside in the na\"ive vanishing region. Thus we have shown the following proposition.


\begin{prop}\label{prop: vanishing line for Y}
	The $E_2$-term of $kq$-based Adams spectral sequence for $Y$ has a vanishing line of slope 1/5. In particular, if $t+5<6n$, then $\mbox{}^{kq}E_2^{n, t, w}(Y)=0$. 
\end{prop}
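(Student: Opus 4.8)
The plan is to bootstrap from the vanishing line for $A_1$ (Corollary~\ref{cor: vanishing line for A_1}) to the one for $Y$ by iterating the long exact sequence on $\kqE_2$-terms constructed above, landing eventually in the naïve $1/3$-vanishing region, where the groups vanish for connectivity reasons. No new computation of $kq$-cooperations or of differentials is needed; everything is formal given the ingredients already assembled.

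I would first record those ingredients in the forms needed. First, $\kqE_2^{n,t,*}(A_1)=0$ whenever $6n>t+5$ (Corollary~\ref{cor: vanishing line for A_1}). Second, in the range $t<6n$ there is the short exact sequence of $d_1$-cochain complexes
\[
0\to \kqE_1^{n,t,w}(Y)\to \kqE_1^{n,t,w}(A_1)\to \kqE_1^{n,t-1,w}(\Sigma^{2,1}Y)\to 0
\]
established above --- this is where the $Q_1$-Margolis homology computation of Corollary~\ref{cor: Margolis homology}, which forces $\kqE_1^{n,*,*}(Y)$ to be $v_1$-torsion in Adams filtration $0$ in that range and kills the intervening $v_1$-maps, gets used --- and hence the associated long exact sequence
\[
\cdots\to \kqE_2^{n,t,w}(Y)\to \kqE_2^{n,t,w}(A_1)\to \kqE_2^{n,t-1,w}(\Sigma^{2,1}Y)\to \kqE_2^{n+1,t,w}(Y)\to\cdots
\]
in the same range. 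Third, since $kq\wedge\overline{kq}^{\wedge n}\wedge Y$ is $(4n-1)$-connected, $\kqE_2^{n,t,*}(Y)=0$ whenever $t<4n$ (by the same connectivity argument as Lemma~\ref{Lem:OneThird}).

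The argument then runs as follows. Fix $(n,t,w)$ with $t+5<6n$; then $t<6n$, so the long exact sequence applies, and $\kqE_2^{n,t,w}(A_1)=0=\kqE_2^{n+1,t,w}(A_1)$ since $6n>t+5$ and $6(n+1)>6n>t+5$. The connecting maps therefore give an isomorphism
\[
\kqE_2^{n,t,w}(Y)\;\cong\;\kqE_2^{n+1,t-1,w}(\Sigma^{2,1}Y)\;\cong\;\kqE_2^{n+1,t-3,w-1}(Y).
\]
Because $t+5<6n$ forces $(t-3)+5<6(n+1)$, this hypothesis is preserved under the operation, so iterating gives $\kqE_2^{n,t,w}(Y)\cong\kqE_2^{n+j,t-3j,w-j}(Y)$ for all $j\ge 0$. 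Taking $j$ large enough that $t-3j<4(n+j)$, the right-hand side vanishes by the naïve line, hence $\kqE_2^{n,t,w}(Y)=0$, which is exactly vanishing in the region $t+5<6n$. The one place that demands care is the bookkeeping: checking that the range $t<6n$ --- in which the short exact sequence of complexes, the long exact sequence, and the $A_1$-vanishing are all simultaneously available --- is preserved at every step of the iteration, and tracking the motivic weight through the $\Sigma^{3,1}$ shifts. This is the $kq$-theoretic substitute for Mahowald's passage from $A_1$ up to $S^0/2$ and ultimately $S^0$, with the $v_1$-Bockstein spectral sequence replaced by the long exact sequence above, which is only well-behaved in the stated range.
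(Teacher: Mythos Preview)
Your proposal is correct and follows essentially the same route as the paper: use the short exact sequence of $d_1$-complexes available in the range $t<6n$, pass to the long exact sequence on $E_2$, use the $A_1$-vanishing of Corollary~\ref{cor: vanishing line for A_1} to obtain the iterated isomorphism $\kqE_2^{n,t,w}(Y)\cong\kqE_2^{n+j,t-3j,w-j}(Y)$, and then push into the na\"ive $1/3$-vanishing region. Your bookkeeping (checking that both surrounding $A_1$-terms vanish and that the hypothesis $t+5<6n$ persists under the shift $(n,t)\mapsto(n+1,t-3)$) is exactly what the paper does.
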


Recall next that there is a cofiber sequence 
\begin{equation}\label{eqn: cofiber sequence M to Y}
	\begin{tikzcd}
	\Sigma^{1,1}M\arrow[r,"\eta"] & M \arrow[r] & Y.
\end{tikzcd}
\end{equation}
We will mimic the the preceding argument for this cofibre sequence. In particular, we will see that there is a ``$\eta$-Bockstein spectral sequence'' in a region, similar to the above situation. 
%

Observe that applying $\mbox{}^{kq}E_1^{n,*,*}$ to the above cofiber sequence gives a long exact sequence
\begin{equation}\label{eqn: LES Y to M}
\begin{tikzcd}
	\cdots \arrow[r] & \mbox{}^{kq}E_1^{n,t-1,w-1}(M)\arrow[r,"\cdot \eta"] & \mbox{}^{kq}E_1^{n,t,w}(M)\arrow[r] & \mbox{}^{kq}E_1^{n,t,w}(Y) \\
	\arrow[r,"\partial"] & \mbox{}^{kq}E_1^{n,t-2,w-1}(M)\arrow[r,"\cdot \eta"] & \mbox{}^{kq}E_1^{n,t-1, w}(M)\arrow[r] & \cdots . 
\end{tikzcd}
\end{equation}
Whenever multiplication by $\eta$ is 0, this long exact sequence breaks up into short exact sequences which give rise to long exact sequences on $\mbox{}^{kq}E_2$. The following lemma says that this occurs above a line of slope $1/5$.
\begin{lem}
	In $\mbox{}^{kq}E_1^{n, t, w}(M)$, multiplication by $\eta$ is trivial so long as $t<6n-4$.
\end{lem}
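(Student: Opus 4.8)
The plan is to run the $\eta$-analogue of the $v_1$-Bockstein argument just given for $Y$, using the cofiber sequence \eqref{eqn: cofiber sequence M to Y}, the long exact sequence \eqref{eqn: LES Y to M}, and the description of $\kqE_1^{n,*,*}(Y)$ in low stems. Recall first that, by the Proposition preceding Proposition \ref{prop: vanishing line for Y}, for $t<6n$ the group $\kqE_1^{n,t,w}(Y)$ is concentrated in Adams filtration $0$; hence $\eta$ already acts trivially on $\kqE_1^{n,t,w}(Y)$ once $t<6n-1$, since $\eta$-multiplication raises Adams filtration and the target then lies in a vanishing region. The goal is to promote this to the statement that $\eta$ acts trivially on $\kqE_1^{n,t,w}(M)$ for $t<6n-4$, the extra room being needed because $\kqE_1^{n,*,*}(M)$, unlike $\kqE_1^{n,*,*}(Y)$, is \emph{not} concentrated in Adams filtration $0$ in this range: it carries the residual $h_0$- and finite $h_1$-towers.

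Concretely, I would reduce to a statement about $h_1$-multiplication on the motivic Adams $E_2$-page $\Ext_{A(1)^\vee}(S_n\wedge M)$; the $\eta$-module structure of $\kqE_1^{n,*,*}(M)=\pi_{**}(kq\wedge\overline{kq}^{\wedge n}\wedge M)$ is governed by the $h_1$-module structure there, with the usual hidden extensions pinned down by Betti realization as in Section \ref{Section:Analysis}, so it suffices to show $h_1$ annihilates $\Ext_{A(1)^\vee}(S_n\wedge M)$ in stems $<6n-4$. Applying $H_{**}(-)$ to the cofiber sequences $\Sigma^{1,1}M\xrightarrow{\eta}M\to Y$ and $S^{0,0}\xrightarrow{2}S^{0,0}\to M$ yields short exact sequences of $A^\vee$-comodules $0\to H_{**}(M)\to H_{**}(Y)\to\Sigma^{2,1}H_{**}(M)\to0$ and $0\to\m_2\to H_{**}(M)\to\Sigma^{1,0}\m_2\to0$, classified by $h_1$ and by $h_0$ respectively; tensoring with $\overline{A/\kern-0.25em/A(1)}^{\vee\otimes n}$ and applying $\Ext_{A(1)^\vee}$ gives two long exact sequences with connecting maps given by $h_1$- and $h_0$-multiplication. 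The first, together with $\Ext^{\ge1}_{A(1)^\vee}(S_n\wedge Y)=0$ for $t<6n$, shows that in that range every $h_1$-tower of $\Ext_{A(1)^\vee}(S_n\wedge M)$ has its bottom already in $\Ext^0_{A(1)^\vee}(S_n\wedge M)=\kqE_1^{n,*,*}(M)$; the second, together with the relation $h_0h_1=0$, reduces the location of the $h_1$-torsion free classes of $\Ext_{A(1)^\vee}(S_n\wedge M)$ to that of the $h_1$-torsion free classes of $\Ext_{A(1)^\vee}(H\uz_I)$ for multi-indices $I\in\mathcal{I}_n$.

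The final step is to locate those classes using Section \ref{Section:Analysis}. Writing $\overline{A/\kern-0.25em/A(1)}^{\vee\otimes n}\cong\bigoplus_{I\in\mathcal{I}_n}\Sigma^{|I|,|I|/2}H\uz_I$ with $|I|=4(i_1+\cdots+i_n)\ge4n$, the remark following Lemma \ref{Lem:ExtOfOtimes} shows the $v_1$-torsion summands of $\Ext_{A(1)^\vee}(H\uz_I)$ are $h_1$-torsion, and Theorem \ref{Lem:ExtOfHZn} together with the explicit form of the groups $Z_j$ in Definition \ref{Def:Zi} shows that the $h_1$-torsion free classes of $\Ext_{A(1)^\vee}(H\uz_I)$ — and likewise the surviving finite $h_1$-towers — occur only in stems $\ge 2n$ above the bottom of $\Sigma^{|I|}H\uz_I$, hence only in stems $\ge 4n+2n=6n$ of $\Ext_{A(1)^\vee}(S_n\wedge M)$. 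This yields the claim with room to spare; the stated bound $6n-4$ leaves a margin that will be convenient later and that also absorbs the mild extra shifts introduced by the $h_0$-extension above.

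The main obstacle is precisely this last step. For $A_1$ and $Y$ one has $kq\wedge A_1\simeq H\f_2$ and, after change of rings to $E(Q_1)$, the $\Ext$-groups for $Y$ reduce to $Q_1$-Margolis homology and carry no $h_1$-towers; but $M$ admits no $v_1$-self-map, $\kqE_1^{n,*,*}(M)$ genuinely carries $h_1$-towers, and pinpointing them forces one through the full inductive calculation of the $\Ext_{A(1)^\vee}(H\uz_I)$ via the short exact sequences of Lemma \ref{Lem:SES}. A secondary difficulty, exactly as in Theorem \ref{Prop:kqDifferentials}, is that Betti realization detects only $\tau$-torsion free classes, so one must track motivic weights via Corollary \ref{Cor:Half} to be sure that no $\tau$-torsion class — in particular no hidden $\eta$-extension — lies below stem $6n-4$.
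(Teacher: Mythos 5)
Your high-level starting point is off: you try to run an ``$\eta$-analogue of the $v_1$-Bockstein argument,'' using the cofiber sequence $\Sigma^{1,1}M \xrightarrow{\eta} M \to Y$ and the vanishing of $\Ext^{\geq 1}_{A(1)^\vee}(S_n\wedge Y)$. But the lemma you are proving is exactly the statement that \emph{makes} the $\eta$-Bockstein argument for $M$ possible (it is what splits the long exact sequence \eqref{eqn: LES Y to M} into short exact sequences), so that sequence cannot be the engine of the proof. What you extract from it — that ``every $h_1$-tower of $\Ext_{A(1)^\vee}(S_n\wedge M)$ has its bottom already in $\Ext^0$'' — is a divisibility statement, not a triviality statement, and it does not feed into your later steps in any load-bearing way. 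The paper instead works entirely through the single cofiber sequence $S^{0,0}\xrightarrow{2}S^{0,0}\to M$: it uses the decomposition $\kqE_1^{n,*,*}(S^{0,0})\cong\bigoplus_{\ell(I)=n}\Ext_{A(1)^\vee}(\Sigma^{|I|}H\uz_I)$, shows via Lemma \ref{Lem:ExtOfOtimes} and Theorem \ref{Lem:ExtOfHZn} that below an explicit stem $\kappa(I)$ and in positive cohomological degree these $\Ext$-groups consist only of $h_0$-towers (plus filtration-zero $v_1$-torsion), so $h_0$-multiplication is injective there; the resulting short exact sequence then exhibits $\Ext_{A(1)^\vee}(A(0)^\vee\otimes H\uz_I)$ as a subquotient on which $h_1$ acts trivially because $h_0h_1=0$, and the range is then relaxed to $s\ge 0$ and translated to $\kqE_1^{n,*,*}(M)$ via the collapse of the Adams spectral sequence.

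Your second and third paragraphs do eventually pivot to the $2$-Moore short exact sequence and the location of $h_1$-free classes in $\Ext_{A(1)^\vee}(H\uz_I)$, which is the paper's actual mechanism. But there are two genuine gaps there. First, the bound you assert — that the $h_1$-torsion free classes ``occur only in stems $\ge 2n$ above the bottom of $\Sigma^{|I|}H\uz_I$, hence only in stems $\ge 6n$'' — is not what Lemma \ref{Lem:ExtOfOtimes} and Theorem \ref{Lem:ExtOfHZn} give you. The correct threshold depends on $\alpha(I)=\sum_j\alpha(i_j)$, not just on the length $n$ of $I$; the paper records this as $\kappa(I)$ and then has to verify that the minimum of $\kappa(I)$ over length-$n$ multi-indices is large enough, rather than just invoking $|I|\ge 4n$ as you do. You have collapsed an $I$-dependent estimate into an $n$-dependent one without justification, and for multi-indices with large $\alpha(I)$ this is not obviously sound. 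Second, you assert that the $v_1$-torsion summands of $\Ext_{A(1)^\vee}(H\uz_I)$ are $h_1$-torsion by citing the remark after Lemma \ref{Lem:ExtOfOtimes}, but that remark is a methodological comment about working modulo $v_1$-torsion, not a computation; the paper instead notes that simple $v_0$-torsion classes in filtration zero are simple $\eta$-torsion, which is a separate small argument you would still need. In short, the core algebraic input is the same, but the first half of your outline is a logical detour and the quantitative part of the second half is stated at too coarse a level of precision for the bound $t<6n-4$ to follow.
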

\begin{proof}
	Consider the cofiber sequence for the mod 2 Moore spectrum. This induces a long exact sequence 
	\[
	\cdots \to \kqE_1^{n,t,w}(S^0)\to \kqE_1^{n,t,w}(S^0)\to \kqE_1^{n,t,w}(M)\to \kqE_1^{n,t-1,w}(S^0)\to \cdots .
	\]
	Recall from Section \ref{Section:Analysis} that 
	\[
	\kqE_1^{n,*,*}(S^0)\cong \bigoplus_{\ell(I)=n}\Ext_{A(1)^\vee}^{*,*}(\Sigma^{4|I|}H\underline{Z}_I)
	\]
	where the sum runs over multi-indices $I$ of positive integers whose length is $n$, and where
	\[
	H\ul{\z}_I:= H\ul{\z}_{i_1}\otimes \cdots \otimes H\ul{\z}_{i_n}.
	\]
	It follows from Lemma \ref{Lem:ExtOfOtimes} and Theorem \ref{Lem:ExtOfHZn} that when 
	\[
	t-s+4< \kappa(I):= \begin{cases}
 	4|I|+2|I|-\alpha(I) & |I| \equiv 0 \mod 2,\\
 	4|I|+2|I|-\alpha(I) - 1 & |I|\equiv 1\mod 2,
 \end{cases}
	\]
	where 
	\[
	\alpha(I) = \sum_{j=1}^n\alpha(i_j)
	\]
	that 
	\[
	\Ext_{A(1)^\vee}^{s,t,w}(\Sigma^{4|I|}H\ul{\z}_I)
	\]
	consists only of $h_0$-towers and $v_1$-torsion classes in Adams filtration zero. Thus multiplication by 2 is injective in this region. Hence the long exact sequence above becomes short exact sequence
	\[
	0\to \Ext_{A(1)^\vee}^{s,t,w}(\Sigma^{4|I|}H\ul{\z}_I)\to \Ext_{A(1)^\vee}^{s,t,w}(\Sigma^{4|I|}H\ul{\z}_I)\to \Ext^{s,t,w}_{A(1)^\vee}(A(0)\otimes \Sigma^{4|I|}H\ul{\z}_I)\to 0
	\]
	for $s>0$. This shows that for $s>0$ and $t-s<6|I|-\alpha(I)$, the modules $\Ext_{A(1)^\vee}^{s,t,w}(A(0)\otimes H\ul{\z}_I)$ consist only of simple $\eta$-torsion. Since the simple $v_0$-torsion classes are also simple $\eta$-torsion, we can actually relax the condition so that $s\geq 0$. 
	
	Since the Adams spectral sequence computing $\kqE_1^{n, *, *}(M)$ collapses at $E_2$, we conclude from the above, that $\kqE_1^{n, t, w}(M)$ consists only of simple $\eta$-torsion whenever 
	\[
	t<\min\{\kappa(I)\mid \ell(I)=n\}.
	\]
	Note that the lowest norm multi-index is 
	\[
	I = (\underbrace{1, 1, \ldots, 1}_{\text{$n$ 1s}}).
	\]
	It follows from Lemma \ref{Lem:ExtOfOtimes} that 
	\[
	6n<\min\{\kappa(I)\mid \ell(I)=n\}.
	\]
	This proves the lemma. 
\end{proof}

We can now obtain a vanishing line for $M$. 

\begin{prop}
	For $t<6n-4$, we have short exact sequences
	\[
	0\to \kqE_1^{n,t,w}(M)\to \kqE_1^{n,t,w}(Y)\to \kqE_1^{n,t-2, w-1}(M)\to 0.
	\]
\end{prop}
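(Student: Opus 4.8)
The plan is to read the short exact sequences directly off of the long exact sequence \eqref{eqn: LES Y to M}, using the preceding lemma to annihilate the connecting $\eta$-multiplications in the stated range. Recall that \eqref{eqn: LES Y to M} arises by applying the homology theory $\kqE_1^{n,*,*}(-)=\pi_{*,*}(kq\wedge\overline{kq}^{\wedge n}\wedge -)$ to the cofiber sequence \eqref{eqn: cofiber sequence M to Y}; since $Y=\mathrm{cofib}(\Sigma^{1,1}M\xrightarrow{\tilde{\eta}}M)$, near the term $\kqE_1^{n,t,w}(Y)$ the long exact sequence reads
\[
\kqE_1^{n,t-1,w-1}(M)\xrightarrow{\cdot\eta}\kqE_1^{n,t,w}(M)\to \kqE_1^{n,t,w}(Y)\xrightarrow{\partial}\kqE_1^{n,t-2,w-1}(M)\xrightarrow{\cdot\eta}\kqE_1^{n,t-1,w}(M).
\]

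The next step is to invoke the preceding lemma, which asserts that multiplication by $\eta$ on $\kqE_1^{n,s,v}(M)$ is trivial whenever $s<6n-4$. Applying it with $s=t-1$ kills the left-hand $\eta$-map, and applying it with $s=t-2$ kills the right-hand $\eta$-map; both applications are legitimate because $t<6n-4$ forces $t-1<6n-4$ and $t-2<6n-4$. Exactness at $\kqE_1^{n,t,w}(M)$ then shows that $\kqE_1^{n,t,w}(M)\to\kqE_1^{n,t,w}(Y)$ is injective, and exactness at $\kqE_1^{n,t-2,w-1}(M)$ shows that $\partial$ is surjective, so the four-term exact sequence collapses to the asserted short exact sequence $0\to\kqE_1^{n,t,w}(M)\to\kqE_1^{n,t,w}(Y)\to\kqE_1^{n,t-2,w-1}(M)\to 0$.

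There is essentially no obstacle here: the entire content is already contained in the preceding lemma (which itself rests on the $Q_1$-Margolis homology computation of Corollary \ref{cor: Margolis homology} together with the $\Ext$ computations of Lemma \ref{Lem:ExtOfOtimes} and Theorem \ref{Lem:ExtOfHZn}). The only point requiring care is the tri-degree bookkeeping for the connecting maps induced by the suspension $\Sigma^{1,1}M$ in \eqref{eqn: cofiber sequence M to Y} — one must confirm that the shifts flanking $\kqE_1^{n,t,w}(Y)$ are by $(-1,-1)$ and $(-2,-1)$ as written, and that the lemma's bound $s<6n-4$ is met at both $\eta$-maps; as noted above, this is automatic from the hypothesis $t<6n-4$.
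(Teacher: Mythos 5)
Your proof is correct and is essentially the same as the paper's: apply $\kqE_1^{n,*,*}(-)$ to the cofiber sequence $\Sigma^{1,1}M \xrightarrow{\tilde{\eta}} M \to Y$, then use the preceding lemma (triviality of $\eta$-multiplication on $\kqE_1^{n,s,v}(M)$ for $s < 6n-4$) at $s = t-1$ and $s = t-2$ to kill the two flanking $\eta$-maps, from which the long exact sequence collapses to the asserted short exact sequence. The paper states this more tersely, but your degree-bookkeeping — both applications of the lemma are licensed since $t < 6n-4$ implies $t-1, t-2 < 6n-4$ — is exactly the intended argument.
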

\begin{proof}
	From the cofiber sequence \eqref{eqn: cofiber sequence M to Y}, we have the long exact sequence \eqref{eqn: LES Y to M}. In the case $t+4<6n$, it follows from the previous lemma that multiplication by $\eta$ is trivial. The result follows. 
\end{proof}

Since the maps in the short exact sequence of the proposition commute with the $d_1$-differential, we obtain the following corollary. 

\begin{cor}
	For $t<6n-4$, we have a short exact sequence of cochain complexes
	\[
		0\to \kqE_1^{n,t,w}(M)\to \kqE_1^{n,t,w}(Y)\to \kqE_1^{n,t-2, w-1}(M)\to 0
	\]
	where the differential is the Adams $d_1$-differential. Thus, in this region, we get a long exact sequence
	\begin{equation}\label{eqn: LES on E2}
	\cdots \to \kqE_2^{n,t,w}(M)\to \kqE_2^{n,t,w}(Y)\to \kqE_2^{n,t-2,w-1}(M)\to \kqE_2^{n+1, t,w}(M)\to \cdots. 
	\end{equation}
	Moreover, the connecting homomorphism detects multiplication by $\eta$ in $\pi_{**}$. 
\end{cor}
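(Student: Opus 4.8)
The corollary bundles two assertions: the existence of the long exact sequence \eqref{eqn: LES on E2}, and the fact that its connecting homomorphism detects $\eta$-multiplication. The plan for the first assertion is to note that the cofiber sequence $\Sigma^{1,1}M \xrightarrow{\tilde{\eta}} M \xrightarrow{i} Y \xrightarrow{\partial} \Sigma^{2,1}M$ is preserved by the canonical $kq$-Adams resolution, which is functorial in the spectrum being resolved; hence after smashing with $kq \wedge \overline{kq}^{\wedge n}$ and applying $\pi_{**}$, the maps $i_*$ and $\partial_*$ on $\kqE_1$-terms commute with the $d_1$-differential, since $d_1$ is induced by the (natural) connecting maps of the Adams tower. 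The preceding proposition already upgrades the long exact sequences of $\kqE_1$-terms to short exact sequences
\[
0 \to \kqE_1^{n,t,w}(M) \to \kqE_1^{n,t,w}(Y) \to \kqE_1^{n,t-2,w-1}(M) \to 0
\]
valid for $t < 6n-4$, and by the previous point these are short exact sequences of cochain complexes in the variable $n$. Passing to $d_1$-cohomology, which by definition computes the $E_2$-page, then produces \eqref{eqn: LES on E2} by the usual long exact sequence in cohomology of a short exact sequence of cochain complexes.

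For the second assertion the plan is to rotate the cofiber sequence to $M \xrightarrow{i} Y \xrightarrow{\partial} \Sigma^{2,1}M \xrightarrow{\Sigma\tilde{\eta}} \Sigma^{1,0}M$ and invoke the geometric boundary theorem: the snake-lemma connecting homomorphism $\delta : \kqE_2^{n,t,w}(\Sigma^{2,1}M) \cong \kqE_2^{n,t-2,w-1}(M) \to \kqE_2^{n+1,t,w}(M)$ of the short exact sequence of complexes above is the algebraic shadow of the boundary map $\Sigma\tilde{\eta}$, hence detects multiplication by $\tilde{\eta}$ on homotopy groups. Concretely one traces the snake: a $d_1$-cycle representing a class in $\kqE_2^{n,t-2,w-1}(M) \cong \kqE_2^{n,t,w}(\Sigma^{2,1}M)$ lifts along $\partial_*$ to $\kqE_1^{n,t,w}(Y)$, one applies $d_1$, and pulls the result back along $i_*$ to $\kqE_1^{n+1,t,w}(M)$; the resulting class is, modulo higher $kq$-filtration, the $kq$-resolution representative of $\eta$ times the original class. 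Since $\tilde{\eta}$ is by construction the unique lift of $\eta \in \pi_{1,1}(S^{0,0})$ to a self-map of $M$, multiplication by $\tilde{\eta}$ on $\pi_{**}(M)$ agrees with the $\pi_{**}(S^{0,0})$-module action of $\eta$, completing the argument.

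The step I expect to demand the most care is the interaction between the region restriction $t < 6n-4$ and the homological algebra. For a fixed internal degree $(t,w)$ this inequality cuts out a set of filtrations $n$ that is closed upward, hence closed under $n \mapsto n+1$, so the $d_1$-differential never leaves the region and the snake lemma applies without reference to terms outside it; nevertheless one should check explicitly that all four terms in each segment of \eqref{eqn: LES on E2} lie in the valid range. The remaining bookkeeping burden is keeping the suspensions $\Sigma^{1,1}$, $\Sigma^{2,1}$, $\Sigma^{1,0}$ and the weight shift straight, so that the bidegree $(1,1)$ of $\eta$ is accounted for in the passage from $\kqE_2^{n,t-2,w-1}(M)$ to $\kqE_2^{n+1,t,w}(M)$; once this is in place the identification of $\delta$ with $\eta$-multiplication is formal rather than computational.
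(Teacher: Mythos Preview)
Your proposal is correct and follows essentially the same approach as the paper. The paper's own argument is a single sentence preceding the corollary (``Since the maps in the short exact sequence of the proposition commute with the $d_1$-differential, we obtain the following corollary''), so your write-up is considerably more detailed: you spell out the naturality of the $kq$-Adams tower, verify that the region $t<6n-4$ is closed upward in $n$ so the snake lemma applies, and justify the identification of the connecting homomorphism with $\eta$-multiplication via the geometric boundary theorem, none of which the paper makes explicit.
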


We will use this to prove that the $E_2$-page of the $kq$-resolution for the mod 2 Moore spectrum has a vanishing line of slope $1/5$. Fix a pair $(t,w)$. Then from the long exact sequence \eqref{eqn: LES on E2} and the vanishing line of slope $1/5$ for $Y$, we see that $\kqE_2^{n,t,w}(Y)=0$ whenever $t+5<6n$. Hence there is a sequence of isomorphisms 
\[
\kqE_2^{n,t-2,w-1}(M)\cong \kqE_2^{n+1,t,w}(M)\cong\cdots \cong \kqE_2^{n+j, t+2(j-1), w+j-1}(M)\cong \cdots  
\]
for all $j\geq 0$. Eventually, these terms will lie in the region above the na\"ive vanishing line of slope $1/3$. Indeed, for $j\gg0$, we have 
\[
t+2(j-1)< 4(n+j). 
\]
Therefore all of the groups in this sequence vanish. This shows that whenever $t+5<6n$, we have that 
\[
\kqE_2^{n+j, t+2(j-1),w+j-1}(M)=0
\]
for all $j\geq 0$. Thus, we have shown the following. 

\begin{cor}\label{cor: vanishing for Moore spectrum}
	For $t+7<6n$, we have that 
	\[
	\kqE_2^{n,t,w}(M)=0.
	\]
\end{cor}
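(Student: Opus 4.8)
The goal of Corollary \ref{cor: vanishing for Moore spectrum} is to upgrade the vanishing line of slope $1/5$ for $Y$ (Proposition \ref{prop: vanishing line for Y}) to one for the mod $2$ Moore spectrum $M$, and the mechanism is exactly the ``$\eta$-Bockstein'' long exact sequence on $E_2$-terms \eqref{eqn: LES on E2} that has already been established in the region $t < 6n-4$. The plan is to fix a bidegree $(t,w)$, assume $t+5 < 6n$ (which in particular forces $t < 6n-4$, so the long exact sequence is available), and read off from \eqref{eqn: LES on E2} that the connecting map $\kqE_2^{n,t-2,w-1}(M) \to \kqE_2^{n+1,t,w}(M)$ is an isomorphism, since it is sandwiched between $\kqE_2^{n,t,w}(Y)$ and $\kqE_2^{n,t-2,w-1}(Y)$, both of which vanish by Proposition \ref{prop: vanishing line for Y} once $t+5 < 6n$ (note $t-2+5 < 6n < 6n+6$ as well, so the second $Y$-group is covered).

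The second step is to iterate. Since $t+5 < 6n$ implies $(t+2(j-1))+5 < 6(n+j)$ for every $j \ge 0$ (the left side grows by $2$ per step while the right grows by $6$), the hypotheses of the previous paragraph remain in force along the whole chain, and one obtains a string of isomorphisms
\[
\kqE_2^{n,t-2,w-1}(M) \cong \kqE_2^{n+1,t,w}(M) \cong \kqE_2^{n+2,t+2,w+1}(M) \cong \cdots \cong \kqE_2^{n+j,\,t+2(j-1),\,w+j-1}(M) \cong \cdots.
\]
The third step is to observe that this sequence eventually enters the na\"ive vanishing region of slope $1/3$ from Lemma \ref{Lem:OneThird}: for $j$ large enough one has $t + 2(j-1) < 4(n+j)$, at which point $\kqE_1^{n+j,\,t+2(j-1),\,*}(M) = 0$ and hence $\kqE_2 = 0$ there as well. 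Since all terms in the chain are isomorphic, they are all zero; in particular $\kqE_2^{n,t-2,w-1}(M) = 0$. Re-indexing (replace $t$ by $t+2$ and $w$ by $w+1$, so the hypothesis $t+5<6n$ becomes $t+7 < 6n$) gives exactly $\kqE_2^{n,t,w}(M) = 0$ for $t+7 < 6n$, as claimed.

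There is essentially no hard computational content here — everything rests on the already-proven structural inputs — so the only thing to be careful about is bookkeeping with the degree shifts: one must check that both $Y$-groups flanking the connecting homomorphism in \eqref{eqn: LES on E2} lie in the slope-$1/5$ vanishing region for $Y$, that the inequality $t+5<6n$ is stable under the shift $(n,t,w)\mapsto (n+1,t+2,w+1)$ induced by $\eta$-multiplication together with the internal degree of $\eta$, and that the final re-indexing producing the sharper constant $7$ (rather than $5$) is performed correctly. The main (very mild) obstacle is simply confirming that the long exact sequence \eqref{eqn: LES on E2} is valid in every bidegree visited by the iteration, which follows from the stability of the inequality under the shift.
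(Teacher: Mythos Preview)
Your proof is correct and follows essentially the same approach as the paper: use the long exact sequence \eqref{eqn: LES on E2} together with the vanishing for $Y$ (Proposition \ref{prop: vanishing line for Y}) to obtain a chain of isomorphisms that eventually lands in the na\"ive $1/3$-vanishing region, then re-index to get the constant $7$. One minor bookkeeping slip: the $Y$-term flanking the connecting map on the right is $\kqE_2^{n+1,t,w}(Y)$ rather than $\kqE_2^{n,t-2,w-1}(Y)$, but since $t+5<6n$ certainly implies $t+5<6(n+1)$, your vanishing check goes through unchanged.
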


We are now in a position to prove the vanishing line for the sphere. In light of Proposition \ref{Prop:kqDifferentials}, the only non-trivial classes on $E_\infty^{n,t,w}(S^0)$ are detected by elements of Adams filtration 0 or 1. For this reason, it suffices to show a vanishing line for the \emph{algebraic $kq$-resolution} for the sphere. This is a spectral sequence which is obtained by applying the functor $\Ext^{s,t,w}_{A^\vee}(-)$ to the $kq$-Adams resolution. It is a convergent spectral sequence of the form
\[
\algE_1^{n,s,t,w}(X) = \Ext_{A^\vee}^{s,t,w}(H_*(kq\wedge \overline{kq}^{\wedge n}\wedge X))\implies \Ext_{A^\vee}^{s+n,t,w}(X).
\]
Our argument is an adaption of the one found in \cite{Mah84}.

\begin{rem2}
The same techniques applied above can also be used to derive a 1/5 vanishing line for the algebraic $E_2$-term for $M$. In particular, we have that $\algE_2^{n,0,t,w}(M)=0$ whenever $t+7<6n$.
\end{rem2}

In \cite{Mah84} Mahowald defines a function $a(j)$ whose values are given by the following table:
\begin{center}
\begin{tabular}{c | c | c | c | c }
$a(j)$ & $0$ & $-2$ & $-2$ & $-1$ \\
$j\mod 4$ & $0$ & $1$ & $2$ & $3.$	
\end{tabular}
\end{center}

In the arguments below we make the following definition for ease of notation. 


\begin{prop}[Compare with Prop 3.6 of \cite{Mah84}]
	If $t<6j+a(j)$ , then:
	\begin{enumerate}
		\item If $a\in \Ext^{0,t,*}(H_*S_j)$, then either $h_0^ia\neq 0$ for all $i$ or $h_0a=0$;
		\item If $a\in \Ext^{s,t,*}(H_*S_j)$, $a\neq 0$, then $a = h_0^s a'$ for some $a'$.
	\end{enumerate}
\end{prop}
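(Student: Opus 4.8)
The plan is to read the proposition off the explicit description of the $E_1$-page of the $kq$-resolution obtained in Section~\ref{Section:Analysis}, using our motivic Brown--Gitler module computations in place of Mahowald's integral Brown--Gitler spectra from \cite[Prop.~3.6]{Mah84}. By the K\"unneth isomorphism for motivic homology, change of rings, and Theorem~\ref{Cor:AmodmodA(1)Decomp},
$$\Ext^{s,t,*}(H_*S_j)\cong\bigoplus_{I\in\mathcal I_j}\Sigma^{4|I|,2|I|}\Ext^{s,t-4|I|,*}_{A(1)^\vee}(H\underline{\z}_I),$$
where $I=(i_1,\dots,i_j)$ runs over tuples of positive integers, $|I|=i_1+\cdots+i_j$, and $H\underline{\z}_I=H\underline{\z}_{i_1}\otimes\cdots\otimes H\underline{\z}_{i_j}$, exactly as in the discussion preceding Corollary~\ref{Cor:Half}. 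A graded $\f_2[h_0]$-module that is a direct sum of free modules on Adams-filtration-zero generators and of $h_0$-trivial modules concentrated in Adams filtration zero visibly satisfies both bullet points, and this form passes to direct sums; so it suffices to prove that $\Ext^{***}_{A(1)^\vee}(H\underline{\z}_I)$ is a direct sum of an $\f_2[h_0]$-module of this shape with a part generated over $\f_2[h_0]$ in internal degrees $\ge 2|I|+a(|I|)$. Granting this, the proposition follows: since each $i_k\ge1$ gives $|I|\ge j$ while $a(-)\in\{0,-2,-2,-1\}$ and $6(|I|-j)\ge6$ whenever $|I|>j$, one has $6|I|+a(|I|)\ge6j+a(j)$, so after the $\Sigma^{4|I|,2|I|}$-shift no $I$-summand contributes anything below internal degree $6j+a(j)$ beyond $h_0$-towers on filtration-zero generators and $h_0$-trivial filtration-zero classes.

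To establish the summand-wise statement I would run the induction from the proof of Theorem~\ref{Lem:ExtOfHZn}. The $v_1$-torsion in $\Ext^{***}_{A(1)^\vee}(H\underline{\z}_I)$ is concentrated in Adams filtration zero (Remark~\ref{Rem:kqMargolis}, together with the fact used throughout Section~\ref{Section:Analysis} that positive-filtration classes are $v_1$-periodic), so it lies in the $h_0$-trivial filtration-zero piece; modulo $v_1$-torsion, Theorem~\ref{Lem:ExtOfHZn} presents $\Ext^{***}_{A(1)^\vee}(H\underline{\z}_I)$ as a sum of suspensions of $\m_2[h_0]$ plus one suspended (and possibly $(-)[m]$-shifted) copy of $\Ext_{A(1)^\vee}(\m_2)$ or $\Ext_{A(1)^\vee}(H\uz_1)$ as in Definition~\ref{Def:Zi}. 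The base cases $\Ext_{A(1)^\vee}(\m_2)$ from \cite[Thm.~6.6]{IS11} and $\Ext_{A(1)^\vee}(H\uz_1)\cong\pi_{**}(ksp)$ from Corollary~\ref{Cor:HZ1} are explicit; there one reads off that the classes obstructing the desired shape---the $h_1$-power towers, the free $h_0$-towers on positive-filtration generators such as $\alpha$, and the summands entering the hidden $h_0$-extensions noted at the end of the proof of Theorem~\ref{Lem:ExtOfHZn}---all begin in internal degrees that pin down $a(0)$ and $a(1)$. For the inductive step, applying $\Ext_{A(1)^\vee}(-)$ to the short exact sequences of Lemma~\ref{Lem:SES}, and using that $\underline{kq}_{n-1}\otimes(A(1)/\kern-0.25em/A(0))^\vee$ has $\Ext$ a sum of suspensions of $\m_2[h_0]$ (Remark~\ref{Rem:kqMargolis}), shows that the obstructing part of $\Ext_{A(1)^\vee}(H\underline{\z}_{2n})$, resp.\ $\Ext_{A(1)^\vee}(H\underline{\z}_{2n+1})$, is that of $\Sigma^{4n,2n}H\underline{\z}_n$, resp.\ $\Sigma^{4n,2n}H\underline{\z}_n\otimes H\underline{\z}_1$, raised by $4n$ in internal degree, the extra $H\underline{\z}_1$ being absorbed by the algebraic Atiyah--Hirzebruch spectral sequence for $\Ext_{A(1)^\vee}(-\otimes H\underline{\z}_1)$ whose only differentials are the $d_1,d_2$ recorded in the proofs of Lemma~\ref{Lem:ExtOfOtimes} and Theorem~\ref{Lem:ExtOfHZn}. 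Propagating internal degrees through the recursion, with $\alpha(n)=\alpha(2n)=\alpha(2n+1)-1$, yields the offsets $a(m)$ of \cite[Prop.~3.6]{Mah84}.

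The main obstacle is the degree bookkeeping over the four residue classes of $m$ modulo $4$: one must check that neither the hidden $h_0$- or $\tau$-extensions relating the $\m_2$-summands of the groups $Z_i$ nor the $v_1$-torsion classes ever produce, below the asserted line, a free $h_0$-generator in positive filtration or a finite nonzero $h_0$-chain based in filtration zero, and that the recursion accumulates no drift beyond the fixed offsets. The hidden extensions turn out to be benign for the $\f_2[h_0]$-module structure: a hidden $h_0$-extension merely merges a filtration-zero $\m_2$-summand with the bottom of an $h_0$-tower into one free module, still on a filtration-zero generator, and hidden $\tau$-extensions do not touch the $h_0$-action. All inputs beyond counting degrees---chiefly the identification of these hidden extensions with their classical counterparts via Betti realization---are already available from Section~\ref{Section:Analysis}, so the remaining work, as in \cite{Mah84}, is the bookkeeping itself; following \cite{Mah84}, the cleanest organization is to carry the displayed module structure (equivalently, the two bullet points with $|I|$ in place of $j$) as the inductive hypothesis.
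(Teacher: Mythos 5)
The paper's own ``proof'' of this proposition is the single sentence ``We can conclude this [from] our calculations in Section \ref{sec: kq-cooperations},'' so you are unpacking an argument the authors leave entirely implicit. Your strategy is the right reading of what they intend: decompose $\Ext^{s,t,*}(H_*S_j)$ via K\"unneth and Theorem~\ref{Cor:AmodmodA(1)Decomp} into suspended Brown--Gitler summands, observe that each summand modulo $v_1$-torsion is a $Z_m$-type object (sums of $\m_2[h_0]$ on filtration-zero generators plus a shifted copy of $\Ext_{A(1)^\vee}(\m_2)$ or $\Ext_{A(1)^\vee}(H\uz_1)$), and track internal degrees through the $\Sigma^{4|I|,2|I|}$ shifts and the recursion of Theorem~\ref{Lem:ExtOfHZn}. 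The numerical estimate $6|I|+a(|I|)\ge 6j+a(j)$ for $|I|\ge j$ is correct ($6(|I|-j)\ge 6 > 2 \ge |a(|I|)-a(j)|$ when $|I|>j$), and your observation that the hidden $h_0$-extensions only ever fuse a filtration-zero $\m_2$ with the bottom of a tower into a free $\f_2[h_0]$-module on a filtration-zero generator is the key reason the decomposition stays in the required shape. You also correctly identify that $v_1$-torsion, being concentrated in Adams filtration zero, cannot violate either bullet point. Since the paper's proof is nothing but this reference to Section 3, your proposal takes the same route; the only caveat is that, like the paper, you stop short of carrying through the full bookkeeping over the four residue classes, acknowledging it explicitly at the end. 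The claimed offset $2|I|+a(|I|)$ for the ``bad'' part of $\Ext_{A(1)^\vee}(H\uz_I)$ is a sufficient (though not always tight) lower bound, which is all your argument needs.
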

\begin{proof}
	We can conclude this our calculations in Section \ref{sec: kq-cooperations}.
\end{proof}

\begin{defin}
	For a motivic spectrum $X$, let $\TExt_{A(1)^\vee}^{s,t,*}(S_j\wedge X)$ denote the subspace of $\Ext_{A(1)^\vee}^{***}(S_j\wedge X)$ spanned by the classes $a$ for which $h_0^ia=0$ for some $i$. Let $\overline{C}_j^{s,t,*}(X) := \TExt^{s,t,*}_{A(1)^\vee}(S_j\wedge X)$.
\end{defin}

\begin{prop}\label{prop: V(0) splitting}
	For $t<6j+a(j)$ we get a short exact sequence of cochain complexes
	\[
	0\to \algE_1^{j,0,t,*}(S^{0,0})\to \algE_1^{j,0,t,*}(M)\to \overline{C}_j^{j,0,t,*}(S^{1,0})\to 0
	\]
\end{prop}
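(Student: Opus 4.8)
The plan is to derive the short exact sequence from the defining cofibre sequence of the mod two Moore spectrum together with the $h_0$-divisibility statement recorded in the preceding proposition. First I would smash the cofibre sequence $S^{0,0}\xrightarrow{\cdot 2}S^{0,0}\to M\to S^{1,0}$ with $kq\wedge S_j$ and apply mod two motivic homology; since $\cdot 2$ is null on $H_{**}(-;\m_2)$, this yields a short exact sequence of $A^\vee$-comodules
\[
0\to H_{**}(kq\wedge S_j)\to H_{**}(kq\wedge S_j\wedge M)\to \Sigma^{1,0}H_{**}(kq\wedge S_j)\to 0,
\]
and, since the cofibre sequence induces compatible maps of $kq$-Adams resolutions, the same sequence appears at every level of the $kq$-resolution. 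Applying $\Ext_{A^\vee}^{s,t,*}(\m_2,-)$ and the change-of-rings isomorphism then produces, for each fixed $(s,t,*)$, a long exact sequence which is moreover a long exact sequence of cochain complexes for the algebraic $kq$-resolution (the differential being the algebraic $kq$-resolution $d_1$), because all maps in sight are induced by the maps of $kq$-Adams resolutions above.

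Next I would restrict to Adams filtration $s=0$, where the long exact sequence reads
\[
0\to \algE_1^{j,0,t,*}(S^{0,0})\to \algE_1^{j,0,t,*}(M)\xrightarrow{\pi}\algE_1^{j,0,t,*}(S^{1,0})\xrightarrow{\partial}\algE_1^{j,1,t,*}(S^{0,0}).
\]
The connecting map $\partial$ is multiplication by $h_0\in\Ext^{1,1,0}_{A(1)^\vee}(\m_2)$, the class detecting $2$: this is the extension class of the Moore comodule $\m_2\to H_{**}(M)\to \Sigma^{1,0}\m_2$, namely the cobar class $[\otau_0]$, and its degree $(1,1,0)$ matches the degree shift of the connecting homomorphism. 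Hence $\coker(\pi)=\ker\bigl(\cdot h_0\colon \Ext^{0,t,*}_{A(1)^\vee}(S_j\wedge S^{1,0})\to \Ext^{1,t,*}_{A(1)^\vee}(S_j)\bigr)$, so it remains only to identify this kernel with $\overline{C}_j^{j,0,t,*}(S^{1,0})=\TExt^{0,t,*}_{A(1)^\vee}(S_j\wedge S^{1,0})$.

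The identification is where the hypothesis $t<6j+a(j)$ enters. One inclusion is free: if $h_0a=0$ then $a$ is $h_0$-power-torsion, so $\ker(\cdot h_0)\subseteq\TExt^{0}$. For the reverse inclusion I would invoke the preceding proposition applied to $\Ext^{0,*,*}_{A(1)^\vee}(S_j)$ in internal degree $t-1$ (the shift coming from the suspension $S^{1,0}$; note $t<6j+a(j)$ gives $t-1<6j+a(j)$): any class $a$ with $h_0a\neq 0$ satisfies $h_0^i a\neq 0$ for all $i$, so contrapositively every $h_0$-power-torsion class is already annihilated by $h_0$, giving $\TExt^{0}\subseteq\ker(\cdot h_0)$. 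Since $d_1$ is $h_0$-linear it preserves $\TExt$, so $\overline{C}_j^{\bullet,0,t,*}(S^{1,0})$ inherits the structure of a quotient cochain complex and the three-term sequence is short exact as cochain complexes in the stated range (which is compatible with $d_1$, i.e. preserved under $j\mapsto j+1$ for fixed $t$, since the table of values of $a$ gives $6+(a(j+1)-a(j))\geq 4>0$).

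The main potential obstacle is not a deep one: the substantive content sits entirely in the preceding proposition about $h_0$-towers in $\Ext_{A(1)^\vee}(S_j)$, which in turn rests on the computations of Section \ref{sec: kq-cooperations}. What has to be handled with care is (i) correctly pinning down the connecting homomorphism of the Moore-spectrum sequence as honest multiplication by $h_0$ rather than a twisted operation, and (ii) the bookkeeping of the internal-degree shift induced by $S^{1,0}$, so that the bound $t<6j+a(j)$ is exactly the range in which the $h_0$-divisibility dichotomy can be fed into every $\Ext^0$ group appearing in the long exact sequence.
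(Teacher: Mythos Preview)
Your proposal is correct and follows the same route as the paper: use the long exact sequence in $\Ext_{A(1)^\vee}(S_j\wedge -)$ induced by the Moore cofibre sequence, identify the connecting homomorphism as multiplication by $h_0$, and then invoke the preceding proposition to see that in the range $t<6j+a(j)$ the $h_0$-power-torsion subspace $\TExt^0$ coincides with $\ker(\cdot h_0)$. One small slip: where you write ``$\coker(\pi)=\ker(\cdot h_0)$'' you mean $\operatorname{im}(\pi)=\ker(\partial)=\ker(\cdot h_0)$, since the third term of the short exact sequence is the image of $\pi$, not its cokernel.
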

\begin{proof}
	The proof is the same as the proof of \cite[Prop. 3.7]{Mah84}. The cofiber sequence
	\[
	S^{0,0}\to M\to S^{1,0}
	\]
	induces a long exact sequence in $\Ext(S_j\wedge -)$. In particular, we have 
	\[
	\Ext^{0,t,*}_{A(1)^\vee}(S_j)\to \Ext^{0,t,*}_{A(1)^\vee}(S_j\wedge M)\to \Ext^{0, t, *}_{A(1)^\vee}(S_j\wedge S^{1,0})\to \Ext^{1,t,*}_{A(1)^\vee}(S_j)\to \cdots.
	\]
	The connecting homomorphism is given by multiplication by $h_0$. By the previous proposition, in the range $t<6j+a(j)$, the group $\TExt$ is precisely the kernel of multiplication by $h_0$. The proposition follows.
\end{proof}

\begin{prop}\label{prop: cohomology with bars or no bars}
	For $t<6j+a(j)-4$, we have $H^*(\overline{C}_\bullet^{0,t,w}(S^0)) = \algE_2^{*,0,t,w}(S^0)$ and $\algE_2^{*,s,s+t,w}(S^0) = 0 = H^*(\overline{C}_\bullet^{s,s+t,w}(S^0))$ for $s>0$.
\end{prop}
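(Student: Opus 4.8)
The plan is to deduce the result from the short exact sequence of cochain complexes in Proposition~\ref{prop: V(0) splitting} together with the $1/5$ vanishing line for the algebraic $kq$-resolution of the Moore spectrum established in the Remark above ($\algE_2^{n,0,t,w}(M)=0$ whenever $t+7<6n$), following \cite[Prop. 3.8]{Mah84} mutatis mutandis.

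\textbf{Step 1 (the complexes $\overline{C}_\bullet$).} First I would record that $\overline{C}_\bullet^{s,s+t,w}(S^{0,0})$ vanishes identically for $s>0$ in the relevant range: by part~(2) of the proposition preceding Proposition~\ref{prop: V(0) splitting}, for $t<6j+a(j)$ every nonzero class in $\Ext^{s,*,*}_{A(1)^\vee}(S_j)$ with $s>0$ is $h_0^s$ times a filtration-zero class and hence not $h_0$-power torsion, so $\TExt^{s,*,*}_{A(1)^\vee}(S_j)=0$; consequently $H^*(\overline{C}_\bullet^{s,s+t,w}(S^{0,0}))=0$ for $s>0$. For $s=0$ the complex $\overline{C}_\bullet^{0,t,w}(S^{0,0})$ is, by definition, the subcomplex of $h_0$-power torsion in filtration zero of the algebraic $kq$-resolution of $S^{0,0}$.

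\textbf{Step 2 (comparison with $M$).} Apply the long exact cohomology sequence to
\[
0\to \algE_1^{\bullet,0,t,*}(S^{0,0})\to \algE_1^{\bullet,0,t,*}(M)\to \overline{C}_\bullet^{0,t,*}(S^{1,0})\to 0,
\]
valid in resolution degrees $\bullet$ satisfying $t<6\bullet+a(\bullet)$. Using $\algE_2^{n,0,t,*}(M)=0$ for $t+7<6n$, the connecting homomorphism gives isomorphisms $H^n(\overline{C}_\bullet^{0,t,*}(S^{1,0}))\cong\algE_2^{n+1,0,t,*}(S^{0,0})$ in that range. Since $\overline{C}_\bullet^{0,t,*}(S^{1,0})$ is the evident $S^{1,0}$-shift of $\overline{C}_\bullet^{0,*}(S^{0,0})$, re-indexing (the abutment of the algebraic $kq$-resolution being $\Ext^{s+n,t,w}(S^{0,0})$) turns this into $H^*(\overline{C}_\bullet^{0,t,w}(S^{0,0}))\cong\algE_2^{*,0,t,w}(S^{0,0})$. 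The weakening of the range from $t<6\bullet+a(\bullet)$ to $t<6j+a(j)-4$ is precisely what is needed for the short exact sequence to be available at the three consecutive resolution degrees that enter a single cohomology computation, allowing for the variation of $a(\cdot)$ modulo $4$.

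\textbf{Step 3 (positive filtration for $S^{0,0}$).} To get $\algE_2^{*,s,s+t,w}(S^{0,0})=0$ for $s>0$, I would use $h_0$-multiplication. The same $h_0$-divisibility used in Step~1 makes multiplication by $h_0$ an isomorphism of cochain complexes $\algE_1^{\bullet,s-1,t-1,*}(S^{0,0})\xrightarrow{\sim}\algE_1^{\bullet,s,t,*}(S^{0,0})$ for $s\geq 2$, reducing to $s=1$; and for $s=1$ it fits $\algE_1^{\bullet,1,t,*}(S^{0,0})$ into a short exact sequence $0\to\overline{C}_\bullet^{0,t-1,*}(S^{0,0})\to\algE_1^{\bullet,0,t-1,*}(S^{0,0})\xrightarrow{h_0}\algE_1^{\bullet,1,t,*}(S^{0,0})\to 0$, whose long exact sequence, combined with the cohomology computation of Step~2, forces $\algE_2^{*,1,t,*}(S^{0,0})$ to vanish in the range $t+7<6n$.

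The step I expect to be the main obstacle is the degree bookkeeping threaded through Steps~2 and~3: the validity range of Proposition~\ref{prop: V(0) splitting} depends on the resolution degree, smashing with $S^{1,0}$ shifts internal degree, and the identification of $\overline{C}_\bullet(S^{1,0})$-cohomology with $\algE_2(S^{0,0})$ requires matching gradings carefully, so one must verify that every isomorphism used genuinely holds throughout $t<6j+a(j)-4$. A subsidiary subtlety is controlling $h_0$-torsion when reducing positive-filtration vanishing to the filtration-zero case in Step~3 --- this is exactly what Mahowald's function $a(j)$ is designed to track.
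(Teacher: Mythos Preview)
Your Step~2 does not establish what the proposition asserts. The long exact sequence coming from Proposition~\ref{prop: V(0) splitting}, together with the vanishing of $\algE_2^{n,0,t,*}(M)$, yields
\[
H^n\bigl(\overline{C}_\bullet^{0,t,*}(S^{1,0})\bigr)\;\cong\;\algE_2^{\,n+1,0,t,*}(S^{0,0}),
\]
that is, $H^n\bigl(\overline{C}_\bullet^{0,t-1,*}(S^{0,0})\bigr)\cong\algE_2^{\,n+1,0,t,*}(S^{0,0})$. This is an isomorphism with a simultaneous shift by one in resolution degree and in $t$, produced by a connecting homomorphism. The proposition, by contrast, asserts that the \emph{inclusion} $\overline{C}_\bullet^{0,t,w}(S^{0,0})\hookrightarrow\algE_1^{\bullet,0,t,w}(S^{0,0})$ induces an isomorphism on cohomology in the \emph{same} bidegree; equivalently, that the complementary complex of $h_0$-towers is acyclic under $d_1$. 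No re-indexing converts your shifted connecting-map isomorphism into that statement, and the appeal to the abutment grading $\Ext^{s+n}$ is a red herring: the abutment indexing is irrelevant to identifying two $E_2$-groups. The logic here is in fact backwards relative to the paper: the identification you obtain in Step~2 is essentially the content of the \emph{next} proposition (the analog of \cite[Prop.~3.9]{Mah84}), which uses the present proposition as input to ``remove a bar''.

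This gap also undermines Step~3. Your short exact sequence
\[
0\to\overline{C}_\bullet^{0,t-1,*}\to\algE_1^{\bullet,0,t-1,*}\xrightarrow{h_0}\algE_1^{\bullet,1,t,*}\to 0
\]
is correct, but concluding $\algE_2^{*,1,t,*}=0$ from its long exact sequence requires that the \emph{inclusion-induced} map $H^j(\overline{C}_\bullet)\to\algE_2^{j,0}$ be an isomorphism, not merely that the groups be abstractly isomorphic via some other map. What is actually needed, and what the paper invokes, is the direct fact that the $h_0$-tower subcomplex is $d_1$-acyclic in this range: this is the content of \cite[Thm.~5.14]{Mah81} classically, and it transports to the motivic setting because the $h_0$-towers are $\tau$-free, the $d_1$-differential is $\tau$-linear, and the motivic weights are pinned down by Corollary~\ref{Cor:Half1}. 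Once that acyclicity is in hand, both parts of the proposition follow immediately from the splitting $\algE_1^{j,0,t,w}\cong\overline{C}_j^{0,t,w}\oplus T_j^{0,t,w}$ and the identification $\algE_1^{j,s,s+t,w}\cong h_0^s\cdot T_j^{0,t,w}$ for $s>0$.
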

\begin{proof}

The classical analog of this result (cf. \cite[Proposition 3.8]{Mah84}) follows from the proof of Theorem 5.14 in \cite{Mah81}. In particular, the $h_0$-towers in this range form an acyclic complex with respect to the $d_1$-differential classically. The motivic version follows from Betti realization, $\tau$-linearity of the $d_1$-differential,  and motivic weight considerations (Corollary \ref{Cor:Half1}). 

%
%
\end{proof}

We deduce the following. 

\begin{prop}[compare with Prop 3.9\cite{Mah84}]
	If $t<6j+a(j)-4$, then there is an exact sequence
	\[
	\cdots \to H^j(C^{0,t,*}_\bullet(S^0))\to \algE_2^{j,0,t,*}(M)\to \algE_2^{j,0,t,*}(S^1)\to \algE_2^{j+1,0,t,*}(S^0)\to \cdots 
	\]
\end{prop}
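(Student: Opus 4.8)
The plan is to obtain this exact sequence as the long exact cohomology sequence of the short exact sequence of cochain complexes furnished by Proposition~\ref{prop: V(0) splitting}, after identifying the resulting cohomology groups by means of Proposition~\ref{prop: cohomology with bars or no bars} and the definition of the algebraic $kq$-resolution. Write $C_\bullet(X)$ for the cochain complex $n \mapsto \algE_1^{n,0,*,*}(X) = \Ext^{0,*,*}_{A^\vee}(H_*(kq\wedge \overline{kq}^{\wedge n}\wedge X))$ equipped with its $d_1$-differential, so that $H^n\big(C_\bullet(X)\big) = \algE_2^{n,0,*,*}(X)$ by construction, and recall that $\overline{C}_\bullet(X)$ is the subcomplex of $h_0$-power-torsion classes. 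Proposition~\ref{prop: V(0) splitting}, applied in the indicated range, provides a short exact sequence of cochain complexes
\[
0 \to C_\bullet^{0,t,*}(S^{0,0}) \to C_\bullet^{0,t,*}(M) \to \overline{C}_\bullet^{0,t,*}(S^{1,0}) \to 0
\]
coming from the cofiber sequence $S^{0,0}\xrightarrow{\cdot 2} S^{0,0} \to M \to S^{1,0}$, whose connecting map at the level of $\Ext$ is multiplication by $h_0$.

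First I would pass to the associated long exact sequence in cohomology,
\[
\cdots \to H^j\big(C_\bullet(S^0)\big) \to H^j\big(C_\bullet(M)\big) \to H^j\big(\overline{C}_\bullet(S^1)\big) \to H^{j+1}\big(C_\bullet(S^0)\big) \to \cdots .
\]
The middle term is $\algE_2^{j,0,t,*}(M)$ by definition. For the third term I would invoke Proposition~\ref{prop: cohomology with bars or no bars}, reindexed along the unit shift $S^{0,0}\rightsquigarrow S^{1,0}$ (which only translates the internal degree), to identify $H^j\big(\overline{C}_\bullet^{0,t,*}(S^1)\big)$ with $\algE_2^{j,0,t,*}(S^1)$; the same proposition identifies $H^j\big(C_\bullet(S^0)\big)$ with $\algE_2^{j,0,t,*}(S^0)$, since in this range the torsion subcomplex $\overline{C}_\bullet(S^0)$ computes all of the Adams-filtration-zero cohomology and the complementary $h_0$-towers form an acyclic subquotient. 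Substituting these identifications into the long exact sequence produces exactly the claimed sequence, and one finds that its connecting homomorphism detects multiplication by $2$ in $\pi_{**}$, in parallel with the role of $\eta$ in \eqref{eqn: LES on E2} one step earlier in the induction.

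The only subtle point---and the reason the hypothesis is $t < 6j + a(j) - 4$ rather than the weaker $t < 6j + a(j)$ of Proposition~\ref{prop: V(0) splitting}---is the bookkeeping of the ranges in which each input is valid. Computing $H^j$ of a cochain complex uses the three consecutive spots $j-1$, $j$, $j+1$, and both Proposition~\ref{prop: V(0) splitting} and Proposition~\ref{prop: cohomology with bars or no bars} hold at spot $n$ only below a line whose intercept is governed by the non-monotone function $a(n)$. The main (and essentially only non-formal) step is therefore to check, residue class by residue class of $j$ modulo $4$, that the constraint coming from the smallest relevant spot, together with the extra $-4$ already present in Proposition~\ref{prop: cohomology with bars or no bars}, is implied by $t < 6j + a(j) - 4$; this is precisely the argument of \cite[Prop.~3.9]{Mah84}. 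Once the ranges are pinned down, the remainder is a formal diagram chase using the snake lemma and the definition of $\algE_2$.
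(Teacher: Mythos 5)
Your proof is correct and follows essentially the same route as the paper: take the short exact sequence of cochain complexes from Proposition~\ref{prop: V(0) splitting}, pass to the long exact sequence in cohomology, and use Proposition~\ref{prop: cohomology with bars or no bars} (suspended by one internal degree) to replace $H^j\big(\overline{C}_\bullet^{0,t,*}(S^{1,0})\big)$ with $\algE_2^{j,0,t,*}(S^{1,0})$, with the $-4$ in the hypothesis inherited from that proposition. One small point worth clarifying: the identification $H^j\big(C_\bullet(S^0)\big)=\algE_2^{j,0,t,*}(S^0)$ is already immediate from the definition of the algebraic $E_2$-term (as you yourself note at the outset), so the ``bars or no bars'' proposition is only invoked for the third term of the sequence, not the first.
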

\begin{proof}
	From Proposition \ref{prop: V(0) splitting} we have the short exact sequence of cochain complexes
	\[
	0\to \algE_1^{\bullet, 0,t,*}(S^0)\to \algE_1^{\bullet, 0,t,*}(M)\to \overline{C}_\bullet^{0,t,*}(S^1)\to 0
	\]
	and from this it follows that we get a long exact sequence in cohomology. In the range $t<6n+a(n)-4$, it follows from Proposition \ref{prop: cohomology with bars or no bars} we can remove a bar. This gives the desired long exact sequence.
\end{proof}

From these, it follows that $^{kq}E_\infty(S^0)$ has a vanishing line of slope $1/5$. 

\begin{thm}\label{Thm:kqVanishing}
	We have $^{kq}E_2^{n,t,*}(S^0) = 0$ if $t+7<6n$. 
\end{thm}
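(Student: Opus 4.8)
The plan is to transport the question to the \emph{algebraic} $kq$-resolution $\algE_\bullet(S^{0,0})$, with $\algE_1^{n,s,t,w}(X)=\Ext^{s,t,w}_{A^\vee}(H_{**}(kq\wedge\overline{kq}^{\wedge n}\wedge X))$, and then assemble the vanishing statements already established. First I would reduce to the algebraic resolution. Since the $H\f_2$-Adams spectral sequences for the spectra $kq\wedge\overline{kq}^{\wedge n}$ collapse, the $H\f_2$-Adams filtration on $^{kq}E_1^{n,t,w}(S^{0,0})=\pi_{t,w}(kq\wedge\overline{kq}^{\wedge n})$ has associated graded $\bigoplus_s\algE_1^{n,s,t+s,w}(S^{0,0})$, and its $d_1$ is filtration-preserving with associated graded the algebraic $d_1$; hence $^{kq}E_2^{n,t,w}(S^{0,0})$ carries a filtration whose graded pieces are subquotients of the $\algE_2^{n,s,t+s,w}(S^{0,0})$. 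By Proposition \ref{Prop:kqDifferentials} the filtration $\ge 2$ part of $^{kq}E_2$ vanishes, so it is enough to prove $\algE_2^{n,0,t,w}(S^{0,0})=0$ and $\algE_2^{n,1,t+1,w}(S^{0,0})=0$ whenever $t+7<6n$. The $s\ge 1$ contribution is immediate from Proposition \ref{prop: cohomology with bars or no bars}, which gives $\algE_2^{n,s,s+t,w}(S^{0,0})=0$ for every $s>0$ once $t<6n+a(n)-4$; as $a(n)\ge -2$, this range contains $t+7<6n$.

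Thus everything comes down to the $0$-line $\algE_2^{n,0,t,w}(S^{0,0})$, and this is the step I expect to be the main obstacle --- though an obstacle of bookkeeping, not of a missing idea, since the slope-$1/5$ vanishing line for $A_1$ (Corollary \ref{cor: vanishing line for A_1}) and its propagation to $Y$ and then to $M$, as well as the bars-versus-no-bars comparison, are already in hand. Using $\algE_2^{n,0,t,w}(M)=0$ for $t+7<6n$ together with the cofiber sequence $S^{0,0}\to M\to S^{1,0}$, Proposition \ref{prop: V(0) splitting}, the identification of cohomology ``with bars'' and ``without bars'' from Proposition \ref{prop: cohomology with bars or no bars}, and the regrading $\algE_r^{n,s,t,w}(S^{1,0})=\algE_r^{n,s,t-1,w}(S^{0,0})$, I would extract a long exact sequence
\[
\cdots\to\algE_2^{j,0,t,w}(S^{0,0})\to\algE_2^{j,0,t,w}(M)\to\algE_2^{j,0,t-1,w}(S^{0,0})\to\algE_2^{j+1,0,t,w}(S^{0,0})\to\cdots
\]
valid for $t<6j+a(j)-4$. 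Wherever the two adjacent $\algE_2(M)$-terms vanish, i.e. for $t+7<6j$, the connecting map is an isomorphism $\algE_2^{j,0,t-1,w}(S^{0,0})\cong\algE_2^{j+1,0,t,w}(S^{0,0})$. Iterating this isomorphism lowers the resolution degree and internal degree in lockstep, decreasing $6n-t$ by $5$ each step, so after finitely many iterations one reaches the region $t<4n$ in which $\algE_1^{n,0,t,w}(S^{0,0})=0$ by Lemma \ref{Lem:OneThird}; the finitely many small resolution degrees not reached this way are read off directly from the computations of Section \ref{Section:Analysis}. This is the motivic analogue of Mahowald's argument in \cite{Mah84}.

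The point that genuinely requires care --- and what I would verify first --- is that this chain of isomorphisms reaches the na\"ive vanishing region for \emph{all} pairs $(n,t)$ with $t+7<6n$, not merely those lying well above the line $t=4n$; tracking the extra motivic weight $w$ through the ranges of Propositions \ref{prop: V(0) splitting} and \ref{prop: cohomology with bars or no bars} is precisely the bookkeeping inherited from \cite{Mah81,Mah84}. Granting it, the three ingredients combine to give $\algE_2^{n,s,t+s,w}(S^{0,0})=0$ for every $s\ge 0$ when $t+7<6n$, and hence $^{kq}E_2^{n,t,w}(S^{0,0})=0$ in that range, which is the claim.
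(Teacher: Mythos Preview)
Your approach is essentially the paper's: reduce to the $s=0$ line of the algebraic $kq$-resolution (the paper handles Adams filtration $1$ by noting that any such survivor is an $h_0$- or $h_1$-multiple of a filtration-$0$ class, but your appeal to Proposition~\ref{prop: cohomology with bars or no bars} works as well), then use the long exact sequence coming from $S^{0,0}\to M\to S^{1,0}$ together with the established vanishing for $M$ to push into the na\"ive slope-$1/3$ vanishing region.

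There is, however, one genuine error: the direction of iteration. You say the isomorphism $\algE_2^{j,0,t-1,w}(S^{0,0})\cong\algE_2^{j+1,0,t,w}(S^{0,0})$ should be used to \emph{lower} the resolution degree. But along $(n,t)\mapsto(n-1,t-1)$ the quantity $4n-t$ \emph{decreases} by $3$, so you move away from the region $t<4n$ of Lemma~\ref{Lem:OneThird}, not toward it; moreover the hypotheses $t+7<6j$ (for the $M$-vanishing) and $t<6j+a(j)-4$ (for the long exact sequence) eventually fail as $j$ drops, so the chain of isomorphisms breaks before reaching anything known to vanish. The paper iterates in the opposite direction, from $(n,t)$ to $(n+1,t+1)$: along this move $6n-t$ increases by $5$ (so the hypotheses are preserved) and $4n-t$ increases by $3$, so after finitely many steps one lands in the region $t<4n$ where $\algE_1$ already vanishes. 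With the direction corrected, your appeal to ``finitely many small resolution degrees read off directly from Section~\ref{Section:Analysis}'' becomes unnecessary---there is no base case to check.
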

\begin{proof}
	The previous result implies that if $\algE_2^{n', 0, t,*}(M)=0$ for all $n'\geq n$, then $\algE_2^{n',0,t,*}(S^{1,0})\cong \algE_2^{n'+1,0,t,*}(S^{0,0})$ for all $n'>n$. In particular, if we have fixed $t$, then in the long exact sequence of the previous proposition, the groups $\algE_2^{n, 0,t,*}(M)$ are eventually all zero. For example, it follows from Corollary \ref{cor: vanishing for Moore spectrum} that  once $t<6n-7$, these groups are all 0 for later $n'$. In other words, we have 
	\[
	\algE_2^{n',0,t-1,*}(S^{0,0})\cong \algE_2^{n', 0, t, *}(S^{1,0}) \cong \algE_2^{n'+1,0,t,*}(S^{0,0})
	\]
	so long as $t<6n'-7$. 
	
Now from the 1/3-vanishing line, we know that $\algE_2^{n, 0, t,*}(S^0) = 0$ whenever $t<4n$. Combining these observations, we can ``push'' the vanishing of $\algE_2^{n,0,t,*}(S^0)$. For example, consider $t= 4n$. Clearly $4n<6n-7$, and so we have the isomorphism
	\[
	\algE_2^{n, 0, 4n,*}(S^0)\cong \algE_2^{n+1,0,4n-1,*}(S^0)
	\]
	and the latter group is trivial as $4n-1<4n+4$. More generally, if $4n\leq t<6n-14$, then we have the string of isomorphisms
	\[
	\algE_2^{n,0,t,*}(S^0)\cong \algE_2^{n+1,0,t-1,*}\cong \cdots 
	\]
	and at some point the topological degree falls within the na\"ive vanishing region above a line of slope $1/3$, and so all of these groups are trivial. 
	
	From Proposition \ref{Prop:kqDifferentials}(1) above, we know that when $6n>t+7$ the group $^{kq}E_2^{n,t,*}(S^0)$ could only possibly possess a class which was detected by an element of Adams filtration 0 or 1. Now a class in Adams filtration 1 necessarily must be connected to one in Adams filtration 0 by multiplication by $h_0$ or $h_1$. But the argument above shows that an element of Adams filtration 0 cannot appear in the region $6n>t+7$.
\end{proof}

\section{Main theorem, $v_1$-periodicity, and $\eta$-periodicity}\label{Sec:MR}

In this section, we state our main result on the $kq$-resolution and state its two main applications to motivic periodicity.

\subsection{The main theorem}
We state our main computational result in the following theorem. 

\begin{thm}\label{Thm:kqLines} We have the following:
\begin{enumerate}
\item The $0$-line of the $kq$-resolution is given by 
$$E^{0,*,*}_\infty \cong \m_2[h_0,h_1,v_1^4]/(h_0h_1, h_0v^4_1, \tau h_1^3)$$
where $|h_0| = (0,0)$, $|h_1| = (1,1)$, and $|v^4_1| = (8,4)$. 
\item The $1$-line of the $kq$-resolution is given by 
$$E^{1,*,*}_\infty \cong \bigoplus_{k \geq 0} \Sigma^{4k} \z/2^{\rho(k)}[\tau] \oplus \m_2[h_1,v^4_1]/(h_1^3 \tau)\{y\},$$
where $|y| = (9,5)$ and $\rho(k)$ is the $2$-adic valuation of $8k$. All of these classes are $v_1$-periodic. 
\item $E^{n,t,u}_\infty = 0$ whenever $6n > t+7$.
\end{enumerate}
\end{thm}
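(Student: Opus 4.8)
The plan is to assemble the theorem from the two preceding sections. Part~(3) is a verbatim restatement of Theorem~\ref{Thm:kqVanishing}, so nothing new is needed there, and the whole task reduces to deducing parts~(1) and~(2) by feeding the differentials of Section~\ref{Sec:E2} into the $E_1$-page description of Section~\ref{Section:Analysis}; the authors' remark following Theorem~\ref{Prop:kqDiffl2} already asserts this, so the content of the proof is the bookkeeping that turns those inputs into the stated closed forms.

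First I would record the relevant portion of the $E_1$-page. By Corollary~\ref{Cor:kqCollapse} the motivic Adams spectral sequence for $\pi_{**}(kq)$ collapses, so the $0$-line $\kqE_1^{0,*,*}=\pi_{**}(kq)$ is $\Ext_{A(1)^\vee}(\m_2)$ with its hidden $h_0$- and $\eta$-extensions, while by Proposition~\ref{Prop:MASSE2} the $1$-line $\kqE_1^{1,*,*}$ is the augmentation ideal of $\bigoplus_{i\ge 0}\Sigma^{4i,2i}\Ext_{A(1)^\vee}(H\underline{\z}_i)$, which modulo $v_1$-torsion is $\bigoplus_{i\ge 1}\Sigma^{4i,2i}Z_{2i-\alpha(i)}$ by Theorem~\ref{Lem:ExtOfHZn} and Definition~\ref{Def:Zi}. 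The structural observation that organizes everything is that $d_r\colon \kqE_r^{n,t,u}\to \kqE_r^{n+r,t+r-1,u}$ raises the resolution degree, so the $0$-line supports only outgoing differentials and the $1$-line receives only the $d_1$ from the $0$-line; hence $\kqE_\infty^{0,*,*}=\bigcap_{r\ge 1}\ker(d_r|_{\kqE_r^{0,*,*}})$ and $\kqE_2^{1,*,*}=\ker(d_1)/\operatorname{im}(d_1)$, with $\kqE_\infty^{1,*,*}$ a further subquotient.

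Next I would run the differentials. The $d_1$ out of the $0$-line is pinned down by Theorem~\ref{Prop:kqDiffl2}: on each $\tau$-torsion-free cyclic summand in internal degree $4k$ it is multiplication by $2^{\rho(k)}$, while on the $h_0$-tower of the unit and on the $\eta$-power towers it vanishes, by $\pi_{**}(kq)$-module linearity and comparison with the $bo$-resolution via Betti realization in the style of Theorem~\ref{Prop:kqDifferentials}. Taking $\ker d_1$ on the $0$-line then truncates the $v_1$-periodic towers to leave exactly $\m_2[h_0,h_1,v_1^4]/(h_0h_1,h_0v_1^4,\tau h_1^3)$, and the cokernel on the $1$-line contributes the groups $\bigoplus_{k\ge 0}\Sigma^{4k}\z/2^{\rho(k)}[\tau]$ together with the surviving $v_1$-periodic family $\m_2[h_1,v_1^4]/(h_1^3\tau)\{y\}$, $|y|=(9,5)$. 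To close parts~(1) and~(2) I would rule out higher differentials touching these two lines: a $d_r$ with $r\ge 2$ out of the $0$-line, or out of $\kqE_2^{1,*,*}$, lands on a line $\ge 2$, where Theorem~\ref{Prop:kqDifferentials} shows every $d_1$-cycle of $H\f_2$-Adams filtration $\ge 2$ is already a $d_1$-boundary, hence zero on $\kqE_2$; the residual filtration $\le 1$ classes are eliminated by the vanishing line of part~(3) and the weight/stem constraint of Corollary~\ref{Cor:Half}. Thus $\kqE_2^{0,*,*}=\kqE_\infty^{0,*,*}$ and $\kqE_2^{1,*,*}=\kqE_\infty^{1,*,*}$ in the relevant range, which yields the stated formulas; that the $1$-line classes are all $v_1$-periodic is immediate from the $Z_j$-description.

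The hard part will be the bookkeeping in the middle step. Section~\ref{Section:Analysis} computed the relevant $\Ext$-groups only modulo $v_1$-torsion and only up to the hidden $h_0$- and $\tau$-extensions drawn in the figures, so checking that $\ker d_1$ and $\operatorname{coker}d_1$ really collapse to the compact answers above --- rather than to a sprawling sum of shifted $Z_j$'s with leftover $v_1$-torsion --- requires reinstating that torsion, transporting hidden extensions from the classical $bo$-resolution, and tallying precisely which summand of each $Z_{2i-\alpha(i)}$ furnishes the $\z/2^{\rho(k)}$ factors and which furnishes the $y$-tower. By contrast, the absence of higher differentials is comparatively routine once Theorem~\ref{Prop:kqDifferentials} and Theorem~\ref{Thm:kqVanishing} are in hand.
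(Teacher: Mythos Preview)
Your proposal is correct and follows the same approach as the paper, which simply cites Theorems~\ref{Prop:kqDifferentials} and~\ref{Prop:kqDiffl2} for parts~(1)--(2) and Theorem~\ref{Thm:kqVanishing} for part~(3) without spelling out any of the bookkeeping. Your write-up is in fact more detailed than the paper's own one-line proof; the only minor slip is that the collapse result you cite for $\pi_{**}(kq)$ should come from \cite{IS11} rather than Corollary~\ref{Cor:kqCollapse}, which concerns $kq\wedge kq$.
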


\begin{proof}
Parts $(1)$ and $(2)$ follow from Propositions \ref{Prop:kqDifferentials} and \ref{Prop:kqDiffl2}. Part $(3)$  is Theorem \ref{Thm:kqVanishing}. 
\end{proof}

\subsection{The $\eta$-local sphere}
Recall that $\eta : S^{1,1} \to S^{0,0}$ is not nilpotent. 

\begin{defin}
Let $X$ be a motivic spectrum with a non-nilpotent self-map $v : X \to \Sigma^{-r,-s} X$. We define the \emph{$v$-telescope of $X$} to be the colimit
$$v^{-1} X := \colim ( X \overset{v}{\to} \Sigma^{-r,-s} X \overset{v}{\to} \Sigma^{-2r,-2s}X \overset{v}{\to} \cdots ).$$
If $X = S^{0,0}$, we will refer to a $v$-telescope $v^{-1}S^{0,0}$ as the $v$-local sphere. 
\end{defin}

The motivic homotopy groups of $\pi_{**}(\eta^{-1}S^{0,0})$ were conjectured by Guillou-Isaksen in \cite[Conj. 1.3(2)]{GI15}. This conjecture was confirmed by Andrews-Miller in \cite{AM17}. This serves as an ``exotic" motivic analog of the $v_1$-periodic computation of the previous subsection. 

\begin{thm}\label{Thm:EtaInv}
The motivic stable homotopy groups of the $\eta$-local sphere are 
$$\pi_{**}(\eta^{-1}S^{0,0}) \cong \f_2[h_1^{\pm 1},v_1^4]\{x,y\}$$
where $|\eta| = (1,1)$, $|v_1^4| = (8,4)$, $x$ is detected by the generator of $\kqE_\infty^{0,0,0}$, and $y$ is detected by the generator of $\kqE_\infty^{1,9,5}$.  
\end{thm}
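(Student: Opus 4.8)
The plan is to run the $kq$-resolution for $\eta^{-1}S^{0,0}$ and read off the answer from the $\eta$-localized version of Theorem \ref{Thm:kqLines}. Since $\eta$-localization is a filtered colimit, it is exact and commutes with smash products, so applying $\eta^{-1}(-)$ to the canonical $kq$-Adams resolution \eqref{Eqn:AdamsRes} yields a strongly convergent spectral sequence
\[
E_1^{n,t,u} = \pi_{t,u}(\eta^{-1}(kq \wedge \overline{kq}^{\wedge n})) \Rightarrow \pi_{t-n,u}(\eta^{-1}S^{0,0}),
\]
which is obtained from the $kq$-resolution for $S^{0,0}$ by inverting $h_1 = \eta$ on each page (one must check convergence survives localization; this follows from the slope $1/3$ connectivity bound in Lemma \ref{Lem:OneThird}, which is preserved under $\eta$-inversion since $|\eta| = (1,1)$ has slope $1$). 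First I would invoke the $\pi_{**}(kq)$-module (indeed $h_1$-module) structure on the entire $kq$-resolution, so that inverting $\eta$ commutes with passing to $E_\infty$: all differentials and extensions are $h_1$-linear.

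Next I would compute the $\eta$-localized $E_\infty$-page using Theorem \ref{Thm:kqLines}. Inverting $h_1$ in the $0$-line $\m_2[h_0,h_1,v_1^4]/(h_0h_1,h_0v_1^4,\tau h_1^3)$: the relations $h_0 h_1$ and $\tau h_1^3$ force $h_0 \mapsto 0$ and $\tau \mapsto 0$ after inverting $h_1$, leaving $\f_2[h_1^{\pm 1}, v_1^4]$, i.e. a single generator $x$ in tridegree $(0,0,0)$. Inverting $h_1$ in the $1$-line $\bigoplus_{k\geq 0}\Sigma^{4k}\z/2^{\rho(k)}[\tau] \oplus \m_2[h_1,v_1^4]/(h_1^3\tau)\{y\}$: the first summand is all $\tau$-power-torsion and $h_0$-divisible hence killed (it is $h_1$-torsion, being in the image of $d_1$ from $\z$-summands which are $\eta$-torsion), and the second summand becomes $\f_2[h_1^{\pm 1}, v_1^4]\{y\}$ with $|y| = (9,5)$, i.e. $y$ in stem $9-1 = 8$, weight $5$. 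By part (3) of Theorem \ref{Thm:kqLines}, $E_\infty^{n,t,u} = 0$ for $6n > t+7$; since the surviving classes in lines $0$ and $1$ extend to all stems via $v_1^4$ and $h_1$ multiplication, but lines $n \geq 2$ contribute nothing $\eta$-locally — here I would argue that any $\tau$-torsion-free class in line $n\geq 2$ has Adams filtration $\geq 2$ by Theorem \ref{Prop:kqDifferentials}(1) hence is a $d_1$-boundary, and any $\tau$-torsion class is $\eta$-torsion by the structure results in Section \ref{Sec:E2}, hence dies after inverting $\eta$. So $\eta^{-1}E_\infty$ is concentrated on lines $0$ and $1$ and equals $\f_2[h_1^{\pm 1}, v_1^4]\{x,y\}$.

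Finally I would resolve hidden extensions: since the associated graded is $\f_2[h_1^{\pm 1},v_1^4]\{x,y\}$ as a module over $\pi_{**}(\eta^{-1}kq) \cong \f_2[h_1^{\pm 1}, v_1^4]$ (using Corollary \ref{Cor:kqCollapse} and the known $\pi_{**}(kq)$), and there is no room for multiplicative hidden extensions between the $0$- and $1$-lines by weight reasons (a hidden extension $x \mapsto y$ would require $|y|-|x| = (8,5)$ to be a multiple of $(8,4) = |v_1^4|$, but $(8,5)$ is not), we conclude $\pi_{**}(\eta^{-1}S^{0,0}) \cong \f_2[h_1^{\pm 1}, v_1^4]\{x,y\}$ as claimed, with $x$ detected by $\kqE_\infty^{0,0,0}$ and $y$ by $\kqE_\infty^{1,9,5}$. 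The main obstacle will be the bookkeeping in Step 2 ruling out $\eta$-periodic contributions from lines $n \geq 2$: one must carefully confirm that every class surviving to $E_\infty^{n,*,*}$ with $n\geq 2$ is $h_1$-torsion, which requires combining the differential statements of Theorem \ref{Prop:kqDifferentials} with the explicit $h_1$-tower structure recorded in Figure \ref{fig: kqresE1} and the weight constraints of Corollary \ref{Cor:Half}.
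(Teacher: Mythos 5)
Your overall plan (localize the $kq$-resolution at $\eta$, then read off $E_\infty$ from Theorem \ref{Thm:kqLines}) is reasonable and differs from the paper's, which works with the \emph{unlocalized} $kq$-resolution for $S^{0,0}$ and shows directly that $\eta$-periodic classes are detected only in lines $0$ and $1$. Either route should land in the same place, but your version carries the extra burden of justifying convergence of the localized spectral sequence (the paper only deals with this in Proposition \ref{Prop:cKW}, by invoking a motivic analog of a theorem of Miller--Ravenel--Sadofsky, and not in the proof of Theorem \ref{Thm:EtaInv} itself).

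The genuine gap is exactly the step you flag as ``the main obstacle'': ruling out $\eta$-periodic contributions from lines $n \geq 2$. You list ingredients (Theorem \ref{Prop:kqDifferentials}, Figure \ref{fig: kqresE1}, Corollary \ref{Cor:Half}) but do not assemble them into an argument, and the combination you name is not the one that works. The paper's argument is a slope comparison: by Theorem \ref{Prop:kqDifferentials}, anything surviving to $E_\infty$ in line $n \geq 2$ has $H\f_2$-Adams filtration $0$ or $1$, and the filtration-zero classes live in the zero-line of the Adams spectral sequence for a connective cover of $bo$ or $bsp$, so within a fixed $kq$-filtration at most two consecutive $\eta$-multiples can be detected. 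Hence an infinite $\eta$-tower above line $1$ would have to climb $kq$-filtration at rate at least one step per two stems, i.e. it lies on a line of slope $\geq 1/2$ in the $kq$-Adams chart, which eventually enters the region killed by the slope-$1/3$ vanishing line of Lemma \ref{Lem:OneThird}. That lemma is the crucial input, and it never appears in your proposed list of ingredients; Corollary \ref{Cor:Half} (a weight constraint on $\tau$-free classes) is not what closes the argument.

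Two smaller inaccuracies: the summand $\bigoplus_{k\geq 0}\Sigma^{4k}\z/2^{\rho(k)}[\tau]$ of the $1$-line is neither ``$\tau$-power-torsion'' (it is a polynomial ring over $\tau$, hence $\tau$-free) nor ``$h_0$-divisible'' (it is $h_0$-\emph{torsion}, since $h_0$ acts by $2$ on a finite cyclic group). Your parenthetical remark that these classes are $h_1$-torsion is the correct reason they vanish after inverting $\eta$; the preceding two claims should be dropped.
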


\begin{proof}
We claim that $\pi_{**}(S^{0,0}[\eta^{-1}])$ is detected in the $0$- and $1$-lines of the $kq$-resolution. If this holds, then the theorem is clear from inspection of the $0$- and $1$-lines described in Theorem \ref{Thm:kqLines}. 

By part $(2)$ of Proposition \ref{Prop:kqDifferentials}, every $\tau$-torsion class in $kq$-Adams filtration greater than one has Adams filtration zero or one, is killed by a differential, or supports a nontrivial differential. The classes in Adams filtration zero correspond to the classes in the zero line of the Adams spectral sequence for a connective cover of $bo$ or $bsp$ (placed in appropriate tridegrees). Within a fixed filtration, then, there can be at most two powers of $\eta$ detected. Therefore an infinite $\eta$-tower above $kq$-Adams filtration one would be detected along a line of slope at least $1/2$ in the $kq$-resolution, contradicting the vanishing line of slope $1/3$ proven in Lemma \ref{Lem:OneThird}. 
\end{proof}

\begin{rem2}
In the terminology of Andrews \cite{And18} and Gheorghe \cite{Ghe17b}, the computation of $\pi_{**}(\eta^{-1}S^{0,0})$ completely identifies all $w_0$-periodic classes in the motivic stable stems. 

Gheorghe-Isaksen-Krause-Ricka have constructed a $\c$-motivic modular forms spectrum $\mathit{mmf}$ \cite{GIKR18} which serves as a computational analog of the classical topological modular forms spectrum $\mathit{tmf}$ \cite{DFHH14}. The $\mathit{mmf}$-based Adams spectral sequence might serve as useful tool for understanding the $v_2$-periodic and $w_1$-periodic motivic stable stems. 
\end{rem2}

\subsection{The $v_1$-periodic $\c$-motivic stable stems}
Our goal in this section is to identify the $v_1$-periodic part of the motivic stable stems. We begin with some definitions following \cite[Sec. 6]{Mah81}.

\begin{defin}
Let $\gamma \in \pi_{j,k}(X)$ be represented by $\gamma: S^{j,k} \to X$ and let $Y := S^{0,0}/(2,\eta)$. Then there are potentially four maps
\begin{enumerate}
\item $\gamma_i^{\#}: \Sigma^{j-3,k-1} Y \overset{p_1}{\to} S^{j,k} \overset{\gamma}{\to} X$ where $p_1$ is the collapse onto the top cell,
\item $\Sigma^{j-2,k-1} Y \overset{p_2}{\to} \Sigma^{j,k} S/2 \overset{\gamma^{\#}}{\to} X$ where $p_2$ is the collapse onto the top two cells and $\gamma^{\#}$ is an extension of $\gamma$ (if it exists),
\item $\Sigma^{j-1,k}Y \overset{p_3}{\to} \Sigma^{j-2,k-1} B^4_2 \overset{\gamma^{\#}}{\to} X$ where $B^4_2$ is the simplicial $2$-coskeleton of the $4$-skeleton of $B_{gm}\mu_2$ and $p_3$ and $\gamma^{\#}$ are analogously defined, and
\item $\Sigma^{j,k} Y \overset{\gamma^{\#}}{\to} Y$. 
\end{enumerate}
If a map of type $(i)$ exists and the composite
$$\Sigma^{j-4+i+2\ell, k-1+h+\ell} Y \overset{v_1^\ell}{\to} \Sigma^{j-4+i,k-1+h} Y \overset{\gamma^{\#}}{\to} X$$
is essential for all $\ell \geq 0$ for all $\gamma^{\#}$, then we say that $\gamma$ is $v_1$-periodic of type $i$. 
\end{defin}

\begin{exm}(compare with \cite[Exm. 6.2]{Mah81}
The Atiyah-Hirzebruch spectral sequence for $Y$ and Betti realization imply that $\eta \in \pi_{1,1}(S^{0,0})$ is $v_1$-periodic of type $2$. Similarly, one can show that $\nu \in \pi_{3,2}(S^{0,0})$ is $v_1$-periodic of type $3$. The generator of the image of the two torsion in the image of the motivic unitary J-homomorphism (see \cite{HKO11} and below) in $\pi_{8k-1,4k}(S^{0,0})$ is $v_1$-periodic of type $1$. 
\end{exm}

\begin{thm}\label{Thm:v1periodic}
The only $v_1$-periodic classes in $\pi_{**}(S^{0,0})$ are those described in parts $(1)$ and $(2)$ of Theorem \ref{Thm:kqLines}.
\end{thm}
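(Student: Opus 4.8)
The plan is to leverage the main theorem (Theorem \ref{Thm:kqLines}) together with the fact that the $kq$-resolution is a convergent spectral sequence whose $E_\infty$-page is concentrated on the $0$- and $1$-lines modulo the vanishing region $6n > t+7$. The key observation is that a $v_1$-periodic class in $\pi_{**}(S^{0,0})$, being detected somewhere in the $kq$-resolution, must be detected by a $v_1$-periodic class on some line $\kqE_\infty^{n,t,u}$. So I would first argue that every $v_1$-periodic permanent cycle necessarily lies on the $0$- or $1$-line, and then observe that parts $(1)$ and $(2)$ of Theorem \ref{Thm:kqLines} enumerate exactly the $v_1$-periodic classes on those two lines.

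First I would recall, as in the proof of Theorem \ref{Thm:EtaInv}, the dichotomy coming from Proposition \ref{Prop:kqDifferentials}: on line $n \geq 2$, every class is either in $H\f_2$-Adams filtration $0$ or $1$, or is killed by a $d_1$, or supports a nontrivial $d_1$. Hence the only classes surviving to $\kqE_2^{n,*,*}$ for $n \geq 2$ come from Adams filtration $\leq 1$, i.e. from the ``zero line'' contributions of $\pi_{**}(kq)$ or $\pi_{**}(ksp)$ summands (in appropriate tridegrees) plus a bounded amount of $h_0$- and $h_1$-multiplication above them. I would then combine this with the vanishing line of slope $1/5$ (Theorem \ref{Thm:kqVanishing}, i.e. $\kqE_2^{n,t,*}(S^0)=0$ for $6n>t+7$): a hypothetical infinite family of $v_1$-periodic classes, with $|v_1^4| = (8,4)$, would occupy tridegrees $(n_0 + 4k\cdot c, t_0 + 8k, u_0 + 4k)$ for some periodicity operator in the $kq$-resolution. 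Because $v_1$-periodicity forces the $n$-coordinate to grow (each application of $v_1$ raises $kq$-filtration since $v_1$ is detected above filtration $1$ on higher lines — this is exactly the ``bounded torsion'' content), such a family would eventually cross the line $6n = t+7$ unless it lives entirely on the $0$- or $1$-line. This is the crux: on lines $n\geq 2$ there is no room for unbounded $v_1$-towers, because after finitely many $v_1^4$-multiplications we would be forced either into the Adams-filtration-$\leq 1$ zone (where the surviving classes are $v_1$-torsion, being the zero-lines of connective covers of $bo$/$bsp$ tensored with $\m_2$) or into the vanishing region.

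The main obstacle I anticipate is making precise the claim that $v_1$-multiplication strictly increases $kq$-resolution filtration, and hence that an infinite $v_1$-tower cannot be confined to finitely many lines $2 \leq n \leq N$. Classically this is Mahowald's observation that $v_1$-periodic homotopy is detected in the $0$- and $1$-lines of the $bo$-resolution; motivically, I would deduce it by Betti realization from Mahowald's result for the classes that realize nontrivially, handle the simple $\tau$-torsion classes via the fact (Proposition \ref{Prop:kqDifferentials}(2)) that their fate is governed by $\eta$-multiplication and the slope-$1/3$ vanishing line (Lemma \ref{Lem:OneThird}), and handle the remaining $\tau$-free classes in positive $kq$-filtration via Corollary \ref{Cor:Half} (the motivic-weight constraint, which pins $u$ in terms of $t$ and rules out ambiguity). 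Once it is established that any $v_1$-periodic class is detected on line $0$ or $1$, the proof concludes: parts $(1)$ and $(2)$ of Theorem \ref{Thm:kqLines} give $E_\infty^{0,*,*}$ and $E_\infty^{1,*,*}$ explicitly, all classes on the $1$-line are stated to be $v_1$-periodic, and on the $0$-line the $v_1$-periodic part is precisely $\m_2[h_0,h_1,v_1^4]/(h_0h_1,h_0v_1^4,\tau h_1^3)$ with its evident $v_1^4$-action (the $v_1$-torsion having already been discarded throughout Section \ref{Section:Analysis}). Thus the $v_1$-periodic stable stems coincide with the groups listed in Theorem \ref{Thm:kqLines}(1)--(2), completing the proof and, via Theorem \ref{ThmB} bookkeeping, yielding Theorem \ref{ThmD}.
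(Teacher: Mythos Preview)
Your approach is essentially the same as the paper's, but you have overcomplicated the crucial step and slightly misframed it. The paper's proof is three lines: on lines $n\geq 2$, every surviving class is represented in $H\f_2$-Adams filtration $\leq 1$ (this is exactly the content of Proposition~\ref{Prop:kqDifferentials}); since $v_1$ raises Adams filtration by one, at most two members of any $v_1$-periodic family can be detected on a single $kq$-line $n\geq 2$; as $v_1$ has stem $2$, this forces the family to climb with slope at least $1/4$ in the $(t-n,n)$-plane, contradicting the slope-$1/5$ vanishing line of Theorem~\ref{Thm:kqVanishing}. Hence the whole family must be detected in lines $0$ and $1$, and Theorem~\ref{Thm:kqLines}(1)--(2) finishes.

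Your ``main obstacle'' is a red herring created by a misstatement. It is \emph{not} true that $v_1$-multiplication strictly increases $kq$-filtration: on the $0$- and $1$-lines the action of $v_1^4$ is internal. What is true, and what you should say instead, is that on lines $n\geq 2$ the bounded-torsion theorem caps surviving Adams filtration at $1$, so the Adams-filtration-raising action of $v_1$ forces the $kq$-filtration to increase after at most two steps. Once you phrase it this way, there is no need for Betti realization, Corollary~\ref{Cor:Half}, or any separate treatment of $\tau$-torsion classes; the slope $1/4$ versus $1/5$ comparison does all the work. Your reference to $v_1^4$ with $|v_1^4|=(8,4)$ is also unnecessarily coarse: use $v_1$ itself, with stem $2$, to get the sharp slope $1/4$.
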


\begin{proof}
The proof is identical to the proof of \cite[Thm. 6.3]{Mah81}. Above $kq$-Adams filtration one, every class is $v_1$-torsion. There can be at most two classes in a $v_1$-periodic family detected in a fixed filtration, so any $v_1$-periodic family is detected on or above a line of slope $1/4$. This contradicts the vanishing line of slope $1/5$ in part $(c)$ of Theorem \ref{Thm:kqLines}. 
\end{proof}

\section{Motivic Telescope Conjectures}\label{Sec:Tel}

In this section, we place the computations of Theorem \ref{Thm:EtaInv} and Theorem \ref{Thm:v1periodic} in the context of chromatic motivic homotopy theory. 
\subsection{Localization functors}
We begin with some motivic analogs of the results from \cite{Mil92}. Let $E$ be any motivic spectrum.

\begin{defin}(compare with \cite{Bou79})
\leavevmode
\begin{enumerate}
\item A motivic spectrum $W$ is \emph{$E$-local} if and only if $[T,W]=0$ for every $E$-acyclic spectrum  $T$.
\item A map $f : X \to Y$ is an \emph{$E$-equivalence} if and only if $E_*f$ is an isomorphism. 
\end{enumerate}
\end{defin}

\begin{thm}\cite{Bou79}\cite{Hor06}
For any motivic spectra $E$ and $X$, there is a unique (up to canonical equivalence) $E$-equivalence  $\eta : X \to L_E X$ where $L_E X$ is an $E$-local motivic spectrum. 
\end{thm}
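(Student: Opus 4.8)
The plan is to obtain $L_E$ as a functorial fibrant replacement in a left Bousfield localization of the stable model category underlying $SH(F)$, following the template of Bousfield \cite{Bou79} as carried out motivically by Hornbostel \cite{Hor06}. First I would fix a combinatorial, left proper, stable model category $\mathcal{M}$ presenting $SH(F)$ --- for instance the category of motivic symmetric spectra, which is cellular and left proper --- so that general localization machinery for model categories applies. The class of $E$-equivalences (maps $f$ with $E_{**}f$ an isomorphism) satisfies two-out-of-three, is stable under (homotopy) cobase change and transfinite composition, and the $E$-acyclic objects form a localizing subcategory; the localization to be constructed will have the $E$-equivalences as its weak equivalences.

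The essential point --- and the main obstacle --- is the set-theoretic \emph{solution-set} (accessibility) condition: there must exist a \emph{set} $S$ of $E$-equivalences between cofibrant objects such that a map is an $E$-equivalence if and only if it is a retract of a transfinite composite of cobase changes of maps in $S$. This reduces to the statement that $E$-acyclicity can be detected on a set of test objects, which in turn follows from $SH(F)$ being compactly (hence well-)generated together with the fact that $E_{**}(-)$ commutes with filtered homotopy colimits: any $E$-acyclic spectrum is a filtered homotopy colimit of $\kappa$-presentable $E$-acyclic spectra for a fixed regular cardinal $\kappa$. This is exactly where a naive construction breaks down over a general base, and it is precisely the content supplied by \cite{Hor06} (with the underlying model-categorical machinery in Hirschhorn's book on localization and in \cite{Bou79}).

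Granting the solution-set condition, left Bousfield localization at $S$ produces a model structure $\mathcal{M}_E$ on the same underlying category, with the same cofibrations and with the $E$-equivalences as weak equivalences. A fibrant object $W$ of $\mathcal{M}$ is fibrant in $\mathcal{M}_E$ if and only if the mapping space $\operatorname{Map}(-,W)$ sends $E$-equivalences to equivalences, which by stability and adjunction is equivalent to $[T,W]=0$ for every $E$-acyclic $T$; thus the $\mathcal{M}_E$-fibrant objects are exactly the $E$-local spectra. Defining $\eta : X \to L_E X$ to be a functorial $\mathcal{M}_E$-fibrant replacement gives an $E$-equivalence with $E$-local target. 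Uniqueness up to canonical equivalence is then formal: given any $E$-equivalence $\eta' : X \to W'$ into an $E$-local $W'$, the lifting property in $\mathcal{M}_E$ produces a map $L_E X \to W'$ under $X$, unique up to homotopy, and the symmetric argument (or direct obstruction theory using $E$-locality of both targets) shows it is an equivalence. This simultaneously exhibits $L_E$ as an idempotent endofunctor on $SH(F)$ left adjoint to the inclusion of the $E$-local subcategory, which is the assertion of the theorem.
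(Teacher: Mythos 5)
The paper does not actually prove this theorem; it is stated with citations to Bousfield \cite{Bou79} and Hornbostel \cite{Hor06} and then used as a black box. Your sketch is a reasonable reconstruction of what those references do: present $SH(F)$ by a suitable cofibrantly generated, left proper, stable model category, perform left Bousfield localization at the $E$-equivalences, and take $L_E$ to be functorial fibrant replacement in the localized structure. You also correctly isolate the genuine technical obstacle --- the set-theoretic bounded cofibration / solution-set condition, i.e.\ that $E$-acyclicity can be tested against a set of objects --- as the nontrivial content supplied by Bousfield's cardinality argument (and Hornbostel's adaptation to the motivic setting).

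One caution on precision: the way you phrase the solution-set condition, namely that ``a map is an $E$-equivalence iff it is a retract of a transfinite composite of cobase changes of maps in $S$,'' is not quite the right statement. The correct condition concerns the class of cofibrations that are $E$-equivalences being generated by a set, or equivalently, that there is a set $S$ of $E$-equivalences whose $S$-local objects agree with the $E$-local objects; arbitrary $E$-equivalences (not just trivial cofibrations) are not expected to be built by cobase change and transfinite composition from $S$. Relatedly, the deduction from ``$SH(F)$ is compactly generated and $E_{**}$ commutes with filtered colimits'' to ``every $E$-acyclic spectrum is a filtered colimit of $\kappa$-presentable $E$-acyclics'' is the heart of Bousfield's argument and is not automatic --- it requires the bootstrapping cardinality estimate, which is exactly what \cite{Bou79} and \cite{Hor06} supply and what your sketch implicitly defers to them. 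With that caveat, your outline tracks the cited proofs accurately and the remaining steps (characterization of local objects via mapping spaces, uniqueness via lifting) are indeed formal.
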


The motivic spectrum $L_E X$ is called the \emph{Bousfield localization of $X$ with respect to $E$}. 

\begin{defin}(compare with \cite[Def. 3]{Mil92})
Let $\ca$ be a set of motivic spectra. 
\begin{enumerate}
\item A motivic spectrum $W$ is \emph{finitely $\ca$-local} if and only if $[\Sigma^{m,n} A , W]=0$ for every $A \in \ca$ and every $m,n \in \z$. 
\item A motivic spectrum $Z$ is \emph{finitely $\ca$-acyclic} if and only if $[Z,W]=0$ for every finitely $\ca$-local motivic spectrum $W$.
\item A map $f : X \to Y$ is a \emph{finite $\ca$-equivalence} if and only if its mapping cone is finitely $\ca$-acyclic. 
\end{enumerate}
\end{defin}

The proof of \cite[Thm. 4]{Mil92} carries over to the motivic setting without change to prove the following:

\begin{thm}
For any set $\ca$ of finite motivic spectra and any motivic spectrum $X$, there is a unique (up to canonical equivalence) finite $\ca$-equivalence $\eta : X \to L_{\ca}^f X$ where $L_\ca^f X$ is an $\ca$-local spectrum. 
\end{thm}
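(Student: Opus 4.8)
The plan is to follow Miller's proof of \cite[Thm. 4]{Mil92} essentially line by line, checking only that the structural inputs it uses persist in the motivic setting. Those inputs are: the category of cellular motivic spectra is a compactly generated stable homotopy category, arbitrary wedges exist, the bigraded spheres $S^{p,q}$ ($p,q \in \z$) form a set of compact generators, and the finite motivic spectra in $\ca$ are compact objects. Granting these, everything else is formal, and the indexing by a single integer in the classical argument is simply replaced throughout by the bigraded indexing $\Sigma^{p,q}$.

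First I would establish the key lemma that a finite $\ca$-equivalence $g \colon Y \to Z$ induces an isomorphism $[Z,W] \to [Y,W]$ for every finitely $\ca$-local $W$. This holds because the full subcategory of motivic spectra $V$ with $[\Sigma^{p,q}V,W] = 0$ for all $p,q$ is closed under cofibers, retracts, shifts, and arbitrary wedges, and it contains every $A \in \ca$ by finite $\ca$-locality of $W$; hence it contains $\operatorname{cofib}(g)$ and its shifts, and the long exact sequence obtained by mapping $Y \to Z \to \operatorname{cofib}(g)$ into $W$ gives the claim. Uniqueness of $L_\ca^f X$ is then formal: given two finite $\ca$-equivalences $\eta \colon X \to L$ and $\eta' \colon X \to L'$ with $\ca$-local targets, the lemma produces unique comparison maps $L \to L'$ and $L' \to L$ compatible with $\eta, \eta'$, whose composites are forced (again by the lemma) to be the identities, so $L \simeq L'$ canonically.

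For existence I would construct the $\ca$-colocalization functor $C = C_\ca^f$ and set $L_\ca^f X := \operatorname{cofib}(CX \to X)$, with $\eta \colon X \to L_\ca^f X$ the induced map. Concretely, the localizing subcategory $\langle \ca \rangle$ of $\mathcal{SH}$ generated by $\ca$ is compactly generated by the set of compact objects $\{\Sigma^{p,q}A : A \in \ca,\ p,q \in \z\}$, so by Brown representability the inclusion $\langle \ca \rangle \hookrightarrow \mathcal{SH}$ admits a right adjoint $C$; alternatively $CX$ is Miller's transfinite telescope, built by iteratively coning off all maps into $X$ out of suspensions of objects of $\ca$. Then I would verify the two defining properties. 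For the first, $\operatorname{cofib}(\eta) \simeq \Sigma CX$ lies in $\langle \ca \rangle$, and every object of $\langle \ca \rangle$ is finitely $\ca$-acyclic (the subcategory of spectra killed by $[-,W]$ for all finitely $\ca$-local $W$ is localizing and contains $\ca$), so $\eta$ is a finite $\ca$-equivalence. For the second, for $A \in \ca$ the counit $CX \to X$ induces isomorphisms $[\Sigma^{p,q}A, CX] \xrightarrow{\sim} [\Sigma^{p,q}A, X]$ since $\Sigma^{p,q}A \in \langle\ca\rangle$ and $C$ is a colocalization; feeding these (and their one-further-suspension versions) into the long exact sequence of $CX \to X \to L_\ca^f X$ forces $[\Sigma^{p,q}A, L_\ca^f X] = 0$, i.e. $L_\ca^f X$ is finitely $\ca$-local.

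The only step that requires genuine (though standard) verification, and the one justifying the claim that Miller's proof transfers unchanged, is the structural input: the category of cellular motivic spectra is a compactly generated stable homotopy category whose compact generators are the bigraded spheres, so that infinite wedges, transfinite cellular constructions, and Brown representability are all available (see e.g.\ \cite{Hor06}). With that in hand, the classical argument applies verbatim; I expect no new obstacles beyond bookkeeping with the extra grading.
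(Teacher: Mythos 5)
Your proposal is correct and takes exactly the approach the paper intends: the paper's entire proof is the assertion that Miller's argument in \cite[Thm.~4]{Mil92} transfers unchanged, and you have spelled out what that means---compact generation of cellular motivic spectra by the bigraded spheres, existence via the colocalization $C_{\ca}^f$ and the cofiber sequence $CX \to X \to L_{\ca}^f X$, and formal uniqueness from the key lemma.

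One small imprecision in the key lemma: you justify $[\operatorname{cofib}(g),W]=0$ by arguing that the subcategory of $V$ with $[\Sigma^{p,q}V,W]=0$ is localizing and contains $\ca$, ``hence it contains $\operatorname{cofib}(g)$.'' But a finite $\ca$-equivalence has cofiber that is merely finitely $\ca$-acyclic, which need not place it in the localizing subcategory $\langle\ca\rangle$. The vanishing $[\Sigma^{p,q}\operatorname{cofib}(g),W]=0$ instead follows directly from the definition of finitely $\ca$-acyclic, once one observes that the finitely $\ca$-local spectra are closed under bigraded shifts. This is a cosmetic fix and does not affect the rest of your argument.
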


The motivic spectrum $L_\ca^f X$ is called the \emph{finite localization of $X$ with respect to $\ca$}. If $\ca$ is the set of finite $E$-acyclic spectra for some motivic spectrum $E$, then we will write $L_E^f$ for $L_\ca^f$. Any $E$-local spectrum is finitely $E$-local, so we obtain a unique morphism $L^f_E X \to L_E X$ under $X$. This gives rise to a natural transformation $L_E^f \to L_E$, and the discussion from \cite[Sec. 2]{Mil92} carries over to the motivic setting without alteration to give the following results.

\begin{cor}
The natural transformation $L^f_E \to L_E$ is an equivalence if and only if $E$ is smashing (i.e. the map $L_EX \cong L_E( X \wedge S^{0,0}) \to X \wedge L_E S^{0,0}$ is an equivalence for all $X$) and the natural map $L_E^f S^{0,0} \to L_E S^{0,0}$ is an equivalence. 
\end{cor}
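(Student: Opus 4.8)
The entire statement reduces, by formal manipulation, to the single fact that the finite localization $L^f_E$ is \emph{smashing}: there is a natural equivalence $L^f_E X \simeq X \wedge L^f_E S^{0,0}$ under $X$. The plan is therefore (i) to establish that $L^f_E$ is smashing by the argument of \cite[Sec.~2]{Mil92}, and then (ii) to deduce both implications using the universal property $[L^f_E X, W] \cong [X,W]$ valid for every finitely $E$-local $W$, together with the fact recorded above that every $E$-local spectrum is finitely $E$-local.

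For (i): let $\ca$ be the set of finite $E$-acyclic spectra and $\cl \subseteq \SH(F)$ the localizing subcategory it generates, so that the finitely $\ca$-acyclic spectra are exactly the objects of $\cl$. Because each $A \in \ca$ is dualizable and $S^{a,b}\wedge A$ is again a shift of a finite $E$-acyclic spectrum, the subcategory $\{X : A \wedge X \in \cl\}$ is localizing and contains all spheres, hence (working with cellular spectra) is all of $\SH(F)$; letting $A$ range over $\ca$ shows $\cl$ is a $\wedge$-ideal. Smashing the fibre sequence $C^f_E S^{0,0} \to S^{0,0} \to L^f_E S^{0,0}$ with an arbitrary $X$ then gives $C^f_E S^{0,0}\wedge X \in \cl$, so $X \to L^f_E S^{0,0}\wedge X$ is a finite $\ca$-equivalence; and for $A \in \ca$ one has $F(A,\,L^f_E S^{0,0}\wedge X) \simeq F(A, L^f_E S^{0,0})\wedge X \simeq \ast$, since all bigraded homotopy groups of $F(A,L^f_E S^{0,0})$ vanish by locality of $L^f_E S^{0,0}$ and cellular spectra are detected by bigraded homotopy. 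Hence $L^f_E S^{0,0}\wedge X$ is finitely $\ca$-local, so by uniqueness of finite localization $L^f_E X \simeq L^f_E S^{0,0}\wedge X$ naturally in $X$.

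For (ii): if $E$ is smashing and $c\colon L^f_E S^{0,0} \to L_E S^{0,0}$ is an equivalence, then for any $X$ the composite
\[
L^f_E X \;\simeq\; X\wedge L^f_E S^{0,0} \;\xrightarrow{\ X\wedge c\ }\; X\wedge L_E S^{0,0} \;\xrightarrow{\ \sim\ }\; L_E X
\]
is an equivalence whose precomposition with $X \to L^f_E X$ is the canonical map $X \to L_E X$; since $L_E X$ is finitely $E$-local, the universal property of $L^f_E$ identifies this composite with the canonical transformation $L^f_E X \to L_E X$, which is thus an equivalence. Conversely, if $L^f_E \to L_E$ is an equivalence, evaluating at $S^{0,0}$ gives the second asserted equivalence, and transporting the smashing equivalence for $L^f_E$ along $L^f_E \simeq L_E$ yields
\[
L_E X \;\simeq\; L^f_E X \;\simeq\; X\wedge L^f_E S^{0,0} \;\simeq\; X\wedge L_E S^{0,0},
\]
which one checks (again via the universal property) to be the canonical comparison map, so $E$ is smashing. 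The main obstacle is step (i) — showing that finitely $E$-acyclic spectra form a localizing $\wedge$-ideal and concluding that finite localization is smashing; the rest is a diagram chase. Step (i) transports verbatim from \cite{Mil92} precisely because $\SH(F)$ restricted to cellular spectra is a closed symmetric monoidal, compactly generated stable category generated by dualizable objects in which triviality is detected on bigraded homotopy groups.
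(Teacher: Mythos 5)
Your argument is correct and is precisely the one the paper invokes by citing \cite[Sec.~2]{Mil92}: establish that finite localization is always smashing, then deduce both implications formally. The only minor compression is in the $\wedge$-ideal step — you show $A\wedge X\in\cl$ for $A\in\ca$ and all $X$, but to conclude $\cl$ is a $\wedge$-ideal you should then note that for fixed $X$ the class $\{Z : Z\wedge X\in\cl\}$ is localizing and contains $\ca$, hence contains $\cl$ — a one-line fix that doesn't affect the substance.
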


\begin{cor}
Finite $\ca$-localization is Bousfield localization with respect to the spectrum $L_\ca^f S^{0,0}$. 
\end{cor}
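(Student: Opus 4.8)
The plan is to derive this from the fact that \emph{finite $\ca$-localization is smashing}, i.e. $L^f_\ca X \simeq X \wedge L^f_\ca S^{0,0}$ for every motivic spectrum $X$, together with the (formal) observation that any smashing localization functor agrees with Bousfield localization at the localized sphere. Both ingredients are the verbatim motivic analogs of the arguments in \cite[Sec. 2]{Mil92}, and the only point at which something must genuinely be checked is the smashing property.

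For the smashing step, recall from the construction used to prove the finite localization theorem above that $L^f_\ca X$ fits into a cofiber sequence $\Gamma_\ca X \to X \to L^f_\ca X$, in which $\Gamma_\ca X$ is the finite $\ca$-acyclization of $X$ and lies in the localizing subcategory of $\SH(F)$ generated by $\ca$. First I would check that smashing this sequence (for $X = S^{0,0}$) with an arbitrary $Y$ again produces a finite $\ca$-acyclization followed by a finite $\ca$-localization. Since $\SH(F)$ is compactly generated and every $A \in \ca$ is a finite, hence compact and dualizable, motivic spectrum, two facts hold: the spectrum $\Gamma_\ca S^{0,0} \wedge Y$ still lies in the localizing subcategory generated by $\ca$ (writing $Y = \colim Y_\beta$ as a filtered colimit of compact spectra, each $A \wedge Y_\beta$ is again finite and $\ca$-acyclic, so $A \wedge Y$ is finite $\ca$-acyclic, and the class of $Z$ with $Z \wedge Y$ finite $\ca$-acyclic is localizing and contains $\ca$); and $L^f_\ca S^{0,0} \wedge Y$ is finite $\ca$-local (for $A \in \ca$ one has $[\Sigma^{m,n}A, L^f_\ca S^{0,0}\wedge Y] \cong \colim_\beta [\Sigma^{m,n} A \wedge DY_\beta, L^f_\ca S^{0,0}]$, which vanishes because each finite spectrum $A \wedge DY_\beta$ is $\ca$-acyclic). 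By the uniqueness clause of the finite localization theorem this identifies $L^f_\ca Y \simeq Y \wedge L^f_\ca S^{0,0}$. In the case of principal interest, where $\ca$ is the set of finite $E$-acyclic spectra, the $\ca$-acyclicity of $A \wedge Y_\beta$ and of $A \wedge DY_\beta$ is transparent, since $E \wedge A \simeq 0$ forces $E \wedge A \wedge (-) \simeq 0$.

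With this in hand, write $T := L^f_\ca S^{0,0}$. For any motivic spectrum $Z$ one then has: $Z$ is $T$-acyclic $\iff Z \wedge T \simeq 0 \iff L^f_\ca Z \simeq Z \wedge T \simeq 0 \iff Z$ is finite $\ca$-acyclic, where the last equivalence holds because $L^f_\ca Z$ vanishes exactly when $Z \to 0$ is a finite $\ca$-equivalence. Hence the $T$-acyclic spectra and the finite $\ca$-acyclic spectra form the same class; consequently a spectrum is $T$-local iff it is finite $\ca$-local (both classes are $\{W : [Z,W]=0$ for all $Z$ in the common acyclic class$\}$), and a map is a $T$-equivalence iff it is a finite $\ca$-equivalence (both mean that the cofiber lies in that class). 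Now both $X \to L_T X$ and $X \to L^f_\ca X$ are, by the existence-and-uniqueness statements for Bousfield localization and for finite localization quoted above, the essentially unique map from $X$ to an object of this common class of local spectra that belongs to this common class of equivalences; therefore $L_T X \simeq L^f_\ca X$, naturally in $X$, which is the assertion.

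I expect the smashing step to be the only real obstacle; everything after it is bookkeeping with orthogonality classes and universal properties. That step is precisely where the finiteness hypothesis on $\ca$ is used, and it transports from \cite{Mil92} without change because $\SH(F)$ is compactly generated, its compact objects coincide with the dualizable (finite) motivic spectra, and the smash product preserves filtered colimits in each variable.
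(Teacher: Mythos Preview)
Your approach is exactly the paper's: the paper gives no argument beyond the remark that ``the discussion from \cite[Sec.~2]{Mil92} carries over to the motivic setting without alteration,'' and you have written out precisely Miller's smashing argument in the motivic setting.

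One small point worth tightening: your justification that $A \wedge Y_\beta$ (and $A \wedge DY_\beta$) is finite $\ca$-acyclic is only supplied for the special case $\ca = \{\text{finite }E\text{-acyclics}\}$, whereas the corollary is stated for an arbitrary set $\ca$ of finite spectra. The fix is immediate in the cellular category you are working in: since $S^{0,0}$ generates the compact objects, the class of compact $Y_\beta$ with $A \wedge Y_\beta$ lying in the localizing subcategory generated by $\ca$ is thick and contains $S^{0,0}$, hence is everything. This makes $\mathrm{Loc}(\ca)$ a tensor ideal and the rest of your argument goes through verbatim for general $\ca$.
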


We conclude our discussion of localization with the following definition and lemma.

\begin{defin}
The \emph{Bousfield class of $E$}, denoted $\langle E \rangle$, is the set of spectra $X$ such that $E_{**}(X) = 0$. 
\end{defin}

We record one more lemma, which follows immediately from the definitions. 

\begin{lem}\label{Lem:BL}
If $\langle E \rangle = \langle F \rangle$, then $L_E \simeq L_F$ and $L_E^f \simeq L_F^f$. 
\end{lem}

\subsection{Recollection of the classical Telescope Conjecture}

In order to motivate the motivic Telescope Conjecture, we recall three equivalent formulations of the classical Telescope Conjecture. Let $K(n)^{cl}$ denote the classical $n$-th Morava K-theory and let $E(n)^{cl}$ denote the classical $n$-th Johnson-Wilson theory. Recall that a classical finite complex $X$ is of \emph{type $n$} if $K(i)^{cl}_*(X) = 0$ for $i < n$ and $K(n)_*^{cl}(X) \neq 0$. If $X$ is of type $n$, then there exists a non-nilpotent $v_n$-self-map $v : \Sigma^k X \to X$ for some $k \geq 0$ which induces an isomorphism in $K(n)^{cl}$-homology. 

The classical Telescope Conjecture first appeared in \cite[10.5]{Rav84}, where it had the following form:

\begin{conj}[Classical Telescope Conjecture (Telescopic Formulation)]
Let $X$ be any finite complex of type $n$ with non-nilpotent $v_n$-self-map $v : \Sigma^k X \to X$. Then $\langle v^{-1} X \rangle$ depends only on $n$, and $\langle v^{-1} X \rangle = \langle K(n)^{cl} \rangle$. 
\end{conj}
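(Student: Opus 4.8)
The statement is Ravenel's classical Telescope Conjecture \cite{Rav84}, so there is no complete proof to reconstruct: it is a theorem only for $n \leq 1$, and the plan is to recall the argument in those heights and to isolate precisely where it breaks down. Working $p$-locally, as is customary in chromatic homotopy theory, the height $0$ case is essentially formal: the $v_{0}$-self-map of a type $0$ complex $X$ is multiplication by $p$, so $v_{0}^{-1}X$ is the rationalization $X_{\mathbb{Q}}$, which is nontrivial precisely because $H\mathbb{Q}_{*}(X) \neq 0$, and hence $\langle v_{0}^{-1}X\rangle = \langle H\mathbb{Q}\rangle = \langle K(0)^{cl}\rangle$ for every type $0$ complex.

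The only substantive known case is height $1$, the theorem of Mahowald at $p = 2$ (and Miller at odd primes), and it is the literal classical shadow of the present paper. One fixes a minimal type $1$ complex carrying a $v_{1}$-self-map — at $p = 2$ one may take $Y = S^{0}/(2,\eta)$ or $A_{1}$ — and computes $\pi_{*}v_{1}^{-1}Y$ using the $bo$-resolution, the classical analogue of the $kq$-resolution studied here. After inverting $v_{1}$ the resolution acquires a horizontal vanishing line and collapses, so $v_{1}^{-1}Y$ is computable and its homotopy agrees with that of $L_{K(1)}Y$; one then checks that the two have the same acyclics, so that $\langle v_{1}^{-1}Y\rangle = \langle K(1)^{cl}\rangle$. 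Since every type $1$ complex is built from such $Y$ by cofiber sequences — the inductive passage from $A_{1}$ to $S^{0}/2$ to $S^{0}$ mirrored in Section \ref{Sec:E2} — the Bousfield class is forced to be the same for all of them. The crucial ingredients are the image-of-$J$ computation and the slope-$1/5$ vanishing line, i.e. the direct antecedents of Theorems \ref{Thm:kqLines} and \ref{Thm:kqVanishing}.

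For $n \geq 2$ there is no proof — here or anywhere, at the time of writing — and this is the heart of the matter. The natural strategy would replace the $bo$-resolution by a $BP$- or $E(n)^{cl}$-based resolution, hope that inverting $v_{n}$ again forces a localized vanishing line and collapse, and identify the outcome with $L_{K(n)}X$. The obstruction is that no such localized vanishing line is known: the telescope $v_{n}^{-1}X$ may support non-nilpotent periodic families invisible to the $E(n)^{cl}$-local (hence $K(n)^{cl}$-local) machinery, and bounding this potential discrepancy is exactly what the conjecture demands. This is why the statement can only be recorded as a conjecture, and it is precisely this gap, set against the successful height $1$ analysis via $kq$, that motivates the motivic Telescope Conjecture formulated later in the paper.
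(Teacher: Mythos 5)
You correctly identify that this is Ravenel's conjecture being recalled for motivation, not a result the paper proves, and your survey of the known cases (heights $0$ and $1$ via the $bo$-resolution) and the obstruction at heights $\geq 2$ matches the historical record and the paper's framing. Since the paper offers no proof to compare against, your note is the appropriate response; the only thing to flag is that the paper itself simply cites \cite{Rav84} and moves directly to Miller's localization/smashing reformulations, whereas you supply the extra context of where the height-$1$ argument lives and why it does not extend.
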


Let $K(\leq n)^{cl} = \bigvee_{i=0}^n K(i)^{cl}$. Miller provided two new formulations of the classical Telescope Conjecture in \cite[Sec. 3]{Mil92}.

\begin{conj}[Classical Telescope Conjecture (Localization Formulation)]
The natural transformation $L^{f}_{K(\leq n)^{cl}} \to L_{K(\leq n)^{cl}}$ is an equivalence. 
\end{conj}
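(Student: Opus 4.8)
Since the final statement is the classical Telescope Conjecture, I cannot expect to produce a complete proof; what I can do is organize the reductions, isolate the single genuinely hard input, and record where it is (and is not) known. First I would identify the two functors in the statement with the standard chromatic localizations. By the Hopkins--Smith thick subcategory theorem, the finite $K(\leq n)^{cl}$-acyclics are exactly the finite spectra of type $\geq n+1$, which form the thick subcategory generated by a single type-$(n+1)$ complex $F(n+1)$; hence $L^f_{K(\leq n)^{cl}} = L^f_{F(n+1)} =: L_n^f$. On the other side, Ravenel's localization theorem gives $\langle K(0)^{cl} \vee \cdots \vee K(n)^{cl}\rangle = \langle E(n)^{cl}\rangle$, so $L_{K(\leq n)^{cl}} = L_{E(n)^{cl}} = L_n$. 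Thus the assertion is that $L_n^f \to L_n$ is an equivalence. Both functors are smashing---$L_n^f$ because finite localizations are always smashing, and $L_n$ by the Hopkins--Ravenel Smash Product Theorem---so the natural transformation is an equivalence of functors if and only if it is an equivalence on the sphere, i.e.\ if and only if $L_n^f S^0$ is $E(n)^{cl}$-local.

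Next I would run chromatic induction on $n$, the case $n \leq 0$ (rationalization) being classical. Assuming $L_{n-1}^f \simeq L_{n-1}$, one checks that $S^0 \to L_n^f S^0$ is both an $E(n-1)^{cl}$-equivalence and a $K(n)^{cl}$-equivalence (its fiber lies in the localizing subcategory generated by $F(n+1)$, which is $E(n-1)^{cl}$- and $K(n)^{cl}$-acyclic), so passing to the fiber sequences $M_n^f S^0 \to L_n^f S^0 \to L_{n-1}^f S^0$ and $M_n S^0 \to L_n S^0 \to L_{n-1} S^0$ and using the induction hypothesis on the base, the inductive step reduces to showing $M_n^f S^0 \to M_n S^0$ is an equivalence, where $M_n^f = \mathrm{fib}(L_n^f \to L_{n-1}^f)$ and $M_n = \mathrm{fib}(L_n \to L_{n-1})$. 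Fixing a type-$n$ complex $F(n)$ with $v_n$-self-map $v : \Sigma^k F(n) \to F(n)$ (Hopkins--Smith), one has $M_n^f F(n) = L_n^f F(n) = v^{-1}F(n)$ (the successive cofibers of $v$ have type $n+1$) and $M_n F(n) = L_n F(n) = L_{K(n)^{cl}} F(n)$; since $M_n^f$ and $M_n$ are smashing and the $n$-th monochromatic category is generated by $F(n)$, the inductive step becomes the single statement that the canonical map $v^{-1}F(n) \to L_{K(n)^{cl}} F(n)$ is an equivalence.

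This last statement---the $v_n$-telescope of a type-$n$ complex is $K(n)^{cl}$-local---is the crux and is precisely where any proof must supply new content. It is trivial for $n=0$, a theorem of Mahowald (at $p=2$) and Miller (at odd primes) for $n=1$, and the $kq$-resolution computations of this paper (Theorems \ref{Thm:v1periodic} and \ref{Thm:EtaInv}) give the $\mathbb{C}$-motivic analogue at the prime $2$; for $n \geq 2$ it is open, and recent evidence points the other way, so I would not expect the program above to close. Concretely, the only viable attack on the crux is computational: build $v^{-1}F(n)$ from a tractable Adams-type resolution, establish a vanishing line so that its homotopy is finitely generated in each degree, and match the resulting $E_\infty$-page against $\pi_\ast\big(L_{K(n)^{cl}}F(n)\big)$ as computed by the $K(n)^{cl}$-local Adams--Novikov spectral sequence---and any mismatch (as is now believed to occur for $n \geq 2$) would \emph{refute} rather than prove the conjecture. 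In summary, the identifications of the functors, the smashing reductions, and the monochromatic descent are routine, and the entire weight of the statement rests on the single open input $v^{-1}F(n) \xrightarrow{\ \sim\ } L_{K(n)^{cl}}F(n)$.
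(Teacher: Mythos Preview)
The statement you are addressing is a \emph{conjecture}, not a theorem: the paper states it as one of three equivalent formulations of the classical Telescope Conjecture and provides no proof. The surrounding text simply recalls that the three formulations were shown to be equivalent by Miller \cite{Mil92}, using that $L_{K(\leq n)^{cl}}$ is smashing and the periodicity theorem plus asymptotic uniqueness of $v_n$-self-maps. So there is no ``paper's own proof'' to compare against.

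Your proposal correctly recognizes this and does exactly what is reasonable: you identify the functors with $L_n^f$ and $L_n$, reduce to the sphere via smashing, run monochromatic induction, and isolate the crux as $v^{-1}F(n) \xrightarrow{\sim} L_{K(n)^{cl}}F(n)$ for a type-$n$ complex. This is the standard chain of reductions and is logically sound. Your remark that the $n=1$ case is due to Mahowald and Miller is correct, and your acknowledgment that the $n\geq 2$ case is open (indeed, now known to fail by work of Burklund--Hahn--Levy--Schlank) is apt. One small point: the references to Theorems \ref{Thm:v1periodic} and \ref{Thm:EtaInv} of this paper concern the \emph{motivic} analogue, not the classical $n=1$ Telescope Conjecture itself, so they are evidence for the motivic conjecture rather than the classical one; the classical $n=1$ case predates this paper.
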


\begin{conj}[Classical Telescope Conjecture (Smashing Formulation)]
The map $L^f_{K(\leq n)^{cl}} S^0 \to L_{K(\leq n)^{cl}} S^0$ is an equivalence.
\end{conj}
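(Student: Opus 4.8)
The plan is to prove the Smashing Formulation by peeling it back, using the localization machinery just set up, to a single statement about periodic families --- the Telescopic Formulation --- which carries all the content. The first step is a formal reduction. Since $\langle K(\leq n)^{cl}\rangle = \langle E(n)^{cl}\rangle$ (Ravenel) and $L_{E(n)^{cl}}$ is smashing by the smashing theorem of Hopkins and Ravenel, Lemma \ref{Lem:BL} shows $L_{K(\leq n)^{cl}} \simeq L_{E(n)^{cl}}$ is smashing. The Corollary above --- ``$L^f_E \to L_E$ is an equivalence if and only if $E$ is smashing and $L^f_E S^{0,0} \to L_E S^{0,0}$ is an equivalence'' --- then says that for $E = K(\leq n)^{cl}$ the Localization Formulation and the Smashing Formulation are equivalent, so nothing is lost and I am free to argue with either the functor or the sphere.

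Next I would compare the two chromatic towers $\{L_m := L_{K(\leq m)^{cl}}\}$ and $\{L_m^f := L^f_{K(\leq m)^{cl}}\}$ layer by layer. The map $L_n^f S^0 \to L_n S^0$ is an equivalence provided the induced maps on $m$-th monochromatic layers are equivalences for every $m \leq n$; the $m$-th finite layer (the fiber of $L_m^f S^0 \to L_{m-1}^f S^0$) has Bousfield class $\langle v^{-1}X\rangle$ for $X$ any type-$m$ finite complex with $v_m$-self-map $v$ (independence of $X$ and $v$ is the thick subcategory theorem), while the $m$-th layer of the other tower (the fiber of $L_m S^0 \to L_{m-1}S^0$) has class $\langle K(m)^{cl}\rangle$. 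These agree for $m=0$ (both are the rational sphere), so the whole statement reduces to the Telescopic Formulation $\langle v^{-1}X\rangle = \langle K(m)^{cl}\rangle$ for $m \leq n$. One inclusion is formal: since $v$ acts invertibly on $K(m)^{cl}$-homology and $X$ is finite, $K(m)^{cl}\wedge v^{-1}X \simeq v^{-1}(K(m)^{cl}\wedge X) \simeq K(m)^{cl}\wedge X$ is a nonzero finite wedge of suspensions of the field spectrum $K(m)^{cl}$, whence $\langle v^{-1}X\rangle \geq \langle K(m)^{cl}\rangle$.

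The remaining inclusion $\langle v^{-1}X\rangle \leq \langle K(m)^{cl}\rangle$ --- equivalently, that $v^{-1}X$ is already $K(m)^{cl}$-local, so that $v^{-1}X \to L_{K(m)^{cl}}X$ is an equivalence --- is where I expect the genuine obstacle to lie, and it is the entire substance of the Telescope Conjecture: no general device forces $v_m$-periodic homotopy to coincide with $K(m)^{cl}$-local homotopy. The natural line of attack is to resolve $v^{-1}X$ by a connective periodic ring spectrum and compute its homotopy groups. For $m=0$ this is immediate, and for $m=1$ at the prime $2$ it is Mahowald's theorem, established precisely through the $bo$-resolution and the image-of-$J$ spectrum $j$ of \eqref{Eqn:jc} --- the classical prototype of the $kq$-resolution of Theorem \ref{ThmA} and of the spectrum $j_o$ of Theorem \ref{ThmE}. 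Thus the argument can be completed for $n \leq 1$; for $n \geq 2$ the inclusion $\langle v^{-1}X\rangle \leq \langle K(m)^{cl}\rangle$ is exactly the outstanding open case, and it is to model the $m = 1$ situation (and to construct analogues of $j$) that the $kq$-resolution and the motivic Telescope Conjecture of the next subsections are introduced.
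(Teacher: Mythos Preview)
This statement is a \emph{conjecture}, not a theorem, and the paper offers no proof of it. The paper simply records the three classical formulations (Telescopic, Localization, Smashing) and then remarks that Miller showed they are equivalent, sketching the two ingredients: (i) since $L_{K(\leq n)^{cl}}$ is smashing, the Localization and Smashing Formulations coincide; (ii) the periodicity theorem, asymptotic uniqueness of $v_n$-self-maps, and a thick subcategory argument identify the Telescopic and Localization Formulations. That is the entire content of the paper at this point.

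Your write-up is therefore not a proof attempt that can be checked against the paper's proof --- there is none --- but rather an expanded account of why the three formulations are equivalent and of what is known. On that score your reduction is essentially the same as Miller's (and the paper's summary of Miller): you use the smashing property of $L_{K(\leq n)^{cl}}$ to pass between Smashing and Localization, and then a monochromatic-layer argument to reach the Telescopic Formulation. You correctly identify the hard direction $\langle v^{-1}X\rangle \leq \langle K(m)^{cl}\rangle$ as the actual open content, and you correctly note that it is settled for $m\leq 1$ and open for $m\geq 2$. So nothing you wrote is wrong, but you should be clear that you have not proved the conjecture --- you have reproved the equivalence of formulations and recalled the known low-height cases, which is exactly what the paper does and all anyone can do at present.
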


These three formulations were shown to be equivalent in \cite{Mil92}. There are two key ideas in this identification. Using the fact that all finite localizations are smashing plus the fact that $L_{K(\leq n)^{cl}}$ is smashing \cite{Rav16}, Miller showed that the Localization and Smashing Formulations are equivalent. Then, Miller used the periodicity theorem, asymptotic uniqueness of $v_n$-self maps, and a thick subcategory argument to identify the Telescopic and Localization Formulations. In particular, one needs the following identification.

\begin{prop}
If $X$ is a $K(n-1)^{cl}$-acyclic finite complex with $v_n$-self-map $v : \Sigma^k X \to X$, then the map $X \to v^{-1} X$ is a finite $K(n)^{cl}$-localization. 
\end{prop}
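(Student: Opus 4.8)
The plan is to verify directly that $v^{-1}X$ has the two defining properties of the finite $K(n)^{cl}$-localization, with the Devinatz--Hopkins--Smith nilpotence theorem \cite{DHS88} as the only substantial input. Write $\ca$ for the set of finite $K(n)^{cl}$-acyclic spectra, so that $L^f_{K(n)^{cl}}=L^f_{\ca}$, and let $\iota\colon X\to v^{-1}X$ be the canonical map into the telescope. By the uniqueness clause for finite localizations, it suffices to prove (i) $v^{-1}X$ is finitely $K(n)^{cl}$-local, and (ii) $\iota$ is a finite $K(n)^{cl}$-equivalence. I would begin by noting that, since $X$ is $K(n-1)^{cl}$-acyclic and $v$ is a non-nilpotent $v_n$-self-map, $X$ has type exactly $n$, the map $K(i)^{cl}_*(v)$ is an isomorphism for $i=n$, and it is nilpotent for $i\neq n$.

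The heart of the argument is the \emph{smashing lemma}: for every $F\in\ca$ one has $F\wedge v^{-1}X\simeq 0$. To prove it, I would first use the K\"unneth isomorphism for Morava $K$-theory on finite complexes to see that $K(i)^{cl}_*(F\wedge X)=0$ for $i<n$ (since $K(i)^{cl}_*X=0$) and for $i=n$ (since $K(n)^{cl}_*F=0$), so that $F\wedge X$ is a finite complex of type $\geq n+1$. Next I would show the self-map $1_F\wedge v$ of $F\wedge X$ is nilpotent by checking the hypotheses of \cite{DHS88}: it induces the zero map on $K(i)^{cl}$-homology for $i\leq n$, a nilpotent map on $K(i)^{cl}$-homology for $n<i<\infty$ (as $K(i)^{cl}_*(v)$ is nilpotent there), and a nilpotent map on mod $p$ homology (for $n\geq 1$ by a degree count, since $v$ raises degree by $k>0$ while $H\f_{p,*}(F\wedge X)$ is bounded; for $n=0$ because $v_0=p$ acts by $0$). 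Hence $1_F\wedge v$ is nilpotent, and therefore $F\wedge v^{-1}X\simeq v^{-1}(F\wedge X)\simeq 0$.

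Granting the smashing lemma, property (i) is immediate: $\ca$ is closed under Spanier--Whitehead duality, so for $A\in\ca$ and any $m$, $[\Sigma^m A,v^{-1}X]\cong\pi_m(DA\wedge v^{-1}X)=0$. For property (ii), I would identify $\mathrm{cofib}(\iota)$ with $\hocolim_j\mathrm{cofib}(v^j\colon\Sigma^{jk}X\to X)$; each $C_j:=\mathrm{cofib}(v^j)$ is finite with $K(i)^{cl}_*C_j=0$ for $i\leq n$ (using $K(i)^{cl}_*X=0$ for $i<n$ and that $K(n)^{cl}_*(v^j)$ is an isomorphism), hence $C_j\in\ca$, hence $C_j$ is finitely $K(n)^{cl}$-acyclic. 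Since the finitely $K(n)^{cl}$-acyclic spectra are closed under homotopy colimits, $\mathrm{cofib}(\iota)=\hocolim_j C_j$ is finitely $K(n)^{cl}$-acyclic, so $\iota$ is a finite $K(n)^{cl}$-equivalence. Combining (i) and (ii) and invoking uniqueness of finite localizations, $\iota\colon X\to v^{-1}X$ exhibits $v^{-1}X$ as $L^f_{K(n)^{cl}}X$.

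The main obstacle is not conceptual: the one genuinely external fact is the nilpotence theorem, which we need to conclude that $1_F\wedge v$ is nilpotent, and the only part requiring care by hand is the mod $p$ homology nilpotence in the low-height cases $n=0,1$. One should also be mildly careful to justify the identification of $\mathrm{cofib}(\iota)$ with the telescope of the $C_j$, but this is the standard commutation of cofibres with filtered homotopy colimits.
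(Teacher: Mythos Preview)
The paper does not actually prove this proposition: it appears in the subsection recalling the classical Telescope Conjecture and is stated without proof as a result due to Miller \cite{Mil92}. The later motivic analogue (Corollary~\ref{Cor:Tel}) is justified only by the phrase ``the proof of \cite[Prop.~14]{Mil92} now carries over.'' So there is no in-paper argument to compare against beyond the attribution to Miller.

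Your argument is correct and is precisely Miller's strategy: establish finite locality of $v^{-1}X$ via the smashing lemma $F\wedge v^{-1}X\simeq 0$ for every finite $K(n)^{cl}$-acyclic $F$, which in turn rests on showing that $1_F\wedge v$ is nilpotent by Morava $K$-theory detection; then exhibit the cofibre of $\iota$ as a filtered homotopy colimit of finite $K(n)^{cl}$-acyclics, hence finitely $\ca$-acyclic. One small correction: the form of the nilpotence theorem you invoke---that a self-map of a finite $p$-local complex is nilpotent if and only if it is nilpotent on every $K(i)^{cl}$ for $0\le i\le\infty$---is the Hopkins--Smith version \cite{HS98}, not the original $MU$-detection theorem of \cite{DHS88}. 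With that citation adjusted, everything is in order.
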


\subsection{The motivic Telescope Conjecture}

Our goal now is to propose motivic analogs of the three formulations of the classical Telescope Conjecture described above. We begin with some background from chromatic motivic homotopy theory. 

First, recall that Borghesi defined $\c$-motivic Morava K-theories $K(n)$ in \cite{Bor03} satisfying
$$K(n)_{**} \cong \m_2[v_n^{\pm 1}]$$
where $|v_n| = (2^{n+1}-2,2^n-1)$. These were also defined by Hornbostel in \cite{Hor06}. 

\begin{defin}
A finite motivic spectrum $X$ is of \emph{classical type $n$} if $K(i)_{**}(X) = 0$ for $i < n$ and $K(n)_{**}(X) \neq 0$.  
\end{defin}

\begin{lem}\label{Lem:ClassicalTypen}
If $X$ is of classical type $n$ and $H_{**}(X)$ is $\tau$-torsion free, then $X$ admits a non-nilpotent $v_n$-self map $v : X \to \Sigma^{-r,-s} X$, i.e. a self-map $v$ which induces an isomorphism on $K(n)$-homology. Moreover, any two such $v$ coincide after raising them to suitable powers. 
\end{lem}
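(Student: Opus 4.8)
The plan is to reduce the motivic statement to the classical nilpotence and periodicity theorems of Hopkins–Smith via Betti realization, using the $\tau$-torsion freeness of $H_{**}(X)$ as the bridge. First I would observe that $X$ being a finite $\c$-motivic spectrum with $\tau$-torsion free mod-$2$ homology implies that $X$ is ``$\tau$-rigid'': its motivic homotopy and homology are controlled by the classical homotopy and homology of its Betti realization $X_{\c} := Re_\c(X)$, in the sense that $\pi_{**}(X)[\tau^{-1}] \cong \pi_*(X_\c)\otimes \f_2[\tau^{\pm 1}]$ and similarly for $K(n)$-homology. In particular, since $K(n)_{**}(X) \cong \m_2[v_n^{\pm 1}]\otimes_{\f_2[\tau]} H\f_2$-type computations are detected after inverting $\tau$, the hypothesis ``$X$ is of classical type $n$'' translates to ``$X_\c$ is of type $n$'' in the classical sense. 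Then the classical periodicity theorem (Hopkins–Smith) produces a non-nilpotent $v_n$-self map $v_\c : \Sigma^{k}X_\c \to X_\c$ inducing an iso on $K(n)^{cl}$-homology.

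Next I would lift $v_\c$ to a motivic self map. Here the $\tau$-torsion freeness is essential: the obstruction-theoretic argument (or a direct computation with the motivic Adams spectral sequence for $[X, \Sigma^{-r,-s}X]$, which for $\tau$-torsion free $H_{**}(X)$ collapses onto the classical computation after inverting $\tau$) shows that the map $[\Sigma^{k}X, X] \to [\Sigma^{k}X_\c, X_\c]$ hits $v_\c$ after possibly raising it to a power. Concretely, one chooses the bidegree $(-r,-s)$ with $-r$ equal to the classical degree $-k$ and $-s$ the appropriate weight (dictated by the ``half'' relation between stem and weight for $\tau$-free classes, cf. Corollary \ref{Cor:Half1}), and then a class realizing $v_\c^N$ exists for $N \gg 0$ because the relevant motivic stable stem agrees with the classical one in the $\tau$-free part. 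Call the resulting map $v : \Sigma^{-r,-s}X \to X$ (after re-indexing); it induces an iso on $K(n)_{**}$ because it does so after inverting $\tau$ and $K(n)_{**}(X)$ is $\tau$-free (being $\m_2[v_n^{\pm1}]$). Non-nilpotence of $v$ follows because $v_\c = Re_\c(v)$ is non-nilpotent and Betti realization is monoidal, so $Re_\c(v^m) = v_\c^m \neq 0$ for all $m$.

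For the asymptotic uniqueness statement, I would again transport the classical uniqueness of $v_n$-self maps: given two self maps $v, v'$ of $X$ inducing isomorphisms on $K(n)$-homology, their Betti realizations are $v_n$-self maps of $X_\c$, so by the classical theorem $v_\c^a = (v'_\c)^b$ for suitable $a,b$. The difference $v^a - (v')^b$ (suitably interpreted after matching bidegrees) then maps to zero under $Re_\c$, hence is $\tau$-torsion in $[\Sigma^{?,?}X,X]$; but a $\tau$-torsion self map of $X$ is nilpotent (by the motivic nilpotence theorem, or by noting that a $\tau$-torsion element of the motivic stable stems of a finite complex is nilpotent since it dies under $\tau$-inversion and $\tau$ acts nilpotently on the torsion), so after raising to a further power we get $v^{aN} = (v')^{bN}$.

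The main obstacle I anticipate is the lifting step: making precise that the comparison map on self-map groups $[\Sigma^{k}X,X] \to [\Sigma^{k}X_\c, X_\c]$ is surjective onto a power of $v_\c$. This requires knowing that the motivic Adams (or Adams–Novikov) spectral sequence computing $\pi_{**}(F(X,X))$ has its $\tau$-free part isomorphic to the classical computation — which uses $\tau$-torsion freeness of $H_{**}(X)$ together with a Künneth argument for $H_{**}(DX \wedge X)$ — and then a convergence/naming argument identifying a $\tau$-free permanent cycle lifting $v_\c^N$. Everything else (non-nilpotence, the $K(n)$-iso property, uniqueness) is then formal given Betti realization's monoidality and the classical Hopkins–Smith theorems.
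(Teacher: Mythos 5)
Your overall strategy — reduce to the classical Hopkins--Smith periodicity theorem via Betti realization, using $\tau$-torsion freeness of $H_{**}(X)$ to match the $\tau$-free part of the motivic self-map groups with the classical ones, then pull back existence, non-nilpotence and the $K(n)$-isomorphism property — is precisely the route the paper takes, and you are in fact more careful than the paper on the lifting step (the paper asserts outright that "there exists some non-trivial element $v\in\pi_{-r,?}(X)$ which maps to $v^{cl}$," whereas you correctly observe that one may need to pass to a power of $v^{cl}$ and choose the weight consistently with the stem-weight relation for $\tau$-free classes).

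However, the elaboration you offer for the asymptotic uniqueness has a genuine gap. You claim that a $\tau$-torsion self-map of a finite motivic complex with $\tau$-torsion free homology is automatically nilpotent "since it dies under $\tau$-inversion and $\tau$ acts nilpotently on the torsion." This is false: take $X = S^{0,0}$, which has $H_{**}(X)=\m_2$ ($\tau$-torsion free), and consider $\eta^4\in\pi_{4,4}(S^{0,0})$ as a self-map $\Sigma^{4,4}S^{0,0}\to S^{0,0}$. Since $\tau\eta^3 = 0$, we have $\tau\eta^4 = 0$, so $\eta^4$ is (simple) $\tau$-torsion; yet $\eta$ is non-nilpotent, so $(\eta^4)^k=\eta^{4k}\neq 0$ for all $k$. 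The inference "dies under $\tau$-inversion $\Rightarrow$ nilpotent" does not hold, and in fact this kind of $\tau$-torsion periodicity (Gheorghe's $w_i$-periodicity and Krause's $\beta_{ij}$-periodicity) is exactly the phenomenon the rest of Section~\ref{Sec:Tel} is designed to capture. Even if $\delta := v^a - (v')^b$ were nilpotent, you would still need a commutativity argument (à la Hopkins--Smith, using that the $v_n$'s are asymptotically central) to pass from "$\delta$ nilpotent" to "$v^{aN}=(v')^{bN}$," which your write-up omits. To be fair, the paper itself says only "asymptotic uniqueness follows similarly from Betti realization" and so is also sketchy here; but the particular justification you substitute in does not hold, and patching it requires either an honest invocation of Krause's motivic nilpotence and uniqueness results \cite[Thm.\ 6.12]{Kra18} applied to $\delta$, or some other mechanism to rule out $\tau$-torsion contributions surviving in high powers.
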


\begin{proof}
The Betti realization $Re(X)$ is a classical spectrum of (classical) type $n$, so it admits a non-nilpotent $v_n$-self-map $v^{cl} : X \to \Sigma^{-r} X$ which induces an isomorphism on $K(n)^{cl}$-homology. Betti realization is strong symmetric monoidal, so the map
$$\pi_{s,t}(X) \to \pi_s(Re(X))$$
is a homomorphism of graded rings. In particular, there exists some non-trivial element $v \in \pi_{-r,?}(X)$ which maps to $v^{cl}$. Since $v^{cl}$ is non-nilpotent, the element $v$ must also be non-nilpotent. The isomorphism $H_{**}(X)[\tau^{-1}] \cong H_{*}(Re(X))[\tau^{\pm 1}]$ and the universal coefficient theorem show that $v$ induces an isomorphism in $K(n)$-homology. Asymptotic uniqueness follows similarly from Betti realization. 
\end{proof}

\begin{lem}\label{Lem:ClassicalAU}
Let $X$ and $Y$ be finite motivic spectra of classical type $n$ with $\tau$-torsion free homology with non-nilpotent $v_n$-self-maps $\psi$ and $\phi$, respectively, and let $f : X \to Y$ be any map. Then there are positive integers $i$ and $j$ for which the diagram
\[
\begin{tikzcd}
X \arrow{r}{f} \arrow{d}{\psi^i} & Y \arrow{d}{\phi^j} \\
\Sigma^{-r,-s} X \arrow{r}{\Sigma^{-r,-s} f} & \Sigma^{-r,-s} Y
\end{tikzcd}
\]
commutes. 
\end{lem}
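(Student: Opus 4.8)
## Proof proposal for Lemma \ref{Lem:ClassicalAU}

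The plan is to reduce the statement to the corresponding \emph{classical} asymptotic uniqueness of $v_n$-self-maps via Betti realization, exactly as in the proof of Lemma \ref{Lem:ClassicalTypen}, and then to lift the resulting homotopy commutativity back to the motivic category using the $\tau$-torsion-freeness hypothesis. First I would apply the Betti realization functor $Re$ to the square in question: $Re(X)$ and $Re(Y)$ are classical finite spectra of type $n$, and $Re(\psi)$, $Re(\phi)$ are (non-nilpotent, since $\psi,\phi$ are) classical $v_n$-self-maps by the argument already used in Lemma \ref{Lem:ClassicalTypen}. The classical asymptotic uniqueness theorem (Hopkins--Smith, as used in \cite{Mil92}) then furnishes positive integers $i_0$, $j_0$ with $Re(\phi)^{j_0}\circ Re(f) \simeq Re(\Sigma^{-r,-s}f)\circ Re(\psi)^{i_0}$ in the classical stable homotopy category.

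Next I would promote this to the statement that $\phi^{j}\circ f$ and $(\Sigma^{-r,-s}f)\circ \psi^{i}$ agree motivically for suitable $i$, $j$. The difference $g := \phi^{j_0}\circ f - (\Sigma^{-r,-s}f)\circ \psi^{i_0} \in [X,\Sigma^{-r,-s}Y]$ becomes null after Betti realization. Here is where $\tau$-torsion-freeness enters: using the identification $\pi_{**}(-)[\tau^{-1}]\cong \pi_*(Re(-))[\tau^{\pm 1}]$ together with the fact that $X$ is a finite (cellular) spectrum so that $[X,\Sigma^{-r,-s}Y]$ is built from finitely many copies of $\pi_{**}(Y)$ — and $H_{**}(Y)$, hence (one must check) $\pi_{**}(Y)$ in the relevant range, has controlled $\tau$-torsion — one concludes that $g$ is annihilated by a power of $\tau$. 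But $\psi$ (being a $v_n$-self-map with $\tau$-torsion-free target homology) acts invertibly enough that multiplying through by a further power of $\psi$ clears the $\tau$-torsion: replacing $i_0$ by $i_0 + N$ and $j_0$ by $j_0 + N$ for $N\gg 0$ kills $g\circ\psi^N$ (equivalently $\phi^N\circ g$), since composing a $\tau$-torsion class with enough copies of the non-nilpotent, $\tau$-torsion-free self-map lands it in filtration where it must vanish. Setting $i := i_0+N$, $j := j_0+N$ gives the commuting square.

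The main obstacle I anticipate is the bookkeeping in the middle step — namely making precise the claim that a map $X \to \Sigma^{-r,-s}Y$ which is null after inverting $\tau$ must itself be $\tau$-power-torsion, and that such torsion is cleared by postcomposition with a power of the $v_n$-self-map. The first half uses finiteness of $X$ and the comparison of motivic and classical homotopy after inverting $\tau$, but one has to be careful that $[X,\Sigma^{-r,-s}Y]$ is a finitely generated module over $\pi_{**}(S^{0,0})$ in each bidegree and that no $\tau$-divisible subgroups intervene; the $\tau$-torsion-free homology hypothesis on $X$ and $Y$ is exactly what rules this out (as in the proof of Lemma \ref{Lem:ClassicalTypen}). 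The second half is the observation that $\psi$, inducing an iso on $K(n)$-homology and having $\tau$-torsion-free source/target, has the property that $\psi^N$ annihilates $\tau$-torsion for $N$ large — this is the motivic shadow of the fact that $v_n$-self-maps of classical type-$n$ complexes are, after taking powers, as unique as one likes. Everything else is a direct transcription of \cite{Mil92}'s argument, and I would flag that the integers $i,j$ depend on $f$ (and on $\psi$, $\phi$), consistently with the "asymptotic" nature of the statement.
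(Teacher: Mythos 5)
Your overall strategy (Betti realize, invoke classical Hopkins--Smith asymptotic uniqueness, lift back) is exactly what the paper intends: the paper's proof is the one-sentence remark that "the result follows from Betti realization and the analogous classical result of Hopkins and Smith," so your first step and the identification of the $\tau$-power-torsion obstruction $g := \phi^{j_0}\circ f - (\Sigma^{-r,-s}f)\circ\psi^{i_0}$ are a reasonable unpacking of what the authors have in mind, and more explicit than what the paper actually writes.

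The genuine gap is in your clearing step. The claim that "composing a $\tau$-torsion class with enough copies of the non-nilpotent, $\tau$-torsion-free self-map lands it in filtration where it must vanish" is not justified and is, as stated, not true. A $v_n$-self-map of a type-$n$ complex moves classes \emph{along} the edge of the $1/(2^{n+1}-2)$-slope vanishing line, not into the vanishing region, and $\tau$-torsion in $\pi_{**}$ of a finite complex with $\tau$-torsion-free homology can perfectly well be $v_n$-periodic --- the whole phenomenon of $\eta$-periodicity ($\eta^4$ is $\tau$-torsion yet survives arbitrary $v_1$-multiplication in $\pi_{**}(kq)$) shows that "$\tau$-torsion" and "eventually killed by $v_n$" are independent conditions motivically. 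Your closing remark that this is "the motivic shadow of the fact that $v_n$-self-maps ... are as unique as one likes" is circular: asymptotic uniqueness is precisely what the lemma is after. What the classical Hopkins--Smith proof actually needs at this point is that the obstruction is \emph{nilpotent}, which classically comes from the nilpotence theorem after checking $K(n)$-triviality. Motivically, $\tau$-torsion guarantees triviality in Borghesi's $K(m)$ but says nothing about Krause's exotic $K(\beta_{ij})$ with $j\geq 0$, so nilpotence of $g$ does not come for free; one would need to invoke the motivic nilpotence/thick-subcategory machinery of Krause \cite[Thm.~6.12]{Kra18} (which the paper cites for the closely related Corollary~\ref{Cor:Unique}) together with an argument that $g$ is acyclic in the exotic theories --- and that last point is exactly where the $\tau$-torsion-freeness hypothesis on $H_{**}$ alone is not obviously enough, since $\tau$-torsion-free complexes such as $S^{0,0}/2$ still carry nontrivial exotic $K$-theory.
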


\begin{proof}
The result follows from Betti realization and the analogous classical result of Hopkins and Smith \cite{HS98}. 
\end{proof}

\begin{rem2}
As a first guess, one might take $K(\leq n) := \bigvee_{i = 0}^n K(i)$ and conjecture that $L^f_{K(\leq n)} \to L_{K(\leq n)}$ is an equivalence. The previous lemma allows one to identify $L^f_{K(\leq n)}X \simeq v^{-1} X$ for any finite motivic spectrum $X$ of classical type $n$ with $\tau$-torsion free homology, where $v$ is a non-nilpotent $v_n$-self map of $X$. This provides part of the input needed to relate the Telescopic and Localization Formulations of the resulting Telescope Conjectures.

However, there are still several issues with this approach. We do not know if $L_{K(\leq n)}$ is smashing, and we do not expect a thick subcategory argument to hold in this context. Moreover, our calculations in Section \ref{Sec:TCv1} suggest that the motivic Telescope Conjecture at height one should carry more information that that which is detected by $K(0) \vee K(1)$. 
\end{rem2}

Gheorghe constructed exotic $\c$-motivic Morava K-theory spectra $K(w_n)$ in \cite[Cor. 3.14]{Ghe17b} with
$$K(w_n)_{**} \cong \f_2[w_n^{\pm 1}]$$
where $|w_n| = (2^{i+2}-3,2^{i+1}-1)$. These detect certain ``exotic" forms of periodicity in the $\c$-motivic stable stems. For example, the spectrum $K(w_0)$ detects $\eta$-periodicity, and $K(w_1)$ detects the non-nilpotent self-map $w^4_1 : \Sigma^{20,12} S^{0,0}/\eta \to S^{0,0}/\eta$ which Andrews used to produce various exotic periodic families in \cite{And18}.

Gheorghe's exotic Morava K-theories fit into a larger family of exotic Morava K-theories $K(\beta_{ij})$ defined by Krause in \cite{Kra18}. The following proposition specializes \cite[Prop. 6.9]{Kra18} to the case $p=2$. 

\begin{prop}\cite{Kra18}
For each $i> j \geq 0$, there is a $\c$-motivic $2$-complete cellular $C\tau$-module $K(\beta_{ij})$ with 
$$K(\beta_{ij})_{**} \simeq \f_2[\alpha_{ij}, \beta_{ij}^{\pm 1}]/(\alpha_{ij}^2 = \beta_{ij}),$$
with $|\alpha_{ij}| = (2^{j+1}(2^i-1)-1, 2^j(2^i-1))$ and $|\beta_{ij}| = (2^{j+2}(2^i-1)-2,2^{j+1}(2^i-1))$. For $j=0$, they admit an $E_\infty$-ring structure. 
\end{prop}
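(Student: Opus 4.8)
The cleanest route is to invoke \cite[Prop. 6.9]{Kra18} and simply unwind the specialization to $p=2$; what follows is a sketch of the underlying construction so that one can check that the degrees and ring structure come out as stated. The starting point is the algebraicity of the special fiber underlying Krause's work: the $\infty$-category of $2$-complete cellular $C\tau$-modules is equivalent, as a symmetric monoidal stable $\infty$-category, to the derived $\infty$-category of (even) $BP_*BP$-comodules, via an equivalence carrying $C\tau$ to the unit object. Under this equivalence the bigraded homotopy of $C\tau$ is the Adams--Novikov $E_2$-page $\Ext^{**}_{BP_*BP}(BP_*,BP_*)$, with an $\Ext$-class of cohomological degree $s$ and (even) internal degree $t$ sitting in motivic bidegree $(t-s,\,t/2)$.

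Next I would recall that the algebraic $\alpha$- and $\beta$-family elements $\alpha_{ij}\in \Ext^{1,2^{j+1}(2^i-1)}_{BP_*BP}$ and $\beta_{ij}\in \Ext^{2,2^{j+2}(2^i-1)}_{BP_*BP}$ are exactly the classes detecting $v_i^{2^j}$-periodicity at height $i$; feeding their internal and cohomological degrees through the bidegree dictionary above reproduces $|\alpha_{ij}|=(2^{j+1}(2^i-1)-1,\,2^j(2^i-1))$ and $|\beta_{ij}|=(2^{j+2}(2^i-1)-2,\,2^{j+1}(2^i-1))$, and in the algebraic category, where at $p=2$ there is no graded-commutativity obstruction, one has $\beta_{ij}=\alpha_{ij}^2$. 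Following Krause, $K(\beta_{ij})$ is then built exactly the way one builds Morava $K$-theory: restrict to the height-$i$ localized subcategory, where (via the Morava stabilizer group) $\beta_{ij}$ becomes a unit after an appropriate quotient by a regular sequence, form the corresponding quotient ring object, and invert $\beta_{ij}$. A Koszul/change-of-rings computation identifies its homotopy with $\f_2[\alpha_{ij},\beta_{ij}^{\pm 1}]/(\alpha_{ij}^2=\beta_{ij})$, and transporting back along the equivalence produces the desired cellular $2$-complete $C\tau$-module; cellularity and $2$-completeness are automatic, since they hold for the unit and are preserved by the operations used.

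For the final clause, when $j=0$ the class $\alpha_{i0}$ is represented by a genuine element of the motivic stable stems (for $i=1$ it is $\eta$, and in general $\alpha_{i0}$ is the element $w_{i-1}$ of Gheorghe \cite{Ghe17b}), so $K(\beta_{i0})=K(w_{i-1})$ can be obtained as a finite localization at an honest self-map of a small ring spectrum, equivalently as a localization of a Thom-spectrum-type $E_\infty$-ring, and the $E_\infty$-structure is inherited from that of the $w$-periodic $E_\infty$-ring as in \cite{Kra18}. The main obstacle is not a single deep step but the careful translation between Krause's formalism over $BP_*BP$ and the $C\tau$-module language used here: keeping the bidegree conventions straight so that the factors $2^{j+1}(2^i-1)$ and $2^j(2^i-1)$ land in the correct coordinates, and checking that the $E_\infty$-structure for $j=0$ genuinely survives the specialization $p=2$, where the homotopy relation is $\alpha_{ij}^2=\beta_{ij}$ rather than $\alpha_{ij}^2=0$. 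Once the algebraic model and this dictionary are pinned down, the rest is formal.
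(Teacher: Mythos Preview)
The paper does not prove this proposition at all; it is stated as a citation of \cite[Prop.~6.9]{Kra18} specialized to $p=2$, with no argument given. Your proposal therefore goes well beyond what the paper does, supplying a sketch of Krause's construction rather than comparing to a proof that exists in the paper.

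As a sketch of the underlying argument your outline is broadly reasonable, but a few points deserve care. The symmetric monoidal equivalence between cellular $C\tau$-modules and the derived category of $BP_*BP$-comodules is due to Gheorghe--Wang--Xu \cite{GWX18}, not Krause; you should attribute it correctly. Your bidegree dictionary $(s,t)\mapsto(t-s,t/2)$ does recover the stated degrees, which is a good sanity check. However, calling $\alpha_{ij}$ and $\beta_{ij}$ ``the algebraic $\alpha$- and $\beta$-family elements'' is somewhat loose: in Krause's framework these are closer to the May spectral sequence generators $h_{ij}$ and $b_{ij}$ (or their Adams--Novikov analogues) than to the classical Greek-letter families, and the construction of $K(\beta_{ij})$ proceeds by iterated cofibers and localization in the $C\tau$-module category rather than by restricting to a height-$i$ localized subcategory in the Morava-theoretic sense. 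Finally, your account of the $E_\infty$-structure for $j=0$ via Gheorghe's $K(w_{i-1})$ is the right identification, but the $E_\infty$-structure in \cite{Kra18} comes from an explicit Thom-spectrum or quotient-ring construction in the algebraic model, not from finite localization of a self-map.
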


Krause observes that $K(w_{n-1}) \simeq K(\beta_{n,0})$. Following \cite[Prop. 4.35]{Kra18} and \cite[Pg. 124]{Kra18}, we define
$$d_{ij} := \begin{cases} \dfrac{1}{2^{i+1}-2} \quad & \text{ if } j=-1, \\
\dfrac{2^{j+1}(2^i-1)}{2^{j+2}(2^i-1)-2} \quad & \text{ else.}
\end{cases}
$$
We set $K(\beta_{i,-1}) := K(i)$ for $i \geq 0$. 

\begin{defin}\label{Def:Type}
A finite motivic spectrum $X$ is of type $(m,n)$ if $H_{**}(X)$ is $\tau$-torsion free, and $K(\beta_{ij})_{**}(X) = 0$ for all $i > j \geq -1$ with $d_{ij} > d_{mn}$ and $K(\beta_{mn})_{**}(X) \neq 0$.
\end{defin}

\begin{rem2}
Unlike in the classical setting, it is not known if the thick subcategories defined by vanishing of $K(\beta_{ij})$ are all of the prime thick subcategories of finite $p$-local cellular motivic spectra. For further discussion, see \cite[Introduction]{Kra18}. 
\end{rem2}

\begin{prop}\label{Prop:NonNilp}
Suppose that $X$ is a finite motivic spectrum of type $(m,n)$. Then there is a non-nilpotent self-map $v : \Sigma^{|\theta|} X \overset{\theta}{\to} X$ of slope $d_{mn}$ which induces an isomorphism on $K(\beta_{mn})_{**}(X)$. Moreover, any two such $v$ coincide after raising them to suitable powers. 
\end{prop}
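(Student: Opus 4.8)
The plan is to run the argument of Lemma~\ref{Lem:ClassicalTypen}, separating the ``classical'' height $n=-1$ from the ``exotic'' heights $n\geq 0$. When $n=-1$ we have $K(\beta_{m,-1})=K(m)$, and a finite spectrum with $\tau$-torsion free homology which is of type $(m,-1)$ in the sense of Definition~\ref{Def:Type} is exactly a spectrum of classical type $m$ with $\tau$-torsion free homology; the existence and asymptotic uniqueness of the self-map are then precisely Lemma~\ref{Lem:ClassicalTypen} and Lemma~\ref{Lem:ClassicalAU}, deduced from the Hopkins--Smith periodicity theorem by Betti realization. So assume $n\geq 0$. Here $K(\beta_{mn})$ is a cellular $2$-complete $C\tau$-module, hence $K(\beta_{mn})\wedge X \simeq K(\beta_{mn})\wedge_{C\tau}(X/\tau)$ and in particular $K(\beta_{mn})_{**}(X)\cong K(\beta_{mn})_{**}(X/\tau)$; the same holds for every $K(\beta_{ij})$, so $X$ being of type $(m,n)$ is equivalent to $X/\tau$ being of ``algebraic type $(m,n)$'' in $\mathrm{Mod}_{C\tau}$. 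Thus it suffices to work after smashing with $C\tau$ and then to realize the result motivically.

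For existence, pass to $X/\tau$ in the category of cellular $2$-complete $C\tau$-modules. Under the equivalence of this category with an algebraic stable homotopy category (used in \cite{Kra18}, building on Gheorghe--Isaksen--Krause--Ricka \cite{GIKR18}), Krause's algebraic nilpotence and periodicity results \cite{Kra18} — the algebraic avatar of Hopkins--Smith applied to an object of algebraic type $(m,n)$ — produce a non-nilpotent self-map $\bar v\colon \Sigma^{|\theta|}(X/\tau)\to X/\tau$ of slope $d_{mn}$ which induces an isomorphism on $K(\beta_{mn})_{**}$, and any two such agree after raising to powers.

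To realize $\bar v$ over the motivic sphere, consider the cofiber sequence $\Sigma^{0,-1}X\xrightarrow{\tau}X\to X/\tau\to \Sigma^{1,-1}X$. Applying $[\Sigma^{|\theta|}X,-]$ gives the $\tau$-Bockstein long exact sequence
$$\cdots \to [\Sigma^{|\theta|}X,\,\Sigma^{0,-1}X] \xrightarrow{\ \tau\ } [\Sigma^{|\theta|}X,\,X] \to [\Sigma^{|\theta|}X,\,X/\tau] \xrightarrow{\ \partial\ } [\Sigma^{|\theta|}X,\,\Sigma^{1,-1}X] \to \cdots,$$
and equivalently $[\Sigma^{\star}X,X]$ and $[\Sigma^{\star}X,X/\tau]$ are the abutments of the $\c$-motivic and the $C\tau$-linear (i.e. algebraic) Adams--Novikov spectral sequences for the finite spectrum $F(X,X)$, which have the same $E_2$-page. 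The element $\bar v$ is named by a non-nilpotent permanent cycle on the algebraic $E_2$; since $H_{**}(X)$ (and hence $H_{**}(F(X,X))$) is $\tau$-torsion free, the motivic Adams--Novikov differentials and hidden $\tau$-extensions on $F(X,X)$ can only delay periodicity, not destroy it, so after replacing $\bar v$ by a suitable power its detecting class survives to a genuine motivic self-map $v\colon \Sigma^{k|\theta|}X\to X$ whose reduction mod $\tau$ is a power of $\bar v$. Then $v$ induces an isomorphism on $K(\beta_{mn})_{**}(X)=K(\beta_{mn})_{**}(X/\tau)$ because $K(\beta_{mn})$ is a $C\tau$-module, and $v$ is non-nilpotent because $\bar v$ is. For asymptotic uniqueness, two such $v,v'$ reduce to non-nilpotent $C\tau$-module self-maps of $X/\tau$ of equal slope, hence agree after raising to powers by the algebraic uniqueness; lifting this identity through the $\tau$-Bockstein — again using $\tau$-torsion freeness of $H_{**}(X)$, and Betti realization on the $\tau$-free part as in Lemma~\ref{Lem:ClassicalAU} — yields $v^{i}=(v')^{j}$ for suitable $i,j$.

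I expect the main obstacle to be exactly the realization step: upgrading the algebraic self-map $\bar v$ to an honest motivic self-map of $X$ itself (not merely of $X/\tau$) with a bidegree $|\theta|$ realizing the slope $d_{mn}$. This is where $\tau$-torsion freeness of $H_{**}(X)$ must be used in an essential way, and where one has to control the motivic Adams--Novikov differentials and $\tau$-Bockstein extensions on the finite spectrum $F(X,X)$; the analogous subtlety is absent classically (there is no $\tau$) and absent in Krause's $C\tau$-module framework (one never leaves $\mathrm{Mod}_{C\tau}$). A secondary bookkeeping point is checking that the motivic notion of type $(m,n)$ in Definition~\ref{Def:Type}, ordered by the $d_{ij}$, matches the height filtration governing the algebraic periodicity theorem after smashing with $C\tau$.
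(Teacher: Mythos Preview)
Your decomposition into the classical case $n=-1$ and the exotic case $n\geq 0$ matches the paper exactly, and for $n=-1$ you correctly invoke Lemma~\ref{Lem:ClassicalTypen} and Lemma~\ref{Lem:ClassicalAU}. The paper's proof is a one-line citation: ``This follows from the combination of Lemma~\ref{Lem:ClassicalTypen} and \cite[Thm.~6.12]{Kra18}.'' The point is that Krause's Theorem~6.12 already produces the \emph{motivic} self-map of $X$ itself (not merely of $X/\tau$), together with asymptotic uniqueness, under the hypothesis that $H_{**}(X)$ is $\tau$-torsion free; the realization step you are worried about is precisely the content of that theorem.

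What you have written for $n\geq 0$ is essentially a sketch of Krause's argument rather than an application of it. The strategy---pass to $C\tau$-modules, use the algebraic periodicity theorem there, then lift through the $\tau$-Bockstein using $\tau$-torsion freeness---is indeed Krause's, and you have correctly identified the realization step as the crux. But your justification of that step (``motivic Adams--Novikov differentials and hidden $\tau$-extensions on $F(X,X)$ can only delay periodicity, not destroy it'') is not a proof: one needs a vanishing-line argument to guarantee that some power of the algebraic class survives all differentials, and this is exactly what Krause supplies. So your proposal is correct in outline but does more work than necessary and leaves the hard step as an assertion; the fix is simply to cite \cite[Thm.~6.12]{Kra18} for the exotic case, as the paper does.
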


\begin{proof}
This follows from the combination of Lemma \ref{Lem:ClassicalTypen} and \cite[Thm. 6.12]{Kra18}. 
\end{proof}

\begin{defin}
We will refer to a non-nilpotent self-map as described in Proposition \ref{Prop:NonNilp} as a \emph{self-map of type $(m,n)$}. 
\end{defin}

\begin{exm}
\leavevmode
\begin{enumerate}
\item A non-nilpotent self-map of type $(m,-1)$ is a $v_m$-self-map. 
\item A non-nilpotent self-map of type $(m,0)$ is a $w_{m-1}$-self-map. 
\end{enumerate}
\end{exm}

Applying Lemma \ref{Lem:ClassicalAU} and \cite[Thm. 6.12]{Kra18} gives the following corollary. 

\begin{cor}\label{Cor:Unique}
Let $X$ and $Y$ be finite motivic spectra of type $(m,n)$ with non-nilpotent self-maps of type $(m,n)$ $\psi$ and $\phi$, respectively, and let $f : X \to Y$ be any map. Then there are positive integers $i$ and $j$ for which the diagram
\[
\begin{tikzcd}
X \arrow{r}{f} \arrow{d}{\psi^i} & Y \arrow{d}{\phi^j} \\
\Sigma^{-r,-s} X \arrow{r}{\Sigma^{-r,-s} f} & \Sigma^{-r,-s} Y
\end{tikzcd}
\]
commutes. 
\end{cor}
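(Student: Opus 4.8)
This is the $\c$-motivic version of the Hopkins--Smith asymptotic uniqueness theorem for $v_n$-self-maps, now for the exotic periodicities indexed by $(m,n)$. The plan is to deduce it from the existence-and-uniqueness statement for type $(m,n)$ self-maps (Proposition \ref{Prop:NonNilp}, which rests on Lemma \ref{Lem:ClassicalTypen}, Lemma \ref{Lem:ClassicalAU}, and \cite[Thm. 6.12]{Kra18}) by the standard function-spectrum manoeuvre; the case $n=-1$ is literally Lemma \ref{Lem:ClassicalAU}, so the content is that the same bookkeeping goes through with Krause's periodicity results replacing classical Hopkins--Smith.

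\textbf{Step 1: normalize bidegrees.} Since $\psi$ and $\phi$ have slope $d_{mn}$, their bidegrees are positive rational multiples of one another, so there are positive integers $i_0,j_0$ with $i_0\cdot|\psi| = j_0\cdot|\phi| =: -(r,s)$. Replacing $\psi$ by $\psi^{i_0}$ and $\phi$ by $\phi^{j_0}$ (still type $(m,n)$ self-maps, since the class of such self-maps is closed under powers) reduces us to the case $\psi\colon X\to\Sigma^{-r,-s}X$, $\phi\colon Y\to\Sigma^{-r,-s}Y$ of the same bidegree; it suffices to produce $k>0$ with $\Sigma^{-r,-s}f\circ\psi^{k}\simeq\phi^{k}\circ f$.

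\textbf{Step 2: pass to the function spectrum.} Let $T:=\mathrm{Fun}(X,Y)\simeq DX\wedge Y$, a finite cellular motivic spectrum. Because $H_{**}(X)$ is $\tau$-torsion free and finitely generated over $\mathbb{M}_2=\f_2[\tau]$ it is free, hence so is $H^{**}(X)\cong H_{**}(DX)$, and by the Künneth theorem $H_{**}(T)$ is free, in particular $\tau$-torsion free. Since each $K(\beta_{ij})_{**}$ is a graded field, universal coefficients give $K(\beta_{ij})_{**}(DX)=0$ iff $K(\beta_{ij})_{**}(X)=0$, and Künneth gives $K(\beta_{ij})_{**}(T)\cong K(\beta_{ij})_{-**}(DX)\otimes_{K(\beta_{ij})_{**}}K(\beta_{ij})_{**}(Y)$; hence $T$ is again of type $(m,n)$. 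Post-composition with $\phi$ and pre-composition with $\psi$ define commuting self-maps $L_\phi,R_\psi\colon T\to\Sigma^{-r,-s}T$, and $1_{DX}\wedge\phi$, $1_{DX}\wedge\psi^{\vee}$ induce isomorphisms on $K(\beta_{mn})_{**}(T)\neq 0$ because $\phi,\psi$ do on $K(\beta_{mn})_{**}(Y)$, $K(\beta_{mn})_{**}(X)$; in particular both $L_\phi$ and $R_\psi$ are non-nilpotent self-maps of $T$ of slope $d_{mn}$, i.e. type $(m,n)$ self-maps. By the uniqueness clause of Proposition \ref{Prop:NonNilp} there are $a,b>0$ with $L_\phi^{a}\simeq R_\psi^{b}$; comparing bidegree shifts forces $a=b$. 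Evaluating the equality $L_\phi^{a}\simeq R_\psi^{a}$ of self-maps of $T$ on the class $f\in\pi_{0,0}(T)=[X,Y]$ yields $\phi^{a}\circ f\simeq\Sigma^{-ar,-as}f\circ\psi^{a}$, which after undoing Step 1 is the asserted commuting square with $i=a\,i_0$ and $j=a\,j_0$.

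\textbf{Main obstacle.} All the serious input is imported: the uniqueness clause of Proposition \ref{Prop:NonNilp}, which for $n=-1$ is Lemma \ref{Lem:ClassicalAU} (Betti realization plus Hopkins--Smith) and for $n\geq 0$ depends on Krause's \cite[Thm. 6.12]{Kra18}. Given that, the only things to verify are that $T=\mathrm{Fun}(X,Y)$ is again of type $(m,n)$ with $\tau$-torsion-free homology, and that $L_\phi,R_\psi$ are genuine non-nilpotent self-maps of slope $d_{mn}$ inducing $K(\beta_{mn})_{**}$-isomorphisms --- all routine Künneth and universal-coefficient computations. The only point demanding a little care is the degree bookkeeping in Steps 1--2 (matching the bidegrees of $\psi,\phi$, and then of $L_\phi,R_\psi$, before invoking uniqueness), which the statement of the corollary already accommodates by allowing distinct exponents $i,j$.
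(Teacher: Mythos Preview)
Your argument is correct, and it is in fact more detailed than what the paper does. The paper's ``proof'' of this corollary is a one-line citation: it invokes Lemma \ref{Lem:ClassicalAU} (the classical Hopkins--Smith two-spectra statement, imported via Betti realization for the $n=-1$ case) together with \cite[Thm.~6.12]{Kra18} (which already contains the asymptotic-uniqueness statement in the exotic case), and simply declares the corollary to follow. In other words, the paper treats the ``maps between different spectra'' version as a black box supplied by the literature.

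You take a genuinely different route: you reduce the two-spectra statement to the single-spectrum uniqueness clause of Proposition \ref{Prop:NonNilp} via the standard Hopkins--Smith manoeuvre of passing to the function spectrum $T=DX\wedge Y$, checking that $T$ is again of type $(m,n)$ with $\tau$-free homology, and then applying uniqueness to the commuting self-maps $L_\phi$ and $R_\psi$ on $T$. This has the virtue of being self-contained given Proposition \ref{Prop:NonNilp}: one does not need to know that Krause's theorem already proves the two-spectra version, only the single-spectrum one. One small notational slip: pre-composition with $\psi$ corresponds to $D\psi\wedge 1_Y$ on $DX\wedge Y$, not $1_{DX}\wedge\psi^{\vee}$ as you wrote; the substance (that it induces a $K(\beta_{mn})_{**}$-isomorphism because $\psi$ does on $X$, hence $D\psi$ does on $DX$ by the field property) is unaffected.
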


The proof of \cite[Prop. 14]{Mil92} now carries over to prove the following:

\begin{cor}\label{Cor:Tel}
If $X$ is a finite motivic spectrum of type $(m,n)$ with non-nilpotent self-map $v : X \to \Sigma^{-r,-s} X$ of type $(m,n)$, then the map $X \to v^{-1} X$ is a finite $K(\beta_{mn})$-localization. 
\end{cor}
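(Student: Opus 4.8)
The plan is to follow Miller's proof of \cite[Prop. 14]{Mil92} essentially verbatim, feeding in the motivic periodicity theorem of Krause and the finite--localization formalism already established above. By the uniqueness of finite $\ca$-localizations it suffices to establish two things: that $X \to v^{-1}X$ is a finite $K(\beta_{mn})$-equivalence, and that $v^{-1}X$ is finitely $K(\beta_{mn})$-local; together these identify $X \to v^{-1}X$ with $X \to L^f_{K(\beta_{mn})}X$.

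First I would handle the equivalence statement. Writing $v : X \to \Sigma^{-r,-s}X$, for each $k \geq 1$ the octahedral axiom applied to $v^{k+1} = v\circ v^k$ places $X/v^k := \mathrm{cofib}(v^k)$ in the thick subcategory generated by $X/v$; and since $v$, hence $v^k$, induces an isomorphism on $K(\beta_{mn})_{**}$ by Proposition \ref{Prop:NonNilp}, each $X/v^k$ is a finite $K(\beta_{mn})$-acyclic spectrum. Because homotopy colimits are exact, $\mathrm{cofib}(X \to v^{-1}X)$ is the sequential homotopy colimit of suspensions of the $X/v^k$, hence lies in the localizing subcategory generated by the finite $K(\beta_{mn})$-acyclic spectrum $X/v$. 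That localizing subcategory consists of finitely $K(\beta_{mn})$-acyclic objects, so $X \to v^{-1}X$ is a finite $K(\beta_{mn})$-equivalence.

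Next I would prove local-ness. Let $A$ be any finite $K(\beta_{mn})$-acyclic motivic spectrum. Since $A$ is dualizable, $[\Sigma^{a,b}A, v^{-1}X]_{**} \cong \pi_{**}\big((v^{-1}X)\wedge DA\big) \cong \pi_{**}\big(v^{-1}(X\wedge DA)\big)$, where the last isomorphism holds because smashing with the finite spectrum $DA$ commutes with the filtered colimit defining $v^{-1}X$. As $K(\beta_{mn})_{**} \cong \f_2[\alpha_{mn}^{\pm 1}]$ is a graded field, $DA$ is again finite $K(\beta_{mn})$-acyclic, so it remains to show that the self-map $w := v\wedge \mathrm{id}_{DA}$ of the finite spectrum $X\wedge DA$ is nilpotent, as then the telescope $v^{-1}(X\wedge DA)$ is contractible. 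Here the motivic nilpotence/periodicity input enters: for $d_{ij}>d_{mn}$ one has $K(\beta_{ij})_{**}(X)=0$ because $X$ has type $(m,n)$, hence $K(\beta_{ij})_{**}(X\wedge DA)=0$ and there is nothing to check; for $d_{ij}=d_{mn}$ one has $K(\beta_{mn})_{**}(DA)=0$, again vacuous; and for $d_{ij}<d_{mn}$ the operator $K(\beta_{ij})_{**}(v)$ is nilpotent by the classification of type-$(m,n)$ self-maps in \cite[Thm. 6.12]{Kra18} (which also underlies Proposition \ref{Prop:NonNilp} and Corollary \ref{Cor:Unique}), so $K(\beta_{ij})_{**}(w) = K(\beta_{ij})_{**}(v)\otimes\mathrm{id}$ is nilpotent. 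Thus $w$ is nilpotent in every $K(\beta_{ij})_{**}$, and the motivic nilpotence theorem forces $w$ itself to be nilpotent; combining with the first step and the uniqueness of finite localization finishes the argument.

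The step I expect to be the main obstacle is the invocation of nilpotence: one needs a motivic nilpotence theorem strong enough that a self-map of a finite cellular motivic spectrum which acts nilpotently on each $K(\beta_{ij})_{**}$ is genuinely nilpotent. Classically this is part of the package of the Nilpotence and Periodicity Theorems, but motivically one must be careful because, as remarked after Definition \ref{Def:Type}, the thick subcategories cut out by the $K(\beta_{ij})$ are not known to exhaust the prime thick subcategories; so I would want to isolate precisely the nilpotence-detection statement that \cite{Kra18} provides through the $C\tau$-module / algebraicity technology rather than appeal to a full classification, and, where the formulation of that theorem also requires control in mod-$2$ motivic homology, supply it directly --- the topological degree of $v$ is strictly negative, so on the finitely generated $\m_2$-module $H_{**}(X\wedge DA)$ a power of $w$ vanishes after inverting $\tau$ by reduction along Betti realization, and the residual $\tau$-torsion is handled by the usual filtration argument.
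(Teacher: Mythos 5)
Your proposal reconstructs exactly the argument the paper invokes: the paper's entire proof is the single remark that \cite[Prop.~14]{Mil92} carries over, and your two halves --- the telescope/octahedral argument for the $K(\beta_{mn})$-equivalence, and duality plus nilpotence of $v\wedge\mathrm{id}_{DA}$ for locality --- are precisely Miller's, with the motivic periodicity input coming from Proposition \ref{Prop:NonNilp} and Corollary \ref{Cor:Unique} just as you indicate. You also correctly single out nilpotence detection as the step the paper leaves implicit, and the $\tau$-torsion-free hypothesis built into Definition \ref{Def:Type} is there precisely so that the Betti realization / $C\tau$-reduction / $\tau$-filtration route you sketch at the end can supply it.
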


In other words, we can identify the finite $K(\beta_{mn})$-localization of a type $(m,n)$ spectrum $X$ with the telescope of its non-nilpotent self-map of type $(m,n)$. This leads to the following conjectures. 

\begin{conj}[Motivic Telescope Conjecture (Telescopic Formulation)]\label{Conj:MotTCTF}
Let $X$ be a finite motivic spectrum of type $(m,n)$ with non-nilpotent self-map $v : \Sigma^{-r,-s} X \to X$ of type $(m,n)$. Then $\langle v^{-1} X \rangle $ depends only on $m$ and $n$, and $\langle v^{-1} X \rangle = \langle K(\beta_{ij}) \rangle$. 
\end{conj}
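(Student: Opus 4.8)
The plan is to transcribe Miller's and Ravenel's treatment of the classical Telescope Conjecture, using Corollaries~\ref{Cor:Unique} and~\ref{Cor:Tel} together with the fact that finite localizations are smashing (by construction) in place of the classical periodicity and thick-subcategory inputs. \emph{Independence of the self-map} is immediate: if $v,v'\colon\Sigma^{-r,-s}X\to X$ are two non-nilpotent self-maps of type $(m,n)$, then Corollary~\ref{Cor:Unique} applied to $\mathrm{id}_X$ produces positive integers $i,j$ with $v^{i}\simeq v'^{j}$ after the evident reindexing, so the telescopes $v^{-1}X$ and $v'^{-1}X$ are canonically equivalent and $\langle v^{-1}X\rangle$ is a well-defined invariant of $X$.

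The next step reduces everything to a statement about the sphere. By Corollary~\ref{Cor:Tel} the map $X\to v^{-1}X$ is a finite $K(\beta_{mn})$-localization, and since finite localization is smashing this says $v^{-1}X\simeq X\wedge F_{mn}$ with $F_{mn}:=L^{f}_{K(\beta_{mn})}S^{0,0}$. Hence $\langle v^{-1}X\rangle=\langle X\rangle\,\langle F_{mn}\rangle$, and the conjecture decomposes into two assertions: (a) $\langle X\rangle\,\langle F_{mn}\rangle$ is independent of the finite type-$(m,n)$ spectrum $X$; and (b) this common Bousfield class equals $\langle K(\beta_{mn})\rangle$.

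Assertion (b) is precisely the statement that $F_{mn}=L^{f}_{K(\beta_{mn})}S^{0,0}\to L_{K(\beta_{mn})}S^{0,0}$ is an equivalence, i.e.\ the Smashing/Localization Formulation; combined with the corollaries of this section this also yields the equivalence of the three formulations, so it is natural to prove them together. Assertion (a) is, classically, where the thick subcategory theorem enters: any two finite type-$(m,n)$ complexes are mutually finitely built from one another, so smashing with $F_{mn}$ produces Bousfield-equivalent spectra. Motivically the relevant thick subcategory theorem is not known (see the remark following Definition~\ref{Def:Type}), so for now (a) must either be assumed or verified directly for the spectra under consideration; one can hope to absorb this into the conjecture's hypotheses, since the $\tau$-torsion-freeness condition already rules out the most pathological examples.

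I expect (b) to be the essential obstacle: it is the genuinely conjectural content, and in general there is little hope of a proof — even classically the Telescope Conjecture is a notoriously delicate problem. The realistic target is therefore small heights. For $(m,n)\in\{(1,-1),(1,0)\}$ the $kq$-resolution does the work: Theorem~\ref{Thm:v1periodic} computes the $v_1$-periodic motivic stable stems and Theorem~\ref{Thm:EtaInv} the $\eta$-periodic ones, and these identify the telescopes $v^{-1}X$ relevant to heights $(1,-1)$ and $(1,0)$ explicitly. Matching those answers against $\pi_{**}L_{K(\beta_{1,-1})}S^{0,0}$ and $\pi_{**}L_{K(\beta_{1,0})}S^{0,0}$ is exactly the point at which a residual question about motivic Bousfield classes must be settled; resolving it would establish (b), and with it Conjecture~\ref{Conj:MotTCTF}, at those two heights.
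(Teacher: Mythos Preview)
This statement is a \emph{conjecture} in the paper, not a theorem; there is no proof to compare against. The paper explicitly presents it as one of three conjectural formulations of a Motivic Telescope Conjecture (Conjectures~\ref{Conj:MotTCTF}, \ref{Conj:MotTCLF}, \ref{Conj:MotTCSF}) and states that their mutual equivalence would follow from Corollary~\ref{Cor:Tel} \emph{together with} the separately stated Motivic Smashing Conjecture~\ref{Conj:MotSC}. You correctly recognize that the statement is genuinely conjectural and that no general proof is to be expected, and your reduction via Corollaries~\ref{Cor:Unique} and~\ref{Cor:Tel} to a thick-subcategory-type assertion plus a Bousfield-class identification matches the paper's own stance.

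One point of imprecision: your assertion (b), that $L^{f}_{K(\beta_{mn})}S^{0,0}\to L_{K(\beta_{mn})}S^{0,0}$ is an equivalence, is not literally the Localization or Smashing Formulation in the paper --- those are phrased for $K(\leq\beta_{mn})$, the wedge over all $(i,j)$ with $d_{ij}>d_{mn}$, rather than the single $K(\beta_{mn})$. Moreover, the paper does not claim the three formulations are equivalent outright; it says their equivalence is conditional on Conjecture~\ref{Conj:MotSC}. So your parenthetical ``combined with the corollaries of this section this also yields the equivalence of the three formulations'' overstates what is actually available. Otherwise your outline is faithful to how the paper itself treats the matter: a strategy, not a proof, with the low-height cases $(1,-1)$ and $(1,0)$ pursued in Sections~\ref{Sec:TCeta} and~\ref{Sec:TCv1} up to the residual Bousfield-class questions recorded as Conjectures~\ref{Conj:cKWBous} and~\ref{Conj:Bous}.
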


Let $K(\leq \beta_{mn}) := \bigvee_{(i,j) \in S_{mn}} K(\beta_{i,j})$ where $S_{mn}$ is the set of $(i,j)$ such that $d_{ij} > d_{mn}$. We define $L^f_{mn} := L^f_{K(\leq \beta_{mn})}$ and $L_{mn} :=  L_{K(\leq \beta_{mn})}$. 

\begin{conj}[Motivic Telescope Conjecture (Localization Formulation)]\label{Conj:MotTCLF}
The natural transformation $L^f_{mn} \to L_{mn}$ is an equivalence. 
\end{conj}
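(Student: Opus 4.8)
The plan is to follow Miller's reduction of the classical Telescope Conjecture \cite{Mil92} to its smashing form. By the criterion relating $L^f_E$ and $L_E$ recalled above, the natural transformation $L^f_{mn} \to L_{mn}$ is an equivalence if and only if $K(\leq \beta_{mn})$ is smashing \emph{and} the map $L^f_{mn} S^{0,0} \to L_{mn} S^{0,0}$ on the sphere is an equivalence. So I would split the argument into these two parts, using Corollary \ref{Cor:Tel} and Corollary \ref{Cor:Unique} to rephrase the sphere statement in telescopic terms.

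For the smashing property, one would adapt Ravenel's proof that $L_{K(\leq n)^{cl}}$ is smashing \cite{Rav16} by building $L_{mn}$ as a finite chromatic tower whose successive fibers are $K(\beta_{ij})$-localizations. The $E_\infty$-ring structures on the $K(\beta_{i,0})$, and more generally the $C\tau$-module structures supplied by Krause's proposition, should give enough multiplicative control that the relevant localizations of the sphere behave like smashing idempotents; over $C\tau$-modules this is essentially formal through the algebraic models, and the real content is propagating it to all of $\SH(\c)$. For the sphere statement, Corollary \ref{Cor:Tel} identifies $L^f_{mn} X \simeq v^{-1} X$ for any finite spectrum $X$ of type $(m,n)$ with a self-map $v$ of type $(m,n)$, and Corollary \ref{Cor:Unique} makes this independent of the choice of $X$ and $v$. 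Combining this with the fact that finite localization is Bousfield localization with respect to $L^f_{mn} S^{0,0}$, together with Lemma \ref{Lem:BL}, reduces the equivalence $L^f_{mn} S^{0,0} \to L_{mn} S^{0,0}$ to the Bousfield class identity $\langle v^{-1} X \rangle = \langle K(\beta_{mn}) \rangle$ of Conjecture \ref{Conj:MotTCTF} for a \emph{single} finite type $(m,n)$ spectrum — precisely the step at which one would classically invoke a thick subcategory argument, which is not yet available motivically.

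The main obstacle is exactly this Bousfield class equality, which is the heart of the telescope conjecture even classically and remains open for heights $\geq 2$. Motivically, however, the explicit computations of this paper make the low-height cases tractable: Theorem \ref{Thm:v1periodic} determines $\pi_{**}(v_1^{-1} S^{0,0})$, which governs the case $(m,n) = (1,-1)$, and Theorem \ref{Thm:EtaInv} determines $\pi_{**}(\eta^{-1} S^{0,0})$, which governs $(m,n) = (1,0)$; in each case these can be compared directly with $\pi_{**}(L_{K(\beta_{mn})} S^{0,0})$. What such comparisons do not yet yield is agreement of Bousfield classes rather than merely of sphere homotopy, so at these heights the conjecture is resolved only up to a question about motivic Bousfield classes, and a proof at general heights would require a genuinely new input — a motivic periodicity and thick-subcategory package — beyond what is developed here.
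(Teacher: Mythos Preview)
The statement you are attempting to prove is labeled a \emph{Conjecture} in the paper, and the paper offers no proof of it; there is nothing to compare your attempt against. Your proposal is not a proof either --- as you yourself acknowledge in the final paragraph --- but rather an outline of how the conjecture decomposes into more primitive open questions.

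That outline is broadly consistent with the paper's own discussion surrounding the conjecture. The paper likewise separates the problem into a Smashing Conjecture (stated independently as Conjecture~\ref{Conj:MotSC}) and a sphere-level statement (Conjecture~\ref{Conj:MotTCSF}), and then in Sections~\ref{Sec:TCeta} and~\ref{Sec:TCv1} analyzes the low-height cases $(m,n)=(1,0)$ and $(m,n)=(1,-1)$ using Theorems~\ref{Thm:EtaInv} and~\ref{Thm:v1periodic}, arriving at exactly the conclusion you reach: these cases reduce to unresolved Bousfield-class identities (Conjectures~\ref{Conj:cKWBous} and~\ref{Conj:Bous}).

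One place where your outline is more optimistic than the paper: you suggest one could ``adapt Ravenel's proof'' to establish that $L_{mn}$ is smashing, using $C\tau$-module structures and the $E_\infty$-ring structures on the $K(\beta_{i,0})$. The paper does not attempt this at all and simply records the smashing statement as an open conjecture. Ravenel's classical argument ultimately rests on the nilpotence and thick subcategory theorems, neither of which is available motivically, so this step is a genuine gap rather than a routine adaptation. In short: your proposal correctly identifies the architecture of the problem and its known obstructions, but it does not --- and, with current technology, cannot --- constitute a proof.
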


\begin{conj}[Motivic Telescope Conjecture (Smashing Formulation)]\label{Conj:MotTCSF}
The map $L^f_{mn} S^{0,0} \to L_{mn} S^{0,0}$ is an equivalence. 
\end{conj}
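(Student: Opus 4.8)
The plan is to deduce Conjecture \ref{Conj:MotTCSF} from the stronger Localization Formulation (Conjecture \ref{Conj:MotTCLF}), and to reduce the latter to a single statement about motivic Bousfield classes. Evaluating the natural transformation $L^f_{mn}\to L_{mn}$ at $S^{0,0}$ already shows that Conjecture \ref{Conj:MotTCLF} implies Conjecture \ref{Conj:MotTCSF}, so it suffices to treat the Localization Formulation. Since $L^f_{mn}$ is a finite localization it is smashing, and since a finite localization is Bousfield localization at its own value on $S^{0,0}$, Lemma \ref{Lem:BL} reduces the identity of functors $L^f_{mn}\simeq L_{mn}=L_{K(\leq\beta_{mn})}$ to the single equality of Bousfield classes $\langle L^f_{mn}S^{0,0}\rangle = \langle K(\leq\beta_{mn})\rangle$. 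Thus the whole problem is concentrated in identifying the Bousfield class of the finite-localization sphere.

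For the two cases pursued in this paper, $(m,n)=(1,-1)$ and $(m,n)=(1,0)$, the strategy is to make both $L^f_{mn}S^{0,0}$ and $L_{mn}S^{0,0}$ explicit and compare them via arithmetic-fracture squares. On the finite side the monochromatic layers are telescopes: by Corollary \ref{Cor:Tel}, for a type-$(m,n)$ complex $X$ the map $X\to v^{-1}X$ is the finite $K(\beta_{mn})$-localization, so $v^{-1}X\simeq L^f_{mn}S^{0,0}\wedge X$, and Theorem \ref{Thm:v1periodic} together with the motivic $j$-spectrum of Theorem \ref{Thm:Jo} (for $(1,-1)$), and Theorem \ref{Thm:EtaInv} (for $(1,0)$), compute the relevant homotopy completely. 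On the $K(\leq\beta_{mn})$-local side the monochromatic layers are the Bousfield localizations at the individual motivic Morava K-theories $K(\beta_{ij})$; at height one these are accessible through motivic Hermitian $K$-theory $KQ$ and its $\eta$-inversion $KQ[\eta^{-1}]$, together with Andrews--Miller's calculation. Matching the two fracture squares then reduces Conjecture \ref{Conj:MotTCLF} (hence Conjecture \ref{Conj:MotTCSF}) to the assertion that these telescopes have the same Bousfield class as the corresponding $K(\leq\beta_{mn})$-localizations --- the promised ``question about motivic Bousfield classes.''

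The hard part is precisely this Bousfield-class comparison. Classically the low-height telescope conjecture follows from the smashing property of $L_n$ and a thick subcategory argument, but motivically we do not know that $L_{mn}$ is smashing and the thick subcategory theorem for finite cellular motivic spectra is unavailable, so we cannot propagate $\langle v^{-1}X\rangle=\langle K(\beta_{mn})\rangle$ from a single type-$(m,n)$ complex to all of them. A promising route is to argue $C\tau$-linearly: the $C\tau$-module category is algebraic --- governed by $\mathrm{BP}_*\mathrm{BP}$-comodules --- where the corresponding height-one telescope statement is tractable by homological algebra, and then to lift the resulting Bousfield-class identity up the $\tau$-Bockstein tower. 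Controlling the $\tau$-torsion that obstructs this lifting is the genuine obstacle, and it is there, rather than in the bookkeeping of the fracture squares, that the remaining work lies.
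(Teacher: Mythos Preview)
The statement is a \emph{conjecture}, and the paper does not prove it. There is therefore no ``paper's own proof'' to compare against. What the paper does is gather evidence for the two cases $(m,n)=(1,0)$ and $(m,n)=(1,-1)$, and in each case reduce the conjecture to an unproven statement about Bousfield classes (Conjectures \ref{Conj:cKWBous} and \ref{Conj:Bous}). Your proposal is likewise not a proof but a strategy, and you say so explicitly in the final paragraph. So on the level of status, you and the paper agree: the conjecture remains open.

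Where your strategy and the paper's diverge is in the route to the Bousfield-class question. The paper does not use fracture squares. Instead it identifies $L^f_{mn}S^{0,0}$ directly as a telescope (Propositions \ref{Prop:etaTel} and \ref{Prop:v1Tel}, using inverse limits of Moore-type complexes and Corollary \ref{Cor:Tel}), then shows this telescope is itself a Bousfield localization at a geometrically meaningful spectrum --- $cKW=\eta^{-1}kq$ for $(1,0)$ and $KQ$ for $(1,-1)$ --- by comparing localized $kq$-resolutions (Propositions \ref{Prop:cKW} and \ref{Prop:KQLoc}). The remaining step is then to compare $\langle cKW\rangle$ or $\langle KQ\rangle$ with $\langle K(\leq\beta_{mn})\rangle$; for $KQ$ the paper gets partway there by proving $\langle KQ\rangle=\langle KGL\vee KW\rangle$ (Proposition \ref{Prop:Bousfield}). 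Your reduction $\langle L^f_{mn}S^{0,0}\rangle=\langle K(\leq\beta_{mn})\rangle$ is formally equivalent to this, but the paper's intermediate identification with $cKW$ and $KQ$ is what makes the residual Bousfield-class conjectures concrete and approachable. Your suggested $C\tau$-linear lift via $BP_*BP$-comodules appears in the paper only as a speculative remark, not as the main line of attack.
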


The equivalence of these three formulations would follow from Corollary \ref{Cor:Tel} along with the following conjecture which may be viewed as a motivic analog of Ravenel's Smashing Conjecture \cite[10.6]{Rav84}.

\begin{conj}[Motivic Smashing Conjecture]\label{Conj:MotSC}
For each $(m,n)$, the localization functor $L_{mn}$ is smashing, i.e. $L_{mn} X \simeq X \wedge L_{mn} S^{0,0}$. 
\end{conj}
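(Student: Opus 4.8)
The plan is to prove the unified statement that the natural transformation $L^f_{mn} \to L_{mn}$ is an equivalence (the Localization Formulation, Conjecture \ref{Conj:MotTCLF}), since this entails the Motivic Smashing Conjecture immediately: finite localizations are always smashing (Ravenel's argument, which applies since $\SH(F)$ is a compactly generated closed symmetric monoidal stable $\infty$-category), so $L_{mn} \simeq L^f_{mn}$ is smashing. Using the identification of finite $\ca$-localization with Bousfield localization at the object $L^f_{mn}S^{0,0}$, together with Lemma \ref{Lem:BL}, this in turn reduces to the single Bousfield-class identity $\langle L^f_{mn}S^{0,0}\rangle = \langle K(\le \beta_{mn})\rangle$. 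One inclusion is formal — $K(\le\beta_{mn})$ acts invertibly on the relevant telescopes, so every $L^f_{mn}S^{0,0}$-acyclic is $K(\le\beta_{mn})$-acyclic — and the content of the conjecture is the reverse inclusion, exactly as in the classical picture.

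For the low heights $(m,n) = (1,-1)$ and $(m,n) = (1,0)$, I would attack the hard inclusion through the explicit periodicity computations of this paper. By Corollary \ref{Cor:Tel}, for a finite motivic spectrum $X$ of type $(m,n)$ with $\tau$-torsion free homology, the finite localization $L^f_{mn}X$ is the mapping telescope $v^{-1}X$ of its non-nilpotent self-map of type $(m,n)$; applying this with $X = S^{0,0}/(2,\eta,v_1) = A_1$ (type $(1,-1)$) or $X = S^{0,0}/(2,\eta) = Y$ (type $(1,0)$) and smashing back up, the homotopy of the relevant telescopes on the sphere is pinned down by Theorem \ref{Thm:v1periodic} (the $v_1$-periodic stable stems) and Theorem \ref{Thm:EtaInv} (the $\eta$-periodic stable stems). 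I would then match this against an independent description of $L_{1,-1}S^{0,0}$ and $L_{1,0}S^{0,0}$: in the first case via an arithmetic-type fracture square expressing $K(\le\beta_{1,-1})$-localization in terms of rationalization and $K(1)$-localization, and in the second case via the motivic connective image-of-$J$ spectrum $j_o$ of Theorem \ref{Thm:Jo}, whose homotopy is precisely the $v_1$-torsion free part of the stable stems. Checking that these agree as Bousfield classes is the ``question about motivic Bousfield classes'' the introduction refers to; where it can be settled, it simultaneously yields the Telescopic Formulation (Conjecture \ref{Conj:MotTCTF}) and hence the Smashing Conjecture at these heights.

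The main obstacle is precisely this Bousfield-class comparison. Equality of Bousfield classes is a statement about $E_{**}(Z)$ for \emph{all} motivic spectra $Z$ — including non-cellular and infinite ones — whereas the periodicity theorems only control finite cellular input, so the finite computations alone do not close the argument; one needs a thick-subcategory-style reduction of the kind that is not presently available motivically (cf.\ the remark after Definition \ref{Def:Type}). A natural route is to transport the classical height-one theorem along Betti realization, but realization inverts $\tau$ and destroys $\tau$-torsion, so a direct comparison must be supplemented by a motivic-weight bookkeeping argument in the spirit of Section \ref{Sec:E2} to rule out exotic $\tau$-torsion surviving in $L^f_{mn}S^{0,0}$ that would obstruct the identification. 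I expect the cleanest outcome in the $(1,-1)$ case to be an explicit description of $L_{1,-1}S^{0,0}$ built from $kq$ on which smashing can be verified by hand on a generating set, with the $(1,0)$ case running the analogous argument through $j_o$; the residual gap in both is the assertion that no such exotic $\tau$-torsion exists, which is where I expect the real difficulty to lie.
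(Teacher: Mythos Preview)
The statement you are attempting to prove is a \emph{conjecture} in the paper, not a theorem; the paper gives no proof of it and does not claim one. What the paper does is exactly what you arrive at by the end of your proposal: it reduces the low-height cases $(m,n)=(1,0)$ and $(m,n)=(1,-1)$ to explicit Bousfield-class identities (Conjectures \ref{Conj:cKWBous} and \ref{Conj:Bous}) which it then leaves open. So your ``residual gap'' is not a loose end in an otherwise complete argument---it is the entire content of the conjecture, and your proposal is a strategy outline rather than a proof.

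Your strategy does parallel the paper's, but several details are misaligned. First, your type assignments are wrong: $Y=S^{0,0}/(2,\eta)$ is of type $(1,-1)$, not $(1,0)$ (it has $K(w_0)_{**}=0$ and a $v_1$-self-map), and $A_1=S^{0,0}/(2,\eta,v_1)$ is not of type $(1,-1)$ at all since $K(1)_{**}(A_1)=0$. The paper uses the Moore spectra $S^{0,0}/2^k$ as its type-$(1,0)$ complexes (Proposition \ref{Prop:etaTel}) and the $Y_k=S^{0,0}/(2^k,\eta^k)$ as its type-$(1,-1)$ complexes (Proposition \ref{Prop:v1Tel}). Second, you route the $(1,0)$ case through $j_o$ and the $(1,-1)$ case through a fracture square; the paper does the opposite sort of thing, identifying $L^f_{1,0}S^{0,0}\simeq \eta^{-1}S^{0,0}\simeq L_{cKW}S^{0,0}$ and $L^f_{1,-1}S^{0,0}\simeq v_1^{-1}S^{0,0}\simeq L_{KQ}S^{0,0}\simeq L_{KGL\vee KW}S^{0,0}$, and then asks whether $\langle cKW\rangle=\langle K(0)\vee K(w_0)\rangle$ and $\langle KGL\vee KW\rangle=\langle K(0)\vee K(w_0)\vee K(1)\rangle$. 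Your $\tau$-torsion bookkeeping idea is not the obstruction the paper isolates; the obstruction is purely a question about Bousfield classes of ring spectra, which no amount of finite-complex or weight analysis addresses. In short: you have correctly located the difficulty, but you should not present this as a proof.
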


\subsection{The motivic Telescope Conjecture for $K(\leq \beta_{10})$}\label{Sec:TCeta}

Our goal in this section is to relate our calculations to Conjecture \ref{Conj:MotTCSF} in the case where $(m,n) = (1,0)$. In  Proposition \ref{Prop:etaTel}, we show that $L_{10}^f S^{0,0} \simeq  \eta^{-1}S^{0,0}$. We then show that $\eta^{-1} S^{0,0} \simeq L_{cKW} S^{0,0}$ in Proposition \ref{Prop:cKW}, where $KW \simeq \eta^{-1}KQ$ is Witt theory \cite{Ana12} and $cKW \simeq \eta^{-1} kq$ is connective Witt theory. Finally, we conjecture that $\langle cKW \rangle = \langle K(0) \vee K(w_0) \rangle$ in Conjecture \ref{Conj:cKWBous}, which would imply that $L_{cKW} S^{0,0} \simeq L_{10} S^{0,0}$. In summary, we will show
$$L_{10}^f S^{0,0} \simeq \eta^{-1}S^{0,0} \simeq L_{cKW} S^{0,0} \overset{?}{\simeq} L_{10} S^{0,0}.$$

\begin{prop}\label{Prop:etaTel}
There is an equivalence of motivic spectra $L^f_{10} S^{0,0} \simeq \eta^{-1} S^{0,0}$.
\end{prop}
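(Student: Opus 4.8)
The plan is to combine the general machinery of finite localizations with the observation that $M:=S^{0,0}/2$ is a finite motivic spectrum of type $(1,0)$. The first step is to identify $K(\leq\beta_{10})$: unwinding the slope formulas one finds $d_{10}=1$, and the only indices that can contribute to the wedge are $(0,-1)$ and $(1,0)$, so that for the purposes of finite localization $L^f_{10}$ is the finite localization away from the thick subcategory of finite $K(w_0)$-acyclic cellular spectra. Now $M=S^{0,0}/2$ is finite, has $\tau$-torsion-free homology, and satisfies $K(w_0)_{**}M\neq 0$, so it is of type $(1,0)$ in the sense of Definition \ref{Def:Type}; its self-map of type $(1,0)$ is the lift $\tilde\eta:\Sigma^{1,1}M\to M$ of $\eta$, which by uniqueness is simply multiplication by $\eta$, whence $\tilde\eta^{-1}M\simeq M\wedge\eta^{-1}S^{0,0}$.

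By Corollary \ref{Cor:Tel}, $M\to\tilde\eta^{-1}M$ is a finite $K(\beta_{10})$-localization, and since $M$ is of type $(1,0)$ the same map is a finite $K(\leq\beta_{10})$-localization, i.e. $L^f_{10}M\simeq\tilde\eta^{-1}M\simeq M\wedge\eta^{-1}S^{0,0}$. As every finite localization is smashing, $L^f_{10}S^{0,0}\wedge M\simeq L^f_{10}M\simeq M\wedge\eta^{-1}S^{0,0}$. To upgrade this to an honest equivalence I would first check that $\eta^{-1}S^{0,0}$ is finitely $K(\leq\beta_{10})$-local: for a finite $K(\leq\beta_{10})$-acyclic spectrum $A$ one has $[\Sigma^{a,b}A,\eta^{-1}S^{0,0}]\cong\pi_{**}(\eta^{-1}S^{0,0}\wedge DA)$, and $DA$ is again finite with $K(w_0)_{**}(DA)=0$; such complexes lie in the thick subcategory generated by $C\eta=S^{0,0}/\eta$ (the finite complexes on which $\eta$ acts nilpotently, by the self-map nilpotence results of \cite{Kra18}), so $\eta^{-1}S^{0,0}\wedge DA$ is built from $\eta^{-1}S^{0,0}\wedge C\eta=\mathrm{cofib}(\eta^{-1}S^{0,0}\xrightarrow{\eta}\Sigma^{-1,-1}\eta^{-1}S^{0,0})=0$ and hence vanishes. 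Consequently the unit $S^{0,0}\to\eta^{-1}S^{0,0}$ factors uniquely through $L^f_{10}S^{0,0}$, producing a map $g\colon L^f_{10}S^{0,0}\to\eta^{-1}S^{0,0}$ whose smash product with $M$ recovers the equivalence above. Since $L^f_{10}S^{0,0}$ and $\eta^{-1}S^{0,0}$ are $2$-complete and any map of $2$-complete spectra inducing an equivalence mod $2$ is itself an equivalence, $g$ is an equivalence, which is the assertion.

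The step I expect to be the main obstacle is the finite-locality of $\eta^{-1}S^{0,0}$: it rests on identifying the finite $K(w_0)$-acyclic complexes with the thick subcategory generated by $C\eta$, and since a full motivic thick-subcategory theorem is not available one has to make do with the self-map nilpotence technology of \cite{Kra18} specialized to the $(1,0)$ stratum. An alternative that avoids constructing the comparison map is to check directly that $\mathrm{fib}(S^{0,0}\to\eta^{-1}S^{0,0})\simeq\colim_k\Sigma^{-k-1,-k}\mathrm{cofib}(\eta^k)$ lies in the localizing subcategory generated by the finite $K(\leq\beta_{10})$-acyclics, using that each $\mathrm{cofib}(\eta^k)$ belongs to the thick subcategory generated by $C\eta$ (via the octahedron applied to $\eta^k=\eta^{k-1}\cdot\eta$); this reduces to the same input.
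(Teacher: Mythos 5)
Your strategy diverges significantly from the paper's. The paper writes $S^{0,0} \simeq \lim_k S^{0,0}/2^k$ (using $2$-completeness), applies Corollary \ref{Cor:Tel} to each $S^{0,0}/2^k$ to identify $L^f_{10}(S^{0,0}/2^k) \simeq \eta^{-1}(S^{0,0}/2^k)$, and then proves the limit/colimit interchange $\lim_k \eta^{-1}(S^{0,0}/2^k) \simeq \eta^{-1}\lim_k S^{0,0}/2^k$ by an explicit spectral sequence comparison: it compares the inverse limit of $h_1$-localized motivic Adams spectral sequences with the $h_1$-localized inverse limit spectral sequence, citing Guillou--Isaksen's computations to verify an isomorphism on $E_2$. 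You, by contrast, attempt a purely structural argument: identify $L^f_{10}M \simeq \eta^{-1}M$ for a single Moore spectrum $M = S^{0,0}/2$, show that $\eta^{-1}S^{0,0}$ is finitely $K(\leq\beta_{10})$-local so that a comparison map $g: L^f_{10}S^{0,0} \to \eta^{-1}S^{0,0}$ exists, and then check $g$ is an equivalence after smashing with $M$.

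The gap is exactly where you flag it, but it is worse than ``an obstacle.'' Your finite-locality step requires that every finite $K(\leq\beta_{10})$-acyclic complex $A$ lies in the thick subcategory generated by $C\eta$, which you propose to deduce from Krause's nilpotence technology: $K(w_0)_{**}(A) = 0$ should force $\eta \wedge A$ nilpotent. But the paper's own Definition \ref{Def:Type} imposes the hypothesis that $H_{**}(X)$ be $\tau$-torsion free precisely because the nilpotence machinery from \cite{Kra18} is only known to apply under such hypotheses; a generic finite $K(\leq\beta_{10})$-acyclic $A$ need not have $\tau$-torsion-free homology, and there is no motivic thick subcategory theorem to bridge the gap. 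This is why the paper instead works with the concrete tower $S^{0,0}/2^k$ (each explicitly $\tau$-torsion free and of type $(1,0)$), accepting a harder-looking limit interchange which it resolves computationally rather than structurally. A secondary concern: your final step asserts that $L^f_{10}S^{0,0}$ and $\eta^{-1}S^{0,0}$ are both $2$-complete so that a mod-$M$ equivalence globalizes; for $\eta^{-1}S^{0,0}$ this can be extracted from Theorem \ref{Thm:EtaInv}, but you give no argument that $L^f_{10}S^{0,0}$ is $2$-complete (telescopes and finite localizations do not automatically inherit $p$-completeness), so that step would also need to be supplied. The alternative route you sketch in your last paragraph has the additional flaw that $C\eta$ itself is not $K(\leq\beta_{10})$-acyclic (since $K(0)_{**}(C\eta) \neq 0$), so showing $\operatorname{cofib}(\eta^k)$ lies in the thick subcategory of $C\eta$ does not place $\operatorname{fib}(S^{0,0} \to \eta^{-1}S^{0,0})$ inside the localizing subcategory of finite $K(\leq\beta_{10})$-acyclics.
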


Note that $S^{0,0}$ is of type $(0,-1)$ and not $(1,0)$, so this lemma is not automatic from Definition \ref{Def:Type} and Corollary \ref{Cor:Tel}.

\begin{proof}
Recall that $S^{0,0}$ has (at least) two non-nilpotent self-maps:
\begin{enumerate}
\item The degree $2$ self-map $2 : S^{0,0} \to S^{0,0}$ is a non-nilpotent self-map of type $(0,-1)$.
\item The first Hopf map $\eta : S^{1,1} \to S^{0,0}$ is a non-nilpotent self-map of type $(1,0)$. 
\end{enumerate}
In particular, $S^{0,0}$ is of type $(0,-1)$. In order to apply Corollary \ref{Cor:Tel}, we realize $S^{0,0}$ as a homotopy limit of finite spectra of type $(1,0)$, then verify that this homotopy limit commutes with $\eta$-localization. 

Let $S^{0,0}/2^k := \mathrm{cofib}(S^{0,0} \overset{2^k}{\to} S^{0,0})$ be the mod $2^k$ Moore spectrum. Then we have
$$S^{0,0} = (S^{0,0})^\wedge_2 \simeq \lim_k S^{0,0}/2^k,$$
where the first equality is a reminder that everything is implicitly $2$-complete. The long exact sequence in homotopy arising from the cofiber sequence defining $S^{0,0}/2^k$ implies that $2 : S^{0,0}/2^k \to S^{0,0}/2^k$ is nilpotent and that $\eta : \Sigma^{1,1} S^{0,0}/2^k \to S^{0,0}/2^k$ is non-nilpotent. Therefore $S^{0,0}/2^k$ is of type $(1,0)$ for all $k \geq 1$, so by Corollary \ref{Cor:Tel}, we have $L^f_{10}(S^{0,0}/2^k) \simeq \eta^{-1}( S^{0,0}/2^k)$ and thus
$$S^{0,0} \simeq \lim_k \eta^{-1}(S^{0,0}/2^k).$$
The lemma will follow if we can show that
$$\lim_k \eta^{-1}(S^{0,0}/2^k) \simeq \eta^{-1} (\lim_k S^{0,0}/2^k) \simeq \eta^{-1} S^{0,0}.$$

The homotopy groups of the left-hand side can be computed via an inverse limit of localized motivic Adams spectral sequences
$$E_2 = \lim_k h_{1}^{-1} \Ext^{***}_{A_*}(S^{0,0}/2^k) \Rightarrow \pi_{**}(\lim_k \eta^{-1}(S^{0,0}/2^k))$$
and the homotopy groups of the right-hand side can be computed via a localized inverse limit of motivic Adams spectral sequences
$$E_2' = h_1^{-1} \lim_k \Ext^{***}_{A_*}(S^{0,0}/2^k) \Rightarrow \pi_{**}(\eta^{-1} S^{0,0}).$$
Convergence of the relevant inverse limit spectral sequences follows from \cite[Prop. 4.2.22]{Gre12}, and convergence of the relevant localized spectral sequences follows from adapting \cite[Thm. 2.13]{MRS01}. 

Explicit calculation using the algebraic Atiyah-Hirzebruch spectral sequence and calculations of Guillou-Isaksen \cite[Thm. 1.1]{GI15} shows that the colimit-limit interchange map
$$\eta^{-1} S^{0,0} \to \lim_k \eta^{-1} S^{0,0}/2^k$$
induces an isomorphism of spectral sequences beginning with $E_2' \cong E_2$. The lemma follows by our previous remarks on convergence of both spectral sequences. 
\end{proof}

In particular, the calculation of Theorem \ref{Thm:EtaInv} may be interpreted as a calculation of $\pi_{**}(L_{10}^fS^{0,0})$. 

\begin{prop}\label{Prop:cKW}
There is an equivalence of motivic spectra
$$\eta^{-1} S^{0,0} \simeq L_{cKW}S^{0,0}.$$
\end{prop}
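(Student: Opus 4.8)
The plan is to deduce the equivalence from an identification of Bousfield classes. Concretely, I claim it suffices to prove
\[
\langle cKW\rangle = \langle \eta^{-1}S^{0,0}\rangle .
\]
Granting this, Lemma \ref{Lem:BL} gives $L_{cKW}\simeq L_{\eta^{-1}S^{0,0}}$, while Proposition \ref{Prop:etaTel} together with the corollary identifying finite localization with Bousfield localization at $L^f_{\ca}S^{0,0}$ shows $L_{\eta^{-1}S^{0,0}}\simeq L^f_{10}$, so $L_{cKW}S^{0,0}\simeq L_{\eta^{-1}S^{0,0}}S^{0,0}\simeq L^f_{10}S^{0,0}\simeq \eta^{-1}S^{0,0}$, as desired. (Alternatively, one notes directly that $\eta^{-1}S^{0,0}\wedge -$ is idempotent and smashing, so $\eta^{-1}S^{0,0}$ is $\langle \eta^{-1}S^{0,0}\rangle$-local, which is the only input about $\eta^{-1}S^{0,0}$ one really needs.)

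One inclusion is formal. Since the telescope commutes with smash products, $cKW\wedge T\simeq \eta^{-1}kq\wedge T\simeq kq\wedge \eta^{-1}T$; hence if $\eta^{-1}S^{0,0}\wedge T\simeq \eta^{-1}T\simeq 0$ then $cKW\wedge T\simeq 0$. Taking $T=\eta^{-1}S^{0,0}$ in this computation also gives $cKW\wedge \eta^{-1}S^{0,0}\simeq \eta^{-1}cKW\simeq cKW$, so the unit map $S^{0,0}\to \eta^{-1}S^{0,0}$ is a $cKW$-equivalence — the easy half of the Bousfield localization characterization, which one would record for completeness.

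The substantive point is the reverse inclusion: if $cKW\wedge T\simeq 0$ then $\eta^{-1}T\simeq 0$. By the computation above, $cKW\wedge T\simeq 0$ says exactly that the ($2$-complete, cellular) spectrum $\eta^{-1}T$ is $kq$-acyclic. Smashing with $A_1$ and using $kq\wedge A_1\simeq H\f_2$ from the proof of Lemma \ref{Lem:A1} yields $H\f_2\wedge \eta^{-1}T\simeq A_1\wedge(kq\wedge \eta^{-1}T)\simeq 0$, and since $H\f_2$ is conservative on $2$-complete cellular motivic spectra we conclude $\eta^{-1}T\simeq 0$. The main obstacle is precisely this last step: $\eta^{-1}T$ need not be bounded below, so one must know that the telescope (formed and re-completed inside $2$-complete cellular motivic spectra) remains $H\f_2$-nilpotent complete, so that vanishing of $H\f_2\wedge \eta^{-1}T$ forces $\eta^{-1}T\simeq 0$. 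This is exactly the kind of convergence statement handled by \cite{Man18} and by the localized spectral sequence arguments used in the proof of Proposition \ref{Prop:etaTel} (via \cite{Gre12}, \cite{MRS01}), and I would invoke those results rather than reprove them; everything else in the argument is formal.
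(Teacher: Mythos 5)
Your approach is genuinely different from the paper's. The paper compares spectral sequences directly: it identifies the $cKW$-based Adams spectral sequence (which converges to $\pi_{**}(L_{cKW}S^{0,0})$ by \cite{Man18}) with the $\eta$-localized $kq$-resolution, using $cKW \simeq \eta^{-1}kq$, and then appeals to a motivic analogue of \cite{MRS01} to see that this localized resolution converges to $\eta^{-1}\pi_{**}(S^{0,0})$. You instead try to prove a Bousfield class identity $\langle cKW\rangle = \langle \eta^{-1}S^{0,0}\rangle$ and deduce the result formally. Had the identity been easy, this would have been a cleaner route; but it is not easy, and your argument for the hard direction contains a fatal gap.

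The hard direction asks: if $cKW\wedge T \simeq kq \wedge \eta^{-1}T \simeq 0$, does it follow that $\eta^{-1}T \simeq 0$? You smash with $A_1$, use $kq\wedge A_1\simeq H\f_2$ to get $H\f_2\wedge\eta^{-1}T\simeq 0$, and then invoke conservativity of $H\f_2$ on $2$-complete cellular motivic spectra. That conservativity statement is false, and in this situation it fails in the most dramatic way possible. The class $\eta \in \pi_{1,1}(S^{0,0})$ has trivial image in $\pi_{1,1}(H\f_2)$ (indeed $\pi_{1,1}(H\f_2)=0$), so $\eta$ acts as zero on $H\f_2$-homology. Hence
\[
H\f_2 \wedge \eta^{-1}T \;\simeq\; \eta^{-1}\bigl(H\f_2 \wedge T\bigr) \;\simeq\; 0
\]
for \emph{every} motivic spectrum $T$, with no hypothesis on $T$ at all. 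The conclusion $H\f_2\wedge\eta^{-1}T\simeq 0$ therefore carries no information, and if the conservativity step were valid it would prove $\eta^{-1}T \simeq 0$ for every $T$, contradicting Theorem \ref{Thm:EtaInv} (which computes $\pi_{**}(\eta^{-1}S^{0,0})\neq 0$). Your own hedge --- that one should check the telescope remains $H\f_2$-nilpotent complete --- cannot rescue the argument, because $\eta^{-1}S^{0,0}$ is precisely \emph{not} $H\f_2$-nilpotent complete: its $H\f_2$-nilpotent completion is contractible, as the computation above shows. This is the motivic shadow of the familiar classical phenomenon that $H\f_p$ detects equivalences only among bounded-below $p$-complete spectra, and $\eta$-periodic spectra are as far from bounded-below as possible. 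The Bousfield class identity you want is closely related to (and not obviously easier than) the paper's Conjecture \ref{Conj:cKWBous}, which the authors explicitly state they do not know how to prove; you should not expect to dispatch it with a two-line $A_1$-trick.
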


\begin{proof}
The left-hand side may be computed by inverting $\eta : S^{1,1} \to S^{0,0}$ in the $E_\infty$-page of the $kq$-resolution. On the other hand, the right-hand side may be computed using the $cKW$-based motivic Adams spectral sequence
$$E_1^{n,t,u} = \pi_{t,u}(\overline{cKW}^{\wedge n} \wedge cKW) \Rightarrow \pi_{**}(L_{cKW} S^{0,0}),$$
where convergence follows from \cite[Thm. 7.3.4]{Man18}. We have 
$$cKW = KW_{\geq 0} \simeq \eta^{-1} kq$$
by \cite{Ana12} and the definition of the homotopy t-structure. Therefore we may identify the $cKW$-based motivic Adams spectral sequence with the $\eta$-localized $kq$-resolution, where $\eta \in \pi_{1,1}(kq)$. This converges to $\eta^{-1} \pi_{**}(S^{0,0})$ by the obvious motivic analog of \cite[Thm. 2.13]{MRS01}; the proposition follows. 
\end{proof}

\begin{conj}\label{Conj:cKWBous}
There is an equivalence of Bousfield classes
$$\langle cKW \rangle = \langle K(0) \vee K(w_0) \rangle.$$
\end{conj}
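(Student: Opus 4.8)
The plan is to establish the two inclusions $\langle cKW\rangle\leq\langle K(0)\vee K(w_0)\rangle$ and $\langle K(0)\vee K(w_0)\rangle\leq\langle cKW\rangle$ separately, exploiting the Bott-periodicity structure of $cKW$. Recall from Proposition \ref{Prop:cKW} and \cite{Ana12} that $cKW\simeq\eta^{-1}kq=KW_{\geq 0}$, and that inverting $\eta$ in the computation of $\pi_{**}(kq)$ (Isaksen--Shkembi \cite{IS11}, extended to $\Spec(\c)$ as in Section \ref{Section:Analysis}) yields
\[
\pi_{**}(cKW)\cong\f_2[\eta^{\pm 1},\beta],\qquad |\eta|=(1,1),\ |\beta|=(8,4),
\]
where $\beta$ is the image of the Bott class $v_1^4$. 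Two basic non-nilpotent operations are available: inverting $\beta$ gives $\beta^{-1}cKW\simeq KW$ (periodic Witt theory), while the cofiber of $\Sigma^{8,4}cKW\xrightarrow{\beta}cKW$ is a homotopy-ring spectrum $cKW/\beta$ with $\pi_{**}(cKW/\beta)\cong\f_2[\eta^{\pm 1}]$ concentrated in bidegrees $(n,n)$.

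The first step is a \emph{$\beta$-Bockstein} splitting of the Bousfield class. Since $cKW/\beta\wedge X=0$ forces $\beta$ to act invertibly on $cKW\wedge X$, whence $cKW\wedge X\simeq(\beta^{-1}cKW)\wedge X=KW\wedge X$, while both $KW$ and $cKW/\beta$ are $cKW$-modules, one obtains at once
\[
\langle cKW\rangle=\langle KW\rangle\vee\langle cKW/\beta\rangle.
\]
Next I would identify $\langle cKW/\beta\rangle=\langle K(w_0)\rangle$: the spectrum $cKW/\beta$ is a homotopy-ring spectrum whose homotopy is the graded field $\f_2[\eta^{\pm 1}]=\f_2[w_0^{\pm 1}]$, and $K(w_0)=K(\beta_{1,0})$ of \cite{Kra18} has the same homotopy, so $cKW/\beta\wedge K(w_0)\neq 0$ and the standard argument for Morava-$K$-type (field) spectra forces Bousfield equivalence; Lemma \ref{Lem:BL} then lets us replace $cKW/\beta$ by $K(w_0)$. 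Combining, $\langle cKW\rangle=\langle KW\rangle\vee\langle K(w_0)\rangle$, so the conjecture is equivalent to
\[
\langle KW\rangle\vee\langle K(w_0)\rangle=\langle K(0)\rangle\vee\langle K(w_0)\rangle,
\]
for which it suffices (and is the natural expectation) that $\langle KW\rangle=\langle K(0)\rangle$.

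The main obstacle is precisely this last identity. One inclusion, say $\langle K(0)\rangle\leq\langle KW\rangle$, should be within reach: $K(0)$ is detected on the $0$-line of the $\eta$-inverted $kq$-resolution, where $\langle KW\rangle$ is controlled by the unit $S^{0,0}\to KW$ and its $v_0$-periodic behaviour, so that $KW\wedge X=0$ propagates to $K(0)\wedge X=0$. The reverse inclusion $\langle KW\rangle\leq\langle K(0)\rangle$ is the genuinely hard part, as it amounts to a thick-subcategory-type classification of $K(0)$-acyclic motivic spectra, for which no motivic analog is presently available (compare the remark after Definition \ref{Def:Type} and \cite[Introduction]{Kra18}). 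A plausible route is to compare both $\langle KW\rangle$ and $\langle K(0)\rangle$ with the Bousfield class $\langle L^f_{10}S^{0,0}\rangle=\langle\eta^{-1}S^{0,0}\rangle$ of Proposition \ref{Prop:etaTel}, transporting the matching of acyclics through the $\eta$-localized $kq$-resolution and the explicit computations of Sections \ref{Section:Analysis}--\ref{Sec:E2}. I expect establishing the full equality $\langle KW\rangle=\langle K(0)\rangle$, as opposed to a one-sided inequality, to be the real difficulty, which is why the statement is posed only as a conjecture and the above serves as evidence.
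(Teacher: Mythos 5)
This statement is a \emph{conjecture}, not a theorem; the paper provides no proof and explicitly says ``We do not know how to prove this conjecture.'' So there is no proof of record to compare against, and you are right to present your argument as a roadmap rather than a proof. That said, two parts of the roadmap call for comment.

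Your first step, the $\beta$-Bockstein splitting $\langle cKW\rangle=\langle KW\rangle\vee\langle cKW/\beta\rangle$, is sound: both $\beta^{-1}cKW\simeq KW$ and $cKW/\beta$ are $cKW$-modules, so $cKW$-acyclicity passes to them, and conversely $cKW/\beta$-acyclicity makes $\beta$ invertible on $cKW\wedge X$, collapsing $cKW\wedge X$ to $KW\wedge X$. This kind of decomposition is not in the paper and is a genuinely useful observation. Your second step, $\langle cKW/\beta\rangle=\langle K(w_0)\rangle$, is plausible but not yet an argument: two ring spectra whose homotopy rings are isomorphic graded fields need not be Bousfield equivalent. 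What is actually needed is a nonzero ring map in at least one direction (so that one becomes a wedge of suspensions of the other), or some other structural link; you should supply this.

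The real problem is your final reduction. You assert that it is ``the natural expectation'' that $\langle KW\rangle=\langle K(0)\rangle$, but this directly contradicts the paper's own Conjecture~\ref{Conj:Bous}, which posits $\langle KW\rangle=\langle K(w_0)\vee K(1)\rangle$ --- a different shape entirely, with $K(1)$ rather than $K(0)$ in it. Moreover $\langle KW\rangle=\langle K(0)\rangle$ looks false on its face: $KW=\eta^{-1}KQ$ is an $\eta$-local ring spectrum, so $KW\wedge C\eta=0$, whereas $\eta$ is $2$-torsion over $\c$ and hence rationally trivial, so $K(0)\wedge C\eta\simeq K(0)\vee\Sigma^{2,1}K(0)\neq 0$. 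Thus $C\eta$ separates the two Bousfield classes. Since you are targeting a false (or at least highly dubious) sufficient condition, the roadmap as written cannot close the gap. The honest conclusion is that, after your valid Bockstein splitting, what remains is to show $\langle KW\rangle\vee\langle K(w_0)\rangle=\langle K(0)\rangle\vee\langle K(w_0)\rangle$, which is \emph{not} equivalent to $\langle KW\rangle=\langle K(0)\rangle$, and which --- given the $C\eta$ observation above and the paper's Conjecture~\ref{Conj:Bous} --- you should expect to require a quite different identification of $\langle KW\rangle$ than the one you propose.
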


We do not know how to prove this conjecture. However, Lemma \ref{Lem:BL} and Conjecture \ref{Conj:cKWBous} would imply that $L_{cKW} \simeq L_{10}$. Applying our previous computations, this would imply that $L_{10}^f S^{0,0} \to L_{10} S^{0,0}$ is an equivalence. If Conjecture \ref{Conj:MotSC} also holds in the case $(m,n) = (1,0)$, then all three formulations of the motivic Telescope Conjecture would hold for $(m,n) = (1,0)$. 

\begin{rem2}
It is also be possible to formulate an ``exotic motivic Telescope Conjecture" using only wedges of Krause's exotic Morava K-theories (but not Borghesi's motivic Morava K-theories). Indeed, if one sets $K(\leq \beta_{mn})^{ex} := \bigvee_{(i,j) \in S_{mn}^{ex}} K(\beta_{ij})$ where $S_{mn}^{ex} := S_{mn} \setminus \{(i,j) \in S_{mn} : j=-1\}$, then \cite[Thm. 6.12]{Kra18} identifies $L_{K(\leq \beta_{mn})^{ex}}^f X$ as the telescope of a non-nilpotent self-map of type $(m,n)$ for any $X$ such that $K(\beta_{ij})_{**}(X) = 0$ for $(i,j) \in S_{mn}^{ex} \setminus \{(m,n)\}$. One can therefore relate the various formulations of such an exotic motivic Telescope Conjecture. The case $(m,n) = (1,0)$ would then follow from our calculations if one can show that $\langle cKW \rangle = \langle K(w_0) \rangle$ and that $L_{K(w_0)}$ is smashing. 
\end{rem2}

\begin{rem2} 
An exotic motivic Telescope Conjecture as suggested above may be possible to study in the category of cellular $C\tau$-modules using \cite{GWX18}. For example, one sees that $\pi_{**}(\eta^{-1}S^{0,0})$ consists entirely of simple $\tau$-torsion classes. Therefore an exotic motivic Telescope Conjecture might be equivalent to a conjecture in the category of $C\tau$-modules. The latter is equivalent to the derived category of $BP_*BP$-comodules by \cite{GWX18}. Telescope conjectures always hold in the derived category of a commuative Hopf algebra by \cite[Sec. 6.3]{HPS97}, so it seems plausible that this exotic analog of the Telescope Conjecture holds. 
\end{rem2}

\subsection{The motivic Telescope Conjecture for $K(\leq \beta_{1,-1})$}\label{Sec:TCv1}

We now turn to Conjecture \ref{Conj:MotTCSF} in the case where $(m,n) = (1,-1)$. In Proposition \ref{Prop:v1Tel}, we show that $L_{1,-1}^fS^{0,0} \simeq v^{-1}_1 S^{0,0}$ using the inverse limit of localized Adams spectral sequences. We then show that $v^{-1}_1 S^{0,0} \simeq L_{KQ}S^{0,0}$ in Proposition \ref{Prop:KQLoc} by comparing localizations of the $kq$-resolution. Using the Wood cofiber sequence and the isotropy separation sequence for $KGL$, we then prove an equivalence of Bousfield classes $\langle KQ \rangle = \langle KGL \vee KW \rangle$ in Proposition \ref{Prop:Bousfield}, which implies $L_{KQ}S^{0,0} \simeq L_{KGL \vee KW} S^{0,0}.$ Finally, we outline the steps needed to prove Conjecture \ref{Conj:Bous} which would imply $\langle KGL \vee KW \rangle = \langle K(0) \vee K(w_0) \vee K(1) \rangle$ and thus $L_{KGL \vee KW} S^{0,0} \simeq L_{1,-1}S^{0,0}$. In summary, we will show
$$L_{1,-1}^f S^{0,0} \simeq v^{-1}_1 S^{0,0} \simeq L_{KQ} S^{0,0} \simeq L_{KGL \vee KW} S^{0,0} \overset{?}{\simeq} L_{1,-1}S^{0,0}.$$

We begin by defining a sequence of finite motivic spectra of type $(1,-1)$ whose homotopy limit is $S^{0,0}$. 

\begin{defin}
Recall from the previous section that $S^{0,0}/2^k$ admits a non-nilpotent self-map $\eta : S^{1,1}/2^k \to S^{0,0}/2^k$.  Let $Y_k := S^{0,0}/(2^k,\eta^k).$ 
\end{defin}

We observe that $\lim_k Y_k \simeq (S^{0,0})^\wedge_{2,\eta} \simeq (S^{0,0})^\wedge_2 = S^{0,0}$, where the second equivalence follows from \cite[Thm. 1]{HKO11a} and the last equality is a reminder that we have implicitly $2$-completed everything. 

\begin{lem}
For each $k \geq 1$, the motivic spectrum $Y_k$ admits a non-nilpotent self-map of type $(1,-1)$
$$v_1 : \Sigma^{2,1} Y_k \to Y_k.$$
\end{lem}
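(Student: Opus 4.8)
The plan is to produce the self-map $v_1 : \Sigma^{2,1} Y_k \to Y_k$ by an obstruction-theoretic argument, bootstrapping from the self-map on $Y = Y_1$ and using the fact that $Y_k$ is built from $Y$ by finitely many extensions. First I would recall that $Y_k \simeq S^{0,0}/2^k \wedge C\eta^{\wedge k}$-type constructions are not literally smash decomposable, so instead I would filter $Y_k$ by the cofiber sequences $S^{0,0}/2^{k-1} \to S^{0,0}/2^k \to S^{1,1}/2$ and $\mathrm{cofib}(\eta^{k-1}) \to \mathrm{cofib}(\eta^k) \to \Sigma^{?,?}C\eta$, expressing $Y_k$ as an iterated extension of shifts of $Y$. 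Since $Y$ carries a non-nilpotent $v_1$-self-map $v_1 : \Sigma^{2,1} Y \to Y$ (inducing an isomorphism on $K(1)_{**}$, equivalently realizing to the classical $v_1$ on the classical $Y$, as recorded earlier in the excerpt and guaranteed by Lemma \ref{Lem:ClassicalTypen} together with Lemma \ref{Lem:ClassicalAU} for asymptotic uniqueness), a power of it extends up the filtration.

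The key steps, in order, are: (1) verify $Y_k$ is of type $(1,-1)$ in the sense of Definition \ref{Def:Type}, i.e. $H_{**}(Y_k)$ is $\tau$-torsion free and $K(\beta_{ij})_{**}(Y_k)=0$ for all $(i,j)$ with $d_{ij}>d_{1,-1}$ while $K(1)_{**}(Y_k)\neq 0$; the $\tau$-torsion-freeness follows from the cell structure and the long exact sequences in motivic homology, and the vanishing statements follow from Betti realization, since $Re(Y_k)$ is the classical type-$1$ complex $S^0/(2^k,\eta^k)$-analog and $K(\beta_{ij})$ for $j\geq 0$ annihilates any $\tau$-torsion-free finite spectrum of classical type $1$. (2) Apply Proposition \ref{Prop:NonNilp} directly: once $Y_k$ is known to be of type $(1,-1)$, that proposition immediately furnishes a non-nilpotent self-map $v_1 : \Sigma^{|\theta|} Y_k \to Y_k$ of slope $d_{1,-1}$ inducing an isomorphism on $K(1)_{**}$, and Corollary \ref{Cor:Unique} gives the asymptotic uniqueness. (3) Check that the bidegree of this self-map is $(2,1)$: this follows because $d_{1,-1} = 1/(2^2-2) = 1/2$ forces the topological degree and weight to be in the ratio $2:1$, and the minimal such self-map compatible with the classical $v_1$ (which has degree $2$) lives in $\pi_{2,1}$; one pins down the weight by Betti realization and the universal coefficient/$\tau$-inversion comparison $H_{**}(Y_k)[\tau^{-1}]\cong H_*(Re\,Y_k)[\tau^{\pm 1}]$.

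The main obstacle I expect is step (1), specifically confirming $K(\beta_{ij})_{**}(Y_k) = 0$ for the relevant $(i,j)$ with $d_{ij} > d_{1,-1} = 1/2$ — there are only finitely many such pairs, but one must be careful that the $C\tau$-module Morava K-theories $K(\beta_{ij})$ behave correctly on $Y_k$, and that the $\tau$-torsion-free hypothesis genuinely holds (the $\eta^k$-cofiber can in principle introduce $\tau$-torsion in homology, so one needs the explicit cell-diagram computation to rule this out). Once $Y_k$ is verified to be of type $(1,-1)$, everything else is a formal consequence of the general machinery (Proposition \ref{Prop:NonNilp}, Corollary \ref{Cor:Unique}) established earlier, and the bidegree bookkeeping is routine. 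An alternative, more hands-on route that avoids invoking the full strength of Krause's results would be to build $v_1$ on $Y_k$ directly by induction on $k$: start with $v_1$ on $Y_1$, and at each stage extend a suitable power over the relevant cofiber sequence, the obstruction lying in a motivic homotopy group that vanishes for connectivity/weight reasons after passing to a high enough power, exactly as in the classical Hopkins--Smith argument pulled back along Betti realization.
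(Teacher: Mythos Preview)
Your primary approach via Definition \ref{Def:Type} and Proposition \ref{Prop:NonNilp} is considerably more elaborate than the paper's, and it has two genuine gaps. First, your claim that ``there are only finitely many such pairs'' $(i,j)$ with $d_{ij} > d_{1,-1} = 1/2$ is false: for every $j \geq 0$ one has $d_{ij} = \tfrac{2^{j+1}(2^i-1)}{2^{j+2}(2^i-1)-2} > \tfrac12$, so there are infinitely many exotic Morava K-theories to check. This does not doom the approach, but the verification is not finite as you suggest. Second, Proposition \ref{Prop:NonNilp} only produces a self-map of the correct \emph{slope} $d_{1,-1}$; it does not give the specific bidegree $(2,1)$. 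Your step (3) gestures at pinning this down via Betti realization, but that only shows some power of the self-map realizes to a power of the classical $v_1$; getting the map in bidegree exactly $(2,1)$ rather than $(2N,N)$ for some $N \geq 1$ is not addressed.

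The paper's proof is much shorter and avoids all of this machinery. It constructs the map directly for each $k$ (no induction, no filtration by shifts of $Y_1$): one simply uses the long exact sequence in homotopy associated to the cofiber sequence $\Sigma^{1,1} S^{0,0}/2^k \xrightarrow{\eta^k} S^{0,0}/2^k \to Y_k$ to lift the classical construction, verifies that the resulting element of $\pi_{2,1}$ is $\tau$-torsion free, and then concludes non-nilpotence because Betti realization is strong symmetric monoidal and sends this map to the classical non-nilpotent $v_1$-self-map on $S^0/(2^k,\eta^k)$. Your ``alternative, more hands-on route'' at the very end is closest in spirit to this, but even there the induction on $k$ is unnecessary --- the direct long-exact-sequence construction works uniformly in $k$.
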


\begin{proof}
The map can be constructed using the long exact sequence in homotopy groups associated to the cofiber sequence
$$S^{1,1}/2^k \overset{\eta^k}{\to}S^{0,0}/2^k$$
as in the classical case. It is straightforward to verify that the map is $\tau$-torsion free, so it realizes to the analogous self-map of $S^0/(2^k,\eta^k)$. Since the classical self-map is non-nilpotent and Betti realization is strong symmetric monoidal, we conclude that $v_1$ is non-nilpotent motivically. 
\end{proof}

\begin{defin}
We define the \emph{$v_1$-inverted motivic sphere spectrum} $v^{-1}_1 S^{0,0}$ by setting
$$v^{-1}_1 S^{0,0} := \lim_k v^{-1}_1 Y_k.$$
\end{defin}

\begin{prop}\label{Prop:v1Tel}
There is an equivalence of motivic spectra
$$ L^f_{1,-1}S^{0,0} \simeq v^{-1}_1  S^{0,0}.$$
\end{prop}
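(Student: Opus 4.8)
The plan is to mimic the proof of Proposition \ref{Prop:etaTel}, replacing the single Moore spectra $S^{0,0}/2^k$ with the finite complexes $Y_k = S^{0,0}/(2^k,\eta^k)$ and replacing $\eta$-localization with $v_1$-localization. First I would record that each $Y_k$ is a finite motivic spectrum of type $(1,-1)$: the long exact sequences in homotopy groups for the two cofiber sequences defining $Y_k$ show that $2$ and $\eta$ act nilpotently, while the self-map $v_1 : \Sigma^{2,1}Y_k \to Y_k$ constructed in the preceding lemma is non-nilpotent and induces an isomorphism on $K(1)_{**}$; since $H_{**}(Y_k)$ is $\tau$-torsion free, Definition \ref{Def:Type} applies. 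By Corollary \ref{Cor:Tel}, the map $Y_k \to v_1^{-1}Y_k$ is therefore a finite $K(\beta_{1,-1}) = K(1)$-localization, so $L^f_{1,-1}Y_k \simeq v_1^{-1}Y_k$ for every $k$.

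Next I would pass to the inverse limit. Since $L^f_{1,-1}$ is a finite localization, it is smashing and hence commutes with homotopy limits of the tower $\{Y_k\}$; combined with $\lim_k Y_k \simeq (S^{0,0})^\wedge_{2,\eta} \simeq S^{0,0}$ (using \cite[Thm. 1]{HKO11a} and our standing $2$-completion convention, exactly as in the paragraph preceding the lemma above), this gives
$$
L^f_{1,-1} S^{0,0} \simeq L^f_{1,-1}\left(\lim_k Y_k\right) \simeq \lim_k L^f_{1,-1} Y_k \simeq \lim_k v_1^{-1} Y_k =: v_1^{-1}S^{0,0},
$$
where the last equality is Definition \ref{Def:v1InvertedMotivicSphere} (the definition of $v_1^{-1}S^{0,0}$). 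Strictly speaking one must check that the smashing localization $L^f_{1,-1}$ commutes with the relevant $\holim$; this follows because $L^f_{1,-1}(-) \simeq (-)\wedge L^f_{1,-1}S^{0,0}$ and the tower $\{Y_k\}$ is a tower of finite (hence dualizable) spectra, so smashing with a fixed object commutes with the limit after identifying it with a limit of the Spanier-Whitehead duals smashed with $L^f_{1,-1}S^{0,0}$; alternatively, one argues at the level of the $\lim^1$ exact sequences on homotopy groups as in the proof of Proposition \ref{Prop:etaTel}.

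The step I expect to be the main obstacle is controlling the inverse limit precisely enough to conclude it is the motivic sphere and not some proper completion thereof. Unlike the $\eta$-local case, here we are completing at both $2$ and $\eta$, and the identification $\lim_k Y_k \simeq S^{0,0}$ relies on the Hu--Kriz--Ormsby convergence result \cite[Thm. 1]{HKO11a}, so one should make sure the tower $\{Y_k = S^{0,0}/(2^k,\eta^k)\}$ genuinely computes the $(2,\eta)$-completion (for instance by comparing it to the standard tower $\{S^{0,0}/(2^k)\wedge S^{0,0}/(\eta^k)\}$ and noting the homotopy groups agree up to a cofinality/reindexing argument and a $\lim^1$ computation). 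Once that identification is in hand, everything else is a formal consequence of the smashing property of finite localizations and Corollary \ref{Cor:Tel}; in particular, no new computation of homotopy groups is required for this proposition, in contrast to Proposition \ref{Prop:etaTel} where the Guillou--Isaksen calculation was invoked.
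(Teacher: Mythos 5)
Your overall plan is sound and matches the paper in its first half: you establish that each $Y_k$ is of type $(1,-1)$, apply Corollary \ref{Cor:Tel} to get $L^f_{1,-1}Y_k \simeq v_1^{-1}Y_k$, and use $\lim_k Y_k \simeq S^{0,0}$. The gap is in the passage to the inverse limit. The claim that $L^f_{1,-1}$, ``being smashing, hence commutes with homotopy limits of the tower $\{Y_k\}$,'' is false: smashing (and, in particular, finite) localizations commute with homotopy \emph{colimits}, not homotopy limits. Your duality patch does not rescue this: rewriting $Y_k \wedge L^f_{1,-1}S^{0,0}$ as $F(DY_k, L^f_{1,-1}S^{0,0})$ and passing to the limit produces $F(\colim_k DY_k,\ L^f_{1,-1}S^{0,0})$, and $\colim_k DY_k$ need not be the Spanier--Whitehead dual of $\lim_k Y_k = S^{0,0}$ --- already for the tower $\{S^{0,0}/2^k\}$ the colimit of duals is a desuspended Pr\"ufer-type spectrum, not the sphere. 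So the displayed chain of equivalences $L^f_{1,-1}(\lim_k Y_k) \simeq \lim_k L^f_{1,-1}Y_k$ is not established by anything you have written, and the proof does not close.

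The paper argues in the opposite direction and thereby avoids this issue entirely: rather than pushing $L^f_{1,-1}$ through the limit, it checks directly that the candidate $v_1^{-1}S^{0,0} := \lim_k v_1^{-1}Y_k$ is finitely $K(\leq\beta_{1,-1})$-local. Each $v_1^{-1}Y_N \simeq L^f_{1,-1}Y_N$ is finitely local, and finitely local objects are closed under homotopy limits; hence for any finitely $K(\leq\beta_{1,-1})$-acyclic $W$ one has $[W,\lim_k v_1^{-1}Y_k]=0$, and one concludes by the uniqueness of finite localizations. This one-sided verification is precisely what sidesteps the false commutation you invoked. (You are, however, right that --- unlike Proposition \ref{Prop:etaTel} --- no Guillou--Isaksen-type homotopy-group computation is needed here; the argument is formal.) If you replace ``smashing, hence commutes with limits'' with ``finitely local objects are closed under limits, so $\lim_k L^f_{1,-1}Y_k$ is finitely local,'' your proof becomes essentially the paper's. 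You should also say a word, as the paper implicitly assumes, about why the unit map $S^{0,0}\to v_1^{-1}S^{0,0}$ is a finite $K(\leq\beta_{1,-1})$-equivalence, since locality of the target alone does not identify it with $L^f_{1,-1}S^{0,0}$.
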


\begin{proof}
It suffices to show that $v^{-1}_1 S^{0,0}$ is finitely $K(\leq \beta_{1,-1})$-local. Since $Y_k$ is of type $(1,-1)$, we have $L^f_{1,-1} Y_k \simeq v^{-1}_1 Y_k$ for all $k \geq 1$. Let $W$ be a finitely $K(\leq \beta_{1,-1})$-acyclic motivic spectrum. Then if $f \in [W,v^{-1}_1 S^{0,0}] = [ W,\lim_k v^{-1}_1 Y_k]$ is nontrivial, we must have $f : W \to v^{-1}_1 Y_N$ nontrivial for some $N \geq 1$. But this is a contradiction since $v_1^{-1}Y_N \simeq L^f_{1,-1} Y_N$ is finitely $K(\leq \beta_{1,-1})$-acyclic. 
\end{proof}

\begin{prop}\label{Prop:KQLoc}
There is an equivalence of motivic spectra
$$v^{-1}_1 S^{0,0} \simeq L_{KQ}S^{0,0}.$$
\end{prop}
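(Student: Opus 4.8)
The plan is to adapt the argument of Proposition~\ref{Prop:cKW}, with $v_1$-inversion and $KQ$ playing the roles there played by $\eta$-inversion and $cKW$, together with one extra colimit--limit interchange to handle the fact that, unlike $\eta$, the class $v_1$ is not in the image of $\pi_{**}(S^{0,0})$. The right-hand side $L_{KQ}S^{0,0}$ will be computed by the $KQ$-based motivic Adams spectral sequence
\[
E_1^{n,t,u} = \pi_{t,u}(KQ \wedge \overline{KQ}^{\wedge n}) \Rightarrow \pi_{t-n,u}(L_{KQ}S^{0,0}),
\]
whose strong convergence follows from \cite[Thm.~7.3.4]{Man18}. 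Since $kq$ is the very effective cover of $KQ$ and the $(8,4)$-periodicity of $KQ$ is detected by $v_1^4 \in \pi_{8,4}(kq)$, one has $KQ \simeq v_1^{-1}kq$ (see \cite{ARO17}, \cite{Bac17}). Smashing the cofiber sequence $S^{0,0} \to kq \to \overline{kq}$ with $KQ$ and using that $v_1^4$ acts invertibly on every $KQ$-module gives $KQ \wedge kq \simeq KQ \wedge KQ$, hence $KQ \wedge \overline{kq} \simeq KQ \wedge \overline{KQ}$; iterating this one factor at a time shows $KQ \wedge \overline{KQ}^{\wedge n} \simeq v_1^{-1}(kq \wedge \overline{kq}^{\wedge n})$, the localization being formed through the $\pi_{**}(kq)$-module structure coming from the leftmost smash factor. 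Thus the $KQ$-resolution of $S^{0,0}$ is identified with the $kq$-resolution of $S^{0,0}$ with the class $v_1^4 \in \pi_{8,4}(kq)$ inverted.

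For the left-hand side I would use $v_1^{-1}S^{0,0} = \lim_k v_1^{-1}Y_k$. Each $Y_k$ is a finite motivic spectrum of type $(1,-1)$, and its $v_1$-self-map is $\tau$-torsion free and realizes correctly under Betti realization, so inverting it corresponds, through the $kq$-Hurewicz map, to inverting $v_1^4 \in \pi_{8,4}(kq)$ in the $kq$-resolution for $Y_k$, which by the identification above is the $KQ$-resolution of $Y_k$. As in the proof of Proposition~\ref{Prop:etaTel}, $\pi_{**}(\lim_k v_1^{-1}Y_k)$ is then computed by an inverse limit of $v_1$-localized $kq$-resolutions, while $\pi_{**}(v_1^{-1}S^{0,0})$---read off the $v_1$-localized $kq$-resolution of $S^{0,0} = \lim_k Y_k$---is computed by a $v_1$-localized inverse limit; convergence of the inverse-limit spectral sequences follows from \cite[Prop.~4.2.22]{Gre12} and convergence of the localized spectral sequences from a motivic analog of \cite[Thm.~2.13]{MRS01}. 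The colimit--limit interchange map $v_1^{-1}S^{0,0} \to \lim_k v_1^{-1}Y_k$ will be shown to induce an isomorphism on $E_2$-pages: by Theorem~\ref{Thm:kqLines} the $v_1$-localized $kq$-resolution is concentrated on the $0$- and $1$-lines, where the relevant groups and their $v_1$-module structure are completely explicit, so the interchange can be verified directly. Stringing these identifications together yields $v_1^{-1}S^{0,0} \simeq L_{KQ}S^{0,0}$.

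The main obstacle is the colimit--limit interchange in the last step---equivalently, that $v_1$-localization commutes with the $(2,\eta)$-completion tower $\{Y_k\}$---which is exactly the subtlety encountered in Proposition~\ref{Prop:etaTel} and is overcome using the explicit description of the $v_1$-periodic part of the $kq$-resolution from Theorem~\ref{Thm:kqLines}. A secondary technical point is the compatibility of the two incarnations of ``$v_1$'', namely the self-map of $Y_k$ versus the homotopy class $v_1^4 \in \pi_{8,4}(kq)$; this holds because the self-map is $\tau$-torsion free and is detected by the periodicity class in $kq$-homology.
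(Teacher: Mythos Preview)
Your proposal is correct and takes essentially the same approach as the paper: identify the $KQ$-based Adams spectral sequence with the $v_1$-localized $kq$-resolution via $KQ \simeq v_1^{-1}kq$, and compute both sides through this localized resolution. The paper's proof is considerably terser---it simply asserts that $\pi_{**}(v_1^{-1}S^{0,0})$ is obtained by inverting $v_1$ in the $E_\infty$-page of the $kq$-resolution and does not explicitly unwind the definition $v_1^{-1}S^{0,0} = \lim_k v_1^{-1}Y_k$ or address the colimit--limit interchange; your more detailed treatment of that step, and of the compatibility between the self-map on $Y_k$ and the periodicity class in $\pi_{**}(kq)$, fills in justifications the paper leaves implicit.
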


\begin{proof}
The proof is similar to the proof of Proposition \ref{Prop:cKW}. The homotopy groups of the left-hand side may be obtained by inverting $v_1$ in the $E_\infty$-page of the $kq$-resolution. The homotopy groups of the right-hand side may be obtained using the $KQ$-based motivic Adams spectral sequence. Since $KQ \simeq v^{-1}_1 kq$, we may identify the $v_1$-localized $kq$-resolution with the $KQ$-based motivic Adams spectral sequence. The result follows. 
\end{proof}

\begin{prop}\label{Prop:Bousfield}
There is an equivalence of Bousfield classes $\langle KQ \rangle = \langle KGL \vee KW \rangle$. 
\end{prop}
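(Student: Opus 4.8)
The plan is to prove the equality of Bousfield classes $\langle KQ \rangle = \langle KGL \vee KW \rangle$ by producing two containments via fiber sequences relating these spectra. The key input is the \emph{Wood cofiber sequence}
\[
\Sigma^{1,1} KQ \overset{\eta}{\to} KQ \to KGL,
\]
which exhibits $KGL$ as $KQ/\eta$, together with the definition of Witt theory $KW \simeq \eta^{-1}KQ$ (equivalently, $KW$ is the telescope of $\eta$ acting on $KQ$). From the Wood sequence one immediately gets that $KGL$ is built from $KQ$ as a two-cell complex, so $\langle KGL \rangle \leq \langle KQ \rangle$; likewise $KW$ is a filtered colimit of copies of $KQ$ (shifted by multiples of $(1,1)$), so $\langle KW \rangle \leq \langle KQ \rangle$ as well. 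Hence $\langle KGL \vee KW \rangle \leq \langle KQ \rangle$.

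For the reverse containment, suppose $X$ is a motivic spectrum with $KGL_{**}(X) = 0$ and $KW_{**}(X) = 0$; I want to conclude $KQ_{**}(X) = 0$. Smashing the Wood cofiber sequence with $X$ and taking homotopy gives a long exact sequence
\[
\cdots \to \pi_{**}(\Sigma^{1,1}KQ \wedge X) \overset{\eta}{\to} \pi_{**}(KQ \wedge X) \to \pi_{**}(KGL \wedge X) \to \cdots,
\]
and since $KGL_{**}(X) = 0$, multiplication by $\eta$ is an isomorphism on $KQ_{**}(X)$. Therefore $KQ_{**}(X) \cong \eta^{-1}(KQ_{**}(X)) \cong KW_{**}(X) = 0$, where the middle isomorphism uses that $KW \simeq \eta^{-1}KQ$ and that $\eta$-localization commutes with smashing against $X$. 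This gives $\langle KQ \rangle \leq \langle KGL \vee KW \rangle$, completing the proof. (One may alternatively phrase the reverse containment through the isotropy separation sequence for $KGL$ as indicated in the introduction, but the Wood-sequence argument above is the most direct route.)

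I expect the main obstacle to be book-keeping around the two-completion and cellularity hypotheses under which these spectra live: one must ensure that the Wood cofiber sequence and the identification $KW \simeq \eta^{-1}KQ$ hold in the relevant category of $2$-complete cellular motivic spectra over an algebraically closed field, and that $KGL$, $KW$ fit into the Bousfield-localization formalism set up earlier in the section. Once these foundational points are in place, the argument is a short diagram chase; the substantive geometric content is entirely contained in the Wood cofiber sequence, which is standard, and in the definitional fact $KW = \eta^{-1}KQ$ due to Ananyevskiy \cite{Ana12}.
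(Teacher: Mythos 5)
Your proof is correct, and for the reverse containment it takes a genuinely different (and shorter) route than the paper. Both you and the paper handle the easy direction identically: if $KQ_{**}(X)=0$ then the Wood cofiber sequence forces $KGL_{**}(X)=0$, and $KW\simeq\eta^{-1}KQ$ being a filtered colimit of (shifts of) $KQ$ forces $KW_{**}(X)=0$. For the converse, however, the paper invokes the isotropy separation sequence
\[
KGL_{hC_2}\to KQ\to KW,
\]
together with the fact that homotopy orbits commute with smash products, to deduce $(KGL_{hC_2})_{**}(X)=0$ from $KGL_{**}(X)=0$ and then conclude $KQ_{**}(X)=0$ from $KW_{**}(X)=0$. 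You instead extract the conclusion directly from the Wood sequence: $KGL_{**}(X)=0$ implies $\eta$ acts invertibly on $KQ_{**}(X)$, hence
\[
KQ_{**}(X)\cong\eta^{-1}KQ_{**}(X)\cong KW_{**}(X)=0,
\]
using that inverting $\eta$ commutes with $\pi_{**}$ and with $-\wedge X$ because it is a filtered colimit. Your argument buys simplicity and stays entirely within the motivic category, dispensing with the equivariant identifications $KGL^{C_2}\simeq KQ$ and $\Phi^{C_2}KGL\simeq KW$; the paper's approach brings the isotropy separation sequence into view, which is perhaps conceptually illuminating but is not logically necessary here. Both are valid, and your observation that the Wood sequence alone suffices is a genuine simplification of the published argument.
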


We thank the motivic homotopy theory group in Osnabr{\"u}ck for help with the following proof. 

\begin{proof}
We start by showing that $\langle KQ \rangle \subseteq \langle KGL \vee KW \rangle$. Let $X$ be a motivic spectrum with $KQ_{**}(X) = 0$, so $X \in \langle KQ \rangle$. The Wood cofiber sequence \cite[Thm. 3.4]{RO16}
$$\Sigma^{1,1} KQ \overset{\eta}{\to} KQ \to KGL$$
implies that $KGL_{**}(X) = 0$. Since $KW \simeq \eta^{-1} KQ$, we also see that $KW_{**}(X) = 0$. Therefore $X \in \langle KGL \vee KW \rangle$.

Now suppose $X$ satisfies $(KGL \vee KW)_{**}(X) = 0$. Then $KGL_{**}(X) = 0$ and $KW_{**}(X) = 0$. Recall the cofiber sequence \cite[Sec. 5.2]{HKO11b}
$$KGL_{hC_2} \to KGL^{C_2} \to \Phi^{C_2} KGL.$$
which can be further identified with
$$KGL_{hC_2} \to KQ \to KW.$$
Since homotopy orbits is a filtered colimit, it commutes with homology and smash products. Therefore $KGL_{**}(X) = 0$ implies that $(KGL_{hC_2})_{**}(X) = 0$. Since $KW_{**}(X) = 0$, we conclude that $KQ_{**}(X) = 0$ and therefore $X \in \langle KQ \rangle$. 
\end{proof}

\begin{rem2}
More generally, Proposition \ref{Prop:Bousfield} follows from the motivic analog of \cite[Lem. 1.34]{Rav84} by taking $X = KQ$ and $g = \eta$, so $Y \simeq KGL$ and $\hat{X} \simeq KW$. 
\end{rem2}

\begin{conj}\label{Conj:Bous}
There are equivalences of Bousfield classes
$$\langle KGL \rangle = \langle K(0) \vee K(1) \rangle \quad \text{ and } \quad \langle KW \rangle = \langle K(w_0) \vee K(1) \rangle.$$
\end{conj}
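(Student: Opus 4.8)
We sketch a possible line of attack. The plan is to treat both equivalences by the same two-step recipe: an arithmetic fracture reducing $\langle-\rangle$ to a rational and a mod $2$ piece, followed by an identification of the mod $2$ piece with the relevant motivic Morava K-theory. Here one uses that for any motivic spectrum $E$ a spectrum is $E$-acyclic if and only if it is both $E_\q$-acyclic and $E/2$-acyclic, so that $\langle E\rangle$ is determined by $\langle E_\q\rangle$ and $\langle E/2\rangle$; in particular $\langle E\rangle = \langle E_\q\rangle \vee \langle E/2\rangle$. (In the $2$-complete setting of this paper the rational summands are trivial, so the conjecture reduces there to its mod $2$ content.)

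For $KGL$, the rational piece is the motivic Chern character: Adams operations split $KGL_\q \simeq \bigvee_{n\in\z}\Sigma^{2n,n}H\q$, so $\langle KGL_\q\rangle = \langle H\q\rangle = \langle K(0)\rangle$. For the mod $2$ piece, $KGL/2$ has coefficient ring $\m_2[\beta^{\pm 1}]$ with $|\beta|=(2,1)$, while Borghesi's $K(1)$ has $K(1)_{**}\cong \m_2[v_1^{\pm 1}]$ with $|v_1|=(2,1)$; since $p=2$ and $|v_1|=|\beta|$, one expects --- as a motivic incarnation of the classical fact $KU/2\simeq K(1)$ --- an equivalence of motivic ring spectra $KGL/2\simeq K(1)$, whence $\langle KGL/2\rangle = \langle K(1)\rangle$. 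Combining gives $\langle KGL\rangle = \langle KGL_\q\rangle\vee\langle KGL/2\rangle = \langle K(0)\vee K(1)\rangle$.

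For $KW$, note first that over an algebraically closed field $F$ one has $W(F)\cong\z/2$, so $\pi_{**}(KW)$ is $2$-torsion and $KW_\q = 0$; thus $\langle KW\rangle = \langle KW/2\rangle$. Using $KW\simeq\eta^{-1}kq$ (so $KW/2\simeq\eta^{-1}(kq/2)$) and the Isaksen--Shkembi computation of $\pi_{**}(kq)$ \cite{IS11} extended to $F$, one inverts $\eta$ --- equivalently $h_1$ in $\Ext_{A(1)^\vee}(\m_2)=\m_2[h_0,h_1,\alpha,\beta]/(h_0h_1,\tau h_1^3,h_1\alpha,\alpha^2=h_0^2\beta)$ --- to get $\pi_{**}(KW)\cong \f_2[\eta^{\pm 1},v_1^4]$ with $|\eta|=(1,1)$ and $|v_1^4|=(8,4)$. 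One then reads off the Bousfield class: $KW$ carries both the $w_0=\eta$ periodicity and the $v_1$ periodicity, and one wants to conclude $\langle KW\rangle = \langle K(w_0)\vee K(1)\rangle$. The natural way to pin this down is to feed in the $KGL$ result through the Wood cofiber sequence $\Sigma^{1,1}KQ\xrightarrow{\eta}KQ\to KGL$ and the isotropy separation sequence $KGL_{hC_2}\to KQ\to KW$ already used in Proposition \ref{Prop:Bousfield}; since homotopy orbits commute with smash products, $\langle KGL\rangle\subseteq\langle KGL_{hC_2}\rangle$, and the two cofiber sequences should then bound $\langle KW\rangle$ both above and below by $\langle K(w_0)\vee K(1)\rangle$ once one knows $\langle KGL\rangle = \langle K(0)\vee K(1)\rangle$ and that inverting $\eta$ kills the $K(0)$-contribution while retaining the $K(w_0)$- and $K(1)$-contributions.

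The hard part will be the mod $2$ Bousfield-class identifications, i.e. upgrading the agreements of coefficient rings ($KGL/2\leftrightarrow K(1)$ and $\eta^{-1}(kq/2)\leftrightarrow K(w_0)\vee K(1)$) to genuine equalities of classes of acyclics. Classically this rests on the nilpotence and thick-subcategory theorems, which are not available in full motivic generality (see the introduction of \cite{Kra18}), and one must in addition control how $\tau$-periodicity interacts with these classes --- note for instance that $\eta^{-1}kq$ is a $C\tau$-module whereas $K(1)$ is not, so the naive identity requires care and may be best phrased, or proved, after restricting to cellular $C\tau$-modules as suggested in the remarks above. A promising route is to reduce the mod $2$ statements, via \cite{GWX18}, to Bousfield-class computations in the derived category of $BP_*BP$-comodules, where telescope-type identities are known \cite[Sec. 6.3]{HPS97}; alternatively one may try to transport the classical relation $\langle KU/2\rangle = \langle K(1)\rangle$ along Betti realization, but since Bousfield classes do not pull back along realization this still requires an independent argument that a cellular motivic spectrum $X$ has $KGL_{**}(X)=0$ precisely when $K(1)_{**}(X)=0$.
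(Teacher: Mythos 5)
This statement is a \emph{conjecture} in the paper, and the paper offers no proof: the accompanying remark only says that the first equivalence ``would follow from a motivic analog of \cite[Thm. 2.1.(d)]{Rav84}, which in turn would rely on a motivic analog of Johnson and Wilson's formula for $K(n)^{cl}_*(X)$,'' and that the authors ``do not know how to approach the second equivalence.'' Your sketch is likewise not a proof, and you are candid about that: the step you flag as ``the hard part'' --- promoting the agreement of coefficient rings ($KGL/2$ versus $K(1)$, and $\eta^{-1}(kq/2)$ versus $K(w_0)\vee K(1)$) to an honest equality of Bousfield classes without nilpotence or thick-subcategory input --- is exactly the open content of the conjecture. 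So no gap is closed; you have reformulated the problem rather than solved it. The route you gesture at for the first equivalence (arithmetic fracture plus an expected equivalence $KGL/2\simeq K(1)$) is also genuinely different from the one the paper has in mind (compute $\langle KGL\rangle$ directly via a Johnson--Wilson/Ravenel-type argument, without identifying $KGL/2$ with a Morava K-theory on the nose), and the latter is arguably more robust since it avoids delicate ring-spectrum comparisons.

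Two local points in your sketch are off. First, the parenthetical assertion that ``in the $2$-complete setting of this paper the rational summands are trivial'' is not correct for periodic spectra: $KGL$ and $KQ$ are not bounded below, and $(KGL)^\wedge_2$ has nonzero rational homotopy (e.g.\ $\pi_{0,0}$ contains a copy of $\z_2$). Indeed the paper's own accounting keeps $K(0)$ as a genuine summand (it writes $\langle KGL\vee KW\rangle \overset{?}{=} \langle K(0)\vee K(w_0)\vee K(1)\rangle$), so the rational factor cannot simply be dropped. Second, you write $KW\simeq\eta^{-1}kq$, but in the paper's notation $\eta^{-1}kq\simeq cKW = KW_{\geq 0}$ is the \emph{connective} Witt theory, not $KW$ itself; the conjecture concerns the periodic spectrum $KW=\eta^{-1}KQ$. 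Passing between them requires either an additional argument or a separate Bousfield-class identity $\langle KW\rangle=\langle cKW\rangle$, which you would need to justify. The observation that $W(\overline{F})\cong\z/2$ forces $\pi_{**}KW$ to be $2$-torsion is a reasonable starting point, but as stated the $KW$ half of your sketch conflates the two spectra.
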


\begin{rem2}
The first equivalence would follow from a motivic analog of \cite[Thm. 2.1.(d)]{Rav84}, which in turn would rely on a motivic analog of Johnson and Wilson's formula for $K(n)^{cl}_*(X)$ in \cite{JW75}. We do not know how to approach the second equivalence. 
\end{rem2}

If Conjecture \ref{Conj:Bous} holds, then we would have $L_{KQ} \simeq L_{KGL \vee KW} \simeq L_{K(\leq \beta_{1,-1})}$. In particular, we would have $L_{KGL \vee KW} S^{0,0} \simeq L_{1,-1} S^{0,0}$. If we also knew that $L_{1,-1}$ (or equivalently, $L_{KQ}$ or $L_{KGL \vee KW}$) is smashing, then all three formulations of the motivic Telescope Conjecture for $(m,n) = (1,-1)$ would hold. 

\begin{rem2}\label{Rem:TelGen}
The formulation of the $\c$-motivic Telescope Conjectures depended on the existence of exotic Morava K-theories $K(\beta_{ij})$. These are only known to exist over $\Spec(\c)$, so it is unclear how to formulate the motivic Telescope Conjecture over general base fields. With that said, many of the spectra appearing in this section, such as $KGL$, $KQ$, $KW$, and $cKW$, are defined over general base fields. If one supposes Conjecture \ref{Conj:Bous} holds over general base fields, then for example, the Telescope Conjecture at height $(1,1)$ would ask that the map $L_{KGL \vee KW}^fS^{0,0} \to L_{KGL \vee KW} S^{0,0}$ is an equivalence. We plan to analyze this conjecture in future work using the $kq$-resolution. 
\end{rem2}

\section{Motivic $J$-homomorphisms}\label{Sec:JHom}

We conclude by discussing the relation of our calculations to the motivic J-homomorphism. We begin by recalling the work of Hu-Kriz-Ormsby on the motivic unitary J-homomorphism in Section \ref{Sec:Ju}. In Section \ref{Sec:GIKR}, we recall and build upon recent work of Gheorghe-Isaksen-Krause-Ricka \cite{GIKR18} which provides a topological model for the category of $\c$-motivic cellular spectra. Using these results, we propose a model for the motivic stable orthogonal J spectrum in Section \ref{Sec:Jo} and determine its location in the $kq$-resolution. 

\subsection{Recollection of the motivic unitary J-homomorphism}\label{Sec:Ju}
Recall that Hu-Kriz-Ormsby defined a motivic unitary J-homomorphism 
$$J_u: \pi_k(K^{alg}(K)) \to \pi^K_{2\ell+k-1,\ell}(S^{0,0})$$
in \cite[Sec. 1]{HKO11} (see also \cite[16.2]{BH17}), where $k + \ell > 1$ and $K$ is any algebraically closed field of characteristic zero. They computed its image using the motivic Adams-Novikov spectral sequence:

\begin{thm}\cite[Thm. 3]{HKO11}
The image of $J_u$ is isomorphic to the $2$-primary component of the image of the classical unitary J-homomorphism in dimension $2\ell + k -1 $. 
\end{thm}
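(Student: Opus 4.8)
The plan is to reduce the computation of $\operatorname{im}(J_u)$ over an algebraically closed field $K$ of characteristic zero to the classical computation of $\operatorname{im}(J)$ via the motivic Adams--Novikov spectral sequence, following the strategy of \cite{HKO11}. First I would recall that for an algebraically closed field of characteristic zero, the homotopy of the algebraic $K$-theory spectrum $K^{alg}(K)$ in the relevant degrees is understood (this is where the restriction on $k+\ell$ enters), so the source of $J_u$ is identified with a cyclic group, and the map $J_u$ is built so as to be compatible via Betti realization with the classical $J$-homomorphism $J : \pi_k(U) \to \pi_{k-1}(S^0)$. The key structural input is that the motivic Adams--Novikov spectral sequence over $K$ has $E_2$-term $\operatorname{Ext}_{MGL_{**}MGL}(MGL_{**}, MGL_{**})$, and that in the relevant stems this $E_2$-page is, up to the extra weight grading and a $\tau$-indexing, isomorphic to the classical Adams--Novikov $E_2$-page $\operatorname{Ext}_{BP_*BP}(BP_*, BP_*)$ computing $\pi_*(S^0)$.

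The steps, in order, would be: (1) Identify the image of $J_u$ on the $E_2$-page of the motivic Adams--Novikov spectral sequence with the classical $\alpha$-family (the image of $J$ classes) sitting in $\operatorname{Ext}^{1}_{BP_*BP}$, using the known computation of $\operatorname{Ext}^{1}$ and the compatibility of $J_u$ with the $e$-invariant. (2) Show that the classes in the image of $J_u$ are permanent cycles in the motivic Adams--Novikov spectral sequence; this follows because the classical $\operatorname{im}(J)$ classes are permanent cycles in the classical spectral sequence and Betti realization induces a map of spectral sequences which is an isomorphism on the relevant $E_2$-tridegrees (by the identification of the $MGL$-cooperations with $BP_*BP$ after inverting $\tau$, as in \cite{HKO11}, \cite{DI05}). (3) Rule out the possibility that the image classes are hit by a differential, again by comparison with the classical situation, and resolve the hidden extension question for the $2$-primary order by comparing to the classical answer. (4) Assemble these into the stated isomorphism between $\operatorname{im}(J_u)$ in motivic degree $(2\ell+k-1,\ell)$ and the $2$-primary part of $\operatorname{im}(J)$ in classical degree $2\ell+k-1$.

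The main obstacle I expect is step (2)--(3): controlling the behavior of the motivic Adams--Novikov spectral sequence away from the $\tau$-inverted (classical) range. Over $K$, the spectral sequence has genuinely more classes than the classical one --- notably $\tau$-torsion phenomena and the extra weight grading --- so one must verify that no exotic motivic differentials or extensions interfere with the $J_u$-classes in the specified tridegrees. The saving grace is that the $\operatorname{im}(J_u)$ classes live in low Adams--Novikov filtration (filtration $1$, the $\operatorname{Ext}^1$-line), where the $\tau$-Bockstein structure is simplest, and that Betti realization $Re_K$ is strong symmetric monoidal and compatible with the Adams--Novikov towers, so every such class and every potential differential out of it maps isomorphically to its classical counterpart. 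Thus the comparison argument, while requiring care about the weight bookkeeping, should go through, and the restriction $k+\ell>1$ is exactly what is needed to stay in the range where the source and the filtration-$1$ line behave classically.
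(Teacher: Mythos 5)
This theorem is not proved in the present paper; it is recalled verbatim from \cite[Thm. 3]{HKO11} and used only as input for Corollary \ref{Cor:Ju}, so there is no in-paper proof to compare against. Your sketch does, however, reconstruct the strategy that Hu--Kriz--Ormsby actually use: identify the source via algebraic $K$-theory of the algebraically closed base, set up the motivic Adams--Novikov spectral sequence, and compare it to the classical ANSS via Betti realization (equivalently, $\tau$-inversion). Two small adjustments to your outline are worth making. First, your main worry in steps (2)--(3) about exotic $\tau$-torsion differentials is already defused by the structure of the answer: the image-of-$J$ classes live in $\Ext^1$ of the $\alpha$-family and are $\tau$-torsion free --- the paper points this out immediately after the cited theorem ("the image of the unitary J-homomorphism only contains $\tau$-free classes") --- so Betti realization is an isomorphism on exactly the $E_2$- and $E_\infty$-tridegrees you need, and no genuinely new motivic differential can appear there. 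Second, the "hidden extension question for the $2$-primary order" is lighter than you suggest: the order of the cyclic group is already determined by the classical $e$-invariant computation, and $\tau$-inversion identifies the extension problem with the classical one, so nothing new needs to be resolved. With those simplifications your plan is consistent with the cited source.
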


In particular, the image of the unitary J-homomorphism only contains $\tau$-free classes in $\pi_{**}(S^{0,0})$. The corresponding elements in the classical stable stems are contained in the image of the classical orthogonal J-homomorphism since the unitary J-homomorphism factors through the orthogonal J-homomorphism.  By \cite{DM89}, the image of the orthogonal J-homomorphism is detected in the $0$- and $1$-lines of the $bo$-resolution. Translating back to the motivic setting using Betti realization and $\tau$-localization, we obtain the following corollary:

\begin{cor}\label{Cor:Ju}
The image of the motivic unitary J-homomorphism $J_u$ is detected in the $1$-line of the $kq$-resolution. More precisely, it is contained in the cokernel of the nontrivial $d_1$-differentials from $kq$-Adams filtration zero to one. 
\end{cor}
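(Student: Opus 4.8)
The plan is to transport the classical statement — that the image of the orthogonal $J$-homomorphism is detected in the $1$-line of the $bo$-resolution, in the cokernel of the $d_1$-differentials — to the motivic setting via Betti realization and $\tau$-localization. First I would recall that the motivic unitary $J$-homomorphism $J_u$ factors through the classical unitary $J$-homomorphism under Betti realization: by \cite[Thm. 3]{HKO11}, the image of $J_u$ is the $2$-primary image of the classical unitary $J$-homomorphism in the corresponding degree, and by the preceding discussion every class in this image is $\tau$-free. Since $\tau$-free classes inject into their Betti realizations (recall $\pi_{**}(X)[\tau^{-1}] \cong \pi_*(X(\c)) \otimes \f_2[\tau^{\pm 1}]$), it suffices to locate these classes in the $kq$-resolution by locating their realizations in the $bo$-resolution and then arguing that the $kq$-resolution filtration of a $\tau$-free class agrees with the $bo$-resolution filtration of its realization.

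The key steps, in order: (1) By \cite{DM89}, each class in the image of the classical orthogonal $J$-homomorphism is detected in the $1$-line of the $bo$-resolution, and more precisely in the cokernel of the nonzero $d_1$-differentials from $bo$-Adams filtration zero to one (this is exactly the image-of-$J$ description in terms of the $E_\infty$-page of the $bo$-resolution). (2) Since $kq(\c) \simeq bo$ (Lemma 2.14 in the excerpt) and Betti realization is strong symmetric monoidal, Betti realization carries the $kq$-resolution to the $bo$-resolution and induces a multiplicative map of spectral sequences $\kqE_r^{n,t,u} \to {}^{bo}E_r^{n,t}$. (3) A $\tau$-free class $x$ in the $kq$-resolution that detects an element of $\im(J_u)$ must realize to a $\tau$-free class detecting the corresponding element of the classical image of $J$; by step (1) the latter lives in the $1$-line, so $x$ lives in $\kqE_\infty^{1,t,u}$ for appropriate $(t,u)$ — here I would invoke Corollary \ref{Cor:Half} to pin down that the motivic weight $u$ is forced, so that the $\tau$-free class sitting over a given classical $1$-line class is unique and genuinely on the $1$-line rather than merely realizing to it. (4) Finally, since $x$ is $\tau$-free, it cannot be the target of a $d_1$ out of a $\tau$-torsion class, and Theorem \ref{Prop:kqDiffl2} identifies the $d_1$-differentials from the $0$-line to the $1$-line as multiplication by $2^{\rho(k)}$ exactly as in the classical case (Theorem \ref{Thm:boDiffl2}); hence the cokernel description transports verbatim, placing $\im(J_u)$ in the cokernel of the nontrivial $d_1$ from $kq$-Adams filtration zero to one.

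The main obstacle I anticipate is step (3): ensuring that a class on the classical $1$-line lifts to a genuine element of the motivic $1$-line and not, say, to something on a higher line that happens to realize onto it, or to a $\tau$-torsion class whose realization vanishes. This is precisely the kind of subtlety flagged in the analysis of Theorem \ref{Prop:kqDifferentials} — Betti realization only sees $\tau$-free behavior — so I would handle it exactly as there: use the fact that the relevant classes in the image of $J_u$ are $\tau$-free by \cite[Thm. 3]{HKO11}, combine with the weight constraint of Corollary \ref{Cor:Half} (which forces $u = \lceil t/2 \rceil$ or $\lceil t/2 \rceil + 1$ according to $t \bmod 4$), and conclude that the $\tau$-free class over each classical generator is uniquely determined and occupies the $1$-line. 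The rest is routine bookkeeping with the multiplicative comparison map of spectral sequences.
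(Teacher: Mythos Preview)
Your proposal is correct and takes essentially the same approach as the paper, whose entire argument is the discussion paragraph preceding the corollary: the image of $J_u$ consists of $\tau$-free classes by \cite{HKO11}, their realizations lie in the $1$-line of the $bo$-resolution by \cite{DM89} (via the factorization of unitary $J$ through orthogonal $J$), and one transports back using Betti realization and $\tau$-localization. You flesh out the transfer step more carefully than the paper does---invoking Corollary~\ref{Cor:Half} to pin down weights and Theorem~\ref{Prop:kqDiffl2} to match the $d_1$-cokernels---but the strategy is identical.
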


\subsection{The Gheorghe-Isaksen-Krause-Ricka equivalence}\label{Sec:GIKR}
Let $Sp^{\z^{op}} = Fun(\z^{op},Sp)$, where $\z$ is regarded as a poset, be the category of \emph{filtered spectra} \cite[Def. 2.1]{GIKR18}. In \cite{GIKR18}, Gheorghe-Isaksen-Krause-Ricka construct a functor 
$$\Gamma_\star : Sp \to Mod_{\Gamma_\star S^0}$$
from the category of classical spectra to the category of left modules over $\Gamma_\star S^0 $ \cite[Def. 3.10]{GIKR18}. Concretely, this functor is defined by
$$\Gamma_\star X : w \mapsto Tot(\tau_{\geq 2w} (X \wedge MU^{\bullet+1}))$$
where $MU^{\bullet + 1}$ is the usual cosimplicial resolution of the $E_\infty$-ring spectrum $MU$, $\tau_{\geq 2w}$ is the $2w$-connective cover, and $Tot$ is totalization. 

The category of $\Gamma_{\star}S^0$-modules is equivalent to the category of cellular $\c$-motivic spectra \cite[Thm. 6.12]{GIKR18} after $2$-completion. The equivalence is induced by adjoint functors
$$- \otimes S^{0,\star} : Mod_{\Gamma_\star S^0} \leftrightarrows \Motc : \Omega^{0,\star}_s.$$
The right adjoint is defined by 
$$\Omega^{0,w}_s(X) := F_s(S^{0,w},X)$$
where $F_s(S^{0,w},X)$ is the (classical) mapping spectrum between motivic spectra. The left-adjoint is given by
$$\hocolim_{i \geq j} Y_i \otimes S^{0,j}$$
where $Y_i \otimes -$ is defined using the fact that $\Motc$ is enriched over classical spectra. 

The following lemma and construction are due to Mark Behrens.

\begin{lem}
Let $Re : \Motc \to Sp$ denote Betti realization and let $E_\star$ be a filtered spectrum with $E_j = \colim_{i \to \infty} E_{-i}$ for all $j \ll 0$. The functor $\widetilde{Re} : Mod_{\Gamma_\star S^0} \to Sp$ in the commutative diagram
\[
\begin{tikzcd}
\Motc \arrow[r,bend left=5] \arrow{d}{Re} & Mod_{\Gamma_\star S^0} \arrow{dl}{\widetilde{Re}} \arrow[l, bend left=5]\\
Sp
\end{tikzcd}
\]
is given by
$$\widetilde{Re}(E_\star) = \colim_{i \to \infty} E_{-i}.$$
\end{lem}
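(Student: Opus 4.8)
The statement identifies the composite $\widetilde{Re} = Re \circ (- \otimes S^{0,\star})$ as the functor sending a filtered spectrum $E_\star$ (with $E_j$ eventually constant as $j \to -\infty$) to $\operatorname{colim}_{i \to \infty} E_{-i}$. The plan is to unwind both sides of the triangle using the explicit formulas already recorded in the excerpt. First I would recall that $- \otimes S^{0,\star} : Mod_{\Gamma_\star S^0} \to \Motc$ is computed as $\hocolim_{i \geq j} Y_i \otimes S^{0,j}$, and that $Re : \Motc \to Sp$ is Betti realization. Since Betti realization is a left adjoint (its right adjoint is the functor remembering a spectrum as a constant or Tate-type object; in any case $Re$ is known to be strong symmetric monoidal and colimit-preserving by \cite{HO16}), it commutes with the homotopy colimit defining $- \otimes S^{0,\star}$. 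Thus $\widetilde{Re}(E_\star) \simeq \hocolim_{i \geq j} Re(E_i \otimes S^{0,j}) \simeq \hocolim_{i \geq j} E_i \wedge Re(S^{0,j})$, using that $Y_i \otimes -$ is the enrichment of $\Motc$ over $Sp$ and that $Re$ respects this enrichment.

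The key computational input is then the Betti realization of the Tate twists $S^{0,j}$. I would argue that $Re(S^{0,j}) \simeq S^0$ for all $j \in \z$: motivically $S^{0,j}$ is a shift of the sphere by powers of the Tate object $S^{0,1}$, and the Betti realization of $S^{0,1}$ (the reduced realization of $\mathbb{G}_m$, or equivalently the realization of the weight-one Tate sphere) is $S^0$. Concretely, over $\c$ one has $S^{2,1} \simeq S^1_{top} \wedge (\text{something realizing to } S^1)$ in the appropriate sense, so that $Re(S^{0,1}) \simeq S^0$; since $Re$ is monoidal this gives $Re(S^{0,j}) \simeq S^0$ for all $j$. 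Feeding this in, the bi-indexed homotopy colimit collapses: $\widetilde{Re}(E_\star) \simeq \hocolim_{i \geq j} E_i$.

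Finally I would simplify this double colimit. The diagram is indexed over pairs $(i,j)$ with $i \geq j$, with the structure maps being the filtration maps $E_i \to E_{i-1}$ in the $i$-direction and identities (after applying $Re$ and collapsing $S^{0,j}$) in the $j$-direction. Cofinality lets one restrict to the subdiagram where $j \to -\infty$ and $i$ ranges over all integers; because $E_j$ is eventually constant for $j \ll 0$ — say $E_j = \operatorname{colim}_{i\to\infty} E_{-i}$ for $j \ll 0$, which is precisely the hypothesis — the colimit over the $j$-direction stabilizes, and what remains is $\operatorname{colim}_{i \to \infty} E_{-i}$ as the $i$-index runs toward $+\infty$ through the filtration. (Care is needed: the natural variable is $E_{-i}$ as $i \to \infty$, i.e. going down the filtration, which is where the colimit of a filtered spectrum is taken; this matches the stated formula.)

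\textbf{Main obstacle.} The delicate point is bookkeeping of the index category and verifying that the homotopy colimit genuinely collapses, i.e. that no higher $\lim^1$-type obstruction survives and that the cofinality/stabilization argument is valid at the level of filtered spectra rather than just their homotopy groups. One must be careful that the condition ``$E_j = \operatorname{colim}_{i\to\infty} E_{-i}$ for all $j \ll 0$'' is exactly what makes the double hocolim reduce to a single colimit without ambiguity. The identification $Re(S^{0,j}) \simeq S^0$ is standard but should be invoked cleanly (via the monoidality of $Re$ and $Re(S^{0,1}) \simeq S^0$), and the interchange of $Re$ with the defining hocolim of $- \otimes S^{0,\star}$ uses that $Re$ preserves homotopy colimits, which follows from it being a left adjoint \cite{HO16}.
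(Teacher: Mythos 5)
Your proposal is correct and follows essentially the same argument as the paper: unwind $-\otimes S^{0,\star}$ as the homotopy colimit $\hocolim_{i \geq j} E_i \otimes S^{0,j}$, push $Re$ through using that it preserves colimits and is strong symmetric monoidal (so $Re(S^{0,j}) \simeq S^0$), and identify the collapsed colimit as $\colim_{i\to\infty} E_{-i}$. One small clarification: there is no $\lim^1$-type obstruction to worry about, since the diagram is a filtered homotopy colimit rather than a limit; once $Re$ collapses the Tate twists, the residual filtered colimit along the filtration maps $E_i \to E_{i-1}$ is precisely $\colim_{i\to\infty}E_{-i}$, with the hypothesis on $E_j$ for $j \ll 0$ pinning down the answer.
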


\begin{proof}
Betti realization commutes with colimits and is strong symmetric monoidal, so we have 
$$Re(E_\star \otimes S^{0,\star}) = Re(\colim_{i \geq j} E_i \otimes S^{0,j}) = \colim_{i \geq j} Re(E_i) \wedge S^0 = \colim_{i \to \infty} E_{-i}.$$
\end{proof}

\begin{constr}\label{Constr:Map}
Suppose that $E_\star \in Mod_{\Gamma_\star S^0}$ satisfies 
$$Tot(\tau_{\geq 2w}(\colim_{i \to \infty} E_{-i} \wedge MU^{\bullet + 1})) \simeq Tot(\tau_{\geq 2w} E_w \wedge MU^{\bullet + 1})$$ 
for all $w \in \z^{op}$ and that under this equivalence, the filtered spectrum structure maps factor as
\begin{equation}\label{Eqn:StrComp}
Tot(\tau_{\geq 2w}(E_w \wedge MU^{\bullet+1})) \to Tot(\tau_{\geq 2w-2}(E_w \wedge MU^{\bullet+1})) \to Tot(\tau_{\geq 2w-2}(E_{w-1} \wedge MU^{\bullet+1})).
\end{equation}
Suppose further that the Adams-Novikov spectral sequence for $E_w$ converges for all $w \in \z^{op}$. Then we define a map
$$\Gamma_\star(\colim_{i \to \infty} E_{-i}) \to E_\star$$
which is given for $\star = w$ by
$$Tot(\tau_{\geq 2w} (\colim_{i \to \infty} E_{-i} \wedge MU^{\bullet +1})) \simeq Tot(\tau_{\geq 2w} (E_w \wedge MU^{\bullet +1})) \hookrightarrow Tot(E_w \wedge MU^{\bullet+1}) \simeq E_w.$$
The filtered spectrum structure maps on the left-hand side are given by Equation \ref{Eqn:StrComp}
and the filtered spectrum structure maps on the right-hand side are given by 
$$Tot(E_w \wedge MU^{\bullet+1}) \to Tot(E_{w-1} \wedge MU^{\bullet+1}),$$
so the map above commutes with the filtered spectrum structure maps. 
\end{constr}

\begin{cor}
If $E$ is a motivic spectrum such that $\Omega^{0,\star}_s(E)$ satisfies the above conditions, then there exists a map of motivic  spectra
$$(\Gamma_\star( Re(E)) )\otimes S^{0,\star} \to E.$$
\end{cor}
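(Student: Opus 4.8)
The plan is to construct the map by applying Construction \ref{Constr:Map} to the filtered spectrum $E_\star := \Omega^{0,\star}_s(E)$ and then transporting the resulting morphism of $\Gamma_\star S^0$-modules across the adjunction $(-\otimes S^{0,\star}) \dashv \Omega^{0,\star}_s$. The hypothesis of the corollary is precisely that $\Omega^{0,\star}_s(E)$ satisfies the standing assumptions of Construction \ref{Constr:Map} --- namely the identification $Tot(\tau_{\geq 2w}(\colim_{i\to\infty} E_{-i}\wedge MU^{\bullet+1}))\simeq Tot(\tau_{\geq 2w}E_w\wedge MU^{\bullet+1})$, the factorization \eqref{Eqn:StrComp} of the filtered-spectrum structure maps, and convergence of the Adams--Novikov spectral sequence in each weight --- so Construction \ref{Constr:Map} directly outputs a map of $\Gamma_\star S^0$-modules
$$\Gamma_\star\!\left(\colim_{i\to\infty} (\Omega^{0,\star}_s E)_{-i}\right) \longrightarrow \Omega^{0,\star}_s(E).$$

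Next I would identify the source using the Lemma. That Lemma computes $\widetilde{Re}(F_\star)=\colim_{i\to\infty}F_{-i}$ for a filtered spectrum of the relevant shape, and the commuting triangle in its statement says exactly that $\widetilde{Re}$ precomposed with the functor $\Motc\to Mod_{\Gamma_\star S^0}$ (i.e.\ with $\Omega^{0,\star}_s$) recovers Betti realization $Re$. Hence $\colim_{i\to\infty}(\Omega^{0,\star}_s E)_{-i}\simeq \widetilde{Re}(\Omega^{0,\star}_s E)\simeq Re(E)$, and the map above becomes $\Gamma_\star(Re(E))\to \Omega^{0,\star}_s(E)$ in $Mod_{\Gamma_\star S^0}$.

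Finally I would apply the left adjoint $-\otimes S^{0,\star}$ and compose with the counit $\varepsilon\colon \Omega^{0,\star}_s(E)\otimes S^{0,\star}\to E$ of the adjunction (which is an equivalence after $2$-completion by \cite[Thm. 6.12]{GIKR18}, though only the counit map itself is needed), producing
$$(\Gamma_\star(Re(E)))\otimes S^{0,\star}\longrightarrow \Omega^{0,\star}_s(E)\otimes S^{0,\star}\overset{\varepsilon}{\longrightarrow} E,$$
which is the claimed map of motivic spectra. Equivalently, one may just take the adjoint transpose of $\Gamma_\star(Re(E))\to \Omega^{0,\star}_s(E)$ under $(-\otimes S^{0,\star})\dashv \Omega^{0,\star}_s$.

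The main obstacle is bookkeeping rather than a conceptual gap: one must confirm that the map produced by Construction \ref{Constr:Map} is genuinely a morphism of $\Gamma_\star S^0$-modules (compatible with the full module structure, not merely the filtered-spectrum structure maps), and that the identification $\colim_{i\to\infty}(\Omega^{0,\star}_s E)_{-i}\simeq Re(E)$ from the Lemma is compatible with the $\Gamma_\star$-functoriality used inside Construction \ref{Constr:Map}. Since everything here is implicitly $2$-complete and cellular, the equivalence of \cite[Thm. 6.12]{GIKR18} makes these identifications well-behaved, so the only real content is checking that "\,$\Omega^{0,\star}_s(E)$ satisfies the above conditions\,'' is interpreted as exactly the list of hypotheses in Construction \ref{Constr:Map}.
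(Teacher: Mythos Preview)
Your proposal is correct and matches the paper's intended (implicit) argument: the corollary is stated without proof immediately after Construction~\ref{Constr:Map}, and the intended justification is exactly to apply that construction to $E_\star=\Omega^{0,\star}_s(E)$, identify $\colim_{i\to\infty}(\Omega^{0,\star}_s E)_{-i}\simeq Re(E)$ via the preceding Lemma, and pass through the adjunction $(-\otimes S^{0,\star})\dashv\Omega^{0,\star}_s$. Your added remarks about checking $\Gamma_\star S^0$-module compatibility are reasonable bookkeeping but go beyond what the paper records.
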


\begin{lem}\label{Lem:Conditions}
Suppose that $E$ is a motivic spectrum for which the motivic Adams-Novikov spectral sequence converges and for which there exists a $u \in \z$ such that $\pi_{*,u}(E) \cong \pi_{*,u-n}(E)$ for all $n \geq 0$. Then $\Omega^{0,\star}_s(E)$ satisfies the conditions of Construction \ref{Constr:Map}. 
\end{lem}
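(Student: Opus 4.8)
The plan is to unwind the definition of $\Omega^{0,\star}_s(E)$ via the Gheorghe--Isaksen--Krause--Ricka equivalence and verify the three conditions of Construction \ref{Constr:Map} in turn. First I would record that, by \cite[Thm. 6.12]{GIKR18} and the description of the right adjoint $\Omega^{0,w}_s(X) = F_s(S^{0,w},X)$, the filtered spectrum $\Omega^{0,\star}_s(E)$ has underlying spectra whose homotopy groups compute $\pi_{*,w}(E)$, and whose colimit $\widetilde{Re}(\Omega^{0,\star}_s(E)) = \colim_{i\to\infty}\Omega^{0,-i}_s(E)$ is the Betti realization $Re(E)$ (by the preceding lemma). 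The hypothesis that there is a $u$ with $\pi_{*,u}(E)\cong\pi_{*,u-n}(E)$ for all $n\geq 0$ says precisely that the filtration stabilizes on homotopy groups for $w\leq u$, so that $\Omega^{0,w}_s(E)\simeq\colim_{i\to\infty}\Omega^{0,-i}_s(E) = Re(E)$ for all $w\leq u$; this is the filtered-spectrum analog of the stabilization hypothesis ``$E_j = \colim E_{-i}$ for $j\ll 0$'' needed to apply the lemma, and it also gives the first condition, namely
$$Tot(\tau_{\geq 2w}(\colim_{i\to\infty}\Omega^{0,-i}_s(E)\wedge MU^{\bullet+1}))\simeq Tot(\tau_{\geq 2w}\,\Omega^{0,w}_s(E)\wedge MU^{\bullet+1}),$$
for all $w$: for $w\leq u$ both sides agree since $\Omega^{0,w}_s(E)\simeq Re(E)$, and for $w>u$ one uses that $\Gamma_\star(Re(E))$ is defined by exactly this totalization formula together with the fact that the $MU$-based Adams resolution of $E$ only sees homotopy in degrees determined by the connective cover, so the two filtrations by $\tau_{\geq 2w}$ of the cosimplicial $MU$-resolution coincide.

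Second, I would check that the filtered-spectrum structure maps of $\Omega^{0,\star}_s(E)$ factor as in \eqref{Eqn:StrComp}. This is formal: the map $\Omega^{0,w}_s(E)\to\Omega^{0,w-1}_s(E)$ is induced by the map of motivic spheres $S^{0,w-1}\to S^{0,w}$ (multiplication by $\tau$, up to the conventions of \cite{GIKR18}), and after applying $Tot(\tau_{\geq ?}(-\wedge MU^{\bullet+1}))$ it factors through lowering the connective cover threshold from $2w$ to $2w-2$ first (which is the inclusion $\tau_{\geq 2w}\to\tau_{\geq 2w-2}$) and then changing weight. The only thing to verify is that these two operations commute up to coherent homotopy, which follows because $\tau_{\geq 2w}$ is a truncation functor and the weight-change map is induced by a map of spectra independent of the truncation.

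Third, I would invoke the hypothesis that the motivic Adams--Novikov spectral sequence for $E$ converges, and translate this into convergence of the classical Adams--Novikov spectral sequence for each $\Omega^{0,w}_s(E)$. For $w\leq u$ this is the Adams--Novikov spectral sequence for $Re(E)$, whose convergence I would deduce from the motivic statement via the comparison $\pi_{*,*}(E)[\tau^{-1}]\cong\pi_*(Re(E))[\tau^{\pm 1}]$ and the fact that Betti realization carries the motivic $MU$-resolution to the classical one (cf. the discussion of $H\f_2$-Adams resolutions and Betti realization in Sections \ref{Section:Analysis} and \ref{Sec:E2}). For $w>u$, $\Omega^{0,w}_s(E)$ is a connective-type modification of $Re(E)$ whose Adams--Novikov spectral sequence is a truncation of that of $Re(E)$, hence also convergent. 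This verifies the last hypothesis of Construction \ref{Constr:Map}, completing the proof.

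\textbf{Main obstacle.} The crux is the first condition: identifying $Tot(\tau_{\geq 2w}(\colim_i \Omega^{0,-i}_s(E)\wedge MU^{\bullet+1}))$ with $Tot(\tau_{\geq 2w}\,\Omega^{0,w}_s(E)\wedge MU^{\bullet+1})$ for \emph{all} $w$, not just $w\leq u$. For $w>u$ one must argue that passing to the colimit in weight and then truncating agrees with truncating $\Omega^{0,w}_s(E)$ directly; this requires knowing that the higher-weight pieces of $\Omega^{0,\star}_s(E)$ are themselves the $MU$-resolution-connective-cover filtration of $Re(E)$, which is essentially the content of the GIKR equivalence applied to $E$ itself rather than to a realization. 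I expect this to require carefully chasing through the definition of $\Gamma_\star$ and the unit/counit of the adjunction $-\otimes S^{0,\star}\dashv\Omega^{0,\star}_s$, and this is where most of the real work lies; the periodicity hypothesis $\pi_{*,u}(E)\cong\pi_{*,u-n}(E)$ is exactly what makes the colimit in the lemma well-behaved and pins down the ``$j\ll 0$'' stabilization, so once that is in hand the remaining verifications are formal.
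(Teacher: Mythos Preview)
The paper's own proof is a single sentence: ``This is immediate from the properties of the function spectrum $F_s(-,-)$.'' So you are doing vastly more work than the authors do, and there is no real ``approach'' in the paper to compare against beyond the assertion that everything is formal once one unwinds the definition $\Omega^{0,w}_s(E)=F_s(S^{0,w},E)$.

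Your proposal is a reasonable attempt to make that one line honest. The verifications you give for $w\le u$ (stabilization of the filtration, identification of $\colim_i\Omega^{0,-i}_s(E)$ with $Re(E)$), for the structure-map factorization, and for convergence of the classical ANSS are all along the right lines and essentially formal. Where you identify a ``main obstacle'' --- condition (a) of Construction~\ref{Constr:Map} for $w>u$ --- you are pointing at something the paper simply does not address. Your sketch there (invoking the GIKR equivalence for $E$ itself) is circular as stated: you would be assuming the very identification $\Omega^{0,\star}_s(E)\simeq\Gamma_\star(Re(E))$ that the Construction and its Corollary are meant to produce. The paper never resolves this either; it asserts the lemma and then in the subsequent Corollary checks the conclusion for $H\f_2$, $H\z$, $kgl$, $kq$, $ksp$ by comparing motivic ANSS's directly, which is a separate argument.

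In short: your proposal is more careful than the paper, your concern about $w>u$ is legitimate, and neither your argument nor the paper's actually closes that gap in the generality stated. For the intended applications the point is moot because one verifies the conclusion by hand for each spectrum.
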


\begin{proof}
This is immediate from the properties of the function spectrum $F_s(-,-)$. 
\end{proof}

Part of the following corollary is asserted without proof in \cite[Table 1]{GIKR18}. 

\begin{cor}
There are equivalences of motivic spectra
$$H\f_2 \simeq \Gamma_\star(H\f_2^{cl}), \quad H\z \simeq \Gamma_\star(H\z^{cl}), \quad kgl \simeq \Gamma_\star(bu), \quad kq \simeq \Gamma_\star(bo), \quad ksp\simeq \Gamma_\star(bsp).$$
\end{cor}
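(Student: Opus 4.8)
The plan is to verify, for each of the five spectra, that it satisfies the hypotheses of Lemma \ref{Lem:Conditions}, and then to show that the map of motivic spectra $(\Gamma_\star(Re(E))) \otimes S^{0,\star} \to E$ produced by Construction \ref{Constr:Map} is an equivalence. The first half is cheap: for $E \in \{H\f_2, H\z, kgl, kq, ksp\}$ the motivic Adams--Novikov spectral sequence converges (these are all connective and of finite type), and each has a periodicity in weight in the sense that $\pi_{*,u}(E) \cong \pi_{*,u-n}(E)$ for $u$ sufficiently negative and all $n \geq 0$. For $H\f_2$ and $H\z$ this is because $\pi_{*,u}$ stabilizes once $u \leq 0$; for $kgl$, $kq$, and $ksp$ it follows from the fact that inverting the respective Bott-type element makes the weight periodic, so below a fixed weight the homotopy is already in the periodic range. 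Hence Lemma \ref{Lem:Conditions} applies and the comparison map exists for each.

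Next I would identify the Betti realizations: $Re(H\f_2) = H\f_2^{cl}$, $Re(H\z) = H\z^{cl}$, $Re(kgl) = bu$, $Re(kq) = bo$ (this last is the Ananyevskiy--R{\"o}ndigs--{\O}stv{\ae}r lemma quoted in the excerpt, $kq(\c) \simeq bo$), and $Re(ksp) = bsp$ (since $ksp$ is the very effective cover of $\Sigma^{4,2}KQ$ and Betti realization commutes with very effective covers up to the appropriate shift, realizing to $bsp$). Therefore the comparison map in each case has the shape $\Gamma_\star(E^{cl}) \otimes S^{0,\star} \to E$ with $E^{cl}$ the asserted classical spectrum. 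It remains to check this is an equivalence. Since both sides are cellular $2$-complete motivic spectra, it suffices to check that the map induces an isomorphism on motivic homotopy groups (or equivalently, since everything is $2$-complete and bounded below, on mod $2$ motivic homology). I would do this weight by weight: by construction the map realizes on $\star = w$ to the inclusion $Tot(\tau_{\geq 2w}(E^{cl} \wedge MU^{\bullet+1})) \hookrightarrow Tot(E^{cl} \wedge MU^{\bullet+1}) \simeq E^{cl}$, whose homotopy groups are computed by the truncated Adams--Novikov spectral sequence; comparing with the motivic Adams--Novikov spectral sequence for $E$ under the Gheorghe--Isaksen--Krause--Ricka equivalence shows the two agree. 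Equivalently, one can observe that $\Gamma_\star$ is fully faithful on the relevant subcategory and that its essential image is detected by the Adams--Novikov $E_2$-page, which for each of these spectra is the classical one base-changed appropriately.

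The cleanest route is probably to package this as follows. The functor $\Gamma_\star(-) \otimes S^{0,\star}$ is, by \cite[Thm. 6.12]{GIKR18}, an equivalence between classical spectra of finite type and a full subcategory of $2$-complete cellular $\c$-motivic spectra, and under this equivalence a motivic spectrum $E$ lies in the image precisely when its motivic Adams--Novikov spectral sequence is "classical", i.e.\ $\pi_{**}MU \wedge E$ is the classical $MU_*MU$-comodule $MU_* Re(E)$ with its standard weight grading. For $H\f_2$, $H\z$, $kgl$, $kq$, $ksp$ this comodule-level statement is a direct computation from the known structure of the motivic (co)homology of these spectra over $\c$ and the fact that $\pi_{**}MU^{\c}$ is Voevodsky's algebraic cobordism, which is $\pi_*MU[\tau]$. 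The main obstacle I anticipate is precisely this last input for $kq$ and $ksp$: one needs that the motivic Adams--Novikov $E_2$-term of $kq$ (and of $ksp$) coincides with that of $bo$ (resp.\ $bsp$) tensored up over $\pi_*MU$ to $\pi_{**}MU^{\c}$, which requires knowing the $MU$-homology of $kq$ — this is where the Hermitian-$K$-theory input genuinely enters and is not formal. Once that comodule identification is in hand, the equivalence of spectra follows from the Gheorghe--Isaksen--Krause--Ricka machine together with the weight-periodicity check above. I would state the $MU$-homology computation as a lemma, prove it using the known $H\f_2$-cohomology $A/\kern-0.25em/A(1)$ of $kq$ together with the motivic Adams spectral sequence comparison, and then assemble the corollary.
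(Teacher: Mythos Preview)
Your first approach is correct and is essentially what the paper does: verify the hypotheses of Lemma \ref{Lem:Conditions}, identify the Betti realizations, apply Construction \ref{Constr:Map} to obtain the comparison map, and then observe that this map induces an isomorphism of motivic Adams--Novikov spectral sequences, hence an equivalence by convergence. The paper's proof is exactly this, in three sentences.

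Where you go astray is in your ``cleanest route'' paragraph and your anticipated obstacle. The isomorphism of Adams--Novikov spectral sequences is \emph{by construction}: the functor $\Gamma_\star$ is literally defined as $w \mapsto Tot(\tau_{\geq 2w}(-\wedge MU^{\bullet+1}))$, and the map of Construction \ref{Constr:Map} is built from the inclusions $Tot(\tau_{\geq 2w}(E^{cl}\wedge MU^{\bullet+1})) \hookrightarrow Tot(E^{cl}\wedge MU^{\bullet+1}) \simeq E_w$. So the map is, on the nose, a map of Adams--Novikov filtrations, and the induced map of spectral sequences is an isomorphism from $E_1$ on. You said this yourself in your weight-by-weight discussion; the point is that nothing further is needed. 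In particular, there is no need to separately compute $MU_{**}(kq)$ or to verify a comodule-level identification as genuine Hermitian $K$-theory input --- that is already encoded in the hypotheses of Lemma \ref{Lem:Conditions} together with the definition of $\Gamma_\star$. Your ``main obstacle'' is not an obstacle at all, and the extra lemma you propose is unnecessary.
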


\begin{proof}
The Betti realization of each motivic spectrum on the left-hand side of the equivalence coincides with the classical spectrum that $\Gamma_\star$ is being applied to on the right-hand side. Each motivic spectrum satisfies the hypotheses of Lemma \ref{Lem:Conditions}, so we obtain maps from the right-hand side of each equivalence to the left-hand side by Construction \ref{Constr:Map}. These maps induce an isomorphism between motivic Adams-Novikov spectral sequences by construction. The motivic Adams-Novikov spectral sequence converges for each motivic spectrum above, so the map is an equivalence of motivic spectra. 
\end{proof}

\begin{cor}
There is an isomorphism between the $kq$-resolution and the $\Gamma_\star(bo)$-resolution. 
\end{cor}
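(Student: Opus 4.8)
The plan is to leverage the preceding corollary identifying $kq \simeq \Gamma_\star(bo)$ together with the fact that $\Gamma_\star$ is a lax symmetric monoidal functor, so that it takes the unit map $S^0 \to bo$ to the unit map $S^{0,0} \simeq \Gamma_\star(S^0) \to \Gamma_\star(bo) \simeq kq$ and hence the classical cofiber $\overline{bo}$ to the motivic cofiber $\overline{kq}$. More precisely, first I would record that the equivalence $kq \simeq \Gamma_\star(bo)$ of the previous corollary is an equivalence of (motivic) ring spectra, or at least compatible with units — this follows from the construction, since the map $\Gamma_\star(bo) \otimes S^{0,\star} \to kq$ produced in Construction \ref{Constr:Map} is built from the totalization of the cosimplicial $MU$-resolution, which is natural in ring maps, and it induces an isomorphism on motivic Adams–Novikov $E_2$-pages compatible with the unit. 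Then, since $\Gamma_\star$ preserves cofiber sequences (it is defined via totalization and connective covers, both of which commute with finite colimits of spectra after the relevant identifications, and cofiber sequences of filtered spectra are computed levelwise), the cofiber sequence $S^0 \to bo \to \overline{bo}$ maps to $S^{0,0} \to kq \to \overline{kq}$, giving $\overline{kq} \simeq \Gamma_\star(\overline{bo}) \otimes S^{0,\star}$ as motivic spectra under $kq$.

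Next I would iterate: because $\Gamma_\star$ (composed with $- \otimes S^{0,\star}$) is symmetric monoidal on the relevant subcategories and preserves the smash products in question — here one uses that $bo \wedge \overline{bo}^{\wedge n}$ and its motivic counterpart both satisfy the hypotheses of Lemma \ref{Lem:Conditions}, so Construction \ref{Constr:Map} applies and the comparison map is an equivalence by convergence of the motivic Adams–Novikov spectral sequence — we obtain compatible equivalences
\[
kq \wedge \overline{kq}^{\wedge n} \;\simeq\; \Gamma_\star\!\left(bo \wedge \overline{bo}^{\wedge n}\right) \otimes S^{0,\star}
\]
for all $n \geq 0$. These equivalences are compatible with the structure maps of the canonical $kq$-Adams resolution \eqref{Eqn:AdamsRes} and its classical analogue, essentially by naturality of the cofiber-sequence comparison above applied degreewise to the tower. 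Applying $\pi_{**}^\c$ to the motivic tower and $\pi_*$ to the classical tower, and using that $\pi_{t,u}^\c$ of a spectrum of the form $\Gamma_\star(Z) \otimes S^{0,\star}$ recovers the classical Adams–Novikov-filtered homotopy of $Z$ in a way that is functorial in $Z$, one reads off an isomorphism of the two exact couples, hence of the associated spectral sequences. This gives the asserted isomorphism between the $kq$-resolution and the $\Gamma_\star(bo)$-resolution.

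The main obstacle I anticipate is verifying that the comparison maps of Construction \ref{Constr:Map} for the smash powers $kq \wedge \overline{kq}^{\wedge n}$ assemble compatibly with the resolution tower structure — that is, that the equivalences $kq \wedge \overline{kq}^{\wedge n} \simeq \Gamma_\star(bo \wedge \overline{bo}^{\wedge n}) \otimes S^{0,\star}$ can be chosen coherently so as to commute strictly (or at least up to the homotopies needed for a map of towers) with the maps $\Sigma^{-n,0} kq \wedge \overline{kq}^{\wedge n} \to \Sigma^{-n,0}\overline{kq}^{\wedge n} \to \Sigma^{-n-1,0}\overline{kq}^{\wedge(n+1)}$. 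The cleanest way around this is to observe that the canonical $kq$-Adams resolution is functorially constructed from the $kq$-module structure and the unit cofiber sequence alone, so it is the image under $\Gamma_\star(-) \otimes S^{0,\star}$ of the canonical $bo$-Adams resolution — i.e. one does not need to compare the resolutions map-by-map but rather identifies the entire resolution diagram as a single functorial construction applied to $bo$. Given that $\Gamma_\star$ is symmetric monoidal (on cellular, bounded-below, appropriately convergent objects) and preserves the unit, this is formal. The remaining bookkeeping — that $\pi_{**}^\c \circ (\Gamma_\star(-) \otimes S^{0,\star})$ agrees with classical $\pi_*$ on the terms in question, so that the abutments also match via $(S^{0,0})^\wedge_{kq} \simeq S^{0,0}$ and $(S^0)^\wedge_{bo} \simeq S^0$ — is routine given the results already cited.
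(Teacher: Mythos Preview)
You have misread the statement. The ``$\Gamma_\star(bo)$-resolution'' here means the motivic Adams spectral sequence based on the motivic ring spectrum $\Gamma_\star(bo)$, not the result of applying $\Gamma_\star$ levelwise to the classical $bo$-resolution. Under this reading the corollary is immediate from the previous one: since $kq \simeq \Gamma_\star(bo)$ as motivic spectra (indeed as ring spectra, the equivalence being compatible with units), the canonical $kq$-Adams tower and the canonical $\Gamma_\star(bo)$-Adams tower are literally built from the same unit map and the same cofibers, hence give isomorphic spectral sequences. No monoidality of $\Gamma_\star$ is needed.

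Your argument, by contrast, aims to prove $kq \wedge \overline{kq}^{\wedge n} \simeq \Gamma_\star(bo \wedge \overline{bo}^{\wedge n}) \otimes S^{0,\star}$, which would identify the $kq$-resolution with $\Gamma_\star$ applied to the classical $bo$-resolution. The paper explicitly disclaims this in the remark immediately following the corollary: ``we do not claim that $\Gamma_\star$ applied to the $bo$-resolution is equivalent to the $kq$-resolution. The functor $\Gamma_\star$ is not generally strong symmetric monoidal \ldots\ and we cannot apply [Prop.~3.25 of GIKR] to obtain an equivalence $\Gamma_\star(bo) \wedge_{\Gamma_\star(S^0)} \Gamma_\star(bo) \simeq \Gamma_\star(bo \wedge bo)$.'' Your key step --- asserting that $\Gamma_\star$ is symmetric monoidal on the relevant objects and preserves the smash products $bo \wedge \overline{bo}^{\wedge n}$ --- is exactly the point the authors flag as unavailable. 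So the approach you outline would require a genuinely new argument (or a verification of the hypotheses of \cite[Prop.~3.25]{GIKR18} for these particular smash products), and in any case proves a different, stronger statement than the one asserted.
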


\begin{rem2}
In other words, we could have replaced $kq$ by $\Gamma_\star(bo)$ in all of the analysis above and arrived at the same conclusions. On the other hand, we do not claim that $\Gamma_\star$ applied to the $bo$-resolution is equivalent to the $kq$-resolution. The functor $\Gamma_\star$ is not generally strong symmetric monoidal \cite[Rmk. 3.8]{GIKR18}, and we cannot apply \cite[Prop. 3.25]{GIKR18} to obtain an equivalence $\Gamma_\star(bo) \wedge_{\Gamma_\star(S^0)} \Gamma_\star(bo) \simeq \Gamma_\star(bo \wedge bo)$.
\end{rem2}

\subsection{A model for the motivic stable orthogonal J-homomorphism}\label{Sec:Jo}

We now study the orthogonal J-homomorphism using $\Gamma_\star$. Following the philosophy that $\Gamma_\star$ of a classical spectrum gives its correct motivic analog, one might try to construct the $\c$-motivic orthogonal J-homomorphism by applying $\Gamma_\star$ to the classical orthogonal J-homomorphism. Unfortunately, this definition runs into the following technical difficulty. Recall that the classical J-homomorphism is obtained by taking the colimit of maps $O(n) \to \Omega^n S^n$ to obtain 
$$J_o^{cl} : O \to GL_1(S^0).$$
This is an infinite loop map by construction, so we can deloop once and take the corresponding map of $\Omega$-spectra to obtain
$$J^{cl}_o : \Sigma^{-1} bo \to gl_1S^0$$
(see e.g. \cite{AHR10}). Applying $\Gamma_\star$ produces a map of motivic spectra 
$$\Gamma_\star J^{cl}_o : \Sigma^{-1,-1} kq \to \Gamma_\star gl_1S^0.$$
This map has the correct source, but it is not clear that $\Gamma_\star(gl_1(S^0))$ corresponds to the spectrum of units of the complex motivic sphere. 

To avoid questions about motivic infinite loop spaces, we employ a different description of the orthogonal $J$ spectrum. In \cite{DM89}, Davis and Mahowald define a connective J spectrum $j$ (denoted $J$ in their paper) via the fiber sequence
\begin{equation}\label{eq: connective j spectrum fiber sequence}
\begin{tikzcd}
j\arrow[r] & bo \arrow[r,"\psi^3-1"] & \Sigma^4 bsp. 
\end{tikzcd}
\end{equation}
The $d_1$-differential in the $bo$-resolution has the form $d_1 : bo \to bo \wedge \overline{bo}$. Pinching onto the first summand in the spectral splitting
$$bo \wedge \overline{bo} \simeq bo \wedge (\Sigma^4 bsp \vee \bigvee_{i \geq 2} \Sigma^{4i} H\z_i^{cl}),$$
shows that
$$\coker(d_1: bo \to bo \wedge \overline{bo}) \cong \coker(\psi^3-1: bo \to \Sigma^4 bsp),$$
$$\ker(d_1: bo \to bo \wedge \overline{bo}) \cong \ker(\psi^3-1 : bo \to \Sigma^4 bsp).$$
There are no further differentials involving the $0$- and $1$-line, so one finds that $\pi_*(j)$ is isomorphic to the $0$- and $1$-lines of the $bo$-resolution. Now, the $0$-line of the $bo$-resolution is just the Hurewicz image of $bo$, and the $1$-line is the image of the orthogonal J-homomorphism. Therefore we can determine the image of J from $\pi_*(j)$ by removing the Hurewicz image of $bo$. 

\begin{defin}
We define the \emph{$\c$-motivic connective orthogonal J spectrum} $j_o$ by applying $\Gamma_\star$ to the connective orthogonal $J$ spectrum, i.e. $j_o := \Gamma_\star(j)$. 
\end{defin}

We now show that $j_o$ fits into a fiber sequence with $kq$ and $\Sigma^4 ksp$ (this is not immediately obvious since $\Gamma_\star$ does not generally preserve fiber sequences of spectra). 

\begin{lem}
The groups $MU_*(bo)$ are concentrated in even degrees. 
\end{lem}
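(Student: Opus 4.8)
The plan is to reduce to the classical Wood splitting after smashing with $MU$. Wood's theorem gives a cofiber sequence of spectra
\[
\Sigma bo \xrightarrow{\ \eta\ } bo \longrightarrow bu \longrightarrow \Sigma^{2} bo ,
\]
in which the first map is multiplication by the Hopf element $\eta \in \pi_1(S^0)$. Smashing with $MU$ and using that $bo \wedge MU$ is an $MU$-module, hence that multiplication by $\eta$ on it is null (the unit $S^0 \to MU$ sends $\eta$ into $\pi_1(MU) = MU_1 = 0$, so the action of $\pi_*(S^0)$ on any $MU$-module, even at the level of spectrum maps, factors through $\pi_*(MU)$ and kills $\eta$), the resulting cofiber sequence splits. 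This yields an equivalence $bu \wedge MU \simeq (bo \wedge MU) \vee \Sigma^{2}(bo \wedge MU)$ and therefore $MU_n(bu) \cong MU_n(bo) \oplus MU_{n-2}(bo)$ for every $n$.

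It then remains to observe that $MU_*(bu)$ is concentrated in even degrees. This follows from the Atiyah--Hirzebruch spectral sequence $E^2_{s,t} = H_s(bu;\z) \otimes MU_t \Rightarrow MU_{s+t}(bu)$: the integral homology $H_*(bu;\z)$ is torsion-free and concentrated in even degrees (equivalently $bu \wedge H\z$ is a wedge of even suspensions of $H\z$, which is classical --- the mod $p$ homology of $bu$ is, at every prime, a polynomial algebra on even-dimensional generators, and there is no integral torsion), and $MU_*$ is concentrated in even degrees, so the $E^2$-page lives entirely in even total degree, all differentials vanish, and $MU_{\mathrm{odd}}(bu) = 0$.

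Combining the two steps, for odd $n$ the splitting gives $0 = MU_n(bu) \cong MU_n(bo) \oplus MU_{n-2}(bo)$, hence $MU_n(bo) = 0$, which is the claim. The only point requiring any care is the vanishing of multiplication by $\eta$ on $bo \wedge MU$; beyond that the argument is a formal consequence of Wood's cofiber sequence together with the collapse of the Atiyah--Hirzebruch spectral sequence for $MU_*(bu)$, so I do not anticipate a genuine obstacle.
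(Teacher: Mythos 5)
Your proof is correct and takes essentially the same approach as the paper: smash the Wood cofiber sequence with $MU$, observe that multiplication by $\eta$ is trivial (because $\eta$ maps to zero in $\pi_1(MU)=0$), and deduce the claim from the evenness of $MU_*(bu)$. The paper stops at the resulting short exact sequence of homotopy groups and simply asserts that $MU_*(bu)$ is even; you go a step further by noting that nullity of $\eta$ as a map of spectra actually splits the triangle, and you justify the evenness of $MU_*(bu)$ via the Atiyah--Hirzebruch spectral sequence. These are minor elaborations, not a different route.
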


We learned the following proof from Mark Behrens. 

\begin{proof}
Consider the cofiber sequence
$$\Sigma bo \wedge MU \overset{\eta}{\to} bo \wedge MU \to bo \wedge MU \wedge C\eta.$$
Since multiplication by $\eta$ is trivial in $MU_*$, turning triangles gives a short exact sequence
$$0 \to \pi_*(bo \wedge MU) \to \pi_*(bo \wedge MU \wedge C\eta) \to \pi_*(\Sigma^2 bo \wedge MU).$$
Recall that $bo \wedge C\eta \simeq bu$. Since $MU_*(bu)$ is concentrated in even degrees, the result follows. 
\end{proof}

\begin{cor}
The groups $MU_*(bsp)$ and $MU_*(j)$ are concentrated in even degrees.
\end{cor}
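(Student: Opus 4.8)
The plan is to bootstrap everything from the Lemma (that $MU_*(bo)$ is concentrated in even degrees) using the cofiber-sequence arguments already employed in its proof, together with Mahowald's splitting of $bo\wedge\overline{bo}$.

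First I would show that $MU_*(bo\wedge bo)$ is concentrated in even degrees. Smashing the cofiber sequence $\Sigma bo\xrightarrow{\eta}bo\to bo\wedge C\eta\simeq bu$ with $MU\wedge bo$ and using, exactly as in the proof of the Lemma, that $\eta$ acts by zero on any $MU$-module, one obtains short exact sequences $0\to MU_n(bo\wedge bo)\to MU_n(bu\wedge bo)\to MU_{n-2}(bo\wedge bo)\to 0$; repeating with the remaining $bo$-factor gives $0\to MU_n(bu\wedge bo)\to MU_n(bu\wedge bu)\to MU_{n-2}(bu\wedge bo)\to 0$. Since $bu$ is complex orientable, $MU_*(bu)$ is a free $MU_*$-module on even-degree generators, so by the Künneth isomorphism $MU_*(bu\wedge bu)\cong MU_*(bu)\otimes_{MU_*}MU_*(bu)$ is concentrated in even degrees; unwinding the two short exact sequences shows in turn that $MU_*(bu\wedge bo)$ and hence $MU_*(bo\wedge bo)$ are as well. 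Next, smashing the cofiber sequence $S^{0}\to bo\to\overline{bo}$ with $MU\wedge bo$ gives a long exact sequence in which $MU_*(bo)\to MU_*(bo\wedge bo)$ is split injective (split by the multiplication $bo\wedge bo\to bo$); together with the evenness of $MU_*(bo)$ and $MU_*(bo\wedge bo)$ this forces $MU_*(bo\wedge\overline{bo})$ to be concentrated in even degrees, and indeed torsion-free, being a summand of a free module.

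To deduce the statement for $bsp$: by Mahowald's splitting $bo\wedge\overline{bo}\simeq\bigvee_{i\geq 1}\Sigma^{4i}bo\wedge H\z_i^{cl}$ and the identification of the $i=1$ wedge summand with $\Sigma^{4}bsp$ (the same identification used above for $\coker(d_1)\cong\coker(\psi^{3}-1)$), $\Sigma^{4}bsp$ is a retract of $bo\wedge\overline{bo}$. Hence $MU_{*-4}(bsp)$ is a direct summand of the even module $MU_*(bo\wedge\overline{bo})$, so $MU_*(bsp)$ is concentrated in even degrees.

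For $j$, I would apply $MU_*(-)$ to the fibre sequence $j\to bo\xrightarrow{\psi^{3}-1}\Sigma^{4}bsp$ of \eqref{Eqn:jc}. Since $MU_*(bo)$ and $MU_*(\Sigma^{4}bsp)=MU_{*-4}(bsp)$ are now known to be concentrated in even degrees, the long exact sequence collapses to $MU_{2k}(j)\hookrightarrow MU_{2k}(bo)$ together with an isomorphism $MU_{2k-1}(j)\cong\coker\big((\psi^{3}-1)_*\colon MU_{2k}(bo)\to MU_{2k-4}(bsp)\big)$, so the remaining — and only genuine — point is that $(\psi^{3}-1)_*$ is surjective on $MU$-homology. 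This is where the content of the corollary lies: the surjectivity should be extracted from the fact that, under the identification of $\Sigma^{4}bsp$ with the first summand of $bo\wedge\overline{bo}$, the map $\psi^{3}-1$ is the composite of the $bo$-resolution differential $d_1=\eta_R-\eta_L\colon bo\to bo\wedge\overline{bo}$ with the projection onto that summand, combined with an analysis of the $\psi^{3}$-action by Bott weight on the (torsion-free) module $MU_*(bo)$. I expect this to be the main obstacle, since controlling the cokernel $2$-adically — rather than merely rationally, where the eigenvalue computation makes surjectivity transparent — is not formal and requires the explicit structure of the $bo$-cooperations, unlike the purely diagrammatic manipulations used in the earlier steps.
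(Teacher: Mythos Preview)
Your argument for $bsp$ is correct but considerably more elaborate than the paper's. The paper simply records that $bsp=\tau_{\geq 0}\Sigma^{-4}bo$; the point is that then $bsp\wedge C\eta\simeq bu$ just as $bo\wedge C\eta\simeq bu$, so the proof of the preceding Lemma applies verbatim to give a short exact sequence $0\to MU_*(bsp)\to MU_*(bu)\to MU_{*-2}(bsp)\to 0$ and hence evenness. Your detour through $MU_*(bo\wedge bo)$, $MU_*(bo\wedge\overline{bo})$, and Mahowald's splitting reaches the same conclusion but does substantially more work; what it buys you is the evenness of $MU_*(bo\wedge\overline{bo})$ along the way, which is not needed here.

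For $j$ you and the paper take the same route, and you are the more careful of the two. The paper's entire argument is the single clause ``the corresponding result for $MU_*(j)$ follows from the fiber sequence,'' which leaves implicit exactly the point you isolate: from the long exact sequence one gets $MU_{2k-1}(j)\cong\coker\big((\psi^3-1)_*\colon MU_{2k}(bo)\to MU_{2k-4}(bsp)\big)$, and evenness is equivalent to surjectivity of $(\psi^3-1)_*$. You are right that this is where the content lies and that it is not formal---rationally it is an eigenvalue computation, but $2$-adically one must control the cokernel. The paper does not address this step, so the obstacle you flag is not a defect of your proposal relative to the paper's proof; it is a point the paper itself elides.
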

\begin{proof}
This is because $bsp = \tau_{\geq 0}\Sigma^{-4}bo$. The corresponding result for $MU_*(j)$ follows from the fiber sequence \eqref{eq: connective j spectrum fiber sequence}.
\end{proof}

Since all the required $MU$-homology groups are concentrated in even degrees, it follows from \cite[Lem. 3.17]{GIKR18} that we have a fiber sequence
\[
\begin{tikzcd}
\Gamma_\star j\arrow[r] & \Gamma_\star bo \arrow[r] & \Gamma_\star \Sigma^4 bsp
\end{tikzcd}
\]
of $\c$-motivic spectra. Also, we note from \cite[Lem 3.12]{GIKR18} that $\Gamma_\star(\Sigma^4 bsp)\simeq \Sigma^{4,2}\Gamma_\star bsp$. We have shown the following.

\begin{thm}\label{Thm:Jo}
There is a fiber sequence
\begin{equation}\label{Eqn:J}
j_o\to kq \to \Sigma^{4,2}ksp.
\end{equation}
The homotopy groups $\pi_{**}(j_o)$ are isomorphic to the $0$- and $1$-lines of the $kq$-resolution. 
\end{thm}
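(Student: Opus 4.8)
The plan is to bootstrap from the corresponding classical statement for $j$ using the functoriality and exactness properties of $\Gamma_\star$. First I would verify that $MU_*(bo)$ is concentrated in even degrees: this follows from the cofiber sequence $\Sigma bo \wedge MU \xrightarrow{\eta} bo\wedge MU \to bo\wedge MU \wedge C\eta$, the vanishing of $\eta$ in $MU_*$, and the identification $bo\wedge C\eta\simeq bu$ together with the classical fact that $MU_*(bu)$ is even. Next I would deduce the analogous statement for $bsp$ and for $j$: since $bsp = \tau_{\geq 0}\Sigma^{-4}bo$, its $MU$-homology is even, and then the fiber sequence $j \to bo \xrightarrow{\psi^3-1} \Sigma^4 bsp$ forces $MU_*(j)$ to be even as well (the long exact sequence in $MU_*$ splits into short exact sequences between even-concentrated groups).

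With these evenness facts in hand, the key step is to invoke \cite[Lem. 3.17]{GIKR18}, which says that $\Gamma_\star$ preserves a fiber sequence of spectra whenever all the relevant $MU$-homology groups are concentrated in even degrees. Applying this to $j\to bo \to \Sigma^4 bsp$ produces a fiber sequence $\Gamma_\star j \to \Gamma_\star bo \to \Gamma_\star(\Sigma^4 bsp)$ of $\c$-motivic spectra. Then I would rewrite the terms using the identifications already established in the excerpt: $\Gamma_\star bo \simeq kq$, $\Gamma_\star bsp \simeq ksp$, and the suspension formula $\Gamma_\star(\Sigma^4 bsp)\simeq \Sigma^{4,2}\Gamma_\star bsp$ from \cite[Lem. 3.12]{GIKR18}. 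By definition $j_o = \Gamma_\star j$, so this yields the fiber sequence $j_o \to kq \to \Sigma^{4,2}ksp$ of \eqref{Eqn:J}.

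For the homotopy computation, I would argue exactly as in the classical case reviewed just before the theorem. The $d_1$-differential in the $kq$-resolution has the form $d_1: kq \to kq\wedge\overline{kq}$, and pinching onto the first summand of the splitting $kq\wedge\overline{kq}\simeq \Sigma^{4,2}ksp \vee \bigvee_{i\geq 2}\Sigma^{4i,2i}kq\wedge H\underline{\z}_i$ (the motivic analog, from Theorem \ref{Cor:AmodmodA(1)Decomp} and the $\Ext$ computations of Section \ref{Section:Analysis}) identifies the map $kq\to\Sigma^{4,2}ksp$ in \eqref{Eqn:J} with the leading component of $d_1$. Hence $\pi_{**}(j_o)$ computes the kernel and cokernel of $d_1$ restricted to the $0$-line, which by Theorem \ref{Thm:kqLines}(1)--(2) and the absence of further differentials into or out of the $0$- and $1$-lines is precisely $E_\infty^{0,*,*}\oplus E_\infty^{1,*,*}$. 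Substituting the explicit descriptions from Theorem \ref{Thm:kqLines} gives the stated formula for $\pi_{**}(j_o)$.

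The main obstacle I anticipate is the careful verification that the map $kq\to\Sigma^{4,2}ksp$ appearing in the $\Gamma_\star$-image fiber sequence genuinely agrees, up to the identifications, with the leading component of the $kq$-resolution $d_1$-differential — classically this is the content of the $\psi^3-1$ description, and motivically one must check that $\Gamma_\star$ carries $\psi^3-1$ to the corresponding motivic Adams operation and that this is compatible with the splitting of $kq\wedge\overline{kq}$. Once that compatibility is pinned down (it should follow from naturality of $\Gamma_\star$ applied to the classical comparison between the $bo$-resolution $d_1$ and $\psi^3-1$, together with the $\Gamma_\star$-equivalences for $kq$ and $ksp$), the homotopy computation is a formal consequence of Theorem \ref{Thm:kqLines}.
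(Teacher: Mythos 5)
Your proof of the fiber sequence \eqref{Eqn:J} matches the paper's exactly: the evenness of $MU_*(bo)$ via $\eta$ and $bo\wedge C\eta\simeq bu$, the deduction for $bsp$ and $j$, and the application of \cite[Lems.~3.12 and 3.17]{GIKR18} together with the identifications $\Gamma_\star bo\simeq kq$, $\Gamma_\star bsp\simeq ksp$. That portion is correct and is precisely the paper's argument.

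For the homotopy computation there is one imprecision worth flagging. You invoke a spectral splitting
\[
kq\wedge\overline{kq}\simeq \Sigma^{4,2}ksp\vee\bigvee_{i\geq 2}\Sigma^{4i,2i}kq\wedge H\underline{\z}_i,
\]
but the $H\underline{\z}_i$ appearing in the paper are $A(1)^\vee$-comodules, not motivic spectra, and the paper explicitly notes (footnote in the introduction) that motivic integral Brown--Gitler \emph{spectra} do not yet exist. Mahowald's splitting $bo\wedge\overline{bo}\simeq\bigvee_{i\geq 1}\Sigma^{4i}bo\wedge H\z_i^{cl}$ is a genuine splitting of spectra; the motivic analog, Theorem~\ref{Cor:AmodmodA(1)Decomp}, is a splitting of $A(1)^\vee$-comodules and hence of $\Ext$-groups and of $\pi_{**}(kq\wedge\overline{kq})$, but not of $kq\wedge\overline{kq}$ itself. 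What you actually need is a projection $\pi_{**}(kq\wedge\overline{kq})\twoheadrightarrow\pi_{**}(\Sigma^{4,2}ksp)$ at the level of homotopy groups, which the calculations of Sections~\ref{Section:Analysis} and~\ref{Sec:E2} do supply, so the argument survives once rephrased; but as written the claimed wedge decomposition of spectra is not available.

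The ``main obstacle'' you identify --- matching $\Gamma_\star(\psi^3-1)$ against the leading component of the $kq$-resolution $d_1$ --- is indeed the substance of the second assertion in Theorem~\ref{Thm:Jo}, and the paper's own proof elides it as well: the paper establishes the fiber sequence and then states the homotopy claim with no further argument, relying on the preceding classical discussion of $j$ plus the results of Theorem~\ref{Thm:kqLines} for bookkeeping. Your instinct that this compatibility should follow from naturality of $\Gamma_\star$ applied to the classical identification, together with Betti realization and $\tau$-linearity, is the right idea and is consistent with how the paper handles analogous comparisons elsewhere (Section~\ref{Sec:E2}). So there is no substantive gap beyond what the paper itself leaves implicit, but the spectral splitting should be replaced by the corresponding statement on homotopy groups.
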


%
\begin{rem2}
The Hurewicz image of $kq$ is detected in the $0$-line. We therefore expect the image of a $\c$-motivic orthogonal J-homomorphism to be isomorphic to the $1$-line of the $kq$-resolution. 
\end{rem2}

\begin{rem2}\label{Rem:BH}
A variant of \eqref{Eqn:J} exists over general base fields after $p$-localization. Indeed, forthcoming work of Bachmann and Hopkins shows that there is a cofiber sequence of $2$-local spectra
$$\eta^{-1}S^{0,0} \to kq \xrightarrow{(\psi^3-1)} \Sigma^{4,2}ksp.$$

\end{rem2}

%

%
%
%
%
%
%
%
%
%
%
%

\bibliographystyle{plain}
\bibliography{master}

\end{document}